\newtheorem{defn}{Definition}[section]
\newtheorem{prop}[defn]{Proposition}
\newtheorem{thm}[defn]{Theorem}
\newtheorem{lem}[defn]{Lemma}
\newtheorem{cor}[defn]{Corollary}
\newtheorem{ex}[defn]{Example}
\newtheorem{rem}[defn]{Remark}
\newtheorem{conj}[defn]{Conjecture}
\newtheorem{ques}[defn]{Question}
\newtheorem{nota}[defn]{Notation}
\newcommand{\Z}{\mathbb{Z}}
\newcommand{\Q}{\mathbb{Q}}
\newcommand{\R}{\mathbb{R}}
\renewcommand{\ker}{\operatorname{Ker}}
\newcommand{\Hom}{\operatorname{Hom}}
\newcommand{\Isom}{\operatorname{Isom}}
\newcommand{\Homeo}{\operatorname{Homeo}}
\newcommand{\Cov}{\operatorname{Cov}}
\newcommand{\Out}{\operatorname{Out}}
\newcommand{\rank}{\operatorname{rank}}
\newcommand{\Aut}{\operatorname{Aut}}
\newcommand{\Inn}{\operatorname{Inn}}
\newcommand{\SO}{\operatorname{SO}}
\newcommand{\Gl}{\operatorname{GL}}
\newcommand{\Zc}{\mathcal{Z}}
\newcommand{\D}{\operatorname{disc-sym}}
\newcommand{\A}{\operatorname{tor-sym}}
\newcommand{\acts}{\curvearrowright}
\newcommand{\Stab}{\operatorname{Stab}}
\newcommand{\lenght}{\operatorname{lenght}}
\begin{document}
	
	\title{Large and iterated finite group actions on manifolds admitting non-zero degree maps to nilmanifolds}
	\author{Jordi Daura Serrano}
	\address{Jordi Daura Serrano, Department de Màtematiques i Informàtica, Universitat de Barcelona (UB), Gran Via de les Corts Catalanes 585, 08007 Barcelona (Spain)}
	\email{jordi.daura@ub.edu}

	\pagenumbering{arabic}
	\maketitle
	\begin{abstract}
	Let $M$ be a closed connected oriented manifold admitting a non-zero degree map to a nilmanifold $f:M\longrightarrow N/\Gamma$. In the first part of the paper, we study effective finite group actions on $M$. In particular, we prove that $\Homeo(M)$ is Jordan, we bound the discrete degree of symmetry $\D(M)$ of $M$ (introduced in \cite{mundet2021topological}) and we study the number and the size of the stabilizers of an effective action of a finite group $G$ on $M$. We also study the toral rank conjecture and the Carlsson's conjecture for large primes for this class of manifolds. In the second part of the paper, we introduce the concepts of free iterated action of groups and iterated discrete degree of symmetry $\D_2(M)$, which refines $\D(M)$. We prove that if $N/\Gamma$ is a 2-step nilmanifold and $\D_2(M)=\D_2(N/\Gamma)$ then $H^*(M,\Q)\cong H^*(N/\Gamma,\Q)$.
	\end{abstract}
	\noindent
	{\it 2020 Mathematics Subject Classification: 57S17, 54H15}
	\date{}
%%%%%%%%%%%%%%%%%%%%%%%%%%%%%%%%%%%%%%%%%% INTRODUCTION %%%%%%%%%%%%%%%%%%%%%%%%%%%%%%%%%%%%%%%%%%%%%%%%%%%%%%%%%%%%%%%%%%%%%%%%%%%%%%%%%%%%%%%%%%%

\section{Introduction}

A fundamental problem in the theory of compact transformation groups is to determine which finite groups $G$ act effectively on a given manifold $M$. To solve this problem in full generality for an arbitrary manifold is extremely difficult with the current tools. A way to modify this problem is the following; instead of studying the properties of an action of a finite group $G$ on $M$, we study the properties of the restriction of this action to a suitable subgroup $H$ of $G$ of index bounded above by a constant $C$ only depending on $M$. Let us recall some definitions and problems which follow this philosophy, compiled in the recent survey \cite{riera2023actions}. 

\begin{defn}\label{large finite group actions def}
\begin{itemize}
	\item[1.](Jordan property) A group $\mathcal{G}$ is Jordan if there exists a constant $C$ such that every finite subgroup $G\leq\mathcal{G}$ has an abelian subgroup $A$ such that $[G:A]\leq C$.
	\item[2.](Discrete degree of symmetry) Given a manifold $M$, let 
	$$\mu(M)=\{r\in\mathbb{N}:\text{ $M$ admits an effective action of $(\Z/a)^r$ for arbitrarily large $a$}\}.$$
	
	More explicitly, $r\in \mu(M)$ if there exists an increasing sequence of natural number $\{a_i\}$ and effective group actions of $(\Z/a_i)^r$ on $M$ for each $i$. 
	
	The discrete degree of symmetry of a manifold $M$ is
	$$ \D(M)=\max(\{0\}\cup\mu(M)).$$
	\item[3.](Minkowski property) A group $\mathcal{G}$ is Minkowski if there exists a constant $C$ such that every finite subgroup $G\leq\mathcal{G}$ satisfies $|G|\leq C$. Given a manifold $M$, if $\Homeo(M)$ is Minkowski then we say that $M$ is almost asymmetric. If $M$ does not admit effective actions of non-trivial finite groups, we say that $M$ is asymmetric.
	\item[4.](Stabilizers of an action on a manifold) Given a manifold $M$ and a group $G$ acting effectively on $M$, we define the set of stabilizer subgroups of the action of $G$ on $M$ as
	$$\Stab(G,M)=\{G_x:x\in M\}.$$
	We say that $M$ has few stabilizers if there exists a constant $C$ such that every a finite group $G$ acting effectively on $M$ has a subgroup $H\leq G$ such that $[G:H]\leq C$ and $|\Stab(H,M)|\leq C$.
\end{itemize}
\end{defn}

\begin{rem}\label{Minkowski lemma}
The name of the Minkowski property is motivated by a classical result of Hermann Minkowski which states that $\Gl(n,\Z)$ is Minkowski (see \cite[\S 1]{serre2010bounds}). The Minkowski property was studied in \cite{popov2011makar,prokhorov2014jordan,popov2018jordan,golota2023finite,bandman2024jordan} under the name of bounded finite subgroups property.
\end{rem}

\begin{ques}\label{large finite group actions questions}
Let $M$ a closed connected $n$-dimensional manifold.
\begin{itemize}
	\item[(1)] \cite[\S 2]{riera2023actions} When is $\Homeo(M)$ Jordan?
	\item[(2)] \cite[Question 3.4, Question 3.5]{riera2023actions} Is $\D(M)\leq n$? If $\D(M)=n$, is $M$ necessarily homeomorphic to $T^n$?
	\item[(3)] \cite[Question 8.3]{riera2023actions} When is $M$ almost asymmetric?
	\item[(4)] \cite[Question 12.2]{riera2023actions} Does every closed connected manifold have few stabilizers?
\end{itemize}
\end{ques}

Let us give some historical context about these four questions. \Cref{large finite group actions questions}.(1) was asked around 30 years ago for the diffeomorphism group of closed connected smooth manifolds by Étienne Ghys. It has been answered affirmatively for many  manifolds (see \cite[\S 2]{riera2023actions}). However, it has been shown that there are closed connected manifolds whose diffeomorphism group is not Jordan, the simplest one being $S^2\times T^2$ (see \cite{csikos2014diffeomorphism,riera2017non,szabo2019special,szabo2023constructing,zarhin2014theta} for other examples of non-Jordan groups of homeomorphism). 
There is no general method to determine whether the homeomorphism group of a closed connected manifold is Jordan. We also remark that the Jordan property also has a rich history in the field of algebraic geometry, which started when J.-P. Serre proved that the Cremona group $Cr_2=\operatorname{Bir}(\mathbb{P}^2)$ is Jordan in \cite{serre2009minkowski} and asked if higher rank Cremona groups $Cr_n$ are also Jordan (the question has been affirmatively answered in \cite{prokhorov2016jordan,birkar2016singularities}). For the most recent developments on the Jordan property in algebraic geometry we refer to the surveys \cite{bandman2024jordan,luo2025jordan}, as well as \cite[\S 2]{mundet2021topological} and the introductions of \cite{csikos2022finite} and \cite{golota2023finite}.

The discrete degree of symmetry of a manifold $M$ was introduced in \cite{mundet2021topological} to study rigidity results on rationally hypertoral manifolds (see \cref{large group actions hypertoral manifolds} for one of the main results in \cite{mundet2021topological}). The discrete degree of symmetry can be regarded as the finite group analogue of the toral degree of symmetry of $M$, defined as $$ \A(M)=\max(\{0\}\cup\{r\in\mathbb{N}:T^r \text{ acts effectively on $M$}\}).$$ The toral degree of symmetry has been widely studied (see \cite[Chapter VII. \S2]{hsiang2012cohomology}, \cite[\S 11.7, \S11.8]{lee2010seifert} and the survey \cite{grove2002geometry}). The theorem of invariance of domain imply that if $M$ is a closed manifold of dimension $n$ then $\A(M)\leq n$ and that $\A(M)=n$ if and only if $M\cong T^n$ (for details, see \cite[12.2]{mundet2021topological}). \Cref{large finite group actions questions}.(2) asks if the same results hold for the discrete degree of symmetry. Note that there exist manifolds such that $\A(M)\neq\D(M)$ (see \cite[Theorem 1.11]{mundet2021topological}). 

The discrete degree of symmetry and the toral degree of symmetry are related to other well-known invariants of manifolds, like the the toral rank or the $p$-rank of a manifold. Recall that an action of a Lie group $G$ on a manifold $M$ is almost-free if all the stabilizer subgroups of the action of $G$ on $M$ are finite.
\begin{defn}\label{def ranks}
	Assume that $M$ is a manifold. We define:
	\begin{itemize}
		\item[1.] The toral rank of $M$ is $$\rank(M)=\max\{\{0\}\cup\{r:T^r\text{ acts almost-freely on $M$}\}\}.$$
		\item[2.] Given a prime $p$, the $p$-rank of $M$ is $$\rank_p(M)=\max\{\{0\}\cup\{r:(\Z/p)^r\text{ acts freely on $M$}\}\}.$$
	\end{itemize}
\end{defn} 

If $M$ is a closed connected manifold then it is clear that $\rank(M)\leq \A(M)\leq\D(M)$. There are two important conjectures due to S.Halperin and G.Carlsson regarding the $\rank(M)$ and $\rank_p(M)$ (see \cite[\S 7.3]{felix2008algebraic}).

\begin{conj}\label{toral rank conjecture}
	Let $M$ be a closed connected manifold. Then: 
	\begin{itemize}
		\item[1.] (Toral rank conjecture) $\dim H^*(M,\Q)\geq 2^{\rank(M)}$.
		\item[2.] (Carlsson's conjecture) For all prime $p$, $\dim H^*(M,\Z/p)\geq 2^{\rank_p(M)}$.
	\end{itemize}
\end{conj}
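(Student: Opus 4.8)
The conjecture is open in general, so the target here is its validity for the manifolds of this paper: let $M$ be a closed connected oriented $n$-manifold admitting a non-zero degree map $f\colon M\to N/\Gamma$ onto an $n$-dimensional nilmanifold, and write $\mathfrak n$ for the Malcev Lie algebra of $\Gamma$ and $\mathfrak z\subseteq\mathfrak n$ for its centre. The plan rests on two facts independent of the symmetry hypotheses. First, since $f$ has non-zero degree between closed oriented $n$-manifolds, the Umkehr map satisfies $f_!\circ f^*=\deg(f)\cdot\mathrm{id}$, so $f^*\colon H^*(N/\Gamma)\to H^*(M)$ is a split injection of graded rings over $\Q$, and over $\Z/p$ for every $p\nmid\deg(f)$; hence $\dim H^*(M;\Q)\ge\dim H^*(N/\Gamma;\Q)$, and similarly mod $p$. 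Second, the toral rank conjecture is known for nilmanifolds, so $\dim H^*(N/\Gamma;\Q)\ge 2^{\dim\mathfrak z}$, with $\rank(N/\Gamma)=\dim\mathfrak z$ realised by the free translation action of the central subtorus $Z(N)/(\Gamma\cap Z(N))$. Granting these, the toral rank statement in \Cref{toral rank conjecture} for $M$ reduces to the inequality $\rank(M)\le\dim\mathfrak z$, and — for $p$ outside the finite set of torsion primes of $H^*(M;\Z)$ and not dividing $\deg f$, so that $\dim H^*(M;\Z/p)=\dim H^*(M;\Q)$ — Carlsson's statement for $M$ reduces to $\rank_p(M)\le\dim\mathfrak z$.

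I would prove $\rank(M)\le\dim\mathfrak z$ as follows. Given an almost-free action of $T^r$ on $M$, choose a point with trivial stabiliser; the orbit inclusion $\iota\colon T^r\hookrightarrow M$ has central image $C=\iota_*(\Z^r)\le\pi_1(M)$, because $T^r$-loops commute past $\pi_1(M)$ in the action map $T^r\times M\to M$. Non-zero degree forces $f_*\colon\pi_1(M)\to\Gamma$ to have finite-index image $\Gamma'$ — otherwise the cover of $N/\Gamma$ corresponding to $\mathrm{im}(f_*)$ would be a non-compact aspherical $n$-manifold, forcing $\deg f=0$ — so $f_*(C)\subseteq Z(\Gamma')$; and since the centre of a lattice in the simply connected nilpotent group $N$ is its intersection with $Z(N)$, we get $f_*(C)\subseteq\Gamma\cap Z(N)=Z(\Gamma)$, whence $\rank(f_*(C))\le\dim\mathfrak z$. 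It remains to show that $f_*\iota_*\colon\Z^r\to\Gamma$ has image of rank exactly $r$ — i.e.\ that the $r$ orbit circles are not collapsed by $f$, something automatic for the aspherical $N/\Gamma$ itself but not for $M$ (in $S^1\hookrightarrow S^3$ the orbit does die, but $S^3$ admits no non-zero degree map to a $3$-dimensional nilmanifold). This non-collapsing is the step I expect to be the main obstacle, and I would handle it through the Borel construction: almost-freeness makes $H^*_{T^r}(M;\Q)$ finite-dimensional, so in the Serre spectral sequence of $M\to M_{T^r}\to BT^r$ the $r$ polynomial generators of $H^*(BT^r;\Q)$ form a system of parameters for the finite-dimensional module $H^*_{T^r}(M;\Q)$; restricting to the $T^r$-invariant Poincaré duality subalgebra $f^*H^*(N/\Gamma;\Q)$ and tracking the spectral sequence should identify these parameters with transgressions of classes detected on the image of $C$ in successive graded quotients of the lower central series of $\Gamma$, and a rank count there — using that the rank of the centre is a commensurability invariant of finitely generated torsion-free nilpotent groups — forces $r\le\dim\mathfrak z$. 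This gives $\dim H^*(M;\Q)\ge\dim H^*(N/\Gamma;\Q)\ge 2^{\dim\mathfrak z}\ge 2^{\rank(M)}$.

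For Carlsson's conjecture at large primes I would run the analogous argument with $p$-groups replacing tori. A free action of $(\Z/p)^r$ on $M$ yields an extension $1\to\pi_1(M)\to\pi_1(M/(\Z/p)^r)\to(\Z/p)^r\to 1$, and the idea is to transport it along $f_*$ and then bound $r$ by the rank of the centre of a nilpotent group commensurable with $\Gamma$. The obstruction — and the reason one settles for large $p$ — is that $f_*$ need not be equivariant for the $(\Z/p)^r$-action on $\pi_1(M)$; to get around this one uses that $\Aut(\Gamma)$ is an arithmetic group, hence Minkowski (cf.\ \Cref{Minkowski lemma}), so that for $p$ larger than a constant depending only on $\Gamma$ the $(\Z/p)^r$-action is inner on a characteristic finite-index subgroup, together with the finite-group-action techniques of the first part of the paper (the control of stabilizers and the bound on $\D(M)$). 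Once the action is made visible on $\Gamma$, the same rank count as before yields $\rank_p(M)\le\dim\mathfrak z$, hence $\dim H^*(M;\Z/p)=\dim H^*(M;\Q)\ge\dim H^*(N/\Gamma;\Q)\ge 2^{\dim\mathfrak z}\ge 2^{\rank_p(M)}$ for all large $p$, which is Carlsson's conjecture for $M$ at large primes.
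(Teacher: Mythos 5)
The statement you are proving is stated in the paper as a \emph{conjecture} (Halperin's toral rank conjecture and Carlsson's conjecture), and the paper does not prove it: what it proves is the conditional \cref{TRC hypernilmanifolds intro}, namely that \emph{if} the toral rank conjecture holds for the target nilmanifold $N/\Gamma$, then the toral rank conjecture and the \emph{stable} Carlsson conjecture hold for $M$. Your argument has a genuine gap precisely at the point where you discharge that hypothesis: you assert that ``the toral rank conjecture is known for nilmanifolds, so $\dim H^*(N/\Gamma;\Q)\ge 2^{\dim\mathfrak z}$''. This is false. For a nilmanifold the inequality $\dim H^*(N/\Gamma,\Q)\geq 2^{\rank\Zc\Gamma}$ is equivalent (via Nomizu's theorem) to the inequality $\dim H^*(\mathfrak n,\Q)\geq 2^{\dim\Zc\mathfrak n}$ for nilpotent Lie algebra cohomology, which is an open problem; it is known for $2$-step nilmanifolds (Deninger--Singhof) and a few further families, which is exactly why the paper phrases its result conditionally and then specializes to the $2$-step case. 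So your chain $\dim H^*(M;\Q)\ge\dim H^*(N/\Gamma;\Q)\ge 2^{\dim\mathfrak z}\ge 2^{\rank(M)}$ breaks at the middle inequality, and with it both halves of your proposal; no argument along these lines can prove the conjecture unconditionally for this class of manifolds, since tori and $2$-step nilmanifolds aside, the target case is itself open. The same caveat applies to your Carlsson part, which in any case only yields the stable (large $p$) version, as in \cref{stable carlsson conjecture}, not Carlsson's conjecture for all $p$ as stated.

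Beyond that main defect, the bound $\rank(M)\le\rank\Zc\Gamma$ that you try to extract from the orbit map and a Borel-construction/transgression count is left as an admitted sketch at its crucial step (your ``non-collapsing'' problem), whereas the paper gets it for free from its symmetry results: $\rank(M)\le\A(M)\le\D(M)\le\rank\Zc\Gamma$, the last inequality being \cref{main theorem2 intro}(2), proved via the exporting map property (\cref{thm: exporting map hypernilmanifolds} and \cref{pE jordan and discsym}). Likewise, for large primes the paper does not need to make a $(\Z/p)^r$-action ``visible on $\Gamma$'' by hand: freeness of large-prime actions and the inequality $\rank_p(M)\le\rank_p(N/\Gamma)$ follow directly from the exporting map with kernel bounded by $\deg(f)$, combined with \cref{TRC=SCC}. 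If you recast your write-up as a proof of \cref{TRC hypernilmanifolds intro} — i.e.\ keep the toral rank conjecture for $N/\Gamma$ as a hypothesis and replace your orbit/spectral-sequence step by the citation of \cref{main theorem2 intro} — it becomes essentially the paper's argument.
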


There is a weaker version of the Carlsson's conjecture, which we call the stable Carlsson's conjecture.

\begin{conj}\label{stable carlsson conjecture}(Stable Carlsson's conjecture)
Given a closed connected manifold $M$, there exists a constant $C$ such that $\dim H^*(M,\Z/p)\geq 2^{\rank_p(M)}$ for all prime $p>C$. 
\end{conj}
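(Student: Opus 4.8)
The plan is to study a free action of $G=(\Z/p)^r$ on $M$ through the Borel fibration $M\to M_{hG}\to BG$ and to read the bound off the resulting Serre spectral sequence, after reducing to odd primes that are large in a precise sense. Two preliminary reductions cut the problem down. First, $H^*(M;\Z)$ is finitely generated, so there is a constant $C_0$ with $\dim_{\Z/p}H^*(M;\Z/p)=\dim_{\Q}H^*(M;\Q)=:b(M)$ for all $p>C_0$, and it thus suffices to prove $b(M)\ge 2^{\rank_p(M)}$ for all large $p$. Second, standard bounds on the $p$-rank of a compact manifold in terms of its mod-$p$ cohomology give $\rank_p(M)\le r_0(M)$ uniformly in $p$, so only finitely many primes need separate treatment. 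Now fix an odd prime $p>\max(C_0,\dim M,r_0(M))$, also beyond the Minkowski bound for $\Gl(b(M),\Z)$ (see \cref{Minkowski lemma}), put $r=\rank_p(M)$, $G=(\Z/p)^r$, and fix a free $G$-action on $M$. Then $M_{hG}\simeq M/G$ is a closed manifold of dimension $n=\dim M$, so $H^*_G(M;\Z/p)\cong H^*(M/G;\Z/p)$ is finite-dimensional and finitely generated over $R:=H^*(BG;\Z/p)=\Lambda(x_1,\dots,x_r)\otimes\Z/p[y_1,\dots,y_r]$, where $|x_i|=1$, $|y_i|=2$ and $\beta x_i=y_i$.

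The core of the proof is the Serre spectral sequence of $M\to M/G\to BG$, with $E_2^{s,t}=H^s(BG;\Z/p)\otimes H^t(M;\Z/p)$ --- untwisted, since for $p$ beyond the Minkowski bound $G$ acts trivially on $H^*(M;\Z/p)$. Finite-dimensionality of $E_\infty$ forces the positive-degree part of the polynomial subring $\Z/p[y_1,\dots,y_r]$, which sits in the row $t=0$, to be annihilated; by multiplicativity this is controlled by the ideal $J\subseteq\Z/p[y_1,\dots,y_r]$ generated by the transgressions $\tau(\zeta)$ as $\zeta$ ranges over the transgressive elements of $H^*(M;\Z/p)$ (a power $y_i^k$ has even degree, so a class transgressing onto it, lying one degree lower, has odd degree), and finiteness of $\Z/p[y_1,\dots,y_r]/J$ forces $J$ to have height $r$. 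The aim is to select transgressive classes $\zeta_1,\dots,\zeta_r$ such that $\tau(\zeta_1),\dots,\tau(\zeta_r)$ already generate a height-$r$ ideal --- that is, they form a homogeneous system of parameters for $\Z/p[y_1,\dots,y_r]$ --- and such that the products $\zeta_{i_1}\cdots\zeta_{i_m}$ over subsets $\{i_1<\dots<i_m\}$ are linearly independent in $H^*(M;\Z/p)$; here $p>n$ is used, via Kudo's transgression theorem and the vanishing of the unstable Steenrod powers in the relevant range, to reduce everything to the even-degree polynomial generators. Granting such $\zeta_i$, the argument closes immediately: they span an exterior subalgebra $\Lambda(\zeta_1,\dots,\zeta_r)\subseteq H^*(M;\Z/p)$ of dimension $2^r$, whence $b(M)=\dim_{\Z/p}H^*(M;\Z/p)\ge 2^r=2^{\rank_p(M)}$.

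The main obstacle is precisely this selection step. Finite-dimensionality together with Krull-dimension bookkeeping only guarantees that the ideal of transgressed classes has height $r$, and its would-be generators are a priori merely subquotients of $H^*(M;\Z/p)$; upgrading ``the transgressed ideal has height $r$'' to ``there exist $r$ multiplicatively independent transgressive classes of $M$ whose transgressions form a system of parameters'' is exactly the content of Carlsson's conjecture, so this is where any proof must do real work. Two sources of extra input can force it. One is an almost-free torus action: if a compactness argument converts the free $(\Z/p)^r$-actions, available for infinitely many $p$, into a single almost-free action of $T^r$ on $M$, the bound follows from \cref{toral rank conjecture}. The other, and the route we pursue under the hypothesis of this paper, is transfer along a non-zero degree map $f\colon M\to N/\Gamma$: for $p\nmid\deg f$ the transfer makes $f^*\colon H^*(N/\Gamma;\Z/p)\hookrightarrow H^*(M;\Z/p)$ injective, which reduces the inequality to the nilmanifold, where Nomizu's theorem (valid over $\Z/p$ for $p$ large) and the presentation of $N/\Gamma$ as an iterated principal circle bundle make both the required transgressive classes and the bound on $\rank_p(N/\Gamma)$ completely explicit.
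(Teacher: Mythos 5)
The statement you are trying to prove is stated in the paper as a \emph{conjecture}: the paper offers no proof of the stable Carlsson conjecture for arbitrary closed connected manifolds, and only establishes it for the restricted class of manifolds admitting a non-zero degree map to a nilmanifold, and even then only under the additional hypothesis that the toral rank conjecture holds for the target nilmanifold (\cref{TRC hypernilmanifolds intro}, proved via \cref{TRC=SCC} and \cref{thm: exporting map hypernilmanifolds}). Your proposal does not close this gap, and in fact you concede the decisive point yourself: after the (correct) reductions to large $p$, untwisted coefficients and the Borel fibration, the step of upgrading ``the ideal of transgressions has height $r$'' to ``there exist $r$ transgressive classes whose $2^r$ products are linearly independent in $H^*(M,\Z/p)$'' is, as you write, exactly the content of Carlsson's conjecture. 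A proof that ends by saying the remaining step is the conjecture itself is not a proof.

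The two escape routes you offer do not repair this. The passage from free $(\Z/p)^r$-actions for infinitely many primes $p$ to a single almost-free $T^r$-action on $M$ is not a valid compactness argument (no such implication holds in general), and even if it were, \cref{toral rank conjecture} is itself open, so it cannot be invoked as a known input. The nilmanifold route only makes sense under the paper's extra hypothesis of a non-zero degree map $f:M\longrightarrow N/\Gamma$, which is not part of the statement; moreover, within that setting your reduction is incomplete: injectivity of $f^*$ on $H^*(\cdot,\Z/p)$ (via transfer, for $p\nmid\deg f$) gives $\dim H^*(M,\Z/p)\geq \dim H^*(N/\Gamma,\Z/p)$, but it does not give $\rank_p(M)\leq\rank_p(N/\Gamma)$ --- that inequality is obtained in the paper by exporting the free $(\Z/p)^r$-action from $M$ to $N/\Gamma$ for $p$ large (\cref{thm: exporting map hypernilmanifolds}), which is a substantial equivariant statement not implied by any cohomological transfer. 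Finally, your claim that Nomizu's theorem and the iterated circle-bundle structure make the required transgressive classes ``completely explicit'' for nilmanifolds overstates what is known: the toral rank conjecture for general nilmanifolds is open (it is known, e.g., for $2$-step nilmanifolds), which is precisely why the paper assumes it as a hypothesis in \cref{TRC hypernilmanifolds intro} rather than proving it.
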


The stable Carlsson conjecture was answered affirmatively for products of spheres in \cite{hanke2009stable}.

Regarding \cref{large finite group actions questions}.(3), a classical problem in geometric topology is to construct manifolds which do not admit any effective finite group action (see the survey \cite{puppe2007manifolds}). For example, A. Borel constructed the first example of asymmetric manifold in a unpublished work (see \cite{borel1983periodic}). Almost-asymmetry is a weaker property than asymmetry. There are many examples of almost-asymmetrical manifolds, like closed connected oriented surfaces of genus greater than 1. This is a consequence of Hurwicz $84(g-1)$ bound (for details, see \cite{IgnasiMundetiRiera2010Jtft}). We remark that a closed manifold $M$ is almost asymmetric if and only if $\D(M)=0$ (see \cite[Lemma 8.1]{riera2023actions}).

Lastly, \cref{large finite group actions questions}.(4) appeared in \cite[Question 1.9]{csikos2021number} motivated by the following theorem:
\begin{thm}\cite[Theorem 1.3]{csikos2021number}\label{few stabilizers p-groups}
	There exists a constant $C$ such that any finite $p$-group $G$ acting effectively on a closed manifold $M$ has a subgroup $H$ such that $[G:H]\leq C$ and $|\Stab(H,M)|\leq C$.
\end{thm}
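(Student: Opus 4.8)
The plan is to argue by induction on $n=\dim M$, carried out in the broader category of $\mathbb{Z}/p$-cohomology manifolds. The enlargement is forced by the method: the basic move is to pass from $M$ to the fixed-point set $M^{K}$ of a subgroup $K$, and Smith theory only guarantees that $M^{K}$ is a $\mathbb{Z}/p$-cohomology manifold, of strictly smaller dimension when $K\neq 1$ and $M$ is connected, with $\dim_{\mathbb{F}_p}H^*(M^{K};\mathbb{F}_p)\leq\dim_{\mathbb{F}_p}H^*(M;\mathbb{F}_p)$ by Floyd's inequality. First I would reduce to $M$ connected, at the cost of a factor $\#\pi_0(M)$; the constant $C$ is then allowed to depend on $M$ (indeed only on $\dim M$ and on $\dim_{\mathbb{F}_p}H^*(M;\mathbb{F}_p)$, the latter mattering only for the finitely many primes where it is not generic). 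The base case $n=0$ is immediate: then $M$ is a finite set, $|G|\leq|M|!$ is bounded in terms of $M$, and one takes $H=1$.

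For the inductive step, let $G$ be a finite $p$-group acting effectively on a connected $\mathbb{Z}/p$-cohomology $n$-manifold $M$, and set $A=\Omega_1(Z(G))\cong(\mathbb{Z}/p)^t$. The key is to split a nontrivial stabilizer $G_x$ according to whether it meets $Z(G)$. If $G_x\cap Z(G)\neq 1$ then $A':=G_x\cap A$ is nontrivial and $A'\trianglelefteq G$, so $G$ acts on the lower-dimensional cohomology manifold $M^{A'}$ and $G_x$ is one of its stabilizers; here the inductive hypothesis applies (after passing to an effective action on a component of $M^{A'}$, of which there are boundedly many), and crucially the Quillen--Hsiang localization theorem bounds the number of subgroups that can occur as an isotropy group of the $A$-action by a constant independent of $p$ and of $t$, so only boundedly many $M^{A'}$ need be considered. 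If instead $G_x\cap Z(G)=1$, then $G_x$ injects into $G/A$; choosing $h\in\Omega_1(Z(G_x))$ we obtain an element of order $p$ with $\langle h\rangle\trianglelefteq G_x$ and, since $G$ is a $p$-group, $N_G(\langle h\rangle)=C_G(h)\supseteq G_x$, so $G_x$ is a stabilizer of the $C_G(h)$-action on the lower-dimensional fixed set $M^{h}$, again reachable by the inductive hypothesis; alternatively, when $A$ acts freely on $M$, one passes to the $G/A$-action on the $n$-manifold $M/A$, for which a secondary induction on $|G|$ is used. Assembling the finitely many finite-index subgroups produced in this way and intersecting them yields $H\leq G$ with $[G:H]\leq C$ such that every nontrivial element of $\Stab(H,M)$ has the form $H\cap G_x$ for a stabilizer $G_x$ of one of the boundedly many controlled types; hence $|\Stab(H,M)|\leq C$.

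The main obstacle is to make the non-central case uniform in $p$. Recovering $G_x$ from its image in $G/A$ requires choosing a complement to the central subgroup $A$, of which there may be as many as $p^{t\cdot d}$; and restricting to centralizers $C_G(h)$ forces one to confront possibly many conjugacy classes of order-$p$ elements, whose centralizers can have unbounded index in $G$. I would resolve this by separating on the size of $p$. When $p>n$, the $p$-subgroups of $\mathrm{GL}(n,\mathbb{R})$ are abelian (a classical fact of Blichfeldt), so every stabilizer --- having a fixed point --- is abelian of rank at most $n$, and a covering-space argument together with the localization theorem then bounds the number of stabilizers directly. For the finitely many primes $p\leq n$, the quantities $p^{O(n)}$ appearing above are themselves bounded in terms of $M$, so the crude reductions suffice. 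Reconciling these two regimes, and ensuring that the repeated intersection of finite-index subgroups does not blow up the bound, is the technical core; the remainder is the bookkeeping of Smith theory and the localization theorem.
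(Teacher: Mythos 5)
You should first be aware that this paper does not prove the statement at all: it is quoted verbatim, with citation, from Csikós--Mundet i Riera--Pyber--Szabó \cite{csikos2021number}, so there is no in-paper proof to compare your argument against; your proposal has to stand on its own as a proof of their theorem, and as written it does not. The most serious gap is the large-prime regime, which carries the whole strategy. You claim that for $p>n$ every stabilizer $G_x$ is abelian because "$p$-subgroups of $\Gl(n,\R)$ are abelian", but this presupposes that $G_x$ acts linearly (or at least locally linearly/smoothly) near its fixed point, so that it embeds in $\Gl(n,\R)$ via a slice or Bochner linearization. The theorem is about actions by homeomorphisms on closed topological manifolds, where no slice theorem or local linearization is available; a fixed point of a finite group of homeomorphisms gives you no faithful linear representation. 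Moreover, even if one grants abelian stabilizers of bounded rank, the sentence "a covering-space argument together with the localization theorem then bounds the number of stabilizers directly" is exactly the hard content of the theorem (bounding the \emph{number} of distinct stabilizer subgroups, uniformly in $p$ and in $|G|$), and no argument is given for it.

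The remaining reductions are also asserted rather than proved. In the non-central case the quantities you must control are not of the form $p^{O(n)}$: the number of conjugacy classes of order-$p$ elements and the indices $[G:C_G(h)]$ are unbounded even for $p=2$ and fixed $M$ (dihedral $2$-groups acting on surfaces already show the set of order-$p$ elements is unbounded), so "for $p\leq n$ the crude reductions suffice" is not an argument. The secondary induction on $|G|$ via $M/A$ is problematic because passing to a quotient degrades the mod-$p$ cohomology bound (cf. the Borel-fibration inequality, \cref{cohomology bound Borel fibration}) by a factor at each step, and the recursion depth is unbounded, so the constants do not obviously close up. Finally, the assembly step --- intersect boundedly many finite-index subgroups to get $H$ and conclude "hence $|\Stab(H,M)|\leq C$" because every stabilizer of $H$ is of the form $H\cap G_x$ with $G_x$ of a "controlled type" --- assumes precisely what has to be shown, namely that only boundedly many distinct subgroups arise this way. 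The skeleton (induction through fixed-point sets of central elementary abelian subgroups, Smith theory, Mann--Su, bounds on isotropy subgroups of $p$-torus actions) is in the right spirit, but the uniform-in-$p$ counting that makes \cite{csikos2021number} a genuine theorem is missing; if you need the statement, cite it rather than reprove it.
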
 
This result was crucial in proving a generalized version of the Jordan property of the homeomorphism group of closed manifolds in \cite{csikos2022finite}, which states that there exists a constant $C$ such that every finite group $G$ acting on $M$ has a nilpotent subgroup $N$ such that $[G:N]\leq C$.

In \cite{daura2024actions}, the author answered these questions for a large class of closed connected aspherical manifolds, which contain closed aspherical locally homogeneous spaces.

\begin{thm}\label{main theorem1 intro}(\cite[Theorem 1.6, Theorem 1.9]{daura2024actions})
	Let $M$ be a closed connected $n$-dimensional aspherical manifold such that $\Zc(\pi_1(M))$ is finitely generated and $\Out(\pi_1(M))$ is Minkowski. Then:
	\begin{itemize}
		\item[1.] $\Homeo(M)$ is Jordan.
		\item[2.] $\D(M)\leq \rank\Zc(\pi_1(M))\leq n$, and $\D(M)=n$ if and only if $M$ is homeomorphic to $T^n$.
		\item[3.] If $\chi(M)\neq 0$ then $M$ is almost-asymmetric.
		\item[4.] If $\Aut(\pi_1(M))$ is Minkowski, then $M$ has few stabilizers.
	\end{itemize}
	Let $\Gamma$ be a lattice of a connected Lie group, then $\Out(\Gamma)$ and $\Aut(\Gamma)$ are Minkowski.
\end{thm}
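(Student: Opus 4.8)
The plan is to use that $M$ is aspherical to transfer every finite action to the fundamental group $\Gamma=\pi_1(M)$. Since $M=K(\Gamma,1)$, a homeomorphism of $M$ is determined up to homotopy by its induced automorphism of $\Gamma$, which is well defined only up to conjugacy, so there is a homomorphism $\Phi\colon\Homeo(M)\to\Out(\Gamma)$. For a finite $G\le\Homeo(M)$ put $K=\ker(\Phi|_G)$; its elements are the finite-order self-homeomorphisms of $M$ inducing inner automorphisms of $\Gamma$. Lifting the $K$-action to the universal cover $\widetilde M$ yields an extension $1\to\Gamma\to\widehat K\to K\to 1$ with trivial induced $K$-action on $\Out(\Gamma)$; setting $Z=C_{\widehat K}(\Gamma)$ gives $\widehat K=\Gamma\cdot Z$ and $Z\cap\Gamma=\Zc(\Gamma)$, so $Z$ contains $\Zc(\Gamma)$ centrally with finite quotient $Z/\Zc(\Gamma)\cong K$, and $Z$ is finitely generated precisely because $\Zc(\Gamma)$ is. The crucial point is that $Z$ is \emph{torsion-free}: a Conner--Raymond/Smith-theory argument on the contractible space $\widetilde M$ shows a torsion element of $Z$ would descend to a nontrivial finite-order homeomorphism of $M$ homotopic to the identity, which either fixes a point --- and then its fixed set, having the full mod-$p$ cohomology of $M$ by a transfer argument on $\widetilde M$, would have to be all of $M$ --- or acts freely, producing a closed aspherical quotient with torsion in $\pi_1$; both are impossible. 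A finitely generated torsion-free group with a finite-index central subgroup has finite, hence trivial, commutator subgroup, so $Z\cong\Z^{\,s}$ with $s=\rank\Zc(\Gamma)$, and $K\cong Z/\Zc(\Gamma)$ is finite abelian, generated by at most $s$ elements. This is the structural heart of everything.

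Part (1) follows at once: $\Out(\Gamma)$ is Minkowski, so there is $C_0$ with $[G:K]=|\Phi(G)|\le C_0$, and $K$ is abelian, so $\Homeo(M)$ is Jordan. For part (2), given effective actions of $(\Z/a)^r$ for arbitrarily large $a$, passing to prime-power subgroups and using $|\Phi(-)|\le C_0$ reduces matters to effective actions of $(\Z/b)^r\le K$ with $b\to\infty$; by the lemma such a group is a quotient of $Z\cong\Z^{\,s}$ by the full-rank subgroup $\Zc(\Gamma)$, hence needs at most $s$ generators, forcing $r\le s$. Thus $\D(M)\le\rank\Zc(\Gamma)$, and $\rank\Zc(\Gamma)\le n$ since a rank-$r$ free abelian subgroup of $\Gamma$ has $\operatorname{cd}_{\Q}=r\le\operatorname{cd}_{\Q}\Gamma=n$. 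If $\rank\Zc(\Gamma)=n$, then $\Gamma$ is finitely generated torsion-free with a central $\Z^n$ and $\operatorname{cd}\Gamma=n$, so by Strebel's theorem that $\Z^n$ has finite index; being finitely generated torsion-free with a finite-index central subgroup, $\Gamma\cong\Z^n$; then $M$ is a closed aspherical manifold with $\pi_1=\Z^n$, hence homotopy equivalent and, by the known cases of the Borel conjecture for the torus, homeomorphic to $T^n$. The converse $\D(T^n)=n$ is clear.

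Part (3): by Gottlieb's theorem a finite aspherical complex with $\chi(M)\neq0$ has $\Zc(\Gamma)=1$, so $\rank\Zc(\Gamma)=0$ and $\D(M)=0$ by part (2), i.e.\ $M$ is almost-asymmetric by \cite[Lemma~8.1]{riera2023actions}. Part (4): for a finite $G$ acting effectively, each stabiliser $G_x$ lifts to a subgroup of $\widehat G$ fixing a point of $\widetilde M$, isomorphic to $G_x$ since $\Gamma$ acts freely, and its conjugation action on $\Gamma$ is faithful (an element fixing a point of $\widetilde M$ and centralising $\Gamma$ would again be a finite-order homeomorphism of $M$ homotopic to the identity with nonempty fixed set, hence trivial), so $|G_x|$ is bounded by the Minkowski constant of $\Aut(\Gamma)$. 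To bound the \emph{number} of stabilisers one passes to $H=\ker(\Phi|_G)$, of index $\le C_0$, and combines the uniform order bound with \cref{few stabilizers p-groups} on Sylow subgroups and with the finiteness of conjugacy classes of bounded subgroups in the relevant quotient. I expect this last step --- upgrading a bound on $\max_x|G_x|$ to a bound on $|\Stab(H,M)|$ --- together with the torsion-freeness of $Z$, to be the main obstacles.

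Finally, let $\Gamma$ be a lattice in a connected Lie group $L$ with radical $R$. By Auslander's theorem, after passing to finite index, $\Gamma\cap R$ is a lattice in $R$ --- hence polycyclic, by Mostow --- and $\Gamma/(\Gamma\cap R)$ is a lattice in the semisimple group $L/R$. Using characteristic subgroups (the polycyclic radical, the Fitting subgroup) one reduces the study of $\Aut(\Gamma)$, up to finite kernel, to a product of $\Aut$ of a polycyclic group and $\Aut$ of a lattice in a semisimple Lie group. The former is Minkowski, being commensurable with an arithmetic group; the latter is Minkowski because such a lattice, and its automorphism group, act properly with finite covolume on the associated symmetric space and therefore have only finitely many conjugacy classes of finite subgroups (by Mostow--Margulis rigidity the outer automorphism group is moreover finite outside the rank-one exceptions, and in those cases finiteness of finite-subgroup conjugacy classes still holds). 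Since the Minkowski property passes to subgroups, finite direct products, and preimages under homomorphisms with finite kernel, $\Aut(\Gamma)$, and by the same analysis $\Out(\Gamma)$, are Minkowski.
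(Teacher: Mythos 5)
This statement is not proved in the paper at all: it is imported verbatim from the author's earlier article \cite{daura2024actions}, so there is no internal proof to compare yours against. Judged on its own terms, your outline for parts (1)--(3) is exactly the intended route, the one whose ingredients the present paper quotes elsewhere (the diagram of \cref{aspherical manifolds big diagram} and the Lee--Raymond results \cite[Theorem 3.1.16, Proposition 3.1.21]{lee2010seifert}): pass to $K=\ker(G\to\Out(\pi_1M))$, of index bounded by the Minkowski constant; lift to $\widetilde M$ and analyse $Z=C_{\widehat K}(\pi_1M)$; prove $Z$ is torsion-free, hence $Z\cong\Z^{s}$ with $s=\rank\Zc(\pi_1M)$ and $K\cong Z/\Zc(\pi_1M)$ abelian of rank at most $s$. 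Your Smith/Newman argument for torsion-freeness is the standard one (beware that the top-degree mod-$p$ cohomology step as you state it needs $p=2$ or orientability, so one passes to the orientation cover or argues with \v{C}ech cohomological dimension plus Newman's theorem), and the Strebel-plus-Borel finish for $\D(M)=n$ and Gottlieb's theorem for (3) are fine.

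There are, however, two genuine gaps. First, you do not actually prove part (4). Bounding $|G_x|$ by the Minkowski constant of $\Aut(\pi_1M)$ via the faithful action of the point-stabilizer lift is correct, but the step you flag as an obstacle --- passing from a bound on $\max_x|G_x|$ to a bound on the \emph{number} $|\Stab(H,M)|$ of stabilizer subgroups --- is left unresolved, and your gesture towards \cref{few stabilizers p-groups} does not by itself close it. It can be closed cheaply: taking $H=\ker(\Phi|_G)$, which is abelian of rank at most some $r=r(M)$ by \cref{bound rank finite groups}, every stabilizer of $H$ has order at most $C'$ and therefore lies in the subgroup of $H$ of elements of order dividing $C'!$, a group of order at most $(C'!)^r$, whose number of subgroups is bounded in terms of $C'$ and $r$. (The paper itself routes this differently, as ``small stabilizers $+$ Jordan $\Rightarrow$ few stabilizers'', citing \cite[Lemma 3.6]{daura2024actions}.) Second, the closing assertion that $\Aut(\Gamma)$ and $\Out(\Gamma)$ are Minkowski for every lattice $\Gamma$ in a connected Lie group is a theorem with substantial content, and your sketch is too coarse to count as a proof: the image of $\Gamma$ in $L/R$ need not be discrete (Auslander-type statements require first dividing out a compact normal factor, or working with the nilradical), the asserted reduction of $\Aut(\Gamma)$ ``up to finite kernel'' to $\Aut(\text{polycyclic})\times\Aut(\text{semisimple lattice})$ needs the relevant subgroups to be characteristic and the kernel of the restriction map (which is a group of cocycle-type automorphisms, not obviously finite) to be controlled, and the rank-one cases where Mostow rigidity fails must be handled separately. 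None of these obstacles is fatal --- they are precisely what the cited \cite[Theorem 1.9]{daura2024actions} deals with --- but as written these steps are asserted rather than proved.
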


There has been many efforts to generalize results describing group actions on closed connected aspherical manifolds to a wider class of manifolds (see \cite{schultz1981group,DONNELLY1982443,Gottlieb1985,ku1983group,washiyama1983degree,IgnasiMundetiRiera2010Jtft}). Recently, in \cite{mundet2021topological} I. Mundet i Riera proved:

\begin{thm}\cite[Theorem 1.3, Theorem 1.14]{mundet2021topological}\label{large group actions hypertoral manifolds}
	Let $M$ be a closed connected orientable $n$-dimensional manifold which admits a non-zero degree map $f:M\longrightarrow T^n$. Then:
	\begin{itemize}
		\item[1.] $\Homeo(M)$ is Jordan.
		\item[2.] $\D(M)\leq n$ and if $\D(M)=n$ then there is a ring isomorphism $H^*(M,\Z)\cong H^*(T^n,\Z)$. Moreover, if $\D(M)=n$ and $\pi_1(M)$ is virtually solvable then $M\cong T^n$.
		\item[3.] If $\chi(M)\neq 0$ then $M$ is almost asymmetric.
		\item[4.] $M$ has few stabilizers.
	\end{itemize}
\end{thm}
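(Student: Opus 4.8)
We sketch the argument of \cite{mundet2021topological}. The plan is to extract one algebraic consequence of $f$ and then feed it into the Borel--Smith machinery together with Minkowski's theorem for $\Gl(k,\Z)$.

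\emph{Step 1: the cohomological input.} Writing $d=\deg f\neq 0$, the projection formula $f_*(f^*\alpha\cap[M])=\alpha\cap f_*[M]=d\,(\alpha\cap[T^n])$ together with Poincaré duality on $M$ and on $T^n$ shows that $f^*\colon H^*(T^n;\Q)\to H^*(M;\Q)$ is injective. Hence $H^*(M;\Q)$ contains the exterior subalgebra $E:=f^*H^*(T^n;\Q)\cong\Lambda_\Q(x_1,\dots,x_n)$ generated by degree-one classes $x_i$ with $x_1\cdots x_n\neq 0$ in $H^n(M;\Q)$; in particular $b_1(M)\geq n$, the cup-product map $\Lambda^nH^1(M;\Q)\to H^n(M;\Q)$ is onto, and $f_*\colon H_1(M;\Z)\to\Z^n$ has finite cokernel. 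From here on $f$ plays no further role: everything rests on the presence of this ``full-rank'' alternating $n$-form on $H^1(M;\Q)$.

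\emph{Step 2: a common reduction.} For a finite $G\leq\Homeo(M)$ the action on the lattice $H^1(M;\Z)\cong\Z^{b_1}$ gives a homomorphism $G\to\Gl(b_1,\Z)$ whose kernel has index $\leq C_1(b_1)$, since $\Gl(b_1,\Z)$ is Minkowski; enlarging the constant we may also kill the finite action on the torsion of $H^*(M;\Z)$. Thus, after replacing $G$ by a subgroup of bounded index, we may assume $G$ acts trivially on $H^*(M;\Z)$; then $G$ fixes each class $x_i$, and every $g\in G$ satisfies $\chi(M^g)=\chi(M)$ by the Lefschetz fixed point theorem.

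\emph{Step 3: the four statements.} For (1): after Step 2 one must bound such a $G$ modulo a bounded-index abelian subgroup. The Albanese map $\alpha\colon M\to T^{b_1}=K(H^1(M;\Z),1)$ satisfies $\alpha\circ g\simeq\alpha$ for every $g\in G$; working one prime at a time, for large $p$ a Sylow $p$-subgroup acts almost freely off a controlled locus and the localization theorem applied to the classes $x_1,\dots,x_n$ bounds its rank and its defect of commutativity, while the finitely many small primes and the number of primes are handled through the Mann--Su inequality and the Borel formula, expressed via $\dim_{\Z/p}H^*(M;\Z/p)$; hence $\Homeo(M)$ is Jordan. For (2): if $(\Z/a_i)^r\acts M$ effectively with $a_i\to\infty$, Step 2 lets us assume the action is trivial on $H^*(M;\Z)$, and passing to a large prime divisor $p$ of $a_i$ (prime-power factors are treated separately) we obtain an effective $(\Z/p)^r$-action trivial on $H^*(M;\Z/p)$; feeding $E$ into the Borel construction, the classes $x_i$ persist rationally and their nonzero top product $x_1\cdots x_n$ confines the whole cohomological complexity of the action to at most $n$ independent directions, forcing $r\leq n$ --- this is the technical core (see below). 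When $r=n$, every estimate in that argument becomes an equality, so $b_1(M)=n$, $H^*(M;\Q)=E$, and then (via transfer, Smith theory at the remaining primes, and Poincaré duality over $\Z$) one gets a ring isomorphism $H^*(M;\Z)\cong H^*(T^n;\Z)$; if in addition $\pi_1(M)$ is virtually solvable, a standard argument with finite covers shows that $M$ is aspherical and $\pi_1(M)$ is virtually $\Z^n$, whereupon Theorem \ref{main theorem1 intro}(2) gives $M\cong T^n$. For (3): by \cite[Lemma 8.1]{riera2023actions} it suffices to prove $\D(M)=0$, and since $\mu(M)$ is downward closed (restrict to a coordinate subgroup) this means ruling out effective $\Z/a$-actions with $a$ arbitrarily large; if, on the contrary, $\Z/a$ acted effectively on $M$ for arbitrarily large $a$, then Step 2 would produce such actions acting trivially on $H^*(M;\Z)$, whence $\chi(M^{\Z/a})=\chi(M)\neq 0$ and the fixed set is nonempty, and a Smith-theoretic analysis of this fixed set --- exploiting the cup-length bound coming from $x_1\cdots x_n\neq 0$ --- produces a contradiction. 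For (4): part (1) supplies an abelian subgroup of bounded index, Step 2 trims it to one acting trivially on $H^*(M;\Z)$, and Theorem \ref{few stabilizers p-groups}, applied Sylow by Sylow and combined with the fact that only boundedly many primes can occur in a stabilizer (controlled by $\chi(M)$ and Smith theory), bounds $|\Stab(H,M)|$ for a further subgroup $H$ of bounded index.

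\emph{Main obstacle.} The heart of the matter is the bound $r\leq n$ in part (2) and the subsequent equality case: one must convert the single nonvanishing product $x_1\cdots x_n\in H^n(M;\Q)$ into an upper bound for the rank of elementary-abelian actions, and then squeeze the equality case down to the integral cohomology ring. This is precisely the step that does \emph{not} reduce to the aspherical situation of Theorem \ref{main theorem1 intro}; it requires the localization theorem and a delicate accounting of Euler classes in $H^*(B(\Z/p)^r;\Z/p)$. By comparison, Steps 1--2 and the Jordan argument of (1) are soft once the Albanese map and Minkowski's theorem are available.
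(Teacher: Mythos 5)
Your sketch is not the argument this theorem actually rests on, and in the places where it departs it leaves the hard steps unproven. The statement is quoted from \cite{mundet2021topological}, and both there and in this paper's generalization the engine is \emph{not} Borel localization applied directly to $M$: it is the construction of an equivariant replacement of $f$ (\cref{pE torus}), i.e.\ the fact that $f$ is an exporting map. After the Minkowski reduction (your Step 2, which matches the paper), one builds an action of $G$ on $T^n$ and a $G$-equivariant $f_G\simeq f$; the kernel of the exported action has order at most $\deg f$ (\cref{exporting map non-zero degree}), and the image acts on $T^n$ by rotations. Jordan then follows from \cref{Jordan ses}, the bound $\D(M)\leq\D(T^n)=n$ from \cref{pE jordan and discsym}, part (3) from the small stabilizers property exported from $T^n$ together with the almost fixed point property (\cref{almost fix point euler char}, \cref{small stab+almost fixed point=almost asymmetric}), and part (4) from small stabilizers plus Jordan (\cite[Lemma 3.6]{daura2024actions}). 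So your assertion in Step 1 that ``from here on $f$ plays no further role'' is exactly where you leave the actual proof: the map $f$ and its degree are used at every subsequent step, both to transport the action and to bound the kernel of ineffectiveness.

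Concretely, the following steps of your sketch are gaps rather than proofs. (a) The ``technical core'' of (2): you never explain how the nonvanishing product $x_1\cdots x_n$ plus localization yields $r\leq n$; an effective $(\Z/p)^r$-action that is trivial on cohomology need not be free, so the localization theorem does not apply as stated, and Mann--Su bounds $r$ by a function of $\dim M$ and the total mod $p$ Betti number, not by $n$. (b) The equality case: ``transfer, Smith theory at the remaining primes, and Poincar\'e duality'' is not a mechanism for passing from $H^*(M,\Q)\cong H^*(T^n,\Q)$ to a ring isomorphism over $\Z$ (rational equality does not control integral torsion). The actual proof (Theorem 1.14 of \cite{mundet2021topological}, generalized in Section 5 here) shows that the pullback $\tilde M$ of the universal cover along $f$ is $\Z$-acyclic, via finite generation of $H^*(\tilde M,\Z)$ over the group ring and a Noetherian length argument using the automorphisms coming from arbitrarily large abelian covers, and then compares Serre spectral sequences; the same acyclicity (not a ``standard finite cover argument'') is what makes $M$ aspherical in the virtually solvable case. (c) In (1), ``localization bounds its defect of commutativity'' has no content as written; the abelian subgroup of bounded index comes from the exported rotation action and \cref{Jordan ses}. (d) In (3), after reducing to $\D(M)=0$ you again only gesture at ``a Smith-theoretic analysis''; and in (4), applying \cref{few stabilizers p-groups} Sylow by Sylow does not bound the set of stabilizer subgroups of the whole group, which is why the paper derives few stabilizers from small stabilizers together with the Jordan property instead.
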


Our first result generalizes \cref{large group actions hypertoral manifolds} to manifolds admitting a non-zero degree map to a nilmanifold.

\begin{thm}\label{main theorem2 intro}
	Let $M$ be a closed oriented connected $n$-dimensional manifold and $f:M\longrightarrow N/\Gamma$ a non-zero degree map to a closed nilmanifold. Then:
	\begin{itemize}
		\item[1.] $\Homeo(M)$ is Jordan.
		\item[2.] $\D(M)\leq \rank\Zc\Gamma\leq n$ and if $\D(M)=n$ then $H^*(M,\Z)\cong H^*(T^n,\Z)$. Moreover, if $\D(M)=n$ and $\pi_1(M)$ is virtually solvable then $M\cong T^n$.
		\item[3.] If $\chi(M)\neq 0$ then $M$ is almost asymmetric.
		\item[4.] $M$ has few stabilizers.
	\end{itemize}
\end{thm}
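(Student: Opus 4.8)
The plan is to reduce each of the four statements to the corresponding result for nilmanifolds (essentially \cref{large group actions hypertoral manifolds} is the $N/\Gamma = T^n$ case, and one needs a genuine nilmanifold version as the base), using the non-zero degree map $f$ to transfer cohomological and fundamental-group information from $N/\Gamma$ to $M$, and then to run the same machinery as in \cite{mundet2021topological} and \cite{daura2024actions}. Recall that a nilmanifold $N/\Gamma$ is aspherical with $\pi_1(N/\Gamma) = \Gamma$ a finitely generated torsion-free nilpotent group, so $\Zc\Gamma$ is finitely generated and, by the last line of \cref{main theorem1 intro}, $\Out(\Gamma)$ and $\Aut(\Gamma)$ are Minkowski; hence \cref{main theorem1 intro} applies to $N/\Gamma$ itself and gives all four conclusions for the nilmanifold. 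The key geometric input is that a non-zero degree map $f : M \to N/\Gamma$ is surjective on rational cohomology (by Poincaré duality and the projection formula, $f_! f^* = \deg(f)\cdot \mathrm{id}$), so $\dim_\Q H^*(M,\Q) \geq \dim_\Q H^*(N/\Gamma,\Q) = 2^{\dim N}$ is false in general — rather, $f^*$ is injective — and more importantly that $f_*\colon \pi_1(M)\to\Gamma$ has image of finite index.

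For part (1), the strategy is the one used for hypertoral manifolds: given a finite group $G \leq \Homeo(M)$, one wants to produce a large abelian subgroup. One passes to the action on $H^*(M,\Q)$ and uses the nilmanifold structure on the image of $f^*$; alternatively, following \cite{mundet2021topological}, one uses that $G$ acting on $M$ induces (after passing to a bounded-index subgroup) an action compatible with $f$ up to homotopy, and one invokes the Jordan property for $\Homeo(N/\Gamma)$ together with a fibration/Borel-construction argument controlling the kernel of the induced map on the nilmanifold side. For part (3), $\chi(M)\neq 0$ already forces almost-asymmetry by the standard Lefschetz-type argument (a $(\Z/p)^r$-action with $r$ large has a fixed point if $\chi\neq 0$, contradicting the structure near a fixed point); this is essentially independent of the nilmanifold hypothesis and I would dispatch it first. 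Part (4) should follow by the same scheme as in \cite{mundet2021topological}: combine \cref{few stabilizers p-groups} with the Jordan-type control from part (1) and the fact that stabilizers act with bounded information on $H^*(M,\Q)$ and on $\pi_1(M)/[\text{something}]$.

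The substantive part is (2), the bound $\D(M)\leq \rank\Zc\Gamma$ and the rigidity statement. I would argue as follows. Suppose $(\Z/a_i)^r$ acts effectively on $M$ with $a_i\to\infty$ and $r = \D(M)$. Using that $f$ has non-zero degree, I want to show $r \leq \rank\Zc\Gamma$. The mechanism from \cite{mundet2021topological,daura2024actions} is: a large-rank elementary abelian (or $(\Z/a)^r$) action on $M$, after replacing by a bounded-index subgroup and using that $M$ is "rationally dominated" by the aspherical $N/\Gamma$, induces an action on $N/\Gamma$ (or on a space with the rational homotopy type controlled by $\Gamma$) which is again effective in a suitable cohomological sense; then the aspherical-manifold bound $\D(N/\Gamma)\leq \rank\Zc\Gamma$ from \cref{main theorem1 intro} applies. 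Concretely, one likely shows that the composite $(\Z/a_i)^r \to \Homeo(M) \to \Out(\pi_1(M))$ has image of bounded order (by a Minkowski-type property for $\Out(\pi_1(M))$ inherited through $f_*$ from $\Out(\Gamma)$), so a bounded-index subgroup acts "homotopically trivially" on $\pi_1$, and then such actions on a manifold rationally dominating a $K(\Gamma,1)$ must factor through a torus of rank $\leq \rank\Zc\Gamma$ — this is where the center of $\Gamma$ enters, exactly as $\rank\Zc(\pi_1)$ appears in \cref{main theorem1 intro}. For the rigidity part, if $\D(M) = n$ then $\rank\Zc\Gamma = n$, forcing $\Gamma\cong\Z^n$ and $N/\Gamma\cong T^n$, so we are reduced to the hypertoral case and quote \cref{large group actions hypertoral manifolds}.(2) verbatim, including the $\pi_1(M)$ virtually solvable addendum.

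The main obstacle I anticipate is the step transferring a finite abelian action on $M$ to usable data on the nilmanifold side without losing effectiveness: a non-zero degree map is not a fibration and need not be equivariant, so one must work up to homotopy and control the kernel of the induced action, which is where the bulk of \cite{mundet2021topological}'s technical apparatus (minimal models, or the equivariant cohomology of the Borel construction together with a Becker–Gottlieb transfer argument) would have to be adapted from $T^n$ to a general nilmanifold. A secondary subtlety is verifying the Minkowski property for $\Out(\pi_1(M))$: unlike in \cref{main theorem1 intro} we do not assume $M$ is aspherical, so $\pi_1(M)$ can be arbitrary, and one must instead argue that only the quotient of $\pi_1(M)$ relevant to $f_*$ matters, or bypass $\Out(\pi_1(M))$ entirely in favor of a purely cohomological argument as in \cite{mundet2021topological}.
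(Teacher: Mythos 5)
There is a genuine gap: the heart of the theorem is precisely the step you defer. What the paper does is prove that \emph{any} non-zero degree map to a nilmanifold is an exporting map (\cref{thm: exporting map hypernilmanifolds}): after passing to a bounded-index subgroup $H\leq G$, one constructs an honest action of $H$ on $N/\Gamma$ and an $H$-equivariant map homotopic to $f$, and then all four items follow formally from the corresponding facts for $N/\Gamma$ via \cref{pE jordan and discsym}. Your sketch acknowledges this as "the main obstacle" but does not supply it, and the tools you suggest (minimal models, Borel construction with a Becker--Gottlieb transfer) are not how the transfer is achieved. The actual construction first reduces to $f_*$ surjective (the finite covering $N/\Gamma'\to N/\Gamma$ is treated separately in \cref{pE nilmanifold covering}), then proceeds by induction on $\dim N$ along a central extension $1\to\R\to N\to N'\to 1$, at each stage running the averaging/covering argument for maps to $S^1$ (\cref{pE S1}) and splicing the two equivariance morphisms together via the Mal'cev completion (\cref{pE local systems}). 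Equally important, the bounded-index subgroup is \emph{not} obtained from a Minkowski property of $\Out(\pi_1(M))$ "inherited through $f_*$": as the paper itself points out (the Baumslag--Solitar connected-sum example in \S 3.4), $\Out(\pi_1(M))$ need not be Minkowski for such $M$. Instead one acts on the set of $N$-local systems $X(\pi_1(M),N)$, which factors through $\Out(\pi_1(M)/\pi_1(M)^c)$ for $c$ the nilpotency class of $N$, and this nilpotent-quotient outer automorphism group is Minkowski by Wehrfritz (\cref{pE local systems trivial action}). You flag this subtlety at the end but your proposed route through $\Out(\pi_1(M))$ would fail.

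A second concrete error is in part (3): $\chi(M)\neq 0$ alone does \emph{not} force almost asymmetry by a Lefschetz-type argument — $S^2$ or $\CP^n$ have nonzero Euler characteristic and admit arbitrarily large finite (even compact Lie) group actions, and a fixed point only gives a possibly huge stabilizer. The nilmanifold hypothesis is essential here: the paper combines the almost fixed point property (\cref{almost fix point euler char}, which is what $\chi(M)\neq0$ actually gives) with the \emph{small stabilizers} property of $M$, which is imported from $N/\Gamma$ through the exporting map, and concludes via \cref{small stab+almost fixed point=almost asymmetric}. Your reduction of the rigidity statement in part (2) to the hypertoral case when $\D(M)=n$ (forcing $\Gamma\cong\Z^n$) is correct and matches the paper, but the bound $\D(M)\leq\rank\Zc\Gamma$ again rests on the missing exporting-map theorem rather than on the $\Out(\pi_1(M))$ argument you outline.
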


As an application of \cref{main theorem2 intro}, we prove:

\begin{prop}\label{TRC hypernilmanifolds intro}
Let $M$ be a closed oriented connected $n$-dimensional manifold and $f:M\longrightarrow N/\Gamma$ a non-zero degree map to a closed nilmanifold. Assume that the toral rank conjecture holds for $N/\Gamma$, then the toral rank conjecture and the stable Carlsson's conjecture hold for $M$.
\end{prop}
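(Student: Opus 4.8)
The plan is to combine two standard facts about non-zero degree maps with the bounds on symmetry already established in Theorem \ref{main theorem2 intro}. First, since $M$ and $N/\Gamma$ are closed oriented manifolds of the same dimension $n$ (equality of dimensions being forced by $\deg f\neq 0$), Poincaré duality on both sides gives an Umkehr (Gysin) homomorphism $f_{!}\colon H^{*}(M;R)\to H^{*}(N/\Gamma;R)$ for any commutative ring $R$ (both spaces are $R$-orientable, being orientable manifolds), and the projection formula yields $f_{!}\circ f^{*}=\deg(f)\cdot\mathrm{id}$. Consequently $f^{*}$ is injective whenever $\deg f$ is a unit in $R$; in particular $\dim_{\Q}H^{*}(M;\Q)\ge\dim_{\Q}H^{*}(N/\Gamma;\Q)$, and $\dim_{\Z/p}H^{*}(M;\Z/p)\ge\dim_{\Z/p}H^{*}(N/\Gamma;\Z/p)$ for every prime $p\nmid\deg f$. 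Second, for any space with finitely generated integral cohomology (such as a closed manifold) reduction mod $p$ never decreases total Betti numbers, so $\dim_{\Z/p}H^{*}(N/\Gamma;\Z/p)\ge\dim_{\Q}H^{*}(N/\Gamma;\Q)$ for all $p$. Thus both conjectures for $M$ reduce to comparing the relevant rank of $M$ with $\rank(N/\Gamma)$ and invoking the toral rank conjecture for the nilmanifold.

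For the toral rank conjecture, recall from Theorem \ref{main theorem2 intro}.2 that $\rank(M)\le\D(M)\le\rank\Zc\Gamma$. Writing $N$ for the simply connected nilpotent Lie group containing $\Gamma$ as a lattice, the centre satisfies $Z(N)\cap\Gamma=\Zc\Gamma$, a lattice in $Z(N)\cong\R^{\rank\Zc\Gamma}$, so left translation by $Z(N)$ descends to a free action of the torus $Z(N)/\Zc\Gamma\cong T^{\rank\Zc\Gamma}$ on $N/\Gamma$; hence $\rank(N/\Gamma)\ge\rank\Zc\Gamma\ge\rank(M)$. Combining this with the first paragraph and the assumed toral rank conjecture for $N/\Gamma$,
\[
\dim_{\Q}H^{*}(M;\Q)\ \ge\ \dim_{\Q}H^{*}(N/\Gamma;\Q)\ \ge\ 2^{\rank(N/\Gamma)}\ \ge\ 2^{\rank(M)},
\]
which is the toral rank conjecture for $M$.

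For the stable Carlsson's conjecture we need, in addition, a bound of the form $\rank_{p}(M)\le\rank\Zc\Gamma$ valid for all sufficiently large primes $p$. A free action of $(\Z/p)^{r}$ on $M$ is in particular effective, and the cohomological estimates underlying the inequality $\D(M)\le\rank\Zc\Gamma$ in Theorem \ref{main theorem2 intro}.2 apply to a single elementary abelian $p$-group and therefore yield a constant $C_{0}$ such that every effective action of $(\Z/p)^{r}$ on $M$ with $p>C_{0}$ prime satisfies $r\le\rank\Zc\Gamma$; in particular $\rank_{p}(M)\le\rank\Zc\Gamma\le\rank(N/\Gamma)$ for $p>C_{0}$. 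Set $C=\max\{C_{0},|\deg f|\}$. For a prime $p>C$ we have $p\nmid\deg f$, so
\[
\dim_{\Z/p}H^{*}(M;\Z/p)\ \ge\ \dim_{\Z/p}H^{*}(N/\Gamma;\Z/p)\ \ge\ \dim_{\Q}H^{*}(N/\Gamma;\Q)\ \ge\ 2^{\rank(N/\Gamma)}\ \ge\ 2^{\rank_{p}(M)},
\]
where the third inequality is the toral rank conjecture for $N/\Gamma$. This gives the stable Carlsson's conjecture for $M$ with constant $C$.

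The routine ingredients are the identity $f_{!}\circ f^{*}=\deg(f)\cdot\mathrm{id}$ and the mod-$p$ comparison of Betti numbers. The step I expect to require the most care is the last one: the definition of $\D(M)$ only concerns effective $(\Z/a)^{r}$-actions for arbitrarily large (composite) $a$, whereas the stable Carlsson's conjecture forces us to rule out $(\Z/p)^{r}$-actions with $r>\rank\Zc\Gamma$ for each individual large prime $p$, uniformly in $p$. Making this explicit amounts to re-reading the proof of Theorem \ref{main theorem2 intro}.2 and checking that its bound is genuinely a bound on the $p$-rank of any $p$-torus acting on $M$ once $p$ exceeds a fixed constant — which is the form in which such Smith-theoretic estimates are normally produced, but it is the point to verify rather than take for granted.
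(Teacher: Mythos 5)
Your argument is correct, and the toral-rank half coincides with the paper's proof (injectivity of $f^{*}$ over $\Q$ from $\deg f\neq 0$, plus $\rank(M)\leq\D(M)\leq\rank\Zc\Gamma\leq\rank(N/\Gamma)$, the last inequality via the free action of $\Zc N/\Zc\Gamma$). For the stable Carlsson half you take a genuinely different route: the paper first proves that the toral rank conjecture and the stable Carlsson conjecture are \emph{equivalent} for closed aspherical locally homogeneous spaces (\cref{TRC=SCC}, which uses the Minkowski property of $\Out(\pi_1)$ and $\Aut(\pi_1)$ and the Lee--Raymond results identifying $\rank$, $\rank_p$ and $\rank\Zc\Gamma$ for large $p$), and then transfers free $(\Z/p)^{r}$-actions from $M$ to $N/\Gamma$ through the exporting map for $p>\max\{D,\deg f\}$ to get $\rank_p(M)\leq\rank_p(N/\Gamma)$, concluding with $\dim H^{*}(M,\Z/p)\geq\dim H^{*}(N/\Gamma,\Z/p)\geq 2^{\rank_p(N/\Gamma)}$. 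You instead never look at $\rank_p(N/\Gamma)$ at all: you bound $\rank_p(M)\leq\rank\Zc\Gamma\leq\rank(N/\Gamma)$ for large $p$ and replace the mod-$p$ cohomology of the nilmanifold by its rational cohomology via universal coefficients, so \cref{TRC=SCC} and its Seifert-fibering input are not needed. That is a real simplification; the only caveat is your justification of the uniform bound $\rank_p(M)\leq\rank\Zc\Gamma$ for $p>C_0$, which is not really a ``Smith-theoretic/cohomological estimate'': in this paper it comes from the exporting map (\cref{thm: exporting map hypernilmanifolds}, \cref{pE jordan and discsym}) together with the fact that for $p$ beyond the Minkowski constant of $\Out(\Gamma)$ an effective $(\Z/p)^{r}$-action on $N/\Gamma$ embeds into the torus $\Zc N/\Zc\Gamma$. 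Alternatively, and more cheaply, the uniform bound follows formally from the \emph{statement} $\D(M)\leq\rank\Zc\Gamma$ of \cref{main theorem2 intro}: if for arbitrarily large primes $p_i$ there were effective $(\Z/p_i)^{r}$-actions with $r\geq\rank\Zc\Gamma+1$, then restricting to $(\Z/p_i)^{\rank\Zc\Gamma+1}$ would give $\rank\Zc\Gamma+1\in\mu(M)$, contradicting the bound on $\D(M)$; so no re-reading of the proof is actually required, only this pigeonhole remark. With that justification supplied, your proof is complete.
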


\Cref{TRC hypernilmanifolds intro} follows from \cref{main theorem2 intro} and the fact that the toral rank conjecture and the stable Carlsson's conjecture are equivalent for closed aspherical locally homogeneous spaces (see \cref{TRC=SCC}). The toral rank conjecture is true if $N/\Gamma$ is a 2-step nilmanifold (see \cite{deninger1988cohomology}), thus \cref{TRC hypernilmanifolds intro} can be used when $N/\Gamma$ is a 2-step nilmanifold. For other nilmanifolds where it is known that the toral rank conjecture holds see \cite{cairns1997betti,cairns1997new}.

To prove \cref{main theorem2 intro} we introduce a new concept called exporting map, inspired by \cite[Theorem 4.1]{mundet2021topological}. A map between manifolds $f:M\longrightarrow M'$ is an exporting map if there exists a constant $C$ such that every finite group $G$ acting on $M$ has a subgroup $H\leq G$ such that $H$ acts on $M'$, there exists a $H$-equivariant map $f_H:M\longrightarrow M'$ homotopic to $f$ and $[G:H]\leq C$ (see \cref{pE defn}).

It is a natural question to ask whether $\D(M)=\rank\Zc\Gamma$ implies $H^*(M,\Z)\cong H^*(N/\Gamma,\Z)$. This is not true in general (see \cref{discsym  not enough}). Thus, we want to find a new invariant refining $\D(M)$ to study cohomological rigidity for manifolds admitting a non-zero degree map to a nilmanifold. In order to do so, we recall that nilmanifolds are precisely total spaces of iterated principal $S^1$-bundles (see \cite{belegradek2020iterated}). This fact leads to the following definition:

\begin{defn}
	Let $\mathcal{G}=\{G_i\}_{i=1,\dots,n}$ be a collection of groups and let $X$ be a topological space. An iterated action of $\mathcal{G}$ on $X$ (denoted by $ \mathcal{G}\acts X$) is:
\begin{itemize}
	\item[1.] A sequence of surjections of topological spaces
	\[\begin{tikzcd}
		X=X_0 \ar{r}{p_1} & X_1 \ar{r}{p_2} & X_2 \ar{r}{p_3} & \cdots \ar{r}{p_n} & X_n,
	\end{tikzcd}\]
	\item[2.] and a collection of group actions $\{\Phi_i:G_i\longrightarrow \Homeo(X_{i-1})\}_{i=1,\dots,n}$,
\end{itemize}
satisfying the property that the maps $p_i:X_{i-1}\longrightarrow X_i$ are the orbit maps of the action of $G_i$ on $X_{i-1}$.
\end{defn}

The concept of iterated group action has appeared implicitly in the literature. For example, towers of regular self-covering are studied in \cite{baker2001towers,van2018towers,van2021structure,qin2021self}. A regular self-covering of a closed manifold $M$ is a map $p:M\longrightarrow M$ which is a regular covering. We can compose this map with itself to obtain a tower of regular self-coverings
\[\begin{tikzcd}
	M\ar{r}{p} & M \ar{r}{p} & \cdots\ar{r}{p} & M\ar{r}{p} & M.
\end{tikzcd}
\]

Since $p:M\longrightarrow M$ is a regular covering, there exists a finite group $G$ acting freely on $M$ such that $p$ can be seen as the  orbit map $p:M\longrightarrow M/G$. Consequently, the study of towers of regular self-coverings is the study of iterated group actions of $\mathcal{G}=\{G,\overset{n}{\cdots}, G\}$ such that $M_i\cong M$ for all $i$ and all the actions $\Phi_i:G\longrightarrow \Homeo(M)$ are the same. In \cite{figueroa2012half} iterated group actions are used to describe and classify spin orbifolds of the form $S^7/\Gamma$ where $\Gamma$ is a finite group of $\SO(8)$.  Iterated group actions have also been studied when each group $G_i$ is a connected Lie group. For example, in \cite{baues2023isometry} O. Baues and Y. Kamishima study iterated group actions on Riemannian aspherical manifolds where each group $G_i$ is the solvable radical of $\Isom(M_{i-1})^0$. 

The next proposition shows that free iterated actions are a suitable tool to study nilmanifolds. Recall that if a compact Lie group $G$ acts freely on a manifold $M$ then the quotient map $M/G$ is also a manifold of dimension $\dim M-\dim G$ and the orbit map $M\longrightarrow M/G$ is a principal $G$-bundle (see \cite[Chapter II]{bredon1972introduction}).

\begin{prop}\label{free iterated action of tori}
Let $M$ be a closed connected manifold of dimension $n$. Assume that we have a free iterated action $\{T^{b_1},\dots,T^{b_c}\}\acts M$. Then $\sum_{i=1}^c b_i \leq n$ and we have an equality if and only if $M$ is a nilmanifold.
\end{prop}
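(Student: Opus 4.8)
The plan is to proceed by induction on $c$, the number of tori in the iterated action, peeling off one free torus action at a time. Consider a free iterated action $\{T^{b_1},\dots,T^{b_c}\}\acts M$ with orbit maps
\[
M=M_0 \xrightarrow{p_1} M_1 \xrightarrow{p_2} \cdots \xrightarrow{p_c} M_c.
\]
By the remark preceding the statement, since $T^{b_1}$ acts freely on the closed manifold $M_0=M$, the quotient $M_1$ is a closed connected manifold with $\dim M_1 = \dim M - b_1$, and $p_1\colon M\to M_1$ is a principal $T^{b_1}$-bundle. Restricting the remaining data $\{T^{b_2},\dots,T^{b_c}\}\acts M_1$ is again a free iterated action, now of length $c-1$, on a closed connected manifold. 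By induction $\sum_{i=2}^c b_i \le \dim M_1 = n - b_1$, hence $\sum_{i=1}^c b_i \le n$. The base case $c=1$ is exactly the statement that a free $T^{b_1}$-action on a closed connected $M$ forces $b_1\le\dim M$ (the quotient is a manifold of dimension $n-b_1\ge 0$), which also tells us $\dim M_1 = n-b_1$ and $b_1 = \dim M$ iff $M_1$ is a point iff $M = T^{b_1}$.

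For the equality case I would again induct, using the characterization of nilmanifolds as iterated principal $S^1$-bundles (the cited fact from \cite{belegradek2020iterated}, which by grouping/regrouping circle bundles applies equally to iterated principal torus bundles). Suppose $\sum_{i=1}^c b_i = n$. From the chain of inequalities above, equality forces $\dim M_1 = n - b_1$ and $\sum_{i=2}^c b_i = \dim M_1$, so by the inductive hypothesis $M_1$ is a nilmanifold, i.e. an iterated principal torus bundle over a point. Since $p_1\colon M\to M_1$ is a principal $T^{b_1}$-bundle, $M$ is then an iterated principal torus bundle over a point as well, hence a nilmanifold. Conversely, if $M$ is a nilmanifold of dimension $n$, it carries a free iterated torus action (indeed a tower of principal $S^1$-bundles, groupable into torus bundles) with $\sum b_i = n$; one must check this realizes an iterated action in the sense of the definition, which is immediate since each principal bundle projection is the orbit map of the corresponding free torus action.

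The main obstacle is the equality case, and specifically the claim that a principal $T^{b_1}$-bundle $M\to M_1$ with $M_1$ a nilmanifold is again a nilmanifold. This is where I expect the real content to lie: one direction is a bundle-assembly argument (a tower of principal torus bundles over a point, refined to circle bundles, is a nilmanifold by Belegradek's characterization), but care is needed because \cite{belegradek2020iterated} phrases things in terms of \emph{principal circle bundles} and one must argue that a principal $T^{b}$-bundle can be refined into a tower of $b$ principal circle bundles — this uses that $T^b$ has a filtration $1\subset S^1\subset T^2\subset\cdots\subset T^b$ by subtori with circle quotients, so a principal $T^b$-bundle $M\to M_1$ factors as $M\to M/S^1 \to M/T^2\to\cdots\to M_1$, each stage a principal circle bundle. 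A secondary subtlety is making sure the inductive restriction $\{T^{b_2},\dots,T^{b_c}\}\acts M_1$ genuinely is a free iterated action on $M_1$: the actions $\Phi_i$ for $i\ge 2$ were already actions on $M_{i-1}$ in the original data, and the orbit-map conditions are inherited verbatim, so nothing new must be constructed here. Everything else — the dimension bookkeeping, the base case, the converse construction — is routine.
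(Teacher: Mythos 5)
Your proof is correct and takes essentially the same route as the paper's: freeness makes each quotient a closed manifold and each orbit map a principal torus bundle, the dimension count gives $\sum_{i=1}^c b_i\leq n$, and in the equality case the tower exhibits $M$ as an iterated principal torus (hence circle) bundle over a point, so the cited characterization of nilmanifolds applies. The only cosmetic differences are that you induct on $c$ by peeling off the first torus while the paper argues directly via the last quotient $M_{c-1}\cong T^{b_c}$, and that you make explicit the torus-to-circle refinement and the easy converse, which the paper leaves implicit.
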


\begin{proof}
Since all the actions of the iterated action are free, the maps $p_i:M_{i-1}\longrightarrow M_{i}$ are principal $T^{b_i}$-bundles and the quotient $M_{c-1}$ is a closed manifold of dimension $n-\sum _{i=1}^{c-1}b_i$. Therefore, $b_c\leq n-\sum _{i=1}^{c-1}b_i$ and hence $\sum_{i=1}^c b_i \leq n$. If the equality holds, then $b_c=\dim(M_{c-1})$. Since $M_{c-1}$ is closed and connected and it admits a free action of a torus $T^{\dim(M_{c-1})}$ we can conclude that $M_{c-1}\cong T^{b_c}$. This implies that $M$ can be obtained as iterated principal torus bundles and hence $M$ is a nilmanifold.
\end{proof}

Our focus is on free iterated actions of finite groups on manifolds. Let us define the notions that we need to state our results on iterated actions.

\begin{defn}\label{iterated action equivalent}
Let $\mathcal{G}=\{G_i\}_{i=1,...,n}$ and $\mathcal{G}'=\{G'_i\}_{i=1,...,n'}$ be  two collections of finite groups which act freely on $M$. Let $p=p_{n}\circ\cdots \circ p_1$ and $p'=p'_{n'}\circ\cdots\circ p'_1$. We say that the iterated actions $\mathcal{G}\acts M$ and $\mathcal{G}'\acts M$ are equivalent (and we denote it by $\mathcal{G}\acts M\sim\mathcal{G}'\acts M$) if:
\begin{itemize}
	\item[1.] There exists a homeomorphism $f:M_n\longrightarrow M_{n'}$.
	\item[2.] The coverings $p:M\longrightarrow M/\mathcal{G}$ and $f^*p':M\longrightarrow M/\mathcal{G}$ are isomorphic. That is, there exists a homeomorphism $\overline{f}:M\longrightarrow M$ satisfying $p'\circ \overline{f}=f\circ p$.
\end{itemize}
The equivalence class will be denoted by $[\mathcal{G}\acts M]$. If $\mathcal{G}\acts M$ is equivalent to $\{G\}\acts M$ for some group $G$ then we say that $\mathcal{G}\acts M$ is simplifiable.
\end{defn}

With \cref{iterated action equivalent} is straightforward to see that a free iterated action $\mathcal{G}=\{G_1,\dots,G_n\}\acts M$ is simplifiable if and only if $\pi_1(M)\trianglelefteq\pi_1(M_n)$. In particular, all free iterated actions on simply-connected manifolds are simplifiable. However, there exist manifolds with non-trivial fundamental group where all free iterated actions on them are simplifiable (see \cref{free itertaed action simplifiable S1 and T2}). There also many examples of non-simplifiable free iterated actions (see, for example, \cref{iterated action T2 not simplfiable}). Recall that if $G$ is a finite group, then $\rank G$ is the minimum number of elements which are needed to generate $G$. The next definition refines the concept of the discrete degree of symmetry to iterated actions of two groups.

\begin{defn}
Given a free iterated action $\mathcal{A}\acts M$ of finite abelian groups, the rank of the iterated action is $$\rank_{ab}(\mathcal{A}\acts M)=\min\{\sum_{i=1}^{n}\rank A'_i:\{A_1',\cdots,A_n'\}\acts M\in [\mathcal{A}\acts M]\text{, $A_i'$ abelian for all $i$}\}.$$	
We define $\mu_2(M)$ as the set of all pairs $(f,b)\in \mathbb{N}^2$ which satisfy:
\begin{itemize}
	\item[1.] There exist an increasing sequence of prime numbers $\{p_i\}$, a sequence of natural numbers $\{a_i\}$ and a collection of free iterated actions $\{(\Z/p_i^{a_i})^f,(\Z/p_i)^b\}\acts M$ for each $i\in\mathbb{N}$.
	\item[2.] $\rank_{ab}(\{(\Z/p_i^{a_i})^f,(\Z/p_i)^b\}\acts M)=f+b$ for each $i\in\mathbb{N}$.
\end{itemize}

Consider the lexicographic order in $\mathbb{N}^2$, that is $(a,b)\geq (c,d)$ if $a>c$, or $a=c$ and $b\geq d$. Define the iterated discrete degree of symmetry of $M$ as
$$\D_2(M)=\max\{(0,0)\cup\mu_2(M)\}.$$
\end{defn}

Note that there are two significant differences between the conditions used in the definitions of the discrete degree of symmetry and the iterated discrete degree of symmetry. On the iterated discrete degree of symmetry we only consider actions of abelian $p$-groups, and all these actions are assumed to be free. While this hypothesis are made for technical reasons, they do not suppose a big loss of generality in our case, since by \cref{main theorem2 intro}.4, any effective action of an abelian $p$-group on a manifold $M$ admitting a non-zero degree to a nilmanifold is free for large enough $p$.

\begin{thm}\label{iterated discsym nilmanifolds intro}
	Let $N/\Gamma$ be a nilmanifold of dimension $n$, let $a=\rank\Zc\Gamma$ and let $\D_2(N/\Gamma)=(d_1,d_2)$. Then $d_1=a$ and $d_2\leq n-a$. Moreover, $d_2=n-a$ if and only if $N/\Gamma$ is a 2-step nilmanifold or a torus.
\end{thm}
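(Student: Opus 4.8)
The plan is to analyze $\D_2(N/\Gamma)$ via the structure of $N/\Gamma$ as an iterated principal $S^1$-bundle, combined with \cref{main theorem2 intro} to control iterated actions on $N/\Gamma$ itself. First I would prove the lower bounds. Since $\Zc\Gamma$ is a lattice in $\Zc N$ of rank $a$, the central torus $T^a = \Zc N/\Zc\Gamma$ acts freely on $N/\Gamma$ with quotient a nilmanifold $N'/\Gamma'$ of dimension $n-a$. Restricting to the finite subgroups $(\Z/p_i^{a_i})^a \le T^a$ gives free actions of large abelian $p$-groups, and since these come from a torus action the $\rank_{ab}$ of the resulting one-step iterated action equals $a$; this shows $d_1 \ge a$. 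If moreover $N/\Gamma$ is a $2$-step nilmanifold (or a torus), then $N'/\Gamma'$ is a torus $T^{n-a}$, so the finite subgroups $(\Z/p_i)^{n-a} \le T^{n-a}$ act freely on $N'/\Gamma'$, and stacking this on top of the central $p$-group action yields a free iterated action $\{(\Z/p_i^{a_i})^a,(\Z/p_i)^{n-a}\}\acts N/\Gamma$; one checks the $\rank_{ab}$ condition $f+b$ is met because each layer is (up to equivalence) minimally generated. Hence $\D_2(N/\Gamma)\ge (a,\,n-a)$ in that case, and $\ge (a,0)$ in general.

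Next, the upper bound $d_1 \le a$. Suppose $\{(\Z/p^{a_i})^f,(\Z/p)^b\}\acts N/\Gamma$ is a free iterated action for large $p$ realizing the first coordinate $f$. The composite covering $p = p_2\circ p_1$ exhibits $(\Z/p^{a_i})^f$ as acting freely on $N/\Gamma$ — but wait, the iterated action only gives a \emph{free} action of $(\Z/p^{a_i})^f$ on the quotient $M_1 = (N/\Gamma)/(\Z/p)^b$, not on $N/\Gamma$. The right move is: $M_1$ is a closed aspherical manifold (a quotient of $N/\Gamma$ by a free finite action, hence an infra-nilmanifold) admitting a free action of $(\Z/p^{a_i})^f$ with $a_i\to\infty$, so $\D(M_1)\ge f$. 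Then $M_1$ admits a non-zero degree map from $N/\Gamma$ (namely $p_1$) — no, degree goes the wrong way; rather $N/\Gamma$ maps to $M_1$ by the covering, which has non-zero degree $|(\Z/p)^b|$. So \cref{main theorem2 intro}.2 does not directly apply to $M_1$. Instead I would argue intrinsically: $\pi_1(M_1)$ is virtually nilpotent with a finite-index nilpotent subgroup $\Gamma_1 \supseteq \Gamma$, and by \cref{main theorem1 intro} applied to the infra-nilmanifold $M_1$ (whose $\Out$ of fundamental group is Minkowski), $\D(M_1) \le \rank \Zc(\pi_1 M_1)$. A cohomological comparison argument shows $\rank\Zc(\pi_1 M_1) \le \rank\Zc\Gamma = a$, whence $f \le a$; combined with $d_1\ge a$ this gives $d_1 = a$.

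For the upper bound on $d_2$ and the characterization of equality, I would set $f = a$ (forced) and study a free iterated action $\{(\Z/p^{a_i})^a,(\Z/p)^b\}\acts N/\Gamma$ with the $\rank_{ab}$ constraint. The total finite group $G$ acting (by the composite) has order $p^{a\cdot a_i + b}$ and acts freely on $N/\Gamma$; a rank/cohomology estimate (the $p$-rank of $N/\Gamma$ is at most $n$, with the "extra" generators beyond the central ones constrained by the nilpotency structure) forces $a + b \le n$, i.e.\ $b \le n-a$. The crux is the equality case: if $b = n-a$, then the bottom layer $M_1 = (N/\Gamma)/(\Z/p)^b$ must be (for large $p$) a nilmanifold of dimension $n-a$ on which the central-type $p$-group $(\Z/p^{a_i})^a$ acts freely with $a_i \to \infty$ and with the first iterated degree of symmetry equal to its dimension's "central part" — forcing $\D(M_1) = n-a$... but $M_1$ has dimension $n-a$ only if the $(\Z/p)^b$-action were by a torus, which it isn't. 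I need to recount: $\dim M_1 = n$ still, since $(\Z/p)^b$ is finite. So the real content is that $(\Z/p^{a_i})^a \times (\Z/p)^b$ acts freely on the closed aspherical $n$-manifold $N/\Gamma$ with $a_i\to\infty$, giving $\D(N/\Gamma) \ge a$ (from the $p^{a_i}$-part) but the $\rank_{ab}=a+b$ condition is the genuinely new input: it says these $b$ extra $\Z/p$-factors cannot be absorbed into the central layer, i.e.\ they act "transversally" to $\Zc\Gamma$.

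The main obstacle, which I would isolate as a separate lemma, is precisely this: translating "$\rank_{ab}(\{(\Z/p^{a_i})^a,(\Z/p)^b\}\acts N/\Gamma) = a+b$ for infinitely many $p$, with $b = n-a$" into the statement that the Lie algebra $\mathfrak{n}$ of $N$ has its derived series collapse after one step, i.e.\ $[\mathfrak n,\mathfrak n]\subseteq \Zc\mathfrak n$, which is exactly the $2$-step condition (or $\mathfrak n$ abelian, the torus case). Concretely: the free $(\Z/p)^{n-a}$-action on $N/\Gamma$ lifts to an action on $N$ normalizing $\Gamma$; for $p$ large its image in $\Out(\Gamma) = \Aut(\mathfrak n)/\Inn$ is trivial by Minkowski, so it is realized by left translations by a subgroup of $N$ mapping isomorphically onto $(N/[\Gamma,\Gamma]\Zc\Gamma\text{-part})$, and the non-absorption condition forces $N/\Zc N\Gamma$ to be a torus of dimension $n-a$, i.e.\ $[\mathfrak n,\mathfrak n] = \Zc\mathfrak n$ when $\mathfrak n$ is non-abelian. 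Conversely this is exactly what the lower-bound construction used, closing the loop. I would expect the forward direction (equality $\Rightarrow$ $2$-step) to require the most care, handling the interplay between the two layers and the $p$-group rank bookkeeping; the converse and the inequalities $d_1 = a$, $d_2 \le n-a$ should follow more routinely from \cref{main theorem1 intro}, \cref{main theorem2 intro}, and \cref{free iterated action of tori}.
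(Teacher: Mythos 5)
Your lower-bound constructions and your identification of the heart of the matter (turning the non-absorption condition $\rank_{ab}=a+b$ into $2$-step-ness) point in the right direction, but the proposal rests on a structural misreading of iterated actions that breaks the upper bound and the equality case. In $\{(\Z/p_i^{a_i})^{d_1},(\Z/p_i)^{d_2}\}\acts N/\Gamma$ the \emph{first} group acts on $N/\Gamma$ itself and the second acts only on the quotient $(N/\Gamma)/(\Z/p_i^{a_i})^{d_1}$; you have this backwards in several places, and in particular the assertion that ``$(\Z/p^{a_i})^a\times(\Z/p)^b$ acts freely on $N/\Gamma$'' is false: an iterated action need not be simplifiable, and under the hypothesis $\rank_{ab}=a+b$ it is provably \emph{not} simplifiable by an abelian group of smaller total rank --- that is precisely what the hypothesis encodes. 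Indeed, if such a product action existed then for $p$ large (beyond the Minkowski constant of $\Out(\Gamma)$) the free $(\Z/p)^{a+b}$-action would be conjugate into the central torus $\Zc N/\Zc\Gamma\cong T^a$, forcing $a+b\le a$, i.e.\ $b=0$, contradicting the equality case of the very theorem; so the ``rank/cohomology estimate'' you invoke for $d_2\le n-a$ has no valid starting point. Note also that $d_1\le a$ needs no detour through an infra-nilmanifold $M_1$: the first layer acts effectively on $N/\Gamma$ with $p_i^{a_i}\to\infty$, so $d_1\le\D(N/\Gamma)\le\rank\Zc\Gamma$ directly.

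The mechanism you propose for the equality case is also inverted. The second layer maps to $\Out(\Gamma_i)$, where $\Gamma_i=\pi_1\bigl((N/\Gamma)/(\Z/p_i^{a_i})^{a}\bigr)$ varies with $i$, so no single Minkowski constant makes this image trivial; and if it \emph{were} trivial (i.e.\ the action inner), then by \cref{inner actions simplifiable} the iterated action would simplify to an abelian group of rank $a$, contradicting $\rank_{ab}=a+d_2$ --- so the image must be \emph{injective}, the opposite of what you claim, and ``realized by translations'' is exactly the situation the rank hypothesis excludes. The paper's proof then bounds $d_2$ by passing to $\Gamma'=\Gamma/\Zc\Gamma$: one shows the extension $\Gamma_i'$ of $(\Z/p_i)^{d_2}$ by $\Gamma'$ is torsion-free for large $i$ (via a uniform embedding of the groups $\Aut(\Gamma_i)$ into $\Gl(m,\Q)$), and that $C_{\Gamma_i'}(\Gamma')$ is a torsion-free abelian extension of $\Zc\Gamma'$ by $(\Z/p_i)^{d_2}$, whence $d_2\le\rank\Zc\Gamma'\le n-a$, with equality forcing $\Gamma'$ abelian, i.e.\ $N/\Gamma$ $2$-step or a torus; none of this appears in your sketch. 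Finally, your converse construction glosses the lifting issue: the $(\Z/p)^{n-a}$ of base rotations must act on the quotient nilmanifold, not on $T^{n-a}$, and whether the lifts generate such a free action depends on the cocycle --- the paper arranges it by taking the first layer to be $(\Z/p^{2})^{a}$, so that $(\tfrac1{p^2}\Z)^a\times_c\Z^b$ is normal in $(\tfrac1{p^2}\Z)^a\times_{c}(\tfrac1p\Z)^b$ with quotient $(\Z/p)^b$, and the verification that $\rank_{ab}=n$ for this action (which you leave as ``minimally generated'') also has to be made.
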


Finally, we prove that the iterated discrete degree of symmetry is the right tool to study cohomological rigidity of manifolds admitting non-zero degree maps to 2-step nilmanifolds.

\begin{thm}\label{main theorem11 intro}
	Let $M$ be a closed connected manifold admitting a non-zero degree map $f:M\longrightarrow N/\Gamma$ to a $2$-step nilmanifold, which is the total space of a principal $T^a$-bundle over $T^b$. Then $\D_2(M)\leq (a,b)$. Moreover, if $\D_2(M)= (a,b)$ then $H^*(M,\Q)\cong H^*(N/\Gamma,\Q)$.
\end{thm}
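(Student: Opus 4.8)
The plan is to combine the ``exporting map'' technology behind \cref{main theorem2 intro} with an iterated refinement, exactly paralleling the relationship between \cref{main theorem2 intro}.2 and the hypertoral case of \cref{large group actions hypertoral manifolds}.2. First I would establish the inequality $\D_2(M)\leq(a,b)$. Given a free iterated action $\{(\Z/p^k)^f,(\Z/p)^g\}\acts M$ realizing a pair $(f,g)\in\mu_2(M)$ with the rank-additivity condition, I want to push it forward along $f:M\to N/\Gamma$. The key point is that the exporting-map machinery lets us replace the top group by a bounded-index subgroup $H$ that acts on $N/\Gamma$ with an $H$-equivariant map homotopic to $f$; I would need an \emph{iterated} version of this, so that the whole tower $M=M_0\to M_1\to M_2$ maps (after passing to bounded-index subgroups at each stage and using that orbit maps of free actions are covering maps, hence well-behaved under the equivariance) to a corresponding tower over $N/\Gamma$. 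The quotients $M_i$ still admit non-zero degree maps to the corresponding nilmanifold quotients (degree is multiplicative under the covering maps and unchanged up to sign), so \cref{main theorem2 intro}.2 applied at each level forces $f\leq\rank\Zc\Gamma=a$ and, once $f=a$, forces the relevant cohomology to be that of a torus of the right rank; then \cref{iterated discsym nilmanifolds intro} (whose proof computes $\D_2(N/\Gamma)=(a,b)$ for a 2-step nilmanifold) gives $g\leq b$. The rank-additivity hypothesis in the definition of $\mu_2$ is what prevents the tower from collapsing to a single-group action that would only see $\D(M)$, so it must be used to keep the two ``layers'' genuinely separate.

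For the rigidity half, assume $\D_2(M)=(a,b)$. From the equality case of \cref{main theorem2 intro}.2 applied to the composite $M\to N/\Gamma\to T^b$ (a non-zero degree map to $T^b$, since $N/\Gamma\to T^b$ has degree $1$), I already get $H^*(M,\Q)\cong H^*(T^n,\Q)$ is \emph{false} in general — rather, $\D(M)$ need not be $n$, so I cannot invoke that directly. Instead the point is that $\D_2(M)=(a,b)$ gives, for infinitely many primes $p$, a free iterated action $\{(\Z/p^{k})^a,(\Z/p)^b\}\acts M$ with the rank condition. I would analyze the Leray–Serre / Borel spectral sequences of the two stages. The bottom stage: $(\Z/p)^b$ acting freely with quotient $M_1$ that maps with non-zero degree to $T^b$; by the equality case of \cref{main theorem2 intro}.2 applied to $M_1\to T^b$ this forces $H^*(M_1,\Q)\cong H^*(T^b,\Q)$. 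The top stage: $(\Z/p^k)^a$ acting freely on $M$ with orbit space $M_1$; here a non-zero degree map $M\to N/\Gamma$ together with the free $(\Z/p^k)^a$-action realizing $\D(M)$-type bounds on each fiber should, via the argument of \cref{main theorem2 intro}.2 in the $\D=n$ case (now relativized over $M_1$), identify the rational cohomology of the ``fiber direction'' with $H^*(T^a,\Q)$. The $2$-step hypothesis enters because then $N/\Gamma\to T^b$ is a principal $T^a$-bundle classified by $a$ classes in $H^2(T^b,\Z)$, and $H^*(N/\Gamma,\Q)$ is the cohomology of the corresponding Koszul-type complex; I must show the Chern classes of the $T^a$-bundle (or its $p$-group analogue) over $M_1$ match, up to the rational isomorphism $H^*(M_1,\Q)\cong H^*(T^b,\Q)$, those of $N/\Gamma\to T^b$. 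This last matching is where the non-zero degree map $f$ is used again: $f^*$ of the Euler/Chern classes of $N/\Gamma\to T^b$ must be (rationally) the Chern classes pulled back from $M_1$, because $f$ is covered by a bundle map up to homotopy after the exporting reduction.

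Concretely, the steps in order: (1) prove an iterated exporting lemma — every free iterated action of finite groups on $M$ has, after bounded-index reduction at each stage, a compatible free iterated action on $N/\Gamma$ with equivariant maps homotopic to $f$ at each level; (2) deduce $\D_2(M)\leq(a,b)$ from (1), \cref{main theorem2 intro}.2 at each level, and \cref{iterated discsym nilmanifolds intro}; (3) for the equality case, run the two-stage spectral sequence argument to get $H^*(M_1,\Q)\cong H^*(T^b,\Q)$ and then $H^*(M,\Q)$ as a free module over it on the $a$ fiber generators; (4) identify the twisting (the $d_2$ or $d_3$ differentials) via the bundle map covering $f$, concluding $H^*(M,\Q)\cong H^*(N/\Gamma,\Q)$ as rings. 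I expect step (1) — making the exporting-map construction iterative while keeping track of the rank-additivity condition through the tower — to be the main obstacle, since one must ensure the bounded-index subgroups chosen at the top stage are still compatible with a free action on the \emph{new} (exported) middle space rather than the original $M_1$, and that the constants do not blow up with the number of iterations (here only two, which helps). The secondary difficulty is step (4): controlling the differentials rationally rather than integrally, where the non-zero degree map only gives an injection on rational cohomology, so I would argue on the level of the minimal model of $N/\Gamma$ (a 2-step nilpotent Lie algebra cohomology) and use that $f^*$ is a map of algebras that is injective and hits the degree-one and degree-two generators appropriately.
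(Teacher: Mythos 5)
The inequality half of your sketch is broadly in the spirit of the paper's argument: one exports the first stage via \cref{thm: exporting map hypernilmanifolds}, passes to the quotient $M_i=M/(\Z/p_i^{a_i})^{a}$, composes with the bundle projection to $T^b$ and applies \cref{pE torus} there. But the real content, which you only gesture at, is proving that the resulting morphism $\eta_i':(\Z/p_i)^{d_2}\longrightarrow T^b$ is injective; this is exactly where the rank-additivity condition in the definition of $\mu_2$ is used, through the triviality of the action on $N$-local systems (\cref{eta trivial action}, a cocycle computation that genuinely uses the $2$-step structure) and the simplifiability of inner iterated actions (\cref{inner actions simplifiable}, \cref{eta injective}). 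Your ``iterated exporting lemma'' would have to contain precisely these two lemmas, and as stated your step (2) does not explain how to rule out a kernel in the second stage.

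The rigidity half is where the proposal breaks. Your key intermediate claim --- that the equality case of \cref{main theorem2 intro}.2 applied to $M_1\to T^b$ forces $H^*(M_1,\Q)\cong H^*(T^b,\Q)$ --- cannot be right: $M_1$ is a closed $(a+b)$-dimensional manifold, so a map to $T^b$ has no degree when $a>0$, the equality case of that theorem only applies when $\D$ equals the dimension, and $H^*(M_1,\Q)\cong H^*(T^b,\Q)$ would contradict Poincar\'e duality. More fundamentally, the actions involved are free actions of \emph{finite} $p$-groups, so there is no ``fiber direction'' carrying $H^*(T^a,\Q)$ and no Chern-class or Koszul matching to perform; the spectral sequences of the orbit maps see cohomology of $p$-groups, not of tori. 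The paper's mechanism is entirely different: pull back the universal cover $N\to N/\Gamma$ by $f$ to get $\tilde M$, show $H^*(\tilde M,\Z)$ is a finitely generated $\Z\Gamma$-module (\cref{cohomology fg group ring}), use the exported iterated actions to produce automorphisms of $H^*(\tilde M,\Z)$ that are $p_i^{a_i}$-th and $p_i$-th roots of multiplication by the generators of $\Gamma$ (\cref{Gamma-modules 1}, \cref{Gamma-module 2}), and then invoke a new non-commutative Noetherian theorem for iterated skew-Laurent rings (\cref{non commutative finitely generation modules}, \cref{non commutative finitely generation modules skew-Laurent}) to conclude that $H^*(\tilde M,\Z)$ is finitely generated over $\Z$; a Serre spectral sequence argument then shows $\tilde M$ is acyclic, giving $H^*(M,\Z)\cong H^*(N/\Gamma,\Z)$ when $f_*$ is surjective, and the rational statement of the theorem follows by lifting $f$ to $N/f_*\pi_1(M)$ (\cref{iterated action rational cohomology}). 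None of this --- in particular the passage to $\tilde M$ and the root-of-multiplication/Noetherian argument, which is the heart of the proof --- appears in your plan, and without it the minimal-model and Chern-class matching of your steps (3)--(4) has no starting point.
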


This paper is divided in five sections. In the first section we introduce the concept of exporting maps and we deduce some of their basic properties. In the second section we prove \cref{main theorem2 intro} and we give some further remarks and consequences of \cref{main theorem2 intro}. We introduce free iterated actions in section 3 and the iterated discrete degree of symmetry in section 4. Finally, section 5 is devoted to the proof of \cref{main theorem11 intro}.

\textbf{Acknowledgements:} I would like to thank Ignasi Mundet i Riera for all the guidance given during this project and for the careful revision of the first draft of this paper. Many thanks to Leopold Zoller for helpful
comments and corrections. 
This work was partially supported by the grant PID2019-104047GB-I00 from the Spanish Ministry of Science and Innovation and the Departament de Recerca i Universitats de la Generalitat de Catalunya (2021 SGR 00697).
%%%%%%%%%%%%%%%%%%%%%%%%%%%%%%%%%%%%%%%%%%%%%%%%%%%%%%%%%%%%%%%%%%%%%%%%%%%%%%%%%%%%%%%%%%%%%%%%%%%%%%%%%%%%%%%%%%%%%%%%%%%%%%%%%%%%%%%%%%%%%%%%%%%%
\section{Exporting and importing maps}\label{sec: exporting and importing maps}

A key result to prove \cref{large group actions hypertoral manifolds} is the following theorem.

\begin{thm}\label{pE torus}\cite[Theorem 4.1]{mundet2021topological}
	Let $M$ be a closed oriented manifold of dimension $n$ which admits a continuous map $f:M\longrightarrow T^n$ of non-zero degree and let $G$ be a finite group acting effectively on $M$ and trivially on $H^1(M,\Z)$, then there exist a group action of $G$ on $T^n$ and a continuous map $f_G:M\longrightarrow T^n$ which is $G$-equivariant and homotopic to $f$.
\end{thm}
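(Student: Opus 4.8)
The plan is to reduce to the case $M = T^n$ by using the fact that any non-zero degree map $f : M \to T^n$ factors, up to homotopy, through the classifying-space interpretation of the torus: $T^n = K(\mathbb{Z}^n,1)$, so a map $M \to T^n$ is determined up to homotopy by the induced map $f^* : H^1(T^n,\mathbb{Z}) \to H^1(M,\mathbb{Z})$, equivalently by a choice of $n$ classes $\alpha_1,\dots,\alpha_n \in H^1(M,\mathbb{Z})$. First I would fix generators $e_1,\dots,e_n$ of $H^1(T^n,\mathbb{Z})$ and set $\alpha_i = f^*e_i$; the hypothesis that $\deg f \neq 0$ translates into the statement that $\alpha_1 \smile \cdots \smile \alpha_n \in H^n(M,\mathbb{Z})$ is a non-zero multiple of the fundamental class, hence the $\alpha_i$ are linearly independent over $\mathbb{Q}$.

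Next, since $G$ acts trivially on $H^1(M,\mathbb{Z})$, each class $\alpha_i$ is $G$-invariant. The key step is then to realize the $G$-invariance at the level of maps, not just cohomology: I would produce, for each $i$, a $G$-invariant map $g_i : M \to S^1$ with $g_i^*(du) = \alpha_i$, by averaging. Concretely, pick any smooth representative $g : M \to S^1 = \mathbb{R}/\mathbb{Z}$ of $\alpha_i$; then for $\sigma \in G$ the maps $g\circ\sigma$ and $g$ differ by a map to $S^1$ which is null-homotopic (because $\sigma^*\alpha_i = \alpha_i$), so $g \circ \sigma - g$ lifts to a real-valued function $h_\sigma : M \to \mathbb{R}$. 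Averaging $g$ against the cocycle $\{h_\sigma\}$ over the finite group $G$ — that is, replacing $g$ by $g + \frac{1}{|G|}\sum_{\sigma} (\text{suitable correction built from } h_\sigma)$ — yields a genuinely $G$-invariant map $\widetilde{g}_i : M \to S^1$ still representing $\alpha_i$. Here the action of $G$ on $S^1$ is the \emph{trivial} one; this already gives a $G$-equivariant map $\widetilde{f} = (\widetilde{g}_1,\dots,\widetilde{g}_n) : M \to T^n$ (with $G$ acting trivially on $T^n$) which is homotopic to $f$, since it induces the same map on $H^1$ and $T^n$ is a $K(\pi,1)$.

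Wait — with $G$ acting trivially on $T^n$ the statement is already proved, and one does not even need a nontrivial action on the target. So the cleanest route is: (i) recall that $[M, T^n] \cong \Hom(H_1(M;\mathbb{Z}), \mathbb{Z}^n) \cong H^1(M;\mathbb{Z})^n$ naturally, hence $f$ is detected by $(\alpha_1,\dots,\alpha_n)$; (ii) run the averaging argument above to replace $f$ by a homotopic map that is strictly $G$-equivariant for the trivial $G$-action on $T^n$; (iii) observe $\deg f \neq 0$ is preserved since the map is only changed within its homotopy class, and it is used only to guarantee the conclusion is non-vacuous. I would carry out (i) first (a routine obstruction-theory / $K(\pi,1)$ computation), then (ii), then (iii).

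The main obstacle is step (ii): making the averaging precise so that the output is honestly $G$-equivariant (not just $G$-equivariant up to homotopy) while remaining in the homotopy class of $f$. The subtlety is that $S^1$ is not contractible, so one cannot average maps to $S^1$ directly; the fix is to average in the universal cover $\mathbb{R}$ after trivializing the obstruction, using that the relevant obstruction class $\{[\sigma \mapsto g\circ\sigma - g]\} \in H^1(G; C(M,\mathbb{R})/\mathbb{Z}?)$ vanishes precisely because each $\alpha_i$ is $G$-fixed, and that $H^1(G; V) = 0$ for any $\mathbb{Q}$-vector space $V$ with $G$ finite. Keeping track of basepoints and ensuring the corrected maps assemble into a single map $M \to T^n$ rather than into $n$ unrelated circle-valued functions is where care is needed, but none of it is deep.
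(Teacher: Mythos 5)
Your reduction to $n$ circle-valued maps and the observation that $\sigma^*\alpha_i=\alpha_i$ makes $g\circ\sigma-g$ null-homotopic are fine, but step (ii) --- producing a $G$-\emph{invariant} map homotopic to $f$, so that one may take the trivial $G$-action on $T^n$ --- is false, and no averaging can repair it. If $\tilde f\colon M\longrightarrow T^n$ were $G$-invariant it would factor through the orbit map $\pi\colon M\longrightarrow M/G$, and then \cref{quotient map image degree} forces $|G|$ to divide $\deg\tilde f=\deg f$. Thus for $M=T^n$, $f=\mathrm{id}$ (degree $1$) and $G=(\Z/m)^n$ acting by rotations (an effective action which is trivial on $H^1$), no $G$-invariant map homotopic to $f$ exists once $m\geq 2$. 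You can also see the failure inside your own computation: setting $\tilde g=g+\tfrac1{|G|}\sum_\sigma h_\sigma$, where $h_\sigma$ is a real lift of $g\circ\sigma-g$, one has $h_\sigma\circ\tau-h_{\sigma\tau}+h_\tau=c(\sigma,\tau)\in\Z$ (a constant, since $M$ is connected), and a short computation gives $\tilde g\circ\tau-\tilde g\equiv \tfrac1{|G|}\sum_\sigma c(\sigma,\tau)\pmod\Z$. This constant is in general a nontrivial element of $\R/\Z$: the relevant obstruction is a class in $H^1(G;\R/\Z)\cong H^2(G;\Z)$, which for finite $G$ is torsion but by no means zero, so your appeal to $H^1(G;V)=0$ for $\Q$-vector spaces $V$ does not apply. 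Relatedly, the non-zero degree hypothesis is not there ``only to make the conclusion non-vacuous''; it is exactly what rules out the stronger conclusion you are aiming for.

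What the averaging genuinely produces is the theorem as stated: $\tilde g$ is equivariant for a \emph{rotation} action of $G$ on $S^1$ through the homomorphism determined by the constants $\tfrac1{|G|}\sum_\sigma c(\sigma,\tau)$, and this action cannot in general be made trivial (its ineffectivity kernel is only bounded by $\deg f$, cf.\ \cref{exporting map non-zero degree} and the remark following the theorem). This is precisely the route of \cref{pE S1}: one sums $F=\sum_{h\in G}f\circ h$ in $S^1$, notes $F^*\theta=|G|\,f^*\theta$, uses divisibility of periods to trivialize the pullback of the $|G|$-fold covering of $S^1$ and extract an ``$|G|$-th root'' $f_G$ of $F$ with $f_G^*\theta=f^*\theta$, and then checks that $x\mapsto f_G(hx)-f_G(x)$ is constant with values in the $|G|$-torsion of $S^1$, yielding a homomorphism $\mu\colon G\longrightarrow S^1$ for which $f_G$ is $\mu$-equivariant; doing this in each coordinate gives the action of $G$ on $T^n$ by rotations. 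So the averaging idea is the right one, but the conclusion must be equivariance with respect to a possibly non-trivial (and possibly non-effective) rotation action on the target, not invariance.
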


\begin{rem}
	The constructed action of $G$ on $T^n$ is not necessarily effective. If $K$ denotes the kernel of ineffectiveness, then $|K|\leq \deg f$ and the effective action of $G/K$ on $T^n$ is free and by rotations. Thus the induced group morphism $G/K\longrightarrow \Gl(n,\Z)$ is trivial.
\end{rem}

\Cref{pE torus} inspires the following definition:

\begin{defn}\label{pE defn}
	Let $M$ and $M'$ be closed oriented manifolds of the same dimension and let $f:M\longrightarrow M'$ be a continuous map. We say that $f$ exports group actions, or $f$ is an exporting map, if there exists a constant $C$ such that every finite group $G$ acting effectively on $M$ (which we denote by $\phi:G\longrightarrow \Homeo(M)$) has a subgroup $H\leq G$ such that:
	\begin{itemize}
		\item[1.] $[G:H]\leq C$
		\item[2.] There exists an action $H$ on $M'$ (denoted by $\phi':H\longrightarrow \Homeo(M')$).
		\item[3.] There exists an $H$-equivariant map $f_H:M\longrightarrow M'$ homotopic to $f$.
	\end{itemize} 
\end{defn}

By Minkowski's lemma, given a closed connected manifold $M$ there exists a constant $C$ such that any finite group $G$ acting effectively on $M$ has a finite index subgroup $H$ of index $[G:H]\leq C$ such that $H$ acts trivially on $H^1(M,\Z)$. Thus, \cref{pE torus} states that any non-zero degree map $f:M\longrightarrow T^n$ is an exporting map. Another example of this property in the smooth setting is provided by the following theorem of R.Schoen and S.T.Yau.

\begin{thm}\cite[Theorem 8]{schoen1979compact}\label{pE harmonic}
	Let $M$ and $M'$ be closed connected orientable smooth manifolds of the same dimension. Assume that $M'$ has a Riemannian metric of non-positive curvature and there is a non-zero degree smooth map $f:M\longrightarrow M'$ such that $f_*:\pi_1(M)\longrightarrow \pi_1(M')$ is surjective. Let $A(M')$ denote the group of affine transformations on $M'$ (that is, the group of diffeomorphisms preserving the Levi-Civita connection) and let $\overline{A}(M')$ denote the subgroup of the identity component of $A(M')$ generated by the parallel vector fields of $M'$. Given a finite group $G$ acting effectively and smoothly on $M$, suppose that for each $g\in G$ there exists an element $g'\in \overline{A}(M')$ such that $f\circ g$ is freely homotopic to $g'\circ f$ ($g'$ is not necessarily unique or non-trivial). Then there exist a group morphism $\gamma:G\longrightarrow \overline{A}(M')$ and a $\gamma$-equivariant smooth map $f_G:M\longrightarrow M'$ homotopic to $f$. Moreover, $|\ker\gamma|\leq \deg(f)$.
\end{thm}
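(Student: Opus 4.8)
The plan is to replace $f$ by a harmonic map and to exploit the rigidity of harmonic maps into non-positively curved targets. First I would average any Riemannian metric on $M$ over the finite group $G$ to obtain a $G$-invariant one; with respect to this metric every $g\in G$ acts on $M$ by isometries, which is the only place where smoothness of the action and finiteness of $G$ enter at this stage. Since $M'$ is compact with non-positive sectional curvature, the Eells--Sampson existence theorem provides a smooth harmonic map $u_0\colon M\longrightarrow M'$ homotopic to $f$; as $\deg u_0=\deg f\neq 0$, the map $u_0$ is surjective. Write $\mathcal{H}$ for the (non-empty) set of all smooth harmonic maps $M\longrightarrow M'$ homotopic to $f$.

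The crux is to describe $\mathcal{H}$. Here I would invoke Hartman's theorem on homotopic harmonic maps, together with the structure theory of harmonic maps into non-positively curved manifolds: any two elements of $\mathcal{H}$ are joined by a geodesic homotopy through harmonic maps of constant energy, and the displacement relating them is realized by an element of $\overline{A}(M')$. Consequently post-composition $(a,u)\mapsto a\circ u$ defines an action of the connected abelian group $\overline{A}(M')$ on $\mathcal{H}$ (an affine transformation preserves harmonicity and, $\overline{A}(M')$ being connected, is isotopic to the identity through affine maps, so $a\circ u\in\mathcal{H}$), and this action is transitive (by Hartman's theorem and connectedness of $\overline{A}(M')$) and free (if $a\circ u=u$ with $u$ surjective, then $a$ fixes all of $M'$, so $a=\mathrm{id}$). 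Thus $\mathcal{H}$ is a torsor over $\overline{A}(M')$; establishing this identification, which uses the curvature hypothesis essentially, is the main obstacle, and everything afterwards is formal.

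To produce the representation, observe that for each $g\in G$ the map $u\circ g$ is harmonic whenever $u$ is (as $g$ is an isometry of $M$), and $u\circ g\simeq f\circ g\simeq g'\circ f\simeq g'\circ u$ for the given $g'\in\overline{A}(M')$, so $u\circ g\in\mathcal{H}$. Hence pre-composition gives, for each $g$, a bijection $R_g\colon\mathcal{H}\longrightarrow\mathcal{H}$, $R_g(u)=u\circ g$, with inverse $R_{g^{-1}}$. Since pre- and post-composition commute, $R_g$ commutes with the $\overline{A}(M')$-action; but a bijection of a torsor over an abelian group commuting with the action is translation by a unique group element, so there is a unique $\gamma(g)\in\overline{A}(M')$ with $u\circ g=\gamma(g)\circ u$ for every $u\in\mathcal{H}$. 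Expanding $u\circ(g_1g_2)=(u\circ g_1)\circ g_2$ and cancelling the surjective $u$ yields $\gamma(g_1g_2)=\gamma(g_2)\gamma(g_1)$, which is a homomorphism because $\overline{A}(M')$ is abelian. Taking $f_G:=u_0$ gives a smooth, $\gamma$-equivariant map homotopic to $f$.

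It remains to bound $\ker\gamma$. If $k\in\ker\gamma$ then $u_0\circ k=u_0$, so $u_0$ is constant on $\ker\gamma$-orbits. By effectiveness of the $G$-action the locus in $M$ with non-trivial $\ker\gamma$-stabilizer is a finite union of proper submanifolds, hence of measure zero, and so is its image under the smooth map $u_0$; choosing a regular value $y$ of $u_0$ outside that image makes $u_0^{-1}(y)$ a finite free $\ker\gamma$-set. If some $k\in\ker\gamma$ reversed orientation, the local degrees of $u_0$ would cancel in pairs along each $\ker\gamma$-orbit, giving $\deg u_0=0$; hence $\ker\gamma$ acts by orientation-preserving diffeomorphisms, the local degree of $u_0$ is constant along each orbit, and $\deg u_0$ equals $|\ker\gamma|$ times a nonzero integer. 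Therefore $|\ker\gamma|\le|\deg u_0|=|\deg f|$, which is the claimed bound.
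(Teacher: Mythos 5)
The paper itself offers no proof of this statement (it is quoted directly from Schoen--Yau), and your harmonic-map strategy --- average the metric so that $G$ acts by isometries, replace $f$ by a harmonic representative via Eells--Sampson, and analyse the set $\mathcal{H}$ of harmonic maps homotopic to $f$ --- is indeed the route taken in the cited source. The formal scaffolding of your argument is sound: if $\mathcal{H}$ really is a torsor over the abelian group $\overline{A}(M')$ (abelian because parallel fields have vanishing brackets, a fact your ``translation on a torsor'' step silently uses), then precomposition by $g\in G$ preserves $\mathcal{H}$ thanks to the hypothesis $f\circ g\simeq g'\circ f$, commutes with postcomposition, and hence is translation by a unique $\gamma(g)$; the homomorphism property and equivariance follow, and your regular-value count correctly yields $|\ker\gamma|\leq \deg(f)$.

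The genuine gap is the torsor claim itself, specifically transitivity. Hartman's theorem only provides a geodesic homotopy between two homotopic harmonic maps whose variation field is a parallel section of $u^*TM'$ (parallel for the pulled-back connection); it does not say this section descends to a parallel vector field \emph{on $M'$}, which is what is needed for the displacement to be realized by an element of $\overline{A}(M')$. That descent is exactly where the hypothesis that $f_*:\pi_1(M)\longrightarrow\pi_1(M')$ is surjective must enter, and your proof never uses it anywhere. Without it the transitivity claim is false: take $M'=T^3/\sigma$ with $\sigma(x,y,z)=(x+\tfrac{1}{2},-y,-z)$, a closed orientable flat $3$-manifold, and let $f:T^3\longrightarrow M'$ be the covering map (degree $2$, $f_*$ not surjective). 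Then $\overline{A}(M')\cong S^1$ consists only of translations in the $x$-direction, yet the maps $f_c(x,y,z)=f(x,y+c,z)$ are harmonic and homotopic to $f$, and for generic $c$ they are not of the form $a\circ f$ with $a\in\overline{A}(M')$. Closing the gap requires the actual Schoen--Yau argument: pass to the universal cover, use the splitting $\widetilde{M'}\cong\R^k\times N_0$ with $\R^k$ spanned by the parallel directions, and use surjectivity of $f_*$ on $\pi_1$ (together with surjectivity of the harmonic map, which follows from $\deg f\neq 0$) to show that the Hartman variation field is invariant under $\pi_1(M')$ and therefore descends to a parallel field on $M'$. As written, your proposal black-boxes precisely this, the hard part of the theorem.
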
 

If there exists a constant $C$ such that every finite group $G$ acting smoothly and effectively on $M$ has a subgroup $H\leq G$ satisfying the conditions of \cref{pE harmonic} and $[G:H]\leq C$ then $f$ is an exporting map. Some examples can be found in \cite[Theorem 11, Theorem 13]{schoen1979compact}.

\begin{thm}\cite[Theorem 11, Theorem 13]{schoen1979compact}\label{exporitng maps SY}
	Let $M$ and $M'$ be closed connected orientable smooth manifolds of the same dimension. Assume that there is a degree one map $f:M\longrightarrow M'$ such that $f_*:\pi_1(M)\longrightarrow \pi_1(M')$ is surjective. Furthermore, assume that $M'$ satisfy one of the following set of conditions:
	\begin{itemize}
		\item[1.] $M'$ is diffeomorphic to a locally symmetric space $\Gamma\setminus G/H$, where all the factors of $G$ have real rank equal or greater than $2$.
		\item[2.] $M'$ is flat and $\Zc\pi_1(M')$ is trivial.
	\end{itemize}
	Then for any finite group $G$ acting effectively and smoothly on $M$ there exist a smooth effective action of $G$ on $M'$ and a $G$-equivariant map $f_G:M\longrightarrow M'$ homotopic to $f$.
\end{thm}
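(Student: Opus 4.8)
The plan is to manufacture a smooth action of $G$ on $M'$ \emph{by affine transformations}, compatible with $f$ up to homotopy, and then to rigidify it into a genuine $G$-equivariant map by the harmonic-map technique underlying \cref{pE harmonic}. Since a degree one map is surjective on $\pi_1$, the hypotheses of \cref{pE harmonic} other than the homotopy condition are automatic; the work is to produce the action on $M'$ and then to verify equivariance and effectiveness.

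First I would carry out the homotopy-theoretic step. Write $\Gamma=\pi_1(M')$ and $K=\ker\big(f_*:\pi_1(M)\twoheadrightarrow\Gamma\big)$. The $G$-action on $M$ gives a homomorphism $G\to\Out(\pi_1(M))$, and I want it to descend to $\bar\rho:G\to\Out(\Gamma)$, for which I need $g_*(K)=K$ for every $g\in G$. This is exactly where the hypotheses on $M'$ enter. In Case~1, $\Gamma$ is a cocompact lattice in a semisimple group all of whose factors have real rank $\ge 2$, so I would invoke Margulis superrigidity/arithmeticity and Mostow rigidity — $\Gamma$ is Hopfian and co-Hopfian, satisfies the normal subgroup property, $\Out(\Gamma)$ is finite, and every outer class is realised by an isometry of $M'$ — together with the degree one hypothesis and a Goursat-type comparison of the two surjections $\pi_1(M)\to\Gamma$ with kernels $K$ and $g_*(K)$, to force $g_*(K)=K$. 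In Case~2, $\Gamma$ is a Bieberbach group with $\Zc\Gamma=1$, and I would use the analogous rigidity (the translation lattice is characteristic, $\Gamma$ is co-Hopfian, $\Out(\Gamma)$ is finite and realised by affine diffeomorphisms of $M'$) to the same end. Having $\bar\rho:G\to\Out(\Gamma)$, I would lift it along $\Aff(M')\to\Out(\Gamma)$ to an action $\gamma:G\to\Aff(M')$ (using $\Zc\Gamma=1$ and asphericity of $M'$ to control this map); by construction $f\circ g$ is then freely homotopic to $\gamma(g)\circ f$ for all $g$, since maps into the aspherical $M'$ are detected on $\pi_1$.

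Next I would rigidify. Put a $G$-invariant Riemannian metric on $M$, so that each $g$ acts by isometries. Because $M'$ is nonpositively curved, Eells--Sampson gives a harmonic $h:M\to M'$ homotopic to $f$; as $\deg h=\deg f\neq 0$, $h$ is surjective, so its image is neither a point nor a closed geodesic, and since $M'$ has no nonzero parallel vector fields (trivial centre in Case~2, no Euclidean de~Rham factor in Case~1) Hartman's uniqueness theorem makes $h$ the unique harmonic map in its homotopy class. For each $g$, the maps $h\circ g$ and $\gamma(g)\circ h$ are both harmonic (precomposition by the isometry $g$, postcomposition by the connection-preserving $\gamma(g)$, both preserve harmonicity) and both homotopic to $f\circ g$, hence equal; so $h$ is $G$-equivariant for $\gamma:G\to\Aff(M')\subseteq\Homeo(M')$, and I take $f_G=h$. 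For effectiveness: if $\gamma(g)=\mathrm{id}$ then $h\circ g=h$, and choosing a regular value $y$ of $h$ outside the measure-zero set $h(\mathrm{Fix}(g))$, the finite fibre $h^{-1}(y)$ is freely permuted by $\langle g\rangle$ with constant local sign along orbits, so $|\langle g\rangle|$ divides the signed count $\deg h=1$; hence $g=e$.

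The hard part will be the invariance $g_*(\ker f_*)=\ker f_*$ in the first step — the rest is a routine assembly of harmonic-map theory. This invariance is genuinely where the hypotheses bite: a rank-one factor (where superrigidity fails) or a nontrivial centre of $\Gamma$ (where parallel vector fields reappear, uniqueness of harmonic maps breaks, and $g_*$ may move $K$) both destroy the argument, and in those cases the conclusion itself fails.
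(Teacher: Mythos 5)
The paper itself offers no proof of this statement---it is quoted from Schoen--Yau---so there is no internal argument to compare yours with; judging the sketch on its own terms, the decisive problem is your first step. You claim that the invariance $g_*(K)=K$, where $K=\ker\bigl(f_*:\pi_1(M)\to\pi_1(M')\bigr)$, can be forced by rigidity of $\Gamma=\pi_1(M')$ (Mostow, Margulis, Hopficity, finiteness of $\Out(\Gamma)$, Bieberbach theory) together with a Goursat-type comparison. All of those inputs constrain $\Gamma$ alone; they say nothing about $\pi_1(M)$, which is completely unconstrained here, nor about how an arbitrary finite-order diffeomorphism of $M$ moves the normal subgroup $K$ inside it, and the degree-one hypothesis does not repair this. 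In fact the invariance fails under exactly the stated hypotheses: take $M$ to be the manifold obtained by gluing two copies of $M'\setminus\operatorname{int}D^n$ along a reflection of the boundary sphere (a model of $M'\# M'$), let $f:M\to M'$ be the degree-one pinch map collapsing the second copy, and let $G=\Z/2$ act by the smooth involution $g$ interchanging the two copies. Then $\pi_1(M)\cong\Gamma*\Gamma$, $K$ is the normal closure of the second free factor, and $g_*(K)$ is the normal closure of the first, a different subgroup. Since equivariance of any $f_G\simeq f$ with respect to any action of $G$ on $M'$ forces $g_*^{-1}(K)=K$ (post-composing with the homeomorphism of $M'$ does not change kernels, and freely homotopic maps have the same kernel on $\pi_1$), no such $f_G$ can exist. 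So your step 1 is not merely ``the hard part'': it is false in this generality, and no amount of rigidity of $\Gamma$ can rescue it.

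What the example isolates is that the homotopy-compatibility condition---each $f\circ g$ freely homotopic to $\phi_g\circ f$ for some homeomorphism (affine map, isometry) $\phi_g$ of $M'$---is a genuine additional hypothesis, exactly of the kind built into \cref{pE harmonic}; in Schoen--Yau's machinery it is assumed (or is automatic when $f_*$ is an isomorphism, so that $K=1$), and it is not a consequence of higher rank or of $\Zc\pi_1(M')$ being trivial. The same example shows that the statement of \cref{exporitng maps SY} has to be read with this caveat. Granting that compatibility, the second half of your sketch---averaging the metric, Eells--Sampson existence, uniqueness of the harmonic representative in the absence of parallel fields, harmonicity of $h\circ g$ and $\phi_g\circ h$, and the degree-one count for effectiveness---is the standard Schoen--Yau rigidification and is essentially sound (note only that uniqueness of harmonic maps into nonpositively curved targets holds in general only up to parallel displacement, which is precisely why the no-flat-factor and trivial-centre hypotheses matter). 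As written, however, the proposal does not establish the theorem, because its first step cannot be carried out from the stated hypotheses.
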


Note that in \cref{exporitng maps SY} we do not need to replace the finite group by a suitable subgroup of bounded index. 

The aim of this section is to study exporting maps.

\begin{lem}\label{exporting map non-zero degree}
	Let $f:M\longrightarrow M'$ be a non-zero degree exporting map. With the notation as in \cref{pE defn}, we have $|\ker\phi'|\leq \deg(f)$.
\end{lem}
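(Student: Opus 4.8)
The plan is to exploit the degree hypothesis to control the kernel of $\phi'$ via a mapping-degree argument at the level of cohomology. Let $G$ be a finite group acting effectively on $M$; since $f$ is an exporting map, there is a subgroup $H\leq G$ with $[G:H]\leq C$, an action $\phi':H\longrightarrow\Homeo(M')$, and an $H$-equivariant map $f_H:M\longrightarrow M'$ homotopic to $f$. Set $K=\ker\phi'$. Then $K$ acts on $M$ (via the restriction of $\phi$, which is still effective), $K$ acts trivially on $M'$, and the quotient map $q:M\longrightarrow M/K$ is a branched-type/orbit map. The key point is that $f_H$ being $H$-equivariant, hence $K$-equivariant with $K$ acting trivially on the target, means $f_H$ factors through the $K$-orbit space: $f_H = \bar f \circ q$ for some $\bar f:M/K\longrightarrow M'$.

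The first step is to turn this factorization into a degree identity. Since $K$ acts effectively (in particular freely on a dense open set, and more to the point the orbit map $q$ has generic fiber $|K|$), one expects $\deg(q)=|K|$ in an appropriate sense; combined with $\deg f_H=\deg f$ (homotopy invariance) and multiplicativity of degree, $\deg(f)=\deg(\bar f)\cdot|K|$. Since $\deg(\bar f)$ is a non-zero integer (it cannot be zero, as $f_H$ has non-zero degree), we get $|K|\leq|\deg f|$, which is exactly the claim. The cleanest way to run this without worrying about whether $M/K$ is a manifold is to work with the transfer: for the finite group $K$ acting on $M$, the composite $H^n(M;\Q)\xrightarrow{\mathrm{tr}} H^n(M;\Q)^K \hookrightarrow H^n(M;\Q)$ is multiplication by $|K|$ on the invariants, and $f_H^*:H^n(M';\Q)\longrightarrow H^n(M;\Q)$ lands in $H^n(M;\Q)^K$ because $f_H$ is $K$-equivariant with trivial $K$-action on $M'$. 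Chasing a generator of $H^n(M';\Q)\cong\Q$ through $f_H^*$ and pairing with the fundamental class of $M$ yields $\deg f = |K|\cdot m$ for some integer $m$, and $m\neq 0$ since $f_H^*$ is injective on top cohomology (because $\deg f\neq 0$). Hence $|K|$ divides $\deg f$, so $|\ker\phi'|\leq\deg(f)$.

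I would structure the write-up as: (i) invoke the exporting property to produce $H$, $\phi'$, $f_H$; (ii) observe $K=\ker\phi'$ acts on $M$ preserving the homotopy class of the $K$-invariant map $f_H$; (iii) use the transfer of the $K$-action on $M$ together with $K$-equivariance of $f_H$ to write $\langle f_H^*\omega,[M]\rangle$ as a multiple of $|K|$, where $\omega$ generates $H^n(M';\Z)/\text{torsion}$; (iv) identify this pairing with $\deg f$ and note it is non-zero, forcing $|K|\mid\deg f$.

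The main obstacle is step (iii): making the divisibility $|K|\,\big|\,\deg f$ rigorous. The subtlety is that $f_H$ is $K$-equivariant for an action of $K$ on $M$ that need not be free, so one cannot directly say $q:M\to M/K$ is a covering of degree $|K|$, and $M/K$ may fail to be a manifold, so naive multiplicativity of degree does not apply. The transfer argument circumvents this, but one must be careful that the target class $\omega$ pulls back to a $K$-invariant class and that the invariants $H^n(M;\Q)^K$ are exactly the image of the transfer up to the factor $|K|$ — this is standard (the composition $\mathrm{tr}\circ\mathrm{res}$ is multiplication by $|K|$), but it should be stated cleanly. Everything else — homotopy invariance of degree, $\deg f_H=\deg f$, injectivity of $f_H^*$ on top rational cohomology — is routine.
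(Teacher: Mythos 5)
Your overall skeleton is the same as the paper's: invoke the exporting property to get $H$, $\phi'$, $f_H$, set $K=\ker\phi'$, observe that $K$-equivariance with trivial $K$-action on $M'$ makes $f_H$ factor through the orbit map $q:M\to M/K$, and deduce that $|K|$ divides $\deg f$. The paper gets the divisibility in one stroke from \cref{quotient map image degree} (Mundet's Lemma 4.4): for an effective orientation-preserving action of a finite group $K$ on the closed oriented $n$-manifold $M$, the pullback $q^*\bigl(H^n(M/K,\Z)\bigr)$ lies in $|K|\,H^n(M,\Z)$. Since $f_H^*=q^*\circ\bar f^*$ and $f_H^*=f^*$ on $H^n$, the class $f^*\omega=\deg(f)\cdot(\text{generator})$ is divisible by $|K|$, done.

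The gap in your proposal is exactly at the step you flag as the main obstacle, step (iii), and the transfer argument you propose does not close it. The identity $\mathrm{tr}\circ\mathrm{res}=|K|$ (averaging $\sum_{g\in K}g^*$ restricted to invariants) is either a rational statement or an integral tautology, and neither yields the divisibility you need. Over $\Q$ it only says $f_H^*\omega=\tfrac{1}{|K|}\sum_g g^*f_H^*\omega$, from which ``$\deg f=|K|\cdot m$ with $m$ an \emph{integer}'' does not follow; over $\Z$ the hypothesis that $f_H^*\omega$ is $K$-invariant in $H^n(M,\Z)\cong\Z$ is automatic whenever $K$ preserves orientation (each $g^*$ is $\pm1$ there), hence carries no information. (Invariance under an orientation-reversing element would force $f_H^*\omega=0$, so in fact $K$ preserves orientation automatically when $\deg f\neq0$; but then invariance is vacuous.) The genuine content is the statement about the orbit map itself: classes pulled back from $H^n(M/K,\Z)$ are divisible by $|K|$. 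That is precisely \cref{quotient map image degree}, and its proof needs more than the averaging identity --- e.g.\ Newman's theorem to produce an open dense set where the action is free, so that $q$ is a $|K|$-sheeted covering there and one can compare $q_*[M]$ with $|K|$ times a class on the quotient, or a transfer associated to $q$ together with such an analysis. So to make your write-up correct you should either prove that lemma or cite it; as it stands, the passage from ``$f_H^*\omega$ is $K$-invariant and lies in the image of $q^*$'' to ``$|K|$ divides $\langle f_H^*\omega,[M]\rangle$'' is asserted but not established.
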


The proof of the lemma is a consequence of the next fact.

\begin{lem}\label{quotient map image degree}\cite[Lemma 4.4]{mundet2021topological}
	Let $M$ be a closed oriented $n$-manifold and suppose that $G$ is a finite group acting on $M$ effectively and preserving the orientation. If we denote by $\pi:M\longrightarrow M/G$ the quotient map and by $d$ the cardinal of $G$ then $\pi^*(H^n(M/G,\Z))\subseteq dH^n(M,\Z)$.  
\end{lem}

\begin{proof}[Proof of \cref{exporting map non-zero degree}]
	Since $f_H$ is $H$-equivariant, we can restrict the action of $H$ on $M$ to the subgroup $\ker\phi'\leq H$, obtaining a commutative diagram
	\[
	\begin{tikzcd}
		M\ar{r}{f_H}\ar{d}{\pi} & M'\\
		M/\ker\phi'\ar{ru}{\overline{f}_H} &\\
	\end{tikzcd}
	\]
	In consequence, $\pi^*\circ \overline{f}^*_H=f^*_H$. Since $f$ and $f_H$ are homotopic, we have $f^*_H=f^*:H^n(M',\Z)\longrightarrow H^n(M,\Z)$. By \cref{quotient map image degree}, we obtain that $|\ker\phi'|$ divides $\deg(f)$. In particular $|\ker\phi'|\leq \deg(f)$.
\end{proof}

The next lemma studies the composition of two exporting maps.

\begin{lem}\label{pE composition}
	Let $M$, $M'$ and $M''$ be closed oriented manifolds and let $f:M\longrightarrow M'$ and $g:M'\longrightarrow M''$ be exporting non-zero degree maps with constants $C$ and $D$ respectively. Then $g\circ f:M\longrightarrow M''$ exports group actions with constant $C\cdot D$.
\end{lem}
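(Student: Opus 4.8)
The plan is to unwind the definitions and chain the two exporting properties, being careful to choose the right intermediate subgroup and to upgrade a mere $H$-action on $M'$ to an \emph{effective} one so that the second map can be applied. Let $G$ be a finite group acting effectively on $M$, say via $\phi\colon G\longrightarrow\Homeo(M)$. Since $f$ is exporting with constant $C$, there is a subgroup $H\leq G$ with $[G:H]\leq C$, an action $\phi'\colon H\longrightarrow\Homeo(M')$, and an $H$-equivariant map $f_H\colon M\longrightarrow M'$ homotopic to $f$.

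The action $\phi'$ of $H$ on $M'$ need not be effective, so I cannot directly feed it into the exporting property of $g$. However, by \cref{exporting map non-zero degree} the kernel $K=\ker\phi'$ satisfies $|K|\leq\deg(f)$, and $\deg f$ is a fixed constant not depending on $G$. The quotient $\overline{H}=H/K$ acts effectively on $M'$. Now apply the exporting property of $g$ (constant $D$) to the effective action of $\overline{H}$ on $M'$: there is a subgroup $\overline{L}\leq\overline{H}$ with $[\overline{H}:\overline{L}]\leq D$, an action of $\overline{L}$ on $M''$, and an $\overline{L}$-equivariant map $g_{\overline{L}}\colon M'\longrightarrow M''$ homotopic to $g$. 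Let $L\leq H$ be the preimage of $\overline{L}$ under $H\to\overline{H}$, so that $K\leq L$ and $[H:L]=[\overline{H}:\overline{L}]\leq D$. The action of $L$ on $M'$ factors through $\overline{L}$ (since $K$ acts trivially), hence $L$ acts on $M''$ through $\overline{L}$, and $g_{\overline{L}}$ is $L$-equivariant for this action. The composite $g_{\overline{L}}\circ f_H\colon M\longrightarrow M''$ is then $L$-equivariant: $f_H$ intertwines $\phi|_L$ with the $L$-action on $M'$, and $g_{\overline{L}}$ intertwines that action with the $L$-action on $M''$. Moreover $g_{\overline{L}}\circ f_H$ is homotopic to $g\circ f$, since $f_H\simeq f$ and $g_{\overline{L}}\simeq g$ imply $g_{\overline{L}}\circ f_H\simeq g\circ f_H\simeq g\circ f$.

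Finally I track the index: $[G:L]=[G:H]\cdot[H:L]\leq C\cdot D$, which is the desired bound, and $g\circ f$ has non-zero degree because $\deg(g\circ f)=\deg(g)\cdot\deg(f)\neq 0$. Thus $g\circ f$ is an exporting map with constant $C\cdot D$.

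I expect the only subtle point to be the passage through the kernel of ineffectiveness: one must observe that $|K|$ is bounded by the fixed quantity $\deg f$ (via \cref{exporting map non-zero degree}) so that replacing $H$ by $H$ and the action by its effectivization does not cost any index, and then check that the $L$-equivariance of $g_{\overline{L}}\circ f_H$ survives pulling $\overline{L}$ back to $L$. Everything else is a formal diagram chase together with multiplicativity of degree and of index.
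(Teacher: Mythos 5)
Your proof is correct and follows essentially the same route as the paper: apply the exporting property of $f$, pass to the effective action of the quotient (the paper works with the image $\phi'(H)$ rather than $H/\ker\phi'$, which is the same group), apply the exporting property of $g$, and pull the resulting subgroup back to $H$, giving index at most $C\cdot D$. The only superfluous step is invoking \cref{exporting map non-zero degree} to bound $|\ker\phi'|$ by $\deg(f)$ — effectiveness of the quotient action is automatic and the index bookkeeping never uses that bound.
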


\begin{proof}
	Assume that $G$ is a finite group acting effectively on $M$ and take the subgroup $H\leq G$ of \cref{pE defn}. Thus, there exists an action of $H$ on $M'$, denoted by $\phi':H\longrightarrow \Homeo(M')$, a $H$-equivariant map $f_H:M\longrightarrow M'$ homotopic to $f$ and $[G:H]\leq C$. Since $\phi'(H)$ acts effectively on $M'$, there exist a subgroup $K\leq \phi'(H)$, an action $\phi'':K\longrightarrow \Homeo(M'')$ satisfying $[\phi'(H):K]\leq D$, and a $K$-equivariant map $g_K:M'\longrightarrow M''$ homotopic to $g$. We can consider the subgroup $H'=\phi'^{-1}(K)\leq H$ and the action of $H'$ on $M''$ given by the group morphism $\phi''\circ\phi':H'\longrightarrow K\leq \Homeo(M'')$. The map $g_K\circ f_H$ is $H'$-equivariant and homotopic to $g\circ f$. In addition, $[G:H']\leq C\cdot D$ and hence the claim is proved.
\end{proof}

Before stating the main result of this section, we introduce two more properties of large finite group actions on manifolds.

\begin{defn}\label{stabilizers properties def}
	Let $M$ be a closed connected manifold. Then:
	\begin{itemize}
		\item[1.] We say that $M$ has the small stabilizers property if there exist a constant $C$ such that if $G$ is a finite group acting effectively on $M$ then $|G_x|\leq C$ for all $x\in M$.
		\item[2.] We say that $M$ has the almost fixed point property if there exists a constant $C$ such that if $G$ is a finite group acting effectively on $M$ then there exist $x\in M$ such that $[G:G_x]\leq C$.
	\end{itemize}
\end{defn}

The first property is new and it has not been studied yet. We will show that the manifolds studied in this article have this property. The second property was studied in \cite{riera2024jordan}.

\begin{thm}\cite[Theorem 1.5]{riera2024jordan}\label{almost fix point euler char}
	Let $M$ be a closed connected manifold with $\chi(M)\neq 0$. Then $M$ has the almost fixed point property.
\end{thm}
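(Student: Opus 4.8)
The plan is to reduce to nilpotent acting groups and then analyse the action one prime at a time. By the generalized Jordan property of Csikós–Pyber–Szabó quoted in the introduction, there is a constant depending only on $M$ such that every finite group acting effectively on $M$ contains a nilpotent subgroup of bounded index; so it suffices to produce a uniform constant $C$ with the property that every nilpotent group $N$ acting effectively on $M$ has a point $x$ with $[N:N_x]\le C$. Writing $N=\prod_p N_p$ as the product of its Sylow subgroups and using that an element of a nilpotent group fixes a point if and only if each of its $p$-components does, one gets $N_x=\prod_p(N_p)_x$ for every $x\in M$, so the goal is to find $x$ for which $\prod_p[N_p:(N_p)_x]$ is bounded. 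Let $\mathcal P=\{p:p\mid\chi(M)\}$; this is a finite set.

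The main tool is classical Smith theory: if a finite $p$-group $P$ acts on a closed manifold $X$, then $X^P$ is a closed submanifold, $\chi(X^P)\equiv\chi(X)\pmod p$, and $\dim_{\Z/p}H^*(X^P;\Z/p)\le\dim_{\Z/p}H^*(X;\Z/p)$. In particular, when $p\notin\mathcal P$ the integer $\chi(X^P)$ is again prime to $p$, hence nonzero, so $X^P\neq\emptyset$; thus every point of $M^{N_p}$ is fixed by all of $N_p$ whenever $p\notin\mathcal P$. The first substantial step is to find a single point fixed by $N_p$ for \emph{all} $p\notin\mathcal P$ simultaneously, which I would establish by induction on $\dim M$: choosing one such prime $p$ dividing $|N|$, replace $M$ by a connected component of $M^{N_p}$ carrying the residual action of $\prod_{q\neq p}N_q$, and iterate. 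The Smith inequality keeps the total mod-$\ell$ Betti numbers of every manifold produced this way bounded by invariants of the original $M$, so the finitely many possible Euler characteristics — and hence all constants — stay uniform, and after boundedly many reductions the residual "good-prime'' group acts with a genuine fixed point.

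There remain the finitely many primes $p\in\mathcal P$, where $N_p$ may be large and a global fixed point is too much to hope for (already $\Z/6$ acts on a disjoint union of a two-point and a three-point orbit with no global fixed point). Here I would treat one $p\in\mathcal P$ at a time: apply \Cref{few stabilizers p-groups} to pass to $H_p\le N_p$ of bounded index admitting at most $C_0$ distinct stabilizers, and expand $\chi(M)=\sum_{(K)}[H_p:K]\,\chi_c(M_{(K)}/H_p)$ over these boundedly many orbit types, where $\chi_c$ denotes the compactly supported Euler characteristic (additive over the stratification). Each $\chi_c$-factor is controlled, through inclusion–exclusion over the $\le C_0$ fixed submanifolds $M^{K}$ together with the Smith inequality, so, $\chi(M)$ being a fixed nonzero integer, not every index $[H_p:K]$ can be large and some orbit type has bounded index. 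Combining the bounded-index orbit types for the $|\mathcal P|$ bad primes with the simultaneous good-prime fixed point, and then with the index coming from the Jordan reduction, yields the desired uniform constant $C$.

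The step I expect to be the main obstacle is the simultaneous fixed point for the good primes: each individual Sylow factor has a fixed submanifold, but passing to it can shrink the manifold and introduce new prime divisors of the Euler characteristic, so one must arrange the induction so that every numerical quantity encountered is bounded by an invariant of the original $M$. The Smith inequalities on mod-$p$ Betti numbers — which are uniformly bounded over all primes $p$ — are exactly what makes such a uniform bound possible; the remainder of the argument is bookkeeping with orbit-type Euler-characteristic identities.
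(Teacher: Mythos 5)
This statement is not proved in the paper at all: it is quoted verbatim from \cite[Theorem 1.5]{riera2024jordan} and used as a black box (e.g.\ in the proof of \cref{main theorem2 intro}.3), so there is no internal argument to compare yours with; I can only assess your sketch on its own terms. Your opening moves are reasonable and do echo ingredients that appear in the literature: the reduction via \cite{csikos2022finite} to nilpotent groups, the factorization $N_x=\prod_p(N_p)_x$ over Sylow subgroups, the Smith congruence $\chi(M^P)\equiv\chi(M)\pmod p$ to get $M^{N_p}\neq\emptyset$ when $p\nmid\chi(M)$, and the use of \cref{few stabilizers p-groups} together with an orbit-type Euler characteristic count at the finitely many primes dividing $\chi(M)$.

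The step you yourself flag as the main obstacle is, however, a genuine gap and not mere bookkeeping, and as written the argument does not close. First, for merely continuous actions $M^{N_p}$ need not be a closed submanifold (only a mod-$p$ cohomology manifold, possibly not even an ANR), so ``the Betti numbers of every manifold produced this way'' is not well founded. Second, and more fundamentally, Smith theory controls only mod-$p$ data: it bounds $\dim_{\Z/p}H^*(M^{N_p};\Z/p)$ and constrains $\chi(M^{N_p})$ only modulo $p$. It gives no bound on the mod-$q$ cohomology of $M^{N_p}$ for $q\neq p$, and no control over which primes divide $\chi$ of the chosen component. Consequently, at the next stage of your induction the congruence argument for a new prime $q$ may simply fail (a prime that was ``good'' for $M$ can divide the Euler characteristic of the fixed set), and every constant you need there --- the number of components to be preserved, the Mann--Su type bounds, and in particular the constant of \cref{few stabilizers p-groups} applied to the new space, which is not the fixed manifold $M$ and in the topological category not a manifold at all --- is no longer uniform in terms of invariants of $M$. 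The same uniformity problem affects your bad-prime step, where the stratification formula $\chi(M)=\sum_{(K)}[H_p:K]\,\chi_c(M_{(K)}/H_p)$ also needs justification for topological actions. So the heart of the theorem, namely obtaining a single point nearly fixed by all the good-prime Sylow factors simultaneously with constants depending only on $M$, is exactly what remains unproved; the published proof in \cite{riera2024jordan} does not proceed by iterating Smith theory prime by prime in this way, and your sketch should not be regarded as a proof of \cref{almost fix point euler char}.
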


The almost fixed point property and the small stabilizers property are dual in the following sense:

\begin{lem}\label{small stab+almost fixed point=almost asymmetric}
	A closed manifold $M$ with the small stabilizers and the almost fixed point property is almost asymmetric.
\end{lem}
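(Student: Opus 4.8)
A closed manifold $M$ with the small stabilizers and the almost fixed point property is almost asymmetric; i.e., $\Homeo(M)$ is Minkowski.

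The plan is to combine the two hypotheses directly. Suppose $G$ is a finite group acting effectively on $M$; I want to bound $|G|$ by a constant depending only on $M$. Let $C_1$ be the constant from the small stabilizers property, so that $|G_x| \leq C_1$ for every $x \in M$, and let $C_2$ be the constant from the almost fixed point property, so that there exists a point $x_0 \in M$ with $[G : G_{x_0}] \leq C_2$. First I would pick such a point $x_0$. Then $|G| = [G : G_{x_0}] \cdot |G_{x_0}| \leq C_2 \cdot C_1$. Setting $C = C_1 C_2$, every finite group acting effectively on $M$ has order at most $C$, which is exactly the statement that $\Homeo(M)$ is Minkowski, i.e. that $M$ is almost asymmetric (per Definition~\ref{large finite group actions def}.3).

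There is essentially no obstacle here: the proof is a one-line counting argument using Lagrange's theorem, once both properties are invoked. The only thing to be slightly careful about is that the two constants $C_1$ and $C_2$ are genuinely universal (independent of $G$), which is precisely what Definition~\ref{stabilizers properties def} and Theorem~\ref{almost fix point euler char} (together with the definition of the almost fixed point property) guarantee. So the "main obstacle" is purely bookkeeping: making sure one quantifies correctly, choosing $x_0$ after $G$ is given, and noting that the final bound $C_1 C_2$ does not depend on $G$. One could also remark that this lemma is the reason the small stabilizers property is a useful companion to $\chi(M) \neq 0$: by Theorem~\ref{almost fix point euler char}, manifolds with nonzero Euler characteristic automatically have the almost fixed point property, so establishing the small stabilizers property for such a manifold immediately yields almost asymmetry — which is how part 3 of Theorem~\ref{main theorem2 intro} will presumably be deduced.
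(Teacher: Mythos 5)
Your proof is correct and is essentially identical to the paper's: both pick a point $x_0$ with $[G:G_{x_0}]$ bounded by the almost fixed point constant, bound $|G_{x_0}|$ by the small stabilizers constant, and conclude $|G|\leq C_1C_2$ via Lagrange's theorem. Nothing further is needed.
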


\begin{proof}
	Let $C$ be the constant of the almost fixed point property and $D$ the constant of the small stabilizers property. Assume that we have a finite group $G$ acting effectively on $M$. Then there exists $x\in M$ such that $[G:G_x]\leq C$. In addition, $|G_x|\leq D$. Consequently, $|G|\leq C\cdot D$. Thus, $M$ is almost asymmetric.
\end{proof}

The main result of this section is that properties introduced in \cref{large finite group actions def} behave well with respect exporting maps. It is one of the main ingredients to prove \cref{main theorem2 intro}.

\begin{thm}\label{pE jordan and discsym}
	Let $M$ and $M'$ closed oriented manifolds which admit a non-zero degree exporting map $f:M\longrightarrow M'$. Then:
	\begin{itemize}
		\item[1.] If $\Homeo(M')$ is Jordan, then $\Homeo(M)$ is Jordan.
		\item[2.] $\D(M)\leq \D(M')$.
		\item[3.] If $M'$ has the small stabilizer property then $M$ has the small stabilizer property.
		\item[4.] If $M'$ is almost asymmetric, then $M$ is almost asymmetric.
	\end{itemize} 
\end{thm}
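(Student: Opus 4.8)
The plan is to reduce each of the four assertions to the exporting-map property of $f$ together with the corresponding property of $M'$, using the defining constant $C$ of $f$ (plus, where needed, the Minkowski-lemma trick to pass to a finite-index subgroup acting trivially on $H^1(M,\Z)$) and Lemma~\ref{exporting map non-zero degree} to control the kernel of ineffectiveness of the exported action.

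First I would treat (1). Let $\mathcal{C}'$ be the Jordan constant of $\Homeo(M')$. Given a finite group $G$ acting effectively on $M$, pick $H\leq G$ with $[G:H]\leq C$, an action $\phi':H\longrightarrow\Homeo(M')$, and an $H$-equivariant map $f_H$ homotopic to $f$ as in \cref{pE defn}. By \cref{exporting map non-zero degree}, $|\ker\phi'|\leq\deg(f)$. The image $\phi'(H)\leq\Homeo(M')$ is finite, hence has an abelian subgroup $A'$ of index $\leq\mathcal{C}'$; its preimage $A=\phi'^{-1}(A')\leq H$ satisfies $[H:A]\leq\mathcal{C}'$ and fits in an extension $1\to\ker\phi'\to A\to A'\to 1$ with $A'$ abelian and $|\ker\phi'|\leq\deg f$. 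Such an $A$ need not be abelian, but it has bounded-index abelian subgroups by a standard argument (e.g. a central-by-finite or Jordan-type bound for extensions of an abelian group by a group of bounded order — cf.\ the discussion of the Jordan property for group extensions), giving an abelian $A''\leq A\leq G$ with $[G:A'']$ bounded by a constant depending only on $C$, $\mathcal{C}'$ and $\deg f$. Hence $\Homeo(M)$ is Jordan.

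For (2), suppose $r\in\mu(M)$, so $(\Z/a_i)^r$ acts effectively on $M$ for arbitrarily large $a_i$. Applying the exporting property (after first passing to the index-$\leq C$ subgroup) yields a subgroup $H_i\leq(\Z/a_i)^r$ with $[(\Z/a_i)^r:H_i]\leq C$ and an action $\phi_i':H_i\longrightarrow\Homeo(M')$ with $|\ker\phi_i'|\leq\deg f$. For $a_i$ large the subgroup $H_i$ still contains a subgroup isomorphic to $(\Z/b_i)^r$ with $b_i\to\infty$ (index $\leq C$ forces each cyclic factor to survive up to a bounded divisor), and modding out the bounded kernel $\ker\phi_i'$ leaves an effective action of $(\Z/c_i)^r$ on $M'$ with $c_i\to\infty$. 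Thus $r\in\mu(M')$, so $\D(M)\leq\D(M')$. Assertion (4) then follows immediately: $M'$ almost asymmetric means $\D(M')=0$ (by \cite[Lemma 8.1]{riera2023actions}), hence $\D(M)=0$, i.e.\ $M$ is almost asymmetric; alternatively argue directly that a finite $G$ acting effectively on $M$ produces, via $f_H$ and the bounded kernel, a finite group of bounded order acting effectively on $M'$, whose order is bounded, so $|G|$ is bounded.

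For (3), let $D$ be the small-stabilizers constant for $M'$ and let $G$ act effectively on $M$ with $x\in M$. Choose $H\leq G$ and $f_H,\phi'$ as above. The stabilizer $H_x$ acts on $M'$ fixing the point $f_H(x)$ via $\phi'$, and $\ker(\phi'|_{H_x})\leq\ker\phi'$ has order $\leq\deg f$, while $\phi'(H_x)$ is a subgroup of a stabilizer of the effective action of $\phi'(H)$ on $M'$, hence $|\phi'(H_x)|\leq D$; therefore $|H_x|\leq D\cdot\deg f$, and since $[G:H]\leq C$ we get $|G_x|\leq C\cdot D\cdot\deg f$, a constant independent of $G$ and $x$. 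The main obstacle I anticipate is the bookkeeping in (1): turning the extension of an abelian group of unbounded order by a group of order $\leq\deg f$ into an abelian subgroup of genuinely bounded index requires a Jordan-type statement for such extensions, and one must make sure the resulting constant depends only on $C$, $\mathcal{C}'$ and $\deg f$ — not on $G$. Everything else is a routine transfer of the relevant property across $f$ via the exporting constant and the degree bound on $\ker\phi'$.
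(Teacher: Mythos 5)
The genuine gap is in item (1). After producing $A=\phi'^{-1}(A')$ sitting in an extension $1\to\ker\phi'\to A\to A'\to 1$ with $|\ker\phi'|\le\deg f$ and $A'$ abelian, you invoke a ``standard'' Jordan-type bound for extensions of an abelian group by a group of bounded order. No such bound exists in that generality: extraspecial $p$-groups are central extensions $1\to\Z/p\to G\to(\Z/p)^{2n}\to 1$ with kernel of order $p$, yet their maximal abelian subgroups have index $p^{n}$, which is unbounded as $n\to\infty$. The lemma that is actually available (\cref{Jordan ses}) requires the abelian quotient to be generated by a bounded number $r$ of elements, and its constant depends on $r$. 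So you must bound the number of generators of $A'$ independently of $G$, and this is exactly the ingredient your sketch never supplies. The paper gets it from the Mann--Su theorem (\cref{bound rank finite groups}): since $A'$ acts effectively on the closed manifold $M'$, its rank is at most a constant $r'$ depending only on $M'$; then \cref{Jordan ses} applied with $d=\deg f$ and $r=r'$ yields an abelian subgroup of $A$ of index at most $C(\deg f,r')$, whence $[G:A'']\le C\cdot \mathcal{C}'\cdot C(\deg f,r')$. You flagged this obstacle yourself at the end, but flagging it does not close it; without the rank bound the step as written fails.

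Items (2), (3) and (4) are essentially correct. Your (3) and (4) are the same bounded-kernel transfer as in the paper (for (4) your alternative via $\D(M')=0$ also works, given the cited equivalence between almost asymmetry and vanishing discrete degree of symmetry). In (2) your route differs slightly from the paper's: instead of splitting into the two regimes of \cref{discsym prime version} ($P\D$ versus $\D_p$), you pass to a full-rank subgroup of $H_i$ (this is \cref{subgroup elementary abelian groups}) and then observe that the quotient of $(\Z/b_i)^r$ by a subgroup of order at most $\deg f$ still contains $(\Z/c_i)^r$ with $c_i\to\infty$; that claim is true (all invariant factors of the quotient are at least $b_i/\deg f$), so your argument is valid and in fact a little more direct, though you should spell out the invariant-factor justification rather than leave it parenthetical.
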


In order to prove this theorem we need the three following group-theoretic lemmas.

\begin{lem}\label{Jordan ses}\cite[Lemma 2.2]{IgnasiMundetiRiera2010Jtft}
	Let $d$ and $r$ be natural numbers. There exists a natural number $C(d,r)$ such that if we have a short exact sequence of groups 
	$$1\longrightarrow K\longrightarrow G \longrightarrow A\longrightarrow 1 $$
	where $|K|\leq d$ and $A$ is abelian and generated by $r$ elements, then $G$ has an abelian subgroup of index at most $C(d,r)$.
\end{lem}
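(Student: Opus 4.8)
The plan is to reduce, via a bounded-index step, to the case where $K$ is central, and then invoke Schur's classical theorem that a group with center of finite index has finite commutator subgroup. First, observe that $G$ acts on the finite group $K$ by conjugation, giving a homomorphism $G\longrightarrow \Aut(K)$. Let $G_0$ be the kernel of this map; then $[G:G_0]\leq |\Aut(K)|\leq (d!)$, a bound depending only on $d$, and $K$ is central in $G_0$. Since $A$ is abelian, $G_0$ contains the derived subgroup $[G,G]$ only up to the image in $A$, so more usefully: the image $\overline{G_0}$ of $G_0$ in $A$ has index at most $(d!)$ in $A$, hence is abelian and generated by at most $r' = r\cdot(d!)$ elements (a finite-index subgroup of a group generated by $r$ elements is generated by boundedly many elements — use that $A$ abelian makes this elementary, e.g. $A\cong \Z^r\times(\text{finite})$ is not assumed, but one can still bound generators of a finite-index subgroup of an $r$-generated abelian group by $r$ plus the index, or simply pass to $A$ itself and note $\overline{G_0}$ has index $\le d!$ so $\rank\overline{G_0}\le r + \log_2(d!)$).

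Next, working inside $G_0$: we have a central extension
\[
1\longrightarrow K\longrightarrow G_0\longrightarrow \overline{G_0}\longrightarrow 1
\]
with $|K|\leq d$ and $\overline{G_0}$ abelian and boundedly generated. Because $K$ is central and $\overline{G_0}$ is abelian, $G_0$ is nilpotent of class at most $2$, and $[G_0,G_0]\leq K$, so $|[G_0,G_0]|\leq d$. Now apply (the easy direction of) Schur's theorem, or argue directly: the commutator map induces an alternating bilinear pairing $\overline{G_0}\times\overline{G_0}\longrightarrow [G_0,G_0]\leq K$. Since $K$ has order at most $d$, every element of $\overline{G_0}$ has at most $d^{\,r'}$ possible commutator-values, and one shows the centralizer in $G_0$ of the (boundedly many) generators of $\overline{G_0}$ has index bounded in terms of $d$ and $r'$; that centralizer is then abelian, being a central-by-(its image, which commutes with everything) subgroup. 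Concretely, pick generators $\bar g_1,\dots,\bar g_{r'}$ of $\overline{G_0}$ with lifts $g_i\in G_0$; the map $G_0\longrightarrow K^{r'}$, $h\mapsto([h,g_1],\dots,[h,g_{r'}])$ is a homomorphism (using class $2$), so its kernel $A'$ has index at most $d^{r'}$ and centralizes all $g_i$, hence is central in $G_0$, in particular abelian.

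Assembling the bounds: $A'$ is an abelian subgroup of $G_0$ with $[G_0:A']\leq d^{r'}$, and $[G:G_0]\leq d!$, so $[G:A']\leq d!\cdot d^{r'}$ where $r' \le r + \log_2 d!$; call this quantity $C(d,r)$. The main (really the only) subtlety is the bookkeeping on the number of generators when passing to the finite-index subgroup $\overline{G_0}\leq A$ and making sure all constants depend on $d,r$ alone and not on $G$; everything else is a routine application of the structure of class-$2$ nilpotent groups. I expect this generator-counting step to be where one must be slightly careful, but it presents no genuine obstacle.
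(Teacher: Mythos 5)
The paper does not prove this lemma at all — it is quoted verbatim from \cite[Lemma 2.2]{IgnasiMundetiRiera2010Jtft} — so the only comparison to make is with the standard argument, and yours is essentially it: pass to the kernel $G_0$ of $G\to\Aut(K)$, observe the extension of $\overline{G_0}$ by the (central, order $\le d$) kernel makes $G_0$ nilpotent of class $2$ with $|[G_0,G_0]|\le d$, and cut down by the commutator homomorphisms $h\mapsto [h,g_i]$ to reach a central, hence abelian, subgroup of index at most $d!\cdot d^{r'}$. Your argument is correct, with one small repair needed: if $K$ is nonabelian then $K\not\subseteq G_0=C_G(K)$, so the assertion ``$K$ is central in $G_0$'' and the displayed sequence $1\to K\to G_0\to\overline{G_0}\to 1$ are wrong as written; the kernel of $G_0\to A$ is $K\cap G_0=\Zc(K)$, which is central in $G_0$ and still has order at most $d$, and every subsequent step (in particular $[G_0,G_0]\le \Zc(K)$ and the fact that elements commuting with the lifts $g_i$ are central in $G_0$) goes through verbatim with this replacement. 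Also, the generator bookkeeping you worry about is a non-issue: a subgroup of an abelian group generated by $r$ elements is itself generated by at most $r$ elements, so one may take $r'=r$ and $C(d,r)=d!\,d^{r}$.
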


\begin{lem}\label{subgroup elementary abelian groups}\cite[Lemma 2.1]{mundet2021topological}
	Let $a$,$b$ and $C$ be natural numbers and suppose that $G'$ is a subgroup of $(\Z/a)^b$ satisfying $[(\Z/a)^b:G']\leq C $. Then there exist a number $a'$ and a subgroup $G''\leq G'$ such that $G''\cong (\Z/a')^b$ and $C!a'\geq a$. 
\end{lem}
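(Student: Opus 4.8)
The idea is to obtain $G''$ not as an abstract subgroup but very concretely, by \emph{scaling}: multiply all of $(\Z/a)^b$ by a single integer $m$ chosen small enough that $m\cdot(\Z/a)^b$ still lies inside $G'$. The point is that such a uniform scaling automatically produces a subgroup of the shape $(\Z/a')^b$, so no structure theory for subgroups of $(\Z/a)^b$ is needed.

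Concretely, I would first pass to the quotient $Q:=(\Z/a)^b/G'$, an abelian group of order $[(\Z/a)^b:G']\leq C$. An integer $m$ satisfies $m\cdot(\Z/a)^b\subseteq G'$ precisely when $m$ annihilates $Q$, and the least such $m$ is the exponent of $Q$; since the exponent of a finite abelian group divides its order, $m\leq |Q|\leq C$. (If one prefers to avoid the word ``exponent'', simply take $m=|Q|$, which certainly annihilates $Q$ and satisfies $m\leq C$.) Next I would compute $m\cdot(\Z/a)^b=(m\,\Z/a)^b$ coordinatewise: the subgroup of $\Z/a$ generated by $m$ is $\gcd(m,a)\,\Z/a\Z$, which is cyclic of order $a/\gcd(m,a)$. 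Hence, setting $a':=a/\gcd(m,a)$ and $G'':=m\cdot(\Z/a)^b$, we get $G''\leq G'$ with $G''\cong(\Z/a')^b$.

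Finally I would check the inequality: $\gcd(m,a)\leq m\leq C\leq C!$, so $C!\,a'=C!\,a/\gcd(m,a)\geq a$, as required. There is essentially no obstacle here; the only point requiring a moment's care is the observation that scaling by the single integer $m$ is performed uniformly in all $b$ coordinates, which is what guarantees that the resulting subgroup is genuinely isomorphic to $(\Z/a')^b$ rather than to a product of cyclic groups of differing orders.
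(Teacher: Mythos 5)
Your proof is correct and is essentially the same argument as the cited one (\cite[Lemma 2.1]{mundet2021topological}): one multiplies $(\Z/a)^b$ by a single integer that annihilates the quotient $(\Z/a)^b/G'$, observes that the image is $(\gcd \cdot \Z/a\Z)^b\cong(\Z/a')^b$, and bounds $a/a'$ by that integer. Your choice $m=|Q|\leq C$ instead of $C!$ even gives the slightly sharper bound $Ca'\geq a$, which of course implies the stated $C!a'\geq a$.
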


To state the last lemma we need to introduce two new invariants:

\begin{defn}\label{def: discsym primes}
	Let $M$ be a manifold. We define
	$$P\D(M)=\max\{\{0\}\cup\{r:(\Z/p)^r \text{ acts effectively on $M$ for arbitrarily large prime $p$}\}\}.$$ 
	
	For a fixed prime $p$, we define
	$$\D_p(M)=\max\{\{0\}\cup\{r:(\Z/p^s)^r \text{ acts effectively on $M$ for arbitrarily large $s$}\}\}. $$
\end{defn}

\begin{lem}\label{discsym prime version}
	Let $M$ be a closed manifold, then
	$$\D(M)=\max(\{P\D(M)\}\cup\{\D_p(M): \text{$p$ prime} \}).$$ 
\end{lem}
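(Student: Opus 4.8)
The plan is to prove the two inequalities separately. For "$\geq$", observe that if $(\Z/p)^r$ acts effectively on $M$ for arbitrarily large primes $p$, then setting $a_i=p_i$ (the $i$-th large prime) gives an increasing sequence of natural numbers witnessing $r\in\mu(M)$, so $P\D(M)\leq\D(M)$. Similarly, if $(\Z/p^s)^r$ acts effectively on $M$ for arbitrarily large $s$ with $p$ fixed, then $a_i=p^{s_i}$ is an increasing sequence witnessing $r\in\mu(M)$, so $\D_p(M)\leq\D(M)$ for every prime $p$. Hence the right-hand side is $\leq\D(M)$.

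The substantive direction is "$\leq$": given $r=\D(M)$, there is an increasing sequence $\{a_i\}$ and effective actions of $(\Z/a_i)^r$ on $M$, and I must produce either effective actions of $(\Z/p)^r$ for infinitely many distinct primes, or a single prime $p$ with effective actions of $(\Z/p^{s})^r$ for arbitrarily large $s$. The key tool is \Cref{subgroup elementary abelian groups}: any bounded-index subgroup of $(\Z/a)^r$ contains a copy of $(\Z/a')^r$ with $a'$ large when $a$ is large. First I would factor: for each $i$ write $a_i=\prod_q q^{e_{i,q}}$, so $(\Z/a_i)^r\cong\prod_q(\Z/q^{e_{i,q}})^r$, and each prime-power factor $(\Z/q^{e_{i,q}})^r$ acts effectively on $M$ (being a subgroup of the effective action). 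So WLOG each $a_i$ is a prime power $p_i^{s_i}$. Now there are two cases. If the set of primes $\{p_i\}$ appearing is infinite, then for infinitely many $i$ the exponent $s_i\geq 1$, giving effective actions of $(\Z/p_i)^{r}$ (restrict to the order-$p_i$ subgroup) for infinitely many distinct primes $p_i$, which forces $P\D(M)\geq r$. If the set of primes is finite, then by pigeonhole some fixed prime $p$ occurs for infinitely many $i$; since $\{a_i\}=\{p^{s_i}\}$ is increasing along that subsequence, the exponents $s_i$ are unbounded, so $(\Z/p^{s_i})^r$ acts effectively on $M$ for arbitrarily large $s_i$, giving $\D_p(M)\geq r$. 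Either way the right-hand side is $\geq r=\D(M)$.

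I do not actually need \Cref{subgroup elementary abelian groups} in this clean argument since passing to a prime-power direct factor is exact and loses nothing; the lemma would only be needed if one first had to pass to a bounded-index subgroup (as happens downstream when combining with exporting maps), so I will not invoke it here. The only mild subtlety — and the one place to be careful — is the reduction to prime powers: one must check that restricting an effective action of $(\Z/a_i)^r$ to a direct factor $(\Z/q^{e})^r$ keeps it effective, which is immediate since any subgroup of a group acting effectively acts effectively. After that, the pigeonhole dichotomy on the set of primes is the whole content, and it is routine. Thus I expect no real obstacle; the main point is simply to organize the case split correctly and to remember that "arbitrarily large $a$" for a prime power along an increasing sequence forces either infinitely many primes or one prime with unbounded exponent.
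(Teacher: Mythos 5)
Your proposal follows essentially the same route as the paper: the two easy inequalities, then a dichotomy on the set of primes dividing the $a_i$, with pigeonhole in the finite case and restriction to subgroups of $(\Z/a_i)^r$ throughout (the paper likewise does not need \cref{subgroup elementary abelian groups} in this proof). The one step you must tighten is the reduction ``WLOG each $a_i$ is a prime power'': the justification you give (each prime-power factor acts effectively) does not guarantee that the chosen prime-power sequence is still increasing, yet your finite-prime case relies on exactly that when you assert ``since $\{a_i\}=\{p^{s_i}\}$ is increasing along that subsequence, the exponents $s_i$ are unbounded.'' For instance, if $a_i=3\cdot 2^i$ and the reduction happens to select the factor $3$ each time, the set of primes is finite, the exponents are bounded, and your argument yields nothing, although the correct conclusion $\D_2(M)\geq r$ is available from the discarded factors. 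The repair is immediate: either choose for each $i$ the largest prime-power factor of $a_i$ (if these were bounded, the $a_i$ would be bounded too, so they tend to infinity along a subsequence), or do as the paper does and keep the full factorizations $a_i=p_1^{x_{1,i}}\cdots p_m^{x_{m,i}}$ in the finite-prime case, pigeonholing on which exponent $x_{l,i}$ is unbounded and then restricting the action of $(\Z/a_i)^r$ to the subgroup $(\Z/p_l^{x_{l,i}})^r$.
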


\begin{proof}
	Clearly, $P\D(M)\leq \D(M)$ and $\D_p(M)\leq \D(M)$ for every prime $p$. Conversely, assume that $\D(M)=b$ and therefore there exists a sequence of natural numbers $\{a_i\}_{i\in \mathbb{N}}$ such that $(\Z/a_i)^b$ acts effectively on $M$ for all $i$ and $a_i\longrightarrow \infty$ when $i\longrightarrow\infty$. Let $\mathcal{P}$ be the set of primes which divide $a_i$ for some $i$.
	
	If $|\mathcal{P}|=\infty$ then there exists a subsequence $\{a_k\}_{k\in\mathbb{N}}$ of $\{a_i\}_{i\in\mathbb{N}}$ such that each $a_k$ is divided by a prime $p_k$ satisfying that $p_k<p_{k+1}$. Thus, by taking $(\Z/p_k)^b\leq (\Z/a_k)^b$ we have an effective action of $(\Z/p_k)^b$ on $M$. Consequently, $b\leq P\D(M)$. 
	
	If $|\mathcal{P}|<\infty$ then there exist $m$ primes $p_1$,\dots, $p_m$ such that $a_i=p_1^{x_{1,i}}\dots p_m^{x_{m,i}}$ for all $i$. Since $a_i\longrightarrow \infty$ when $i\longrightarrow\infty$, by the pigeonhole principle there exists a subsequence $\{a_k\}_{k\in\mathbb{N}}$ and a number $l\in \{1,\dots,m\}$ such that $x_{l,k}\longrightarrow \infty$ when $k\longrightarrow \infty$. Thus, we have effective group actions of $(\Z/p_l^{x_{l,k}})^b$ on $M$. Consequently, $b\leq \D_{p_{l}}(M)$. 
	
	By combining these two cases we obtain the desired result.
\end{proof}

We are almost ready to give the proof of \cref{pE jordan and discsym}. The last result we need is a corollary of a theorem by L.N. Mann and J.C. Su.

\begin{thm}\cite{mann1963actions}\label{MannSu thm}
	Let $M$ be a closed manifold of dimension $n$. For a prime $p$, we define $b_p(M)=\sum_{i=0}^{n}\dim H^i(M,\Z/p)$. There exists a number $C_p$ only depending on $n$ and $b_p(M)$ such that if $(\Z/p)^r$ acts effectively on $M$ then $r\leq C_p$.
\end{thm}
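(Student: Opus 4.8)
The plan is to prove the statement by induction on $n=\dim M$; the two nontrivial inputs are Smith theory (together with Newman's theorem and the Borel formula) and a spectral sequence estimate for \emph{free} actions. All cohomology below is with $\Z/p$-coefficients, and I work in the category of mod-$p$ cohomology manifolds, since the fixed-point sets produced along the way need not be manifolds in the usual sense; write $b_p(X)=\sum_i\dim H^i(X;\Z/p)$. I will use three classical facts: (i) if $\Z/p$ acts on a mod-$p$ cohomology $n$-manifold $X$, then $X^{\Z/p}$ is again a mod-$p$ cohomology manifold with $b_p(X^{\Z/p})\le b_p(X)$; (ii) Newman's theorem in its generalized-manifold form: a nontrivial $\Z/p$-action on a connected mod-$p$ cohomology $n$-manifold has no fixed component of dimension $n$, so the fixed set has dimension $\le n-1$; (iii) the Borel formula: for $\Gamma\cong(\Z/p)^k$ acting on a connected mod-$p$ cohomology $n$-manifold $Y$ and a component $F$ of $Y^{\Gamma}$, one has $\operatorname{codim}_Y F=\sum_{H}\operatorname{codim}_{Y^H}F$, the sum over the corank-$1$ subgroups $H\le\Gamma$, all components taken through $F$. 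A routine reduction — a finite group acting on a space with $c\le b_p$ components has a subgroup of index $\le c!$ preserving each component, and that subgroup embeds into the product of the induced actions — lets me assume $M$ connected and reduces the theorem to the finiteness of $N_p(n,b):=\sup\{\rank\Gamma:\Gamma\cong(\Z/p)^m\text{ acts effectively on a mod-}p\text{ cohomology manifold of dimension }\le n\text{ with }b_p\le b\}$, since then one may take $C_p=N_p(n,b_p(M))$. The base case $n=0$ is immediate: such an $X$ has $\le b$ points, so $(\Z/p)^m\hookrightarrow S_b$ and $m\le b$.

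The one genuinely new ingredient is a bound on \emph{free} actions: if $\Gamma\cong(\Z/p)^m$ acts freely on a connected mod-$p$ cohomology manifold $Y$ of dimension $\le n$ with $b_p(Y)\le b$, then $m\le F(n,b)$ for an explicit function $F$. Since the action is free, $E\Gamma\times_\Gamma Y\simeq Y/\Gamma$, and the Leray spectral sequence of $E\Gamma\times_\Gamma Y\to B\Gamma$ has $E_2^{s,t}=H^s(B\Gamma;\mathcal H^t(Y))$ and converges to $H^{s+t}(Y/\Gamma;\Z/p)$. As $Y/\Gamma$ is a mod-$p$ cohomology manifold of dimension $\le n$, the bottom edge $E_\infty^{s,0}$ vanishes for $s>n$; the bottom row supports no outgoing differentials, and an incoming differential into $E_r^{s,0}$ has source $E_r^{s-r,r-1}$, which is zero for $r>n+1$. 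Hence $\dim H^{n+1}(B\Gamma;\Z/p)=\sum_{r=2}^{n+1}\dim(\operatorname{im} d_r)\le\sum_{r=2}^{n+1}\dim H^{n+1-r}(B\Gamma;\mathcal H^{r-1}(Y))\le nb\cdot\dim H^{n-1}(B\Gamma;\Z/p)$, where the last step bounds the twisted terms using that every $\Z/p[\Gamma]$-module has all composition factors trivial. Since $\dim H^{j}(B(\Z/p)^m;\Z/p)=\binom{j+m-1}{m-1}$ for every prime $p$, this forces $(n+m)(n+m-1)\le n^2(n+1)b$, so $m$ is bounded by a function of $n$ and $b$ only (indeed independent of $p$).

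For the inductive step, fix an effective action of $\Gamma=(\Z/p)^r$ on a connected $Y$ of dimension $n$ with $b_p(Y)\le b$. If the action is free, the previous paragraph gives $r\le F(n,b)$. Otherwise pick $g_1\neq1$ with $Y^{g_1}\neq\emptyset$ and set $\Gamma_1:=\{h\in\Gamma:h|_{Y^{g_1}}=\mathrm{id}\}\ni g_1$. As $\Gamma$ is abelian it preserves $Y^{g_1}$, and $\Gamma/\Gamma_1$ acts effectively on $Y^{g_1}$, which has dimension $\le n-1$ and $b_p\le b$; by induction $\rank(\Gamma/\Gamma_1)\le N_p(n-1,b)$. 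Now either every nontrivial $h\in\Gamma_1$ has $Y^h=Y^{g_1}$, in which case $Y^H=Y^{g_1}=Y^{\Gamma_1}$ for every nontrivial $H\le\Gamma_1$ and the Borel formula, applied to $\Gamma_1$ at a component $F$ of $Y^{g_1}$, gives $\operatorname{codim}_Y F=0$; then $F$ is open and closed in the connected $Y$, so $F=Y$ and $g_1$ acts trivially — impossible unless $\rank\Gamma_1\le1$. Or there is $g_2\in\Gamma_1\setminus\{1\}$ with $Y^{g_2}\supsetneq Y^{g_1}$; setting $\Gamma_2:=\{h\in\Gamma:h|_{Y^{g_2}}=\mathrm{id}\}\subseteq\Gamma_1$, the kernel of $\Gamma_1\to\Homeo(Y^{g_2})$ is $\Gamma_2$, so $\rank\Gamma_1\le\rank\Gamma_2+N_p(n-1,b)$, and one recurses. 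The chain $Y^{g_1}\subsetneq Y^{g_2}\subsetneq\cdots$ consists of closed cohomology submanifolds of $Y$ of dimension $\le n-1$; at constant dimension the smaller is a union of components of the larger, so the number of components strictly increases, and since both the dimension ($\le n$ values) and the number of components ($\le b$) are bounded, the chain terminates after at most $nb$ steps in the first alternative. Hence $r\le1+nb\cdot N_p(n-1,b)$, which — together with the free case, the disconnected reduction, and the base case — gives $N_p(n,b)<\infty$ and proves the theorem.

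The main obstacle is that the hypotheses are purely topological: one cannot linearize the action near a fixed point, so every local statement used above — that a fixed-point set of a $\Z/p$-action is a cohomology manifold, Newman's dimension drop, the fact that a closed full-dimensional cohomology submanifold of a connected cohomology manifold is open, and the Borel formula — must be drawn from Smith theory for generalized manifolds, and one must keep careful track of the point-set conditions (finitistic spaces, finitely generated mod-$p$ cohomology, paracompactness) under which the Leray spectral sequence and these Smith-theoretic results are valid. The free-action estimate is the rigid core on which the induction bottoms out; once it and the standard Smith-theoretic toolkit are in hand, the remainder is the bookkeeping of constants sketched above.
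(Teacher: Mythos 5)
This statement is not proved in the paper at all: it is quoted verbatim from Mann and Su \cite{mann1963actions}, so there is no internal argument to compare yours with. What you have written is essentially a reconstruction of the original Mann--Su/Borel proof: bound the rank of \emph{free} actions by playing the Borel construction $E\Gamma\times_\Gamma Y\simeq Y/\Gamma$ against the size of $H^*(B(\Z/p)^m;\Z/p)$, then induct on dimension through fixed-point sets using Smith theory and Borel's formula. Your free-action estimate is correct as stated: the edge argument in the spectral sequence, the composition-series bound for twisted coefficients over a $p$-group, and the identity $\dim H^j(B(\Z/p)^m;\Z/p)=\binom{j+m-1}{m-1}$ (valid for all $p$, since the Poincar\'e series is $(1-t)^{-m}$) do give $(n+m)(n+m-1)\le n^2(n+1)b$. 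The inductive scheme (the subgroups $\Gamma_1\supseteq\Gamma_2\supseteq\cdots$ acting trivially on an increasing chain of fixed sets, with the Borel formula forcing the terminal subgroup to have rank at most one) is also sound, as is the reduction to the connected case.

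The one step that fails as written is the termination bound for the chain $Y^{g_1}\subsetneq Y^{g_2}\subsetneq\cdots$. Your justification, ``at constant dimension the smaller is a union of components of the larger, so the number of components strictly increases,'' is false once components of different dimensions occur: for instance $Y^{g_{i+1}}$ may have two top-dimensional components $F_1,F_2$ while $Y^{g_i}$ consists of $F_1$ together with several isolated points of $F_2$; then the maximal dimension is unchanged, $Y^{g_i}$ is not a union of components of $Y^{g_{i+1}}$, and the number of components \emph{decreases} along the chain. The repair is easy and keeps your structure intact: attach to each $Y^{g_i}$ the vector $(c_{n-1},\dots,c_0)$, where $c_d$ is the number of $d$-dimensional components (so $\sum_d c_d\le b$ by the Smith inequality). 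Since $g_i$ preserves $Y^{g_{i+1}}$ and, on each component $F$ of $Y^{g_{i+1}}$, the set $Y^{g_i}\cap F$ is either empty, all of $F$, or of dimension strictly less than $\dim F$, this vector strictly increases in lexicographic order (read from the top dimension) as one passes from $Y^{g_i}$ to $Y^{g_{i+1}}$; hence the chain has length at most $(b+1)^n$, which replaces your $nb$ and is all that the induction needs, since any bound in terms of $(n,b)$ suffices. Beyond this, your proof leans on the generalized Smith-theoretic package (fixed sets of $\Z/p$-actions on $\Z/p$-cohomology manifolds are again such, the Smith inequality, invariance of domain for cohomology manifolds, Borel's formula, and the validity of the Borel-fibration spectral sequence in the finitistic setting); you state these correctly and flag rather than verify their hypotheses, which is reasonable for a result the paper itself only cites, but that is where the remaining substance of \cref{MannSu thm} lies.
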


\begin{cor}\cite{mann1963actions}\label{bound rank finite groups}
	Let $M$ be a closed manifold of dimension $n$. There exists a number $r$ such that if $A$ is a finite abelian group acting effectively on $M$ then $\rank(A)\leq r$. 
\end{cor}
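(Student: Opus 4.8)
The plan is to deduce the corollary from Mann--Su's theorem (\cref{MannSu thm}) by applying it to elementary abelian $p$-subgroups of $A$, after first making the constant $C_p$ in that theorem independent of $p$.

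First I would reduce to the case of elementary abelian groups. If $A$ is a finite abelian group with $p$-primary components $A_p$, the structure theorem gives $\rank(A)=\max_p\rank(A_p)$, and each $A_p$ contains a subgroup isomorphic to $(\Z/p)^{\rank(A_p)}$, namely its subgroup of elements of order dividing $p$. Hence if $A$ acts effectively on $M$ and $\rank(A)=k$, then for a suitable prime $p$ the group $(\Z/p)^k\le A$ acts effectively on $M$, the restricted action being effective. So it is enough to produce a constant $r$, depending only on $M$, such that an effective action of $(\Z/p)^k$ on $M$ forces $k\le r$ for \emph{every} prime $p$ simultaneously.

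Next I would bound $b_p(M)=\sum_{i=0}^n\dim H^i(M,\Z/p)$ uniformly in $p$. Since $M$ is a closed manifold, $H^*(M,\Z)$ is a finitely generated abelian group, so its torsion subgroup is annihilated by some integer $N$. By the universal coefficient theorem, for every prime $p\nmid N$ one has $b_p(M)=b_0(M):=\sum_{i=0}^n\dim H^i(M,\Q)$, while for the finitely many primes dividing $N$ the numbers $b_p(M)$ take some fixed finite values. Thus the set $\{b_p(M):p\text{ prime}\}$ is finite.

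Finally, \cref{MannSu thm} gives a constant $C_p$ depending only on $n$ and $b_p(M)$; as $n$ is fixed and $b_p(M)$ ranges over a finite set, the $C_p$ are bounded by some number $r$ depending only on $M$. Then for every prime $p$, an effective action of $(\Z/p)^k$ on $M$ forces $k\le r$, and combining this with the first paragraph yields $\rank(A)\le r$ for every finite abelian group $A$ acting effectively on $M$. I do not expect a genuine obstacle here; the only point needing care is the uniformity of $b_p(M)$, which is precisely where finite generation of $H^*(M,\Z)$ is used.
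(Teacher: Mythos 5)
Your proposal is correct and is essentially the intended argument: the paper gives no separate proof, citing Mann--Su, and the standard deduction is exactly your reduction to an elementary abelian $p$-subgroup $(\Z/p)^{\rank(A)}\le A$ together with the observation that $b_p(M)$ takes only finitely many values (by finite generation of $H^*(M,\Z)$ and universal coefficients), so the Mann--Su constants $C_p$ are uniformly bounded. The same uniformity device appears in the paper's proof of \cref{iterated MannSu thm}, where $b_p(M)\leq 2b(M)$ is used to remove the dependence on $p$.
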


\begin{proof}[Proof of \cref{pE jordan and discsym}]
	For the first statement of the theorem assume that $G$ is a finite group acting effectively on $M$. Then there exist a subgroup $H$ of $G$ such that $[G:H]\leq C$, an action of $H$ on $M'$ and a continuous map $f_H:M\longrightarrow M'$ which is $H$-equivariant and homotopic to $f$. Let $H_0=\ker\phi'$. By \cref{exporting map non-zero degree}, we known that $|H_0|\leq \deg(f)$. The group $\phi'(H)$ acts effectively on $M'$. Thus there exists an abelian subgroup $A'\leq \phi'(H)$ such that $[\phi'(H):A']\leq C'$, where $C'$ is the Jordan constant of $\Homeo(M')$. By \cref{bound rank finite groups}, there exists a constant $r'$ such that any abelian finite group acting effectively on $M'$ has rank at most $r'$. 
	
	We consider the commutative diagram 
	\[
	\begin{tikzcd}
		1\ar{r}{} & H_0\ar{r}{}\ar{d}{Id} & \phi'^{-1}(A')\ar{r}{\phi'}\ar[hook]{d}{} & A'\ar{r}{}\ar[hook]{d}{} &1\\	
		1\ar{r}{} & H_0\ar{r}{} & H\ar{r}{\phi} & \phi(H)\ar{r}{} &1\\
	\end{tikzcd}
	\]
	Note that $[H:\phi'^{-1}(A')]\leq C'$. We can use \cref{Jordan ses} on the upper short exact sequence to find a constant $D$, which only depends on $d$ and $r'$, and an abelian subgroup $A$ of $\phi'^{-1}(A')$ such that $[\phi'^{-1}(A'):A]\leq D$. In conclusion, we have found a subgroup $A$ of $G$ such that $[G:A]\leq C\cdot C'\cdot D$ and we can conclude that $\Homeo(M)$ is Jordan.
	
	We note that by \cref{subgroup elementary abelian groups} we can assume that given an increasing sequence  $\{a_i\}_{i\in\mathbb{N}}$ such that $a_i\longrightarrow \infty$ and groups $(\Z/a_i)^b$ acting effectively on $M$ then $(\Z/a_i)^b$ also act on $M'$ for all $i$ and that we can replace $f$ by an homotopic equivariant map for each $i$. 
	
	By \cref{discsym prime version} we can divide the prove in two parts. Firstly assume that we have an increasing sequence of primes $\{p_k\}_{k\in\mathbb{N}}$ and groups $(\Z/p_k)^b$ acting effectively on $M$. Like in the first part of the proof, we consider the group action $\phi'_{{p_k,b}}:(\Z/p_k)^b\longrightarrow \Homeo(M')$ AND the exact sequence 
	\[
	\begin{tikzcd}
		1\ar{r}{} &K_{(p_k,b)}\ar{r}{} & (\Z/p_k)^b\ar{r}{\phi'_{{p_k,b}}} & \phi'_{{p_k,b}}((\Z/p_k)^b)\ar{r}{} &1\\
	\end{tikzcd}
	\]
	for each $k$, where $K_{(p_k,b)}=\ker\phi'_{{p_k,b}}$.
	Since $K_{(p_k,b)}$ is a subgroup of $(\Z/p_k)^b$, there exists $x(k)\in \mathbb{N}$ such that
	$|K_{(p_k,b)}|=p_k^{x(k)}$. On the other hand, $|K_{(p_k,b)}|\leq d $. Since $p_k\longrightarrow\infty$ when $k\longrightarrow \infty$, there exist $k_0$ such that $p_k>d$ for all $k\geq k_0$. This implies that $x(k)=0$ for $k\geq k_0$ and that $(\Z/p_k)^b$ acts effectively on $M'$ for $k\geq k_0$. Thus, $P\D(M)\leq \D(M')$.
	
	We fix a prime number $p$ and denote by $c\in \mathbb{N}$ the largest number such that $p^c\leq d$. Assume that we have an increasing sequence $\{a_k\}_{k\in\mathbb{N}}$ such that $(\Z/p^{a_k})^b$ acts effectively on $M$. Since $a(k)\longrightarrow\infty$ when $k\longrightarrow \infty$, there exists a $k_0$ such that $a_{k_0}>c$ for all $k\geq k_0$. Then $K_{(p^{a_k},b)} $ is a subgroup of $(\Z/p^{c})^b\leq (\Z/p^{a_k})^b$ and $$(\Z/p^{a_k-c})^b\cong (\Z/p^{a_k})^b/(\Z/p^{c})^b\leq (\Z/p^{a_k})^b/K_{(p^{a_k},b)}\cong \phi'_{{p^{a_k},b}}((\Z/p^{a_k})^b)$$ for $k\geq k_0$. Hence,  $(\Z/p^{a_k-c})^b$ acts effectively on $M'$ for $k\geq k_0$ and $\D_p(M)\leq \D(M')$.
	
	Joining the two cases we obtain that $\D(M)\leq \D(M')$.
	
	To prove the third part assume that $G$ is a finite group acting effectively on $M$ with a fix point $x$. Let $C$ be the constant provided by the exporting map property and let $H$, $\phi':H\longrightarrow \Homeo(M')$ and $f_H$ be respectively the subgroup of $G$, the action on $M'$ and the continuous map homotopic to $f$ given by the assumptions. Since $x$ is a fixed point of the action of $H$ on $M$, $f_H(x)$ is a fixed point of the action of the effective $\phi'(H)$ on $M'$. If $C'$ is the small stabilizer constant on $M'$, then $|\phi'(H)|\leq C'$. We obtain the exact sequence 
	\[
	\begin{tikzcd}
		1\ar{r}{} &H_0\ar{r}{} &H\ar{r}{\phi'} &\phi'(H)\ar{r}{} &1
	\end{tikzcd}
	\]
	where $|H_0|\leq d=\deg(f)$ and $|\phi'(H)|\leq C'$. In consequence, $|H|\leq C'\cdot d$ and $|G|\leq C\cdot C'\cdot d$.
	
	The proof of the fourth part is analogous to the proof of the third part. If $C'$ denotes now the constant provided by the almost-asymmetric property of $M'$, then we have $|\phi'(H)|\leq C'$ and hence $|G|\leq C\cdot C'\cdot d$.
\end{proof}

Since $\Homeo(S^4)$ is Jordan and $\Homeo(T^2\times S^2)$ is not Jordan, we can deduce:

\begin{cor}
	Any non-zero degree map $f:T^2\times S^2\longrightarrow S^4$ is not an exporting map.
\end{cor}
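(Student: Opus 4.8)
The plan is to argue by contradiction, using part 1 of \cref{pE jordan and discsym}. Before that, I would note that the statement is not vacuous: any closed oriented $n$-manifold admits a degree one map onto $S^n$ (collapse the complement of an embedded open ball), so non-zero degree maps $f:T^2\times S^2\longrightarrow S^4$ do exist, and both $T^2\times S^2$ and $S^4$ are closed oriented manifolds of dimension $4$, as required in \cref{pE defn}.

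Now suppose, for contradiction, that some non-zero degree map $f:T^2\times S^2\longrightarrow S^4$ were an exporting map. Since $\Homeo(S^4)$ is Jordan, \cref{pE jordan and discsym}.1 applied to $f$ would force $\Homeo(T^2\times S^2)$ to be Jordan as well. But $\Homeo(S^2\times T^2)$ is not Jordan — this is the simplest known example of a homeomorphism group failing the Jordan property, recalled in the introduction (see \cite{csikos2014diffeomorphism}). This contradiction shows that no non-zero degree map $f:T^2\times S^2\longrightarrow S^4$ can be an exporting map.

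I do not expect any genuine obstacle: the corollary is an immediate consequence of \cref{pE jordan and discsym}.1 together with the two cited facts (Jordanness of $\Homeo(S^4)$ and non-Jordanness of $\Homeo(S^2\times T^2)$). The only point worth stating with some care is that the argument is uniform in $f$: the exporting property of any single non-zero degree map already suffices to transfer the Jordan property from $S^4$ back to $T^2\times S^2$, which is precisely why \emph{every} such map must fail to be exporting.
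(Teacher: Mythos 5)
Your proof is correct and follows exactly the paper's (implicit, one-line) argument: apply \cref{pE jordan and discsym}.1 to a hypothetical non-zero degree exporting map and contradict the non-Jordanness of $\Homeo(T^2\times S^2)$ using the Jordanness of $\Homeo(S^4)$. The extra remarks on non-vacuousness and uniformity in $f$ are fine but not needed.
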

Note also that $\D(T^2\times S^2)\geq 3> \D(S^4)=2$.

The next definition is the converse of the exporting map property.

\begin{defn}\label{pI defn}
	Let $M$ and $M'$ be closed oriented manifolds of the same dimension and let $f:M\longrightarrow M'$ be continuous map. We say that $f$ imports group actions, or it is an importing map, if there exists a constant $C$ such that any finite group $G$ acting on $M'$  has a subgroup $H\leq G$ such that:
	\begin{itemize}
		\item[1.] $[G:H]\leq C$.
		\item[2.] There exists a finite group $\tilde{H}$ acting effectively on $M$ and a surjective group morphism $\rho:\tilde{H}\longrightarrow H$.
		\item[3.] There exists a map $f_H:M\longrightarrow M'$ homotopic to $f$ which is $\rho$-equivariant ($f_H(hx)=\rho(h)f_H(x)$ for all $x\in M$ and $h\in\tilde{H}$).
	\end{itemize}
\end{defn}

\begin{rem}
	The property of importing group actions is similar to the property of propagating of group actions (see \cite[Definition 3.1]{adem2002topics}). Given a continuous map between closed manifolds $f:M\longrightarrow M'$ and a finite group $G$ acting effectively on $M'$, we that the action of $G$ on $M'$ propagates to $M$ across $f$ if there exists an effective action of $G$ on $M$ and an equivariant map $f_G:M\longrightarrow M'$ homotopic to $f$. Propagation of group actions was used to study group actions on homology spheres (see \cite[\S 3.2.2]{adem2002topics} and references therein).
\end{rem}

Importing and exporting maps have similar properties. The next lemma has an analogous proof to \cref{pE composition}.

\begin{lem}\label{pI composition}
	Let $M$, $M'$ and $M''$ be closed oriented manifolds and let $f:M\longrightarrow M'$ and $g:M'\longrightarrow M''$ be importing maps with constants $C$ and $D$ respectively. Then $g\circ f:M\longrightarrow M''$ imports group actions with constant $C\cdot D$.
\end{lem}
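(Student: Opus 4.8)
The plan is to imitate the proof of \cref{pE composition}, feeding the two importing-map properties one after the other but starting from the target manifold $M''$ rather than the source. Let $G$ be a finite group acting on $M''$. Since $g:M'\longrightarrow M''$ is an importing map with constant $D$, there is a subgroup $K\leq G$ with $[G:K]\leq D$, a finite group $\tilde K$ acting effectively on $M'$, a surjection $\sigma:\tilde K\longrightarrow K$, and a $\sigma$-equivariant map $g_K:M'\longrightarrow M''$ homotopic to $g$.

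Now $\tilde K$ acts effectively on $M'$, so I would apply the importing property of $f:M\longrightarrow M'$ (constant $C$) to the group $\tilde K$: this yields a subgroup $H'\leq\tilde K$ with $[\tilde K:H']\leq C$, a finite group $\tilde H$ acting effectively on $M$, a surjection $\tau:\tilde H\longrightarrow H'$, and a $\tau$-equivariant map $f_{H'}:M\longrightarrow M'$ homotopic to $f$.

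It remains to assemble these data. Put $H:=\sigma(H')\leq K\leq G$ and define $\rho:=(\sigma|_{H'})\circ\tau:\tilde H\longrightarrow H$, which is a surjection as a composite of surjections. The map $g_K\circ f_{H'}:M\longrightarrow M''$ is homotopic to $g\circ f$, and for $h\in\tilde H$, $x\in M$ one computes
\[
(g_K\circ f_{H'})(hx)=g_K\bigl(\tau(h)f_{H'}(x)\bigr)=\sigma(\tau(h))\,g_K(f_{H'}(x))=\rho(h)\,(g_K\circ f_{H'})(x),
\]
using that $\tau(h)\in H'\leq\tilde K$ together with the $\sigma$-equivariance of $g_K$; thus $g_K\circ f_{H'}$ is $\rho$-equivariant. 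Finally $[G:H]=[G:K]\,[K:\sigma(H')]\leq D\cdot[\tilde K:H']\leq C\cdot D$, so $g\circ f$ imports group actions with constant $C\cdot D$.

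The only step needing more than bookkeeping is the index estimate for $H=\sigma(H')$ inside $G$: here one uses that the image of a subgroup of index at most $C$ under a surjection of finite groups again has index at most $C$ (since $[K:\sigma(H')]=[\tilde K:H'\ker\sigma]\leq[\tilde K:H']$). Beyond that the argument is the mirror image of \cref{pE composition}, with the roles of source and target interchanged and the equivariances of $f_{H'}$ and $g_K$ composed in the appropriate order.
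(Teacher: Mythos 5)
Your argument is correct and is essentially the paper's own proof: apply the importing property of $g$ to $G$, then that of $f$ to the lifted group acting on $M'$, and take the image subgroup $\sigma(H')\leq G$ with the composed surjection and composed equivariant map. You even spell out the index estimate $[K:\sigma(H')]=[\tilde K:H'\ker\sigma]\leq[\tilde K:H']$, which the paper asserts without comment, so nothing is missing.
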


\begin{proof}
	Let $G$ be a finite group acting effectively on $M''$. Then there exist a subgroup $H\leq G$ satisfying that $[G:H]\leq D$, a group $\tilde{H}$ acting effectively on $M'$, a surjective group morphism $\rho':\tilde{H}\longrightarrow H$ and a $\rho$-equivariant map $g_H:M'\longrightarrow M''$. Since $\tilde{H}$ acts effectively on $M'$, there exist a subgroup $K\leq \tilde{H}$ satisfying that $[\tilde{H}:K]\leq C$, a group $\tilde{K}$ acting effectively on $M$, a surjective group morphism $\rho:\tilde{K}\longrightarrow K$ and a $\rho$-equivariant map $f_K:M\longrightarrow M'$. 
	
	We consider now the subgroup $\rho'(K)\leq G$. We note that $[G:\rho'(K)]\leq C\cdot D$, that $\rho'_{|K}\circ\rho: \tilde{K}\longrightarrow \rho'(K)$ is a surjective group morphism and that $g_H\circ f_K$ is $\rho'_{|K}\circ\rho$-equivariant and homotopic to $g\circ f$. Consequently, $g\circ f$ is an importing map.
\end{proof}

\begin{lem}\label{pI kernel non-zero degree}
	Let $M$ and $M'$ be closed oriented manifolds and $f:M\longrightarrow M'$ a non-zero degree importing map. Then $|\ker\rho|\leq \deg(f)$.
\end{lem}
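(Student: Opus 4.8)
The plan is to mirror the argument used for the exporting case (Lemma~\ref{exporting map non-zero degree}), replacing equivariance of $f_H$ with $\rho$-equivariance of $f_H$ and using that $\widetilde H$ acts effectively on $M$. Fix a finite group $G$ acting on $M'$, pass to the subgroup $H\leq G$ of \cref{pI defn}, and let $\rho:\widetilde H\longrightarrow H$ be the associated surjection with a $\rho$-equivariant map $f_H:M\longrightarrow M'$ homotopic to $f$. Set $K=\ker\rho\leq\widetilde H$. The key observation is that for $k\in K$ and $x\in M$ we have $f_H(kx)=\rho(k)f_H(x)=f_H(x)$, so $f_H$ is constant on $K$-orbits and therefore factors through the quotient map $\pi:M\longrightarrow M/K$ as $f_H=\overline f_H\circ\pi$ for some continuous $\overline f_H:M/K\longrightarrow M'$.

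Next I would pass to top cohomology. Since $\widetilde H$ acts effectively on $M$ and contains $K$, the action of $K$ on $M$ is effective; after replacing $\widetilde H$ by a finite-index subgroup if necessary (or simply noting that an effective action on a closed connected manifold of a finite group either preserves or has an index-two subgroup preserving the orientation) we may assume $K$ acts preserving the orientation — alternatively, invoke \cref{quotient map image degree} directly for the orientation-preserving part, absorbing a factor of at most $2$, which does not affect the stated bound since one can apply it to the orientation-preserving subgroup and conclude divisibility. From $f_H=\overline f_H\circ\pi$ we get $f_H^*=\pi^*\circ\overline f_H^*$ on $H^n(M',\Z)$. Because $f_H\simeq f$, we have $f_H^*=f^*$, so $f^*(H^n(M',\Z))\subseteq \pi^*(H^n(M/K,\Z))$. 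By \cref{quotient map image degree} applied to the (orientation-preserving part of the) action of $K$ on $M$, $\pi^*(H^n(M/K,\Z))\subseteq |K|\,H^n(M,\Z)$. Since $f$ has non-zero degree, $f^*$ sends a generator of $H^n(M',\Z)\cong\Z$ to $\deg(f)$ times a generator of $H^n(M,\Z)\cong\Z$, hence $|K|$ divides $\deg(f)$ and in particular $|\ker\rho|=|K|\leq\deg(f)$.

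The only genuine subtlety — and the one point I would be careful to state precisely rather than wave at — is the orientation hypothesis needed to apply \cref{quotient map image degree}: that lemma requires $K$ to act preserving the orientation of $M$. The clean way to handle this is to let $K^+\leq K$ be the (index $\leq 2$) subgroup acting by orientation-preserving homeomorphisms, run the divisibility argument with $K^+$ in place of $K$ to get $|K^+|\mid\deg f$, and then note this still gives $|K^+|\leq\deg f$; if one wants the sharper $|K|\leq\deg f$ one observes that the quotient map $M\to M/K$ still factors $f_H$ and that $M/K$ is a (possibly non-orientable) closed $n$-manifold, so $\pi^*$ on $H^n(\,\cdot\,,\Z)$ still has image in $|K|H^n(M,\Z)$ by the transfer argument of \cref{quotient map image degree}. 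Either way the stated inequality follows, so the main work is purely the bookkeeping of which finite-index subgroup one is working with; no new ideas beyond those already used for \cref{exporting map non-zero degree} are required.
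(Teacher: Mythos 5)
Your argument is correct and is essentially the paper's own proof: you factor $f_H$ through $M/\ker\rho$ using $\rho$-equivariance, apply \cref{quotient map image degree}, and conclude that $|\ker\rho|$ divides $\deg(f)$ since $f_H^*=f^*$ in top cohomology. The extra care you take about the orientation hypothesis of \cref{quotient map image degree} goes beyond what the paper records (it applies the lemma without comment), and your observation resolves it correctly, since a non-orientable quotient would force the top-degree pullback to vanish, contradicting $\deg(f)\neq 0$.
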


\begin{proof}
	Since $f_H$ is $\rho$-equivariant, we have a commutative diagram 
	\[
	\begin{tikzcd}
		M \ar{r}{f_H}\ar{d}{\pi} & M'\\
		M/\ker\rho\ar{ru}{\overline{f}_H} &
	\end{tikzcd}
	\]
	
	By \cref{quotient map image degree}, $\pi^*(H^n(M/\ker\rho,\Z))\subseteq \deg(f)H^n(M,\Z)$. Therefore, $|\ker\rho|$ divides $\deg(f)$. Thus, $|\ker\rho|\leq \deg(f)$.
\end{proof}

The main example of importing maps are coverings of manifolds.

\begin{lem}\label{pI covering maps}
	Let $p:M\longrightarrow M'$ be a finite covering between closed oriented manifolds. Then $p$ imports group actions.
\end{lem}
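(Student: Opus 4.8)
The plan is to lift group actions through a finite covering using the theory of lifting actions along regular covers together with Minkowski's lemma. Let $p:M\longrightarrow M'$ be a finite covering of degree $d$. First I would pass to a \emph{regular} (Galois) cover dominating $M$: there is a closed oriented manifold $\widehat{M}$ with a finite covering $q:\widehat{M}\longrightarrow M'$ that factors through $p$, and such that $q$ is regular; here one may take $\widehat{M}$ to correspond to the intersection of the conjugates of $p_*\pi_1(M)$ inside $\pi_1(M')$, so $q$ has degree $\widehat{d}$ dividing $d!$ and its deck group $Q=\operatorname{Deck}(q)$ acts freely on $\widehat{M}$ with $\widehat{M}/Q\cong M'$. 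I record that $p_*\pi_1(M)$ corresponds to a subgroup $S\le Q$ with $[Q:S]=d$, and $M\cong\widehat{M}/S$.

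Next, given a finite group $G$ acting effectively on $M'$ (with action $\psi:G\longrightarrow\Homeo(M')$), I would apply Minkowski's lemma (as in the remark after \Cref{pE defn}) to extract a subgroup $G_1\le G$ with $[G:G_1]$ bounded by a constant depending only on $M'$ such that $G_1$ acts trivially on $H^1(M',\Z)$, hence — since the cover $q$ is determined by a characteristic-up-to-finite-index subgroup of $\pi_1(M')$ — after a further bounded-index refinement $H\le G_1$ the action of $H$ on $M'$ lifts to $\widehat{M}$; concretely, the pullback principal-$Q$-bundle structure lets one lift the $H$-action to an action of an extension $\widehat{H}$ of $H$ by $Q$ on $\widehat{M}$, commuting with the $Q$-action, so that $\widehat{M}\longrightarrow M'$ is $(\widehat{H}\to H)$-equivariant and the projection $\widehat{H}\longrightarrow H$ is surjective with kernel $Q$ of bounded order. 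Standard references for the existence of such lifts are the obstruction-theoretic criteria for lifting a group action along a covering; the point is that the obstruction lies in a cohomology group that is killed after passing to a subgroup of index bounded in terms of $|Q|$ and the rank of $H^1$, which is itself bounded by \Cref{bound rank finite groups}-type input applied to $M'$.

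To land on $M$ rather than $\widehat{M}$, I would then need the lifted action of $\widehat{H}$ on $\widehat{M}$ to descend to $M\cong\widehat{M}/S$; this requires $S\trianglelefteq\widehat{H}$, which need not hold on the nose but holds after replacing $\widehat{H}$ by the finite-index subgroup $\widehat{H}_0$ that normalizes $S$ — and since $S$ has index $d$ in $Q$, the number of conjugates of $S$ in $Q$ (hence in $\widehat{H}$, as $Q\trianglelefteq\widehat{H}$) is at most $d$, so $[\widehat{H}:\widehat{H}_0]\le d$. Setting $\widetilde{H}=\widehat{H}_0/(S\cap\ker(\widehat{H}_0\to\Homeo(\widehat{M})))$ gives an effective action on $M=\widehat{M}/S$, together with a surjection $\rho:\widetilde{H}\longrightarrow H'$ onto a bounded-index subgroup $H'\le H\le G$, and the induced map $f_H:M\longrightarrow M'$ is $p$ itself (up to the identification $\widehat{M}/S\cong M$), hence $\rho$-equivariant and equal to — in particular homotopic to — the given $p$. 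Assembling all the bounded-index losses ($[G:G_1]$, the refinement to lift, $[\widehat{H}:\widehat{H}_0]$) into a single constant $C$ depending only on $M$ finishes the proof.

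The main obstacle I anticipate is the lifting step: one must argue carefully that an effective $H$-action on $M'$ (not merely a homotopy-theoretic datum) genuinely lifts to an honest homeomorphism action on the finite cover $\widehat{M}$ after passing to bounded index — the subtlety is that lifting a single homeomorphism along a cover is automatic once $\pi_1$ is respected, but lifting a whole group \emph{as a group} (choosing lifts compatibly) has a genuine cohomological obstruction, and one needs the index bound on the subgroup that kills it to be uniform, i.e. independent of $G$. This is exactly the kind of bounded-cohomology-obstruction argument used for \Cref{pE torus}, so I expect the paper either invokes that machinery or gives a direct argument via the correspondence between covers and conjugacy classes of finite-index subgroups of $\pi_1$, using that $\Homeo(M')$ acts on this finite set of subgroups with bounded stabilizer index.
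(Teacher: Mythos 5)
There is a genuine gap at the crux of the argument: the uniform production of a bounded-index subgroup of $G$ all of whose elements actually lift through the covering. Acting trivially on $H^1(M',\Z)$ (your Minkowski step) says nothing about whether a homeomorphism of $M'$ preserves the class of the cover $p$ (or of your regular cover $\widehat M$): a finite cover corresponds to a conjugacy class of finite-index subgroups of $\pi_1(M')$, not to anything visible in $H^1$, unless the cover happens to be abelian. And the ``cohomological obstruction killed after passing to bounded index'' that you invoke for the further refinement is both unsubstantiated as a uniform bound and beside the point: since \cref{pI defn} only asks for an extension $\tilde H\twoheadrightarrow H$, no compatible-choice-of-lifts obstruction ever needs to be killed --- once every individual $h\in H$ admits some lift, the set of all lifts of all elements of $H$ is automatically a finite group surjecting onto $H$ with kernel the deck group. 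So the only step requiring a uniform argument is exactly the one you delegate to ``standard references''. The correct mechanism --- and the one the paper uses --- is the one you only name speculatively in your closing sentence: $\Cov_n(M')\cong\Hom(\pi_1(M'),S_n)/\!\sim$ is a finite set (as $\pi_1(M')$ is finitely generated), $G$ permutes it by pull-back, hence the kernel $H$ of this permutation action has index bounded by a constant depending only on $M'$ and $n$, and every $h\in H$ lifts to $M$ because $h^*p\cong p$. As written, your proof builds on the wrong mechanism and leaves the right one as an ``anticipated obstacle'', so the key bound is not established.

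Once that step is in place, your detour through the regular cover $\widehat M$ and the descent to $M\cong\widehat M/S$ is unnecessary, and it contains slips of its own: the number of $\widehat H$-conjugates of $S$ is not controlled by the $Q$-conjugates (it is still bounded in terms of $d$, but not by $d$ via your argument), and the group $\widetilde H=\widehat H_0/(S\cap\ker(\widehat H_0\to\Homeo(\widehat M)))$ is just $\widehat H_0$, which does not act effectively on $M$ --- you would need to quotient by the kernel of the induced action on $M$, which contains $S$. The paper avoids all of this by lifting directly to $M$: take $\tilde H\le\Homeo(M)$ to be the group of all lifts of elements of $H$, $\rho:\tilde H\to H$ the tautological surjection (its kernel is the deck group of $p$, of order at most $n$), and $f_H=p$ itself as the $\rho$-equivariant map homotopic to $p$.
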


\begin{proof}
	Assume that $p:M\longrightarrow M'$ is a $n$-sheeted covering and $G$ is a finite group acting effectively on $M'$. Then $G$ also acts on $\operatorname{Cov}_n(M')$, the set of $n$-sheeted coverings of $M'$ by pull-backs. On the other hand  $\operatorname{Cov}_n(M')\cong\Hom(\pi_1(M'),S_n)/\sim$ where $S_n$ is the $n$-th symmetric group and the equivalence relation is given by conjugation of elements of $S_n$. Therefore $\operatorname{Cov}_n(M')$ is finite, which implies that there exists a constant $C$ only depending on $M'$ and $n$ such that any finite group $G$ acting effectively on $M'$ has a subgroup $H$ which acts trivially on $\operatorname{Cov}_n(M')$ and $[G:H]\leq C$. Then we can lift the action of $H$ on $M'$ to an effective action of a group $\tilde{H}$ on $M$. In addition, there exists a surjective group morphism $\rho:\tilde{H}\longrightarrow H$ which makes the covering map $p:M\longrightarrow N$ $\rho$-equivariant. 
\end{proof}

We have the analogous result to \cref{pE jordan and discsym} for importing maps.

\begin{thm}\label{pI jordan and discsym}
	Let $M$ and $M'$ closed oriented manifolds which admit an importing map $f:M\longrightarrow M'$. Then:
	\begin{itemize}
		\item[1.] If $\Homeo(M)$ is Jordan, then $\Homeo(M')$ is Jordan.
		\item[2.] $\D(M')\leq \D(M)$.
		\item[3.] If $M$ has the almost fixed point property, then $M'$ has the almost fixed point property.
		\item[4.] If $M$ is almost asymmetric, then $M'$ is almost asymmetric.
	\end{itemize} 
\end{thm}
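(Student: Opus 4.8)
The plan is to mirror the proof of \cref{pE jordan and discsym}, exploiting that for an importing map the auxiliary morphism points in the convenient direction: a finite group $G$ acting effectively on $M'$ is controlled by a finite group $\tilde H$ acting effectively on $M$ together with a \emph{surjection} $\rho\colon\tilde H\to H\leq G$ with $[G:H]\leq C$ and a $\rho$-equivariant $f_H\simeq f$, where $C$ is the constant of the importing property. Since $\rho(L)$ has index at most $[\tilde H:L]$ in $H$ for every $L\leq\tilde H$, statements 1, 3 and 4 are immediate. For statement 1: if $\Homeo(M)$ is Jordan with constant $C_M$ and $\tilde A\leq\tilde H$ is abelian with $[\tilde H:\tilde A]\leq C_M$, then $\rho(\tilde A)\leq G$ is abelian with $[G:\rho(\tilde A)]\leq CC_M$. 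For statement 3: if $M$ has the almost fixed point property with constant $C_M$ and $x\in M$ satisfies $[\tilde H:\tilde H_x]\leq C_M$, then $y=f_H(x)$ satisfies $\rho(\tilde H_x)\subseteq G_y$ by $\rho$-equivariance, so $[G:G_y]\leq CC_M$. For statement 4: $|H|=|\tilde H|/|\ker\rho|\leq|\tilde H|\leq C_M$ forces $|G|\leq CC_M$. None of these three uses a hypothesis on $\deg f$.

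Statement 2 is the core, and the only place I would use $\deg f\neq 0$, via \cref{pI kernel non-zero degree}, which gives $|\ker\rho|\leq d:=\deg f$; for finite coverings (\cref{pI covering maps}) this holds automatically. By \cref{discsym prime version} it suffices to prove $P\D(M)\geq P\D(M')$ and $\D_p(M)\geq\D_p(M')$ for each prime $p$. First suppose $(\Z/p_k)^r$ acts effectively on $M'$ with $p_k\to\infty$; importing yields $\tilde H_k\acts M$ effectively and $\rho_k\colon\tilde H_k\twoheadrightarrow H_k$ with $[(\Z/p_k)^r:H_k]\leq C$, so $H_k=(\Z/p_k)^r$ whenever $p_k>C$. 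A Sylow $p_k$-subgroup $P_k\leq\tilde H_k$ still surjects onto $(\Z/p_k)^r$, and $\ker(\rho_k|_{P_k})\leq\ker\rho_k$ is a $p_k$-group of order $\leq d$, hence trivial for $p_k>d$; thus $P_k\cong(\Z/p_k)^r$ acts effectively on $M$, so $P\D(M)\geq r$.

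Now suppose $(\Z/p^{s_i})^r$ acts effectively on $M'$ with $s_i\to\infty$. Importing together with \cref{subgroup elementary abelian groups} produces, after restricting $\rho_i$ to a suitable preimage, a finite group $\tilde V_i\acts M$ effectively surjecting onto $(\Z/p^{s_i'})^r$ with $s_i'\to\infty$ and with kernel $K_i$ of order $\leq d$. Applying \cref{Jordan ses} to $1\to K_i\to\tilde V_i\to(\Z/p^{s_i'})^r\to 1$ gives an abelian $A_i\leq\tilde V_i$ of index $\leq C(d,r)$; then $\rho_i(A_i)$ has bounded index in $(\Z/p^{s_i'})^r$, so by \cref{subgroup elementary abelian groups} it contains a copy $W_i\cong(\Z/p^{s_i''})^r$ with $s_i''\to\infty$. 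Pulling $W_i$ back into $A_i$, intersecting with $A_i$, and taking the $p$-primary part gives a finite abelian $p$-group surjecting onto $(\Z/p^{s_i''})^r$; by the structure theorem for finite abelian groups it contains $(\Z/p^{s_i''})^r$, which therefore acts effectively on $M$, so $\D_p(M)\geq r$. Together with \cref{discsym prime version} this yields $\D(M)\geq\D(M')$.

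The hard part is statement 2, where one runs three reductions simultaneously — the prime decomposition $\D=\max(P\D,\sup_p\D_p)$ of \cref{discsym prime version}, the passage to finite-index elementary abelian subgroups via \cref{subgroup elementary abelian groups}, and the splitting of a bounded extension via \cref{Jordan ses} — and must keep careful track that the abelian group produced at the end is a genuine $p$-group of rank $r$ whose exponent tends to infinity. Without the bound $|\ker\rho|\leq d$ this last step fails, which is why statement 2, unlike statements 1, 3 and 4, needs $f$ to have non-zero degree (as it always does in the motivating case of coverings).
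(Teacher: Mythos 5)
Your items 1, 3 and 4 are exactly the paper's argument: the paper proves item 3 in detail precisely as you do (push the almost-fixed point through the $\rho$-equivariant map and use surjectivity of $\rho$ to control indices), declares item 4 analogous, and item 1 is the evident push-forward of an abelian subgroup of $\tilde H$, which is what the cited covering-case proof does. For item 2 the paper gives no details at all ("same as in the case of finite coverings"), and your spelled-out argument --- reduce via \cref{discsym prime version}, shrink to $(\Z/p^{s'})^r$ with \cref{subgroup elementary abelian groups}, split the bounded extension with \cref{Jordan ses}, push forward, and recover $(\Z/p^{s''})^r$ inside an abelian $p$-group by the structure theorem --- is exactly that covering-case argument transplanted, and it is correct as you run it. The one substantive point of divergence is the hypothesis you add: the theorem as stated does not assume $\deg f\neq 0$, whereas your item 2 (and, implicitly, the argument the paper cites) needs a uniform bound on $|\ker\rho|$; for coverings this is automatic because $\ker\rho$ sits in the deck group, and for general importing maps it is supplied by \cref{pI kernel non-zero degree} only when the degree is non-zero. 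So you are proving item 2 in slightly less generality than the statement claims, but you are right that without some bound on $|\ker\rho|$ the extension $1\to K_i\to\tilde V_i\to(\Z/p^{s_i'})^r\to 1$ need not contain large rank-$r$ abelian subgroups, so the paper's own reduction to the covering case quietly relies on the same bound; flagging this is a fair criticism of the statement rather than a gap in your argument.
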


\begin{proof}
	The proof of items 1. and 2. are the same as in the case of finite coverings (see \cite{IgnasiMundetiRiera2010Jtft,mundet2021topological}). We prove the third part in detail.
	
	Let $G$ be a group acting effectively on $M'$. Let $C$ be the constant of the definition of the importing map $f$ and $H\leq G$, $f_H:M\longrightarrow M'$ and $\rho:\tilde{H}\longrightarrow H$ the data provided by the definition. Recall that $[G:H]\leq C$ and $\rho$ is surjective. The group $\tilde{H}$ acts effectively on $M$. Since $M$ has the almost fixed point property with constant $D$ there exists $x\in M$ such that $[\tilde{H}:\tilde{H}_x]\leq D$. We use now that $f_H$ is $\rho$-equivariant, therefore $\phi(\tilde{H}_x)\leq H_{f_H(x)}$. Since $\rho$ is surjective, $[H:H_{f_H(x)}]\leq [H:\rho(\tilde{H}_x)]\leq D$. Finally. we have $[G:G_{f_H(x)}]\leq [G:H_{f_H(x)}]\leq C\cdot D$. We have seen that $M'$ has the almost fixed point property with constant $C\cdot D$.
	
	The proof of the fourth part is analogous to the third part.
\end{proof}

With \cref{pI jordan and discsym} it is straightforward to find example of maps which are not importing. For example, since $\Homeo(T^4)$ is Jordan and $\Homeo(T^2\times S^2)$ is not Jordan, we have:

\begin{cor}
	Any non-zero degree map $f:T^4\longrightarrow T^2\times S^2$ is not an importing map.
\end{cor}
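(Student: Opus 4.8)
The plan is to argue by contradiction, invoking part 1 of \cref{pI jordan and discsym} together with the two facts recalled in the introduction: $\Homeo(T^4)$ is Jordan, whereas $\Homeo(T^2\times S^2)$ is not. Since $T^4$ and $T^2\times S^2$ are both closed oriented $4$-manifolds, the notion of importing map is available for a map between them; note that the hypothesis of non-zero degree will in fact play no role here, as the argument only uses functoriality of the Jordan property.

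First I would record that $\Homeo(T^4)$ is Jordan. This follows from \cref{large group actions hypertoral manifolds}.1 applied to the degree-one identity map $T^4\longrightarrow T^4$; alternatively, it follows from \cref{main theorem1 intro}.1, because $T^4$ is aspherical with $\Zc(\pi_1(T^4))=\Z^4$ finitely generated and $\Out(\Z^4)=\Gl(4,\Z)$ Minkowski by Minkowski's lemma (\cref{Minkowski lemma}).

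Now suppose, for contradiction, that some non-zero degree map $f:T^4\longrightarrow T^2\times S^2$ is an importing map. Applying \cref{pI jordan and discsym}.1 with $M=T^4$ and $M'=T^2\times S^2$, the Jordan property of $\Homeo(T^4)$ forces $\Homeo(T^2\times S^2)$ to be Jordan as well. This contradicts the fact that $\Diff(S^2\times T^2)$ — and hence $\Homeo(S^2\times T^2)$, since $\Diff\subseteq\Homeo$ — is not Jordan (see \cite{csikos2014diffeomorphism}). Therefore no such importing map exists. There is no genuine obstacle in this proof: it is a direct application of \cref{pI jordan and discsym}.1, exactly dual to the preceding corollary on exporting maps, where one instead used that $\Homeo(S^4)$ is Jordan while $\Homeo(T^2\times S^2)$ is not. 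One could also try to argue via \cref{pI jordan and discsym}.2, i.e. via the inequality $\D(T^2\times S^2)\le\D(T^4)$; but since $\D(T^4)=4$ this would require knowing $\D(T^2\times S^2)\ge 5$, which is not available, so the Jordan route is the one to take.
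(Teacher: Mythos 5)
Your proof is correct and is exactly the paper's argument: the corollary is stated as an immediate consequence of \cref{pI jordan and discsym}.1 together with the facts that $\Homeo(T^4)$ is Jordan and $\Homeo(T^2\times S^2)$ is not. The extra remarks (where the Jordan property of $\Homeo(T^4)$ comes from, and that non-zero degree is not actually needed) are accurate but not part of the paper's proof, which leaves these points implicit.
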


\begin{cor}
	Let $M$ and $M'$ be closed oriented manifolds and $f:M\longrightarrow M'$ a map which exports and imports group actions. Then $\D(M)=\D(M')$ and $\Homeo(M)$ is Jordan if and only if $\Homeo(M')$ is Jordan.
\end{cor}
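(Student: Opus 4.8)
The plan is to read this statement off directly from the two structural theorems \cref{pE jordan and discsym} and \cref{pI jordan and discsym}, since the hypothesis ``$f$ exports and imports group actions'' is precisely the conjunction of the hypotheses of those two results applied to the \emph{same} map $f$. First I would apply \cref{pE jordan and discsym} to $f$ viewed as an exporting map: part (1) gives the implication ``$\Homeo(M')$ Jordan $\Rightarrow$ $\Homeo(M)$ Jordan'' and part (2) gives $\D(M)\le\D(M')$. Then I would apply \cref{pI jordan and discsym} to $f$ viewed as an importing map: part (1) gives the reverse implication ``$\Homeo(M)$ Jordan $\Rightarrow$ $\Homeo(M')$ Jordan'' and part (2) gives $\D(M')\le\D(M)$. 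Chaining the two Jordan implications yields the claimed equivalence, and combining the two inequalities yields $\D(M)=\D(M')$. That is the whole argument; no new lemma, constant, or bookkeeping of the constants $C,D$ is needed, because Jordan-ness and the value of $\D$ are qualitative.

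The only point requiring a word of care is a small mismatch of hypotheses: \cref{pE jordan and discsym} is stated for a \emph{non-zero degree} exporting map (its proof invokes \cref{exporting map non-zero degree}), while nothing in \cref{pE defn} or \cref{pI defn} by itself forces $\deg f\neq 0$. So the corollary should be understood with the standing assumption $\deg f\neq 0$ in force (as it is throughout this section), or with ``non-zero degree'' added explicitly to the hypotheses; with that in place both theorems apply verbatim to $f$.

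I do not anticipate a genuine obstacle here: the statement is a formal consequence of the previous two theorems. The only things to verify are that the exporting and importing structures are being used for one and the same map in the two directions, and that ``$f$ exports and imports group actions'' means exactly ``$f$ is an exporting map and $f$ is an importing map'' in the sense of \cref{pE defn} and \cref{pI defn} — both of which are immediate.
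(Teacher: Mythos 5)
Your argument is exactly the paper's (implicit) proof: the corollary is stated without a separate proof precisely because it follows by applying \cref{pE jordan and discsym} and \cref{pI jordan and discsym} to the same map $f$, chaining the two Jordan implications and combining the two inequalities $\D(M)\leq\D(M')$ and $\D(M')\leq\D(M)$. Your remark on hypotheses is also well taken: \cref{pE jordan and discsym} (via \cref{exporting map non-zero degree}) is stated for non-zero degree exporting maps, so the corollary should indeed be read with $\deg f\neq 0$ understood.
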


We will see that finite coverings between nilmanifolds are an example of importing and exporting map.

\section[Large finite group actions and non-zero degree maps to nilmanifolds]{Large finite group actions on manifolds admitting a non-zero degree map to a nilmanifold}\label{sec: group action on hypernilmanifolds}

The goal of this section is to prove \cref{main theorem2 intro} and to give some applications of it. 

\subsection{Proof of \cref{main theorem2 intro}}

The main tool to prove \cref{main theorem2 intro} is the following result:

\begin{thm}\label{thm: exporting map hypernilmanifolds}
	Let $M$ be a closed oriented connected manifold and $f:M\longrightarrow N/\Gamma$ a non-zero degree map to a nilmanifold $N/\Gamma$. Then $f$ is an exporting map.
\end{thm}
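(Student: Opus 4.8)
The plan is to reduce the statement to \cref{pE torus} by composing with a non-zero degree map from the nilmanifold to a torus of the same dimension, and then bootstrap this along the iterated $S^1$-bundle structure of the nilmanifold. A nilmanifold $N/\Gamma$ of dimension $n$ is the total space of a principal $S^1$-bundle $S^1\to N/\Gamma\xrightarrow{q} N'/\Gamma'$ over a nilmanifold $N'/\Gamma'$ of dimension $n-1$, where $N'/\Gamma' = N/(\Gamma\cdot Z)$ with $Z$ a central one-parameter subgroup such that $\Gamma\cap Z$ is a lattice in $Z$. By induction on $n$ we may assume $N'/\Gamma'$ admits a non-zero degree exporting map to $T^{n-1}$ (the base case $n=1$ is trivial, $N/\Gamma = S^1 = T^1$). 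I would like to say $N/\Gamma$ itself maps with non-zero degree to $T^n$ and invoke \cref{pE torus}, but that only shows a non-zero degree map \emph{from} $N/\Gamma$ to $T^n$ is exporting; what we need is that every non-zero degree map \emph{into} $N/\Gamma$ is exporting. So the real task is to prove the implication: if $N'/\Gamma'$ has the property that every non-zero degree map into it is exporting, then so does $N/\Gamma$.

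So suppose $G$ acts effectively on $M$ and $f\colon M\to N/\Gamma$ has non-zero degree; after passing to a subgroup of bounded index (Minkowski's lemma) we may assume $G$ acts trivially on $H^1(M,\Z)$, hence by naturality also trivially on the relevant cohomology of $N/\Gamma$ pulled back via $f$. The composite $q\circ f\colon M\to N'/\Gamma'$ has non-zero degree only if $q$ does — which it does not, since $q$ drops dimension. This is the main obstacle: the $S^1$-bundle structure is transverse to degree arguments. The fix is to instead pull the bundle back: form $f^*q\colon f^*(N/\Gamma)\to M$, an $S^1$-bundle over $M$, classified by $f^*e\in H^2(M,\Z)$ where $e$ is the Euler class. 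The point is that $f$ lifts to a bundle map $\tilde f\colon M \to N/\Gamma$ covering a map $\bar f\colon M \to N'/\Gamma'$ only after replacing $M$ by this pullback; a cleaner route is to work with the classifying map of $f$ and observe that the exporting property is really a statement about how $G$ acts on the set of principal $S^1$-bundles over $M$ (a finitely generated group $H^2(M,\Z)$ on which $G$ acts, so a bounded-index subgroup acts trivially on any fixed orbit), combined with \cref{pI covering maps}-style reasoning applied fiberwise.

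Concretely, the steps I would carry out are: (i) use \cref{pE torus} to handle the degree map $f$ composed appropriately, getting a $G$-equivariant (after bounded index) map to $T^n$; (ii) use that the $S^1$-fibration $N/\Gamma \to T^n$... no — rather, recognize that $N/\Gamma$ and $T^n$ are related by a chain of operations (bundle projections and finite covers, cf. \cref{pI covering maps} and the promised fact that finite covers of nilmanifolds are importing and exporting) and assemble the result using \cref{pE composition}: express the needed map as a composite $M \to N/\Gamma$ where the exporting property of each stage follows from the inductive hypothesis on the $(n-1)$-dimensional base together with an equivariant obstruction-theory argument lifting the action across the $S^1$-bundle; (iii) the lifting step is where one must check that the central $S^1$ in $N$ supplies a circle action on $N/\Gamma$ commuting with everything, so that once $G$ acts on $N'/\Gamma'$ equivariantly compatibly with $\bar f$, the action extends over the $S^1$-bundle $N/\Gamma$ — here the Euler class obstruction lives in $G$-equivariant $H^2$ and is handled by averaging / by the fact that $f^*e$ is $G$-invariant. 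The hard part will be step (iii): making the equivariant lift across the principal $S^1$-bundle rigorous, i.e. showing the $G$-action on the base lifts to a $G$-action on the total space $N/\Gamma$ together with an equivariant map from $M$ homotopic to $f$; this is the nilmanifold analogue of \cite[Theorem 4.1]{mundet2021topological} and I expect it to require the same circle of ideas (working one central $S^1$ at a time, using that the relevant cohomology classes are fixed by a bounded-index subgroup, and controlling the kernel of ineffectiveness by $\deg f$ via \cref{exporting map non-zero degree}).
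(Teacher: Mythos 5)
Your overall shape of attack (work one central circle at a time along the lower central tower, control kernels of ineffectiveness by $\deg f$, pass to a bounded-index subgroup first) is indeed the shape of the paper's argument, but the proposal has a genuine gap: the entire content of the theorem is your step (iii), and you leave it as ``I expect it to require the same circle of ideas'' rather than proving it. Moreover, the tools you propose for it would not suffice. First, for a nonabelian $N$, triviality of the $G$-action on $H^1(M,\Z)$ (Minkowski for $\Gl(r,\Z)$) is not the right invariance condition: what must be fixed is the conjugacy class of the homomorphism $i\circ f_*\colon \pi_1(M)\to N$ in the space of $N$-local systems $X(\pi_1(M),N)$, and the bounded-index subgroup fixing it is obtained from Wehrfritz's theorem that $\Out$ of the finitely generated \emph{nilpotent} group $\pi_1(M)/\pi_1(M)^c$ is Minkowski (\cref{pE local systems trivial action}); no cohomological invariance in $H^1$ or an ``invariant Euler class in equivariant $H^2$'' replaces this. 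Second, the lifting across the central $S^1$ is not an averaging/obstruction-theory statement about bundles: the paper's \cref{pE local systems} constructs, by induction on $\dim N$ with base case \cref{pE S1}, an actual group morphism $\tilde\mu\colon\tilde H\to N$ out of the extension $1\to\pi_1(M)\to\tilde H\to H\to 1$, and the delicate point is splicing the morphism $\tilde\mu_Z$ on the central part with the inductively obtained $\tilde\mu_B$ on the quotient; this is resolved by producing a torsion-free lattice $\Lambda\ge\Gamma$ of finite index and invoking rigidity of Mal'cev completions, a step your sketch does not address at all.

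In addition, your reduction to the case $f_*(\pi_1(M))=\Gamma$ silently uses that the finite covering $N/\Gamma'\to N/\Gamma$ (with $\Gamma'=f_*(\pi_1(M))$) is itself an exporting map, quoting the paper's announcement of this fact; but that is part of what has to be proved (\cref{pE nilmanifold covering}), and its proof is not formal: it uses that $\Out(\Gamma')$ is Minkowski, that the kernel of $G\to\Out(\Gamma')$ acts freely and can be conjugated into the standard action of the torus $\Zc N/\Zc\Gamma'$, and that the covering is equivariant for the induced torus morphism $\Zc N/\Zc\Gamma'\to\Zc N/\Zc\Gamma$. So while your high-level plan points in the right direction, the proposal as written does not contain the two key arguments (the local-system formulation with its Minkowski bound, and the inductive construction of $\tilde\mu$ with the Mal'cev splicing), nor the covering reduction, and hence does not constitute a proof.
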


We divide the proof of \cref{thm: exporting map hypernilmanifolds} in three parts. 

\textbf{Part 1:} In the first part we reduce the proof of \cref{thm: exporting map hypernilmanifolds} to the case where $f:M\longrightarrow N/\Gamma$ induces a surjective map between fundamental groups. 

First, we note that $f_*(\pi_1(M))$ is a finite index subgroup of $\Gamma$. Indeed, consider the diagram
\[\begin{tikzcd}
	& X \ar{d}{p}\\
	M\ar{ru}{f'}\ar{r}{f} & N/\Gamma
\end{tikzcd}\]
where $X$ is the covering space of $N/\Gamma$ associated to the subgroup $f_*(\pi_1(M))\leq \Gamma$ and $f'$ is a lift of $f$ (which exists by general properties of covering spaces). Since $f=p\circ f'$, the non-zero map  $f^*:H^n(N/\Gamma,\Z)\longrightarrow H^n(M,\Z)$ factors through $H^n(X,\Z)$. This implies that $H^n(X,\Z)$ is not zero and therefore $X$ is a  compact manifold. Since $p:X\longrightarrow N/\Gamma$ is a covering between compact manifolds, $p$ is a finite covering and $[\Gamma:f_*(\pi_1(M))]$ is finite.

Consequently, $f_*(\pi_1(M))=\Gamma'$ is also a lattice of $N$ and $X\cong N/\Gamma'$. By \cref{pE composition}, if we prove that $f'$ and $p$ are exporting maps then $f$ will be an exporting map.

\begin{lem}\label{pE nilmanifold covering}
	The covering $p$ is an exporting map. 
\end{lem}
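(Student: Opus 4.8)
The plan is to show that a finite covering $p: N/\Gamma' \longrightarrow N/\Gamma$ between nilmanifolds is an exporting map, and in fact that it is also an importing map, since covers are already known to be importing by \cref{pI covering maps}. The key observation is that nilmanifolds are aspherical, so homeomorphisms and equivariant maps up to homotopy are controlled entirely by $\pi_1$. Concretely, $N/\Gamma$ is a $K(\Gamma,1)$ and $N/\Gamma'$ is a $K(\Gamma',1)$ with $\Gamma' \trianglelefteq \Gamma$ of finite index (we may pass to a further finite-index characteristic, hence normal, subgroup if needed, which only changes the constant).

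First I would recall that $\Out(\Gamma)$ is Minkowski: this is the last sentence of \cref{main theorem1 intro}, valid since $\Gamma$ is a lattice in the connected Lie group $N$. So there is a constant $C_0$ (depending only on $\Gamma$) such that any finite group $G$ acting effectively on $N/\Gamma'$ has a subgroup $H$ with $[G:H] \le C_0$ acting "inner-ly" — more precisely, the induced map $H \to \Out(\Gamma')$ is trivial. Next I would use that $\Gamma' \trianglelefteq \Gamma$ together with the finitely-generated center hypothesis to arrange, after a further bounded-index reduction, that $H$ acts on $N/\Gamma'$ by maps whose $\pi_1$-action extends to an action on $\Gamma$ by conjugation inside $N$. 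The point is that any homeomorphism of $N/\Gamma'$ homotopic to one inducing an inner automorphism of $\Gamma'$ is homotopic to one coming from left-translation by an element of $N$ (using that $N/\Gamma'$ is a nilmanifold, so its homotopy self-equivalences inducing inner automorphisms are realized by translations, as in the Bieberbach/Mostow rigidity picture for nilmanifolds). Since left-translations by $N$ descend to $N/\Gamma$ whenever they normalize the relevant lattices, we get an action of $H$ on $N/\Gamma$ and a $p$-compatible map.

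More carefully, the key step is: given the action $\phi: H \to \Homeo(N/\Gamma')$ with $H \to \Out(\Gamma')$ trivial, I want to produce $\phi': H \to \Homeo(N/\Gamma)$ and an $H$-equivariant map $p_H: N/\Gamma' \to N/\Gamma$ homotopic to $p$. Lift each $\phi(h)$ to the universal cover $N$; this gives, after choosing basepoints, an affine map of $N$ normalizing $\Gamma'$ (nilmanifold homotopy equivalences are affine up to homotopy — Malcev rigidity), and the composite $N \to N$ then descends along $\Gamma' \hookrightarrow \Gamma$ to a map $N/\Gamma' \to N/\Gamma$ precisely when the affine map normalizes $\Gamma$ as well. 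The triviality in $\Out(\Gamma')$ plus $\Gamma' \trianglelefteq \Gamma$ means the $\pi_1$-automorphism is conjugation by some $\gamma \in \Gamma$ (after a bounded-index correction using that $\Gamma/\Gamma'$ is finite and $\Aut$ of a f.g. nilpotent group has a nice structure), and conjugation by $\gamma \in \Gamma$ certainly normalizes $\Gamma$. This yields the desired $\phi'$, and the equivariant map $p_H$ is then the descent of the identity (or the chosen affine map) of $N$, which is homotopic to $p$ because both induce the inclusion $\Gamma' \hookrightarrow \Gamma$ on $\pi_1$ and the target is aspherical.

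The main obstacle I anticipate is the bookkeeping needed to get from "trivial in $\Out(\Gamma')$" to "the induced automorphism extends to $\Gamma$ by an inner automorphism of $\Gamma$ (up to bounded index)": $\Gamma'$ being normal in $\Gamma$ does not immediately make $\Aut(\Gamma') $-elements extend to $\Gamma$, so one has to work inside $N$ where all automorphisms are affine, use that $N$ acts on itself by translations, and check the translations involved preserve both lattices after possibly shrinking $H$ by a constant index. This is where the hypothesis that $N/\Gamma$ is a nilmanifold (rather than an arbitrary aspherical manifold) is essential: the affine structure on $N$ lets us realize abstract lattice automorphisms geometrically and descend them along the cover. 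Once that is in place, the homotopy statement for $p_H \simeq p$ is immediate from asphericity of $N/\Gamma$, and the index bound $[G:H] \le C$ is the product of the two constants used (from $\Out(\Gamma')$ Minkowski and from the $\Gamma/\Gamma'$-correction), both depending only on $\Gamma$.
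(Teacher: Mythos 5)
There is a genuine gap at the heart of your argument: everything you produce is homotopy-level, while the exporting property (\cref{pE defn}) demands an honest group action of $H$ on $N/\Gamma$ and an on-the-nose $H$-equivariant map $p_H$ with respect to the \emph{original} action of $H$ on $N/\Gamma'$. Knowing that each homeomorphism $\phi(h)$ induces an inner automorphism $c_{\gamma_h}$ of $\Gamma'$ and is therefore \emph{homotopic} to a (left-)translation does not let you take ``the descent of the identity'' as the equivariant map: equivariance means $p_H(\phi(h)x)=\phi'(h)p_H(x)$ pointwise, and $p\circ\phi(h)=\phi'(h)\circ p$ fails in general when $\phi(h)$ is only homotopic to a translation. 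Moreover, a choice $h\mapsto\gamma_h$ is only well defined up to the centralizer of $\Gamma'$ and need not be a homomorphism, so ``translation by $\gamma_h$'' need not even define an action of $H$ on $N/\Gamma$. The paper closes exactly this gap with a theorem about genuine topological actions, not about homotopy classes: by Lee--Raymond, $H=\ker\bigl(G\to\Out(\Gamma')\bigr)$ is isomorphic to a subgroup of the torus $\Zc N/\Zc\Gamma'$, acts freely, and its action on $N/\Gamma'$ is topologically \emph{conjugate} to the restriction of the standard central torus action; since $\Zc\Gamma'\leq\Zc\Gamma\leq\Zc N$ gives a homomorphism $\rho:\Zc N/\Zc\Gamma'\to\Zc N/\Zc\Gamma$, the covering $p$ itself is $\rho$-equivariant and the exported action is $h(n\Gamma)=\rho(h)(n\Gamma)$. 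Your appeal to ``Mal'cev/Bieberbach rigidity'' of homotopy self-equivalences cannot substitute for this conjugation statement, and without it the construction of $(\phi',p_H)$ does not go through.

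A secondary problem is your reduction to $\Gamma'\trianglelefteq\Gamma$. In the situation where this lemma is applied, $\Gamma'=f_*(\pi_1(M))$ is merely a finite-index subgroup of $\Gamma$, and you cannot ``pass to a further characteristic subgroup'': that replaces the source $N/\Gamma'$ and hence changes the map $p$ whose exporting property is to be proved (exporting a composite covering does not yield exporting of the intermediate covering, since actions on $N/\Gamma'$ need not lift). Normality is in fact irrelevant to the correct argument: what makes the translations descend to $N/\Gamma$ is that they are \emph{central} in $N$, and central left-translations descend to any quotient $N/\Gamma$ because $\Zc\Gamma'\leq\Zc\Gamma$. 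So the normality detour both introduces an unjustified step and misses the mechanism (centrality) that actually makes the proof work.
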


\begin{proof}
	Since $\Gamma'$ is finitely generated and nilpotent, the group $\Out(\Gamma')$ is Minkowski by \cite{Wehrfritz1994}. Consequently, there exists a constant $C$ depending only on $\Gamma'$ satisfying that, if $G$ is a finite group acting effectively on $N/\Gamma'$ and $H$ is the kernel of the group morphism $\psi:G\longrightarrow \Out(\Gamma')$, then $[G:H]\leq C$. By \cite[Theorem 3.1.16]{lee2010seifert}, $H$ is isomorphic to a subgroup of the torus $\Zc N/\Zc \Gamma'$, and hence it is abelian. 
	
	Let $G$ be a finite group acting effectively on $N/\Gamma'$ and $H=\ker(\psi:G\longrightarrow \Out(\Gamma'))$. We claim that the action of $H$ on $N/\Gamma'$ is free. Given a point $x\in N/\Gamma'$, the isotropy subgroup $H_x$ injects into $\Aut(\Gamma')$. Since the action of $H$ on $N/\Gamma'$ is inner, $H_x$ is a subgroup of $\Inn(\Gamma')=\Gamma'/\Zc\Gamma'$. Since $\Gamma'$ is a finitely generated torsion-free nilpotent group, $\Inn(\Gamma')$ is torsion-free and therefore $H_x$ is trivial, as claimed. 
	
	The action of $H$ can be conjugated to an action of $H\leq\Zc N/\Zc \Gamma'$ obtained by restricting the standard torus action of $\Zc N/\Zc\Gamma'$ on $N/\Gamma'$ (see \cite[Remark 11.7.18.2]{lee2010seifert}). Thus, we can assume without loss of generality that $H\leq\Zc N/\Zc \Gamma'$ and the action of $H$ on $N/\Gamma'$ is induced by the restriction of the torus action $\Zc N/\Zc\Gamma'$ on $N/\Gamma'$. 
	
	The covering $p:N/\Gamma'\longrightarrow N/\Gamma$ is given by $p(n\Gamma')=n\Gamma$ for $n\in N$ and hence it is $N$-equivariant. Since $\Zc\Gamma'=\Zc N\cap \Gamma'$ and $\Zc\Gamma=\Zc N\cap \Gamma$, we have that $\Zc\Gamma'\leq\Zc\Gamma\leq \Zc N$. Consequently, we have a group morphism $\rho:\Zc N/\Zc\Gamma'\longrightarrow \Zc N/\Zc\Gamma$ between tori of the same dimension. 	
	
	We note that $p:N/\Gamma'\longrightarrow N/\Gamma$ is $\rho$-equivariant. Since $H\leq \Zc N/\Zc\Gamma'$, we have an action of $H$ on $N/\Gamma$ 
	given by $h(n\Gamma)=\rho(h)(n\Gamma)$. This implies that $p$ is an exporting map, as we wanted to prove.
\end{proof}

In particular, $p:N/\Gamma'\longrightarrow N/\Gamma$ is an exporting map. It remains to prove that $f'$ is also an exporting map. 

Thus, from now on we will assume that $f:M\longrightarrow N/\Gamma$ induces a surjective morphism $f_*:\pi_1(M)\longrightarrow \Gamma$.

\textbf{Part 2:} Let $i:\Gamma\hookrightarrow N$ denote the inclusion of the lattice $\Gamma$ in $N$. For this part of the proof we consider the set of isomorphism classes $N$-local systems $X(\pi_1(M),N)=\Hom(\pi_1(M),N)/\!\!\sim$, where $\sim$ denotes the equivalence relation given by the conjugation by elements of $N$. An effective action of a finite group $G$ on $M$ induces an action of $G$ on $X(\pi_1(M),N)$ (this action is described explicitly in the proof of \cref{pE local systems}). Our goal is to prove that if $G$ fixes the class $[i\circ f_*]\in X(\pi_1(M),N)$ then there exists an action of $G$ on $N/\Gamma$ and a $G$-equivariant map $f_G:M\longrightarrow N/\Gamma$ which is homotopic to $f$. We will use induction on the dimension of $N$.

%Recall that this set is in one to one correspondence with principal $N$-bundles over $M$ which admit a collection of local trivializations which cover $M$ and whose transition maps are locally constant. If $M$ is a smooth manifold, then $\Hom(\pi_1(M),N)/\sim$ is in one to one correspondence with flat $N$-bundles over $M$ (see \cite[Chapter 2]{morita2001geometry}).

We start with the following lemma, which is a generalization of \cite[Lemma 4.2]{mundet2021topological} to non-compact manifolds. The arguments used to prove \cref{pE S1} and \cite[Lemma 4.2]{mundet2021topological} are essentially the same. 

%\begin{lem}\label{pE S1}\cite[Lemma 4.2]{mundet2021topological}
%	Let $M$ be a closed connected manifold, let $f:M\longrightarrow S^1$ be a continuous map and let $\theta$ be a generator of $H^1(S^1,\Z)$. Suppose that $G$ is a finite group acting effectively on $M$ fixing $f^*\theta$. Then there exists a group action of $\phi':G\longrightarrow\Homeo(S^1)$ such that $G/\ker\phi'$ acts by rotations on $S^1$ and a $G$-equivariant map $f_G:M\longrightarrow S^1$ homotopic to $f$.
%\end{lem}

\begin{lem}\label{pE S1}
	Let $M$ be a connected manifold, let $f:M\longrightarrow S^1$ be a continuous map and let $\theta$ be a generator of $H^1(S^1,\Z)$. Suppose that $H$ is a finite group of cardinal $r$ acting effectively on $M$ fixing $f^*\theta$. Consider the group extension $$1\longrightarrow \pi_1(M)\longrightarrow \tilde{H}\longrightarrow H\longrightarrow 1,$$ where the group $\tilde{H}$ acts effectively on the universal cover $\tilde{M}$. Then there exists a group morphism $\tilde{\mu}:\tilde{H}\longrightarrow \R$ and a $\tilde{\mu}$-equivariant map $\tilde{f}_H: \tilde{M}\longrightarrow \R$ such that $\tilde{\mu}_{|\pi_1(M)}=f_*$.
\end{lem}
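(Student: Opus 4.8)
The plan is to follow the strategy of \cite[Lemma 4.2]{mundet2021topological} but work upstairs on the universal cover so that non-compactness of $M$ causes no trouble. First I would set up the obstruction-theoretic picture: the map $f\colon M\longrightarrow S^1=K(\Z,1)$ is, up to homotopy, determined by the class $f^*\theta\in H^1(M,\Z)=\Hom(\pi_1(M),\Z)$, which is exactly the homomorphism $f_*$. Lifting to the universal cover, a based lift $\tilde f\colon\tilde M\longrightarrow\R$ exists since $\R$ is contractible, and after an affine normalisation we may assume $\tilde f$ is $f_*$-equivariant, i.e.\ $\tilde f(\gamma\cdot x)=\tilde f(x)+f_*(\gamma)$ for all $\gamma\in\pi_1(M)$ (viewing $f_*(\gamma)\in\Z\subseteq\R$ as a translation). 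This is just unwinding the definition of the identification $[M,S^1]\cong H^1(M,\Z)$.

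Next I would exploit that $H$ fixes $f^*\theta$. The group $\tilde H$ acts on $\tilde M$ and covers the $H$-action on $M$; I want to produce $\tilde\mu\colon\tilde H\longrightarrow\R$ with $\tilde\mu|_{\pi_1(M)}=f_*$ and a $\tilde\mu$-equivariant $\tilde f_H$. For each $h\in\tilde H$, consider $\tilde f\circ h-\tilde f\colon\tilde M\longrightarrow\R$. Because the induced action of $\tilde H/\pi_1(M)=H$ on $H^1(M,\Z)$ fixes $f^*\theta$, one checks that $\tilde f\circ h$ and $\tilde f$ represent the same class in the appropriate sense, so $\tilde f\circ h-\tilde f$ descends to $M$ and is homotopic to a constant; averaging (or using that $[M,\R]$ is trivial and $M$ is connected, so a map $M\to\R$ homotopic to a constant through maps to $\R$... actually $\R$ is contractible so $\tilde f\circ h-\tilde f$ composed appropriately) — more carefully, $\tilde f\circ h - \tilde f$ is $\pi_1(M)$-invariant hence descends to a map $M\longrightarrow\R$, and I would replace $\tilde f$ by its average over a chosen set-theoretic section or use a center-of-mass / integration argument over the finite group to arrange that $\tilde f\circ h-\tilde f$ is the constant function with some value $\tilde\mu(h)\in\R$. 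Then $\tilde\mu\colon\tilde H\longrightarrow\R$ is forced to be a homomorphism by associativity of the action, it restricts to $f_*$ on $\pi_1(M)$ by construction, and the modified $\tilde f_H$ is $\tilde\mu$-equivariant by design.

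The main obstacle is the averaging step: on a non-compact $M$ one cannot average over $M$, but here one only needs to average over the \emph{finite} group $H$ (or $\tilde H/\pi_1(M)$), which is legitimate. Concretely, fix any continuous $\tilde f$ as above and set $\tilde f_H=\frac{1}{r}\sum_{h\in H}(\text{lift of }h)^{-1}\circ\tilde f\circ(\text{lift of }h)$, choosing a set-theoretic section $H\to\tilde H$; one verifies this is independent of the section up to adding an element of $f_*(\pi_1(M))$, that it is still a valid lift of a map homotopic to $f$ (the correction differs from $\tilde f$ by a $\pi_1(M)$-invariant function $M\to\R$, hence does not change the homotopy class downstairs since $\R$ is contractible and the correction is null-homotopic), and that by construction $\tilde f_H(h\cdot x)-\tilde f_H(x)$ is independent of $x$, defining $\tilde\mu(h)$. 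The cocycle/homomorphism property of $\tilde\mu$ and the compatibility $\tilde\mu|_{\pi_1(M)}=f_*$ are then routine diagram chases. I expect the write-up to closely parallel \cite[Lemma 4.2]{mundet2021topological}, with the only genuinely new point being the remark that every averaging is over a finite group, so connectedness of $M$ (and not compactness) is all that is used.
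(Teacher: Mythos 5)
Your argument is correct in substance, but it takes a genuinely different route from the paper. The paper averages downstairs in the group $S^1$: it forms $F(x)=\sum_{h\in H}f(h\cdot x)$, observes $F^*\theta=rf^*\theta$, extracts an $r$-th root of $F$ by showing the pull-back of the degree-$r$ covering $S^1\to S^1$ along $F$ has trivial monodromy (hence a section), and thereby gets an honest $\mu$-equivariant map $f_H:M\to S^1$ with $\mu:H\to\Z/r$, which is then lifted to universal covers to produce $\tilde\mu$ and $\tilde f_H$. You instead work entirely upstairs and average the real-valued lift over the finite group: with a set-theoretic section $s:H\to\tilde H$ the correct formula is $\tilde f_H(x)=\frac1r\sum_{k\in H}\tilde f(s(k)\cdot x)$ (your displayed expression $(\text{lift of }h)^{-1}\circ\tilde f\circ(\text{lift of }h)$ is ill-typed, since the lifts act on $\tilde M$, not on $\R$; this is a slip of notation, not of idea). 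The hypothesis that $H$ fixes $f^*\theta$ enters exactly as the identity $f_*(\tilde h\gamma\tilde h^{-1})=f_*(\gamma)$ for $\tilde h\in\tilde H$, $\gamma\in\pi_1(M)$, which gives $\tilde f_H(\gamma x)=\tilde f_H(x)+f_*(\gamma)$ and, writing $s(k)\tilde h=\gamma_k\,s(k\bar h)$, gives $\tilde f_H(\tilde h x)-\tilde f_H(x)=\frac1r\sum_k f_*(\gamma_k)$, a constant; this constant defines $\tilde\mu(\tilde h)$, and the homomorphism property and $\tilde\mu|_{\pi_1(M)}=f_*$ follow at once. You should write out this short computation rather than label it ``by construction'', since it is the crux. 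What each approach buys: yours is more elementary and direct (no covering-space trivialization, no root extraction, and the target $\R$ being a vector space is what legitimizes dividing by $r$), and it produces $\tilde\mu:\tilde H\to\R$ in one step; the paper's version additionally produces an explicit $H$-equivariant map $f_H:M\to S^1$ with $f_H^*\theta=f^*\theta$ downstairs, and its structure (average in the target group, then take a root) is the template that the paper later imitates in the induction over nilmanifolds. Both use only that the averaging is over a finite group, so neither needs compactness of $M$.
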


\begin{proof}
	Define $F:M \to S^1$ by $\zeta(x) = \sum_{h \in H} f(h \cdot x)$ for every $x \in M$. Then $F$ is continuous and constant on the orbits of the action of $H$. Let $\phi_h: M \to M$ be the homeomorphism induced by the action of $h\in H$. By assumption, $\phi_h^* f^* \theta = f^* \theta$ for every $h\in H$. We have $F^* \theta = \sum_{h \in H} \phi_h^* (f^* \theta) = r f^* \theta$.
	
	Consider the $\Z/r$-covering $p_r:S^1\longrightarrow S^1$ defined as $p_r(t)=rt$. Let $q_r:M_r \longrightarrow M$ the pull-back of $p_r$ by $F$. Recall that $M_r=\{(x, t) \in M \times S^1 :F(x)=rt\}$ and $q_r:M_r \longrightarrow M$ satisfies $q_r(x,t)=x$ for $(x,t)\in M_r$. Moreover, $q_r:M_r \longrightarrow M$ has a structure of principal $\Z/r$ given by the action $(x,t)\cdot\alpha=(x,t\alpha)$ for $(x,t)\in M_r$ and $\alpha\in\Z/r$. 
	
	We claim that $q_r$ is a trivial principal bundle. This is equivalent to the triviality of the monodromy of $q_r$, which we denote by $\nu:\pi_1(M, x_0) \to \Z/r$, where $x_0\in M$ is an arbitrary base point. If there existed some $\gamma \in \pi_1(X, x_0)$ such that $\nu(\gamma) \neq 0$, then the pairing of $f^* \theta$ with $[\gamma] \in H_1(X,\Z)$ would not be divisible by $r$, which contradicts $F^* \theta = r f^* \theta$.
	Hence $\nu$ is trivial and consequently, the bundle $q_r$ is a trivial, so we may choose a section $\sigma: M\longrightarrow  M_r$. 
	
	Define $f_H: M\longrightarrow S^1$ by the condition that $\sigma(x) = (x, f_H(x))$. Then $f_H$ is continuous and we have $rf_H(x)=F(x)$ for every $x\in M$. For any $h\in H$, define $\chi_h: M \to S^1$ by $$\chi_h(x) = f_H(h \cdot x) - f_H(x).$$
	
	We have $$r\chi_h(x) = r f_H(h \cdot x) - r f_H(x) = F(h\cdot x) - F(x) = 0$$
	because $F$ is $H$-invariant. Hence $\chi_h$ takes values in $\Z/r\leq S^1$. Consequently, since $\chi_h$ is continuous, it is a constant map. We may thus define a map $\mu: H \longrightarrow \Z/r$ by the condition that $\mu(h)= \chi_h(x)$ for every $x \in X.$
	
	Let $h,h'\in H$ and let $x \in X$. We have $$
	\chi_{hh'}(x) = f_H(hh' \cdot x) - f_H(x) = f_H(hh' \cdot x) - f_H(h' \cdot x) + f_H(h' \cdot x) - f_H(x) = \chi_h(h' \cdot x) + \chi_{h'}(x),$$
	which proves that $\mu(hh') = \mu(h) + \mu(h')$, so $\mu$ is a morphism of groups. 
	From the definition of $\mu$, it is immediate that $f_H$ is $\mu$-equivariant.
	
	To conclude the proof, note that $rf_H^* \theta = F^* \theta = rf^* \theta$. 
	Since $H^1(X; \mathbb{Z})$ has no torsion, we conclude that $f_H^* \theta = f^*\theta$. Thus, $f_*:\pi_1(M)\longrightarrow \Z$ and $f_{H*}:\pi_1(M)\longrightarrow \Z$ are equal. We can lift $f_H$ to a map between the universal coverings $\tilde{f}_H:\tilde{M}\longrightarrow \R$. Since $f_H$ is $\mu$-equivariant, we have a commutative diagram 
	\[\begin{tikzcd}
		1\ar{r}{}&\pi_1(M)\ar{r}{}\ar{d}{f_{H*}}&\tilde{H}\ar{r}{}\ar{d}{\tilde{\mu}}&H\ar{r}{}\ar{d}{\mu}&1\\
		1\ar{r}{}&\Z\ar{r}{}&\Z\ar{r}{\tilde{\pi}}&\Z/r\ar{r}{}&1.
	\end{tikzcd}\]
\end{proof}

\begin{lem}\label{pE local systems}
	Let $M$ be a closed connected oriented manifold and let $f:M\longrightarrow N/\Gamma$ be a map such that $f_*(\pi_1(M))=\Gamma$. Assume that $H$ is a finite group acting effectively on $M$ which fixes the local system $[i\circ f_*]\in X(\pi_1(M),N)$. Consider the group extension $$1\longrightarrow \pi_1(M)\longrightarrow \tilde{H}\longrightarrow H\longrightarrow 1,$$ where the group $\tilde{H}$ acts effectively on the universal cover $\tilde{M}$. Then there exists a group morphism $\tilde{\mu}:\tilde{H}\longrightarrow N$ and a $\tilde{\mu}$-equivariant map $\tilde{f}_H: \tilde{M}\longrightarrow N$ such that $\tilde{\mu}_{|\pi_1(M)}=f_*$.
\end{lem}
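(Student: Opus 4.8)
The plan is to argue by induction on $\dim N$. Before starting, recall how $H$ acts on $X(\pi_1(M),N)$: an effective action of $H$ on $M$ determines a homomorphism $H\to\Out(\pi_1(M))$, and $\Out(\pi_1(M))$ acts on $X(\pi_1(M),N)=\Hom(\pi_1(M),N)/\!\sim$ by precomposition, inner automorphisms of $\pi_1(M)$ acting trivially after passing to the quotient by conjugation in $N$. If $\dim N=0$ there is nothing to do. If $\dim N=1$ then $N\cong\R$ and $N/\Gamma\cong S^1$; since $\R$ is abelian one has $X(\pi_1(M),\R)=\Hom(\pi_1(M),\R)=H^1(M;\R)$, and because $H^1(M;\Z)$ is torsion free, $H$ fixing the class of $i\circ f_*$ is the same as $H$ fixing $f^*\theta\in H^1(M;\Z)$, so the statement is precisely \cref{pE S1}.

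For the inductive step I would assume $\dim N\geq 2$ and reduce the dimension of the target by one. Since $\Gamma$ is a lattice in the simply connected nilpotent group $N$, the group $\Zc\Gamma=\Gamma\cap\Zc N$ is a lattice in $\Zc N$; choosing a nontrivial $\gamma_0\in\Zc\Gamma$ and letting $L=\exp(\R\log\gamma_0)\leq\Zc N$ be the one-parameter subgroup through it, $L\cap\Gamma$ is a discrete subgroup of $L\cong\R$ containing $\langle\gamma_0\rangle$, hence infinite cyclic. Set $N_1=N/L$ (simply connected nilpotent, $\dim N_1=\dim N-1$), $\Gamma_1=\Gamma/(L\cap\Gamma)$ (a lattice in $N_1$), and let $q\colon N\to N_1$ be the projection; this is a central extension $1\to L\to N\xrightarrow{q}N_1\to 1$ with $L\cong\R$, descending to a principal $S^1$-bundle $N/\Gamma\to N_1/\Gamma_1$. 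Let $f_{1*}\colon\pi_1(M)\to\Gamma_1$ be the composite of $f_*$ with the projection $\Gamma\to\Gamma_1$, still surjective, and let $f_1\colon M\to N_1/\Gamma_1$ be a map inducing it (for instance $f$ followed by $N/\Gamma\to N_1/\Gamma_1$). Since postcomposition with $q$ commutes with precomposition by automorphisms of $\pi_1(M)$, the map $q_*\colon X(\pi_1(M),N)\to X(\pi_1(M),N_1)$ is $H$-equivariant, so $H$ fixes $q_*[i\circ f_*]=[i_1\circ f_{1*}]$. As \cref{pE local systems} imposes no degree hypothesis, the induction hypothesis applies to $f_1$ and gives a homomorphism $\tilde\mu_1\colon\tilde H\to N_1$ and a $\tilde\mu_1$-equivariant map $\tilde f_1\colon\tilde M\to N_1$ with $\tilde\mu_1|_{\pi_1(M)}=f_{1*}$.

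The remaining task, and the technical heart of the argument, is to lift $\tilde\mu_1$ and $\tilde f_1$ through $q$. For the homomorphism I would pull the central extension $1\to L\to N\to N_1\to 1$ back along $\tilde\mu_1$ to a central extension $1\to L\to E\to\tilde H\to 1$, noting that a homomorphism $\tilde H\to N$ lying over $\tilde\mu_1$ is the same as a splitting of it. The obstruction to splitting lies in $H^2(\tilde H;\R)$ (with $L\cong\R$ a trivial module) and restricts in $H^2(\pi_1(M);\R)$ to the obstruction for $E|_{\pi_1(M)}$, which vanishes because $f_*\colon\pi_1(M)\to N$ is a section there. Because $H$ is finite and $\R$ is a $\Q$-vector space, the Lyndon--Hochschild--Serre spectral sequence of $1\to\pi_1(M)\to\tilde H\to H\to 1$ collapses with $\R$-coefficients, so the restriction $H^2(\tilde H;\R)\to H^2(\pi_1(M);\R)$ is injective and the obstruction vanishes; thus $E$ splits. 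Any two splittings differ by an element of $\Hom(\tilde H,\R)$, and since $\R$ is an injective $\Z$-module the restriction $\Hom(\tilde H,\R)\to\Hom(\pi_1(M),\R)$ is onto, so a splitting may be corrected to agree with $f_*$ on $\pi_1(M)$. This yields $\tilde\mu\colon\tilde H\to N$ with $q\tilde\mu=\tilde\mu_1$ and $\tilde\mu|_{\pi_1(M)}=f_*$.

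Finally, $\tilde H$ acts on the principal $L$-bundle $\tilde f_1^{*}q$ over $\tilde M$ via its action on $\tilde M$ and on $N$ by left translations through $\tilde\mu$ (the structure group $L$ acting trivially on fibres, as left and right translations in $N$ commute), and a $\tilde\mu$-equivariant lift $\tilde f_H\colon\tilde M\to N$ of $\tilde f_1$ is exactly a $\tilde H$-equivariant section of this bundle. Since $L\cong\R$ is contractible the bundle has a section and the set of sections is an affine space over $C(\tilde M;\R)$ on which $\tilde H$ acts affinely with linear part the natural action induced by $\tilde H\acts\tilde M$; the obstruction to a $\tilde H$-fixed section lies in $H^1(\tilde H;C(\tilde M;\R))$, which by the same collapse of the Lyndon--Hochschild--Serre spectral sequence injects into $H^1(\pi_1(M);C(\tilde M;\R))$, where it detects the existence of a $\pi_1(M)$-equivariant section — equivalently of a section of an $\R$-bundle over the closed manifold $M$, which always exists. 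Hence the equivariant section exists and the induction closes. I expect the lifting of $\tilde\mu_1$ to $\tilde\mu$ to be the main obstacle, since one must lift it and pin down its restriction to $\pi_1(M)$ simultaneously; both the vanishing of the obstruction in $H^2(\tilde H;\R)$ and the surjectivity onto $\Hom(\pi_1(M),\R)$ hinge on $L\cong\R$ being divisible, which is precisely why the induction peels off a single one-parameter central subgroup rather than the full center.
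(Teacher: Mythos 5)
Your induction scheme (peeling off a one--dimensional central subgroup $L\cong\R$ of $N$, base case via \cref{pE S1}) is the same as the paper's, and your final step producing the $\tilde\mu$-equivariant map $\tilde f_H$ as an $\tilde H$-fixed section of the pulled-back principal $L$-bundle, with the obstruction killed in $H^1(\tilde H;C(\tilde M;\R))$ by the five-term sequence, is a correct and clean alternative to the paper's coordinate-wise construction. The problem is the step you yourself identify as the heart of the argument: the lift of $\tilde\mu_1$ to $\tilde\mu:\tilde H\to N$ with $\tilde\mu|_{\pi_1(M)}=i\circ f_*$. The existence of \emph{some} splitting of the pulled-back central extension $1\to L\to E\to\tilde H\to 1$ is fine (your injectivity of $H^2(\tilde H;\R)\to H^2(\pi_1(M);\R)$ is correct), but the correction step is not. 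The restriction map $\Hom(\tilde H,\R)\to\Hom(\pi_1(M),\R)$ is \emph{not} surjective in general: since $\R$ is abelian, any homomorphism $\tilde H\to\R$ restricts to a homomorphism on $\pi_1(M)$ that is invariant under conjugation by $\tilde H$, and by the five-term sequence the image is exactly the $H$-invariants $\Hom(\pi_1(M),\R)^H$ (e.g.\ for $\tilde H=\Z\rtimes\Z/2$ infinite dihedral the restriction map is $0\to\R$). Injectivity of $\R$ as a $\Z$-module is irrelevant here because $\tilde H$ is not abelian. So you must show that the defect $\delta(\gamma)=\mu(\gamma)s(\gamma)^{-1}\in L$ (with $\mu=i\circ f_*$ and $s$ your splitting) is $H$-invariant, and this is precisely where the nonabelian nature of $N$ bites: writing $a(\tilde h)=s(\tilde h)w_{\tilde h}$, the hypothesis that $H$ fixes $[\mu]$ only gives $q(w_{\tilde h})\in\Zc N'$, and a short computation shows $\delta(c_{\tilde h}\gamma)\delta(\gamma)^{-1}=[w_{\tilde h},s(\gamma)]\in L$. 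Elements of $N$ projecting to the center of $N'=N/L$ need not be central in $N$ (already for the $3$-dimensional Heisenberg group $q^{-1}(\Zc N')=N$), so these commutators have no reason to vanish, and the class of $\delta$ modulo the invariant homomorphisms is a genuine obstruction that your argument never addresses.

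This is exactly the difficulty the paper's proof is organized around: instead of trying to extend $\delta$ by pure algebra, it applies \cref{pE S1} a second time, to the induced action of $H_0=\tilde H_0/Z_0$ on the intermediate quotient $\tilde M/Z_0$ (checking the fixed-class hypothesis there using that $\mu|_{Z_0}$ takes values in $\Zc N$), thereby producing a genuine homomorphism $\tilde\mu_Z:\tilde H_0\to\R$ on the kernel of the base morphism; it then assembles $\tilde\mu_Z$ and $\tilde\mu_B$ into $\tilde\mu$ not by splitting an extension but by passing to the quotients $\tilde H/\ker\mu$ and $\Lambda=\tilde\Gamma/K$ and invoking rigidity of the Mal'cev completion to embed the resulting lattice $\Lambda\supseteq\Gamma$ into $N$ compatibly with $i$. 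To repair your write-up you would either have to prove the vanishing of the class of $\delta$ (which, as far as I can see, requires extra geometric input of this kind rather than the fixed-local-system hypothesis alone) or follow the paper's route for this step; as written, the proposal has a genuine gap at the lifting of the homomorphism.
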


Before proving \cref{pE local systems}, we deduce the next corollary.

\begin{cor}
	Let $M$ be a closed connected oriented manifold and $f:M\longrightarrow N/\Gamma$ be a non-zero degree map such that $f_*(\pi_1(M))=\Gamma$. Assume that $H$ is a finite group acting effectively on $M$ which fixes the local system $[i\circ f_*]\in X(\pi_1(M),N)$. Then there exists a group action of $H$ on $N/\Gamma$ and an equivariant map $f_H:M\longrightarrow N/\Gamma$ homotopic to $f$.
\end{cor}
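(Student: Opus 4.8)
The plan is to obtain this corollary as a descent statement from \cref{pE local systems}. Applying that lemma to the given effective action of $H$ on $M$, which fixes $[i\circ f_*]\in X(\pi_1(M),N)$ by hypothesis, produces a group extension $1\to\pi_1(M)\to\tilde H\to H\to 1$ with $\tilde H$ acting on the universal cover $\tilde M$ (lifting the $H$-action on $M$ and the deck action of $\pi_1(M)$), a homomorphism $\tilde\mu:\tilde H\to N$, and a $\tilde\mu$-equivariant map $\tilde f_H:\tilde M\to N$ with $\tilde\mu|_{\pi_1(M)}=f_*$. What remains is to push $\tilde f_H$ and the $\tilde H$-action down the orbit maps $\tilde M\to M$ and $N\to N/\Gamma$.

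First I would note that, since $\tilde\mu|_{\pi_1(M)}=f_*$ is surjective onto $\Gamma$ and $\pi_1(M)$ acts on $N$ through $\tilde\mu$ by translations, the $\pi_1(M)$-orbits in $N$ are exactly the cosets of $\Gamma$; hence $N/\pi_1(M)$ is the nilmanifold $N/\Gamma$, and this $\pi_1(M)$-action is the deck action of the covering $N\to N/\Gamma$ precomposed with $f_*$ (the subgroup $\ker f_*$ acting trivially). Consequently the $\tilde\mu$-equivariant map $\tilde f_H$ is in particular $\pi_1(M)$-equivariant, intertwining the deck action on $\tilde M$ with this action on $N$, so it descends to a continuous map $f_H:M=\tilde M/\pi_1(M)\longrightarrow N/\pi_1(M)=N/\Gamma$. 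Likewise, since $\pi_1(M)\trianglelefteq\tilde H$, the $\tilde H$-action on $N$ descends to an action of $\tilde H/\pi_1(M)=H$ on the orbit space $N/\pi_1(M)=N/\Gamma$ (a normal subgroup acting on a space always induces an action of the quotient group on the orbit space), and with respect to this $H$-action the map $f_H$ is $H$-equivariant by construction. Concretely each $h\in H$ acts on $N/\Gamma$ through a translation by $\tilde\mu(\tilde h)$ for any lift $\tilde h$, which normalizes $\Gamma$ since $\tilde\mu(\tilde h)\,\Gamma\,\tilde\mu(\tilde h)^{-1}=f_*(\tilde h\,\pi_1(M)\,\tilde h^{-1})=f_*(\pi_1(M))=\Gamma$.

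It then remains to check that $f_H$ is homotopic to $f$. Since $\tilde f_H$ is a lift of $f_H$ to universal covers inducing $f_*$ on fundamental groups, we have $(f_H)_*=f_*:\pi_1(M)\to\Gamma$; as $N/\Gamma$ is aspherical, homotopy classes of maps $M\to N/\Gamma$ are determined by the induced homomorphism on $\pi_1$ up to conjugacy, so $f_H\simeq f$. (Alternatively, fixing an equivariant lift $\tilde f$ of $f$, the map $x\mapsto\tilde f(x)^{-1}\tilde f_H(x)$ is $\pi_1(M)$-invariant, hence descends to $M\to N$; contractibility of $N$ gives a null-homotopy, which lifts to a $\pi_1(M)$-equivariant homotopy $\tilde f\simeq\tilde f_H$ and descends to $f\simeq f_H$.) The substance of the argument is already contained in \cref{pE local systems}, so the only genuinely delicate point here is the homotopy $f_H\simeq f$, where one must either invoke asphericity of $N/\Gamma$ or build the homotopy equivariantly on $\tilde M$; both routes are routine. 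Finally, the constructed $H$-action on $N/\Gamma$ need not be effective, but its kernel of ineffectiveness $K$ satisfies $|K|\leq\deg f$ (since $f_H$ is constant on $K$-orbits it factors through $M/K$, and \cref{quotient map image degree} forces $|K|\mid\deg f$, exactly as in the proof of \cref{exporting map non-zero degree}), in agreement with the remark following \cref{pE torus}.
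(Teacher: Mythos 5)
Your proposal is correct and follows essentially the same route as the paper: apply \cref{pE local systems}, descend $\tilde f_H$ and the $\tilde H$-action along the quotients by $\pi_1(M)$ (the paper phrases the induced action via the surjection $H\to\tilde\mu(\tilde H)/\Gamma$, you via normality of $\pi_1(M)$ in $\tilde H$ and the fact that $\tilde\mu(\tilde h)$ normalizes $\Gamma$, which is the same computation), and conclude $f_H\simeq f$ from $N/\Gamma$ being a $K(\Gamma,1)$ together with $[\tilde\mu|_{\pi_1(M)}]=[i\circ f_*]$. The extra remarks on the kernel of ineffectiveness match the paper's remark following \cref{pE torus} and are not needed for the corollary itself.
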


\begin{proof}
	Since $\tilde{\mu}_{|\pi_1(M)}=i\circ f_*$, the map $\tilde{f}_H:\tilde{M}\longrightarrow N$ induces a map $f_H:M\longrightarrow N/\Gamma$. Moreover, $\tilde{\mu}:\tilde{H}\longrightarrow N$ induces a surjective group morphism $\mu':\tilde{H}/\pi_1(M)=H\longrightarrow \tilde{\mu}(\tilde{H})/\Gamma$ such that $f_H$ is $\mu'$-equivariant. Thus, we have an action $H\longrightarrow \tilde{\mu}(\tilde{H})/\Gamma\longrightarrow \Homeo(N/\Gamma)$ and $f_H$ is $H$-equivariant. Finally, since $N/\Gamma$ is a model of the Eilenberg-MacLane space $K(\Gamma,1)$ and $[\tilde{\mu}_{|\pi_1(M)}]=[i\circ f_*]$, the maps $f$ and $f_H$ are homotopic. 
\end{proof}

\begin{rem}
	Note that the action of $\tilde{\mu}(\tilde{H})/\Gamma$ is free and the quotient $(N/\Gamma)/(\tilde{\mu}(\tilde{H})/\Gamma) $ is the nilmanifold $N/\tilde{\mu}(\tilde{H})$.
\end{rem}

\begin{proof}[Proof of \cref{pE local systems}]
	We proceed by induction on the dimension of the nilmanifold $N/\Gamma$. Assume that $\dim(N/\Gamma)=1$. Then $N/\Gamma\cong S^1$ and the statement is a consequence of \cref{pE S1}. For the induction step, we can take central exact sequences 
	\[\begin{tikzcd}
		1\ar{r}{}&\R\ar{r}{}&N\ar{r}{\tilde{\pi}}&N'\ar{r}{}&1
	\end{tikzcd}\]
	and 
	\[\begin{tikzcd}
		1\ar{r}{}&\R\cap\Gamma\cong\Z\ar{r}{}&\Gamma\ar{r}{\pi_*}&\Gamma'\ar{r}{}&1,
	\end{tikzcd}\] where $\pi_*=\tilde{\pi}_{|\Gamma}$. They induce a principal $S^1$-bundle, $\pi:N/\Gamma\longrightarrow N'/\Gamma'$. Since the short exact sequences are central, we can choose a normalized 2-cocycle $c:N'\times N'\longrightarrow \R$ such that $N\cong \R\times_c N'$ and $\Gamma\cong \Z\times_{c_{|\Gamma'\times\Gamma'}}\Gamma'$. Note that $\dim(N'/\Gamma')=\dim(N/\Gamma)-1$.
	
	We denote $\mu=i\circ f_*$. We have an action of $\tilde{H}$ on $X(\pi_1(M),N)$ satisfying that $\tilde{h}[\nu]=[\nu\circ c_{\tilde{h}|\pi_1(M)}]$ for all $\tilde{h}\in\tilde{H}$ and $[\nu]\in X(\pi_1(M),N)$. If the action of $H$ on $X(\pi_1(M),N)$ fixes $[\mu]$ then the action of $\tilde{H}$ also fixes $[\mu]$. This is a consequence of the fact that the action of $\tilde{H}$ on $X(\pi_1(M),N)$ factors through the action of $H$ on $X(\pi_1(M),N)$, since the restriction to $\pi_1(M)$ of the action of $\tilde{H}$ on $X(\pi_1(M),N)$ is trivial.
	
	This condition implies that there exists a map (which is not a group morphism in general) $a:\tilde{H}\longrightarrow N$ such that $\mu\circ c_{\tilde{h}|\pi_1(M)}=c_{a(\tilde{h})}\circ \mu$. Note that $a:\tilde{H}\longrightarrow N$ is not unique. If we have another map $a':\tilde{H}\longrightarrow N$ with the same property, then $c_{a'(\tilde{h})}\circ \mu=\mu\circ c_{\tilde{h}|\pi_1(M)} =c_{a(\tilde{h})}\circ \mu$ and hence $c_{a(\tilde{h})^{-1}a'(\tilde{h})}\circ \mu=\mu$. Therefore, $a(\tilde{h})^{-1}a'(\tilde{h})$ centralizes the lattice $\Gamma$ and hence $a(\tilde{h})^{-1}a'(\tilde{h})\in\Zc N$ for all $\tilde{h}\in\tilde{H}$.
	
	The action of $\tilde{H}$ fixes $[\tilde{\pi}\circ \mu]\in X(\pi_1(M),N')$. Since $\pi_*:\Gamma\longrightarrow\Gamma'$ is surjective, the group morphism $\pi_*\circ f_*:\pi_1(M)\longrightarrow \Gamma'$ is surjective and, by induction, there exists a group morphism $\tilde{\mu}_B:\tilde{H}\longrightarrow N'$ and a $\tilde{\mu}_B$-equivariant map $\tilde{f}'_B:\tilde{M}\longrightarrow N'$ such that $\tilde{\mu}_{B|\pi_1(M)}=\tilde{\pi}\circ \mu$. 
	
	Consequently, we have a surjective group morphism $\mu_B:\tilde{H}/\pi_1(M)\cong H\longrightarrow \tilde{\mu}_B(\tilde{H})/\Gamma'$. Thus, we can define an action of $H$ on $N'/\Gamma'$ (which may not be effective) given by $h(n\Gamma')=\mu_B(h)(n\Gamma')$. The map $\tilde{f}'_B:\tilde{M}\longrightarrow N'$ induces an $H$-equivariant map $f'_B:M\longrightarrow N'/\Gamma'$ homotopic to $\pi\circ f$. Since $f'_{B*}:\pi_1(M)\longrightarrow \Gamma'$ factors through $\pi_*:\Gamma\longrightarrow\Gamma'$, there exists a map $f_B:M\longrightarrow N/\Gamma$ homotopic to $f$ such that $\pi\circ f_B=f'_B$.
	We can lift $f_B$ to a map $\tilde{f}_B:\tilde{M}\longrightarrow N\cong \R\times_c N'$. Note that $\tilde{f}_B$ is $\mu$-equivariant and $\tilde{\pi}\circ \tilde{f}_B$ is $\tilde{\mu}_B$-equivariant.
	
	We now consider the group $\tilde{H}_0=\ker\tilde{\mu}_B$ and $Z_0=\mu^{-1}(\ker\tilde{\pi})\leq\pi_1(M)$ (recall that $\ker\tilde{\pi}\cong\R$). Notice that $Z_0=\tilde{H}_0\cap \pi_1(M)$. Let $g\in \tilde{H}_0\cap \pi_1(M)$. Then $\tilde{\mu}_B(g)=\tilde{\pi}(\mu(g))=0$, which implies that $g\in Z_0$. Conversely, if $g\in Z_0$, then $g\in \pi_1(M)$. Therefore, $\tilde{\mu}_B(g)=\tilde{\pi}(\mu(g))=0$, which implies that $g\in\tilde{H}_0$. Consequently, $g\in \tilde{H}_0\cap \pi_1(M)$. In conclusion, $Z_0=\mu^{-1}(\ker\tilde{\pi})\leq\pi_1(M)$. Therefore $Z_0$ is a normal subgroup of $\tilde{H}_0$. Moreover, $H_0=\tilde{H}_0/Z_0$ is a subgroup of $H$ and hence $Z_0$ is a finite index subgroup of $\tilde{H}_0$. 
	
	Since $Z_0\leq\pi_1(M)$, $\tilde{f}_B:\tilde{M}\longrightarrow N$ induces a map $\tilde{M}/Z_0\longrightarrow N/\Z\cong S^1\times_c N'$. We obtain a map $f_Z:\tilde{M}/Z_0\longrightarrow S^1$ by composing it with the projection to the $S^1$ factor. We also get an effective action of the finite group $H_0$ on $\tilde{M}/Z_0$. We consider $\mu_Z:Z_0\longrightarrow \R$, which is the composition $f_{Z*}:Z_0\longrightarrow \Z$ and the inclusion $i_Z=i_{|\Z}:\Z\longrightarrow \R\leq \Zc N$. The group morphism $\mu_Z$ is the restriction of the map $\mu$ to $Z_0$, $\mu_Z=\mu_{|Z_0}$. 
	
	The action of $H_0$ on $X(Z_0,\R)=\Hom(Z_0,\R)$ induces an action of $\tilde{H}_0$ on $X(Z_0,\R)$ satisfying $\tilde{h}\cdot\nu=\nu\circ c_{\tilde{h}|Z_0}$ for $\nu\in X(Z_0,\R)$ and $\tilde{h}\in \tilde{H}_0$. This action satisfies $$\tilde{h}\cdot\mu_{Z}=\mu_{|Z_0}\circ c_{\tilde{h}|Z_0}=c_{a(\tilde{h})}\circ \mu_{|Z_0}=\mu_{|Z_0}$$ where the last equality holds because the image of $\mu_{|Z_0}$ lies on the center of $N$. Consequently, the group $\tilde{H}_0$ (and hence $H_0$) fixes $\mu_Z\in X(Z_0,\R)$. We can use \cref{pE S1} to conclude that there exists a group morphism $\tilde{\mu}_Z:\tilde{H}_0\longrightarrow \R$ satisfying $\tilde{\mu}_{Z|Z_0}=\mu_Z$ and a $\tilde{\mu}_Z$-equivariant map $\tilde{f}_Z:\tilde{M}\longrightarrow \R$. 
	
	Summarizing the proof until this point, we have obtained two group morphisms $\tilde{\mu}_B$ and $\tilde{\mu}_Z$ satisfying
	\[\begin{tikzcd}
		1\ar{r}{}&\tilde{H}_0\ar{r}{}\ar{d}{\tilde{\mu}_Z}&\tilde{H}\ar{r}{\tilde{\pi}}&\tilde{H}/\tilde{H}_0\ar{r}{}\ar{d}{\overline{\mu}_B}&1\\
		1\ar{r}{}&\R\ar{r}{}&N\ar{r}{\tilde{\pi}}&N'\ar{r}{}&1
	\end{tikzcd}\]
	where $\overline{\mu}_B:\tilde{H}/\tilde{H}_0\longrightarrow N'$ is an injective group morphism induced by $\tilde{\mu}_B$. We also have a map $\tilde{f}:\tilde{M}\longrightarrow N$ of the form $\tilde{f}=(\tilde{f_Z},\tilde{f}'_B)$. Our goal is to construct a group morphism $\tilde{\mu}:\tilde{H}\longrightarrow N$ making the above diagram commutative.
	
	Firstly, we note that $\ker\mu$ is a normal subgroup of $\tilde{H}$, since the action of $\tilde{H}$ fixes $[\mu]$. Let $\tilde{\Gamma}=\tilde{H}/\ker\mu$, let $\tilde{\Gamma}_0=\tilde{H}_0/\ker\mu$ and let $\tilde{\nu}_Z:\tilde{\Gamma}_0\longrightarrow \R$ be the group morphism induced by $\tilde{\mu}_Z$. We have a commutative diagram
	\[\begin{tikzcd}
		1\ar{r}{}&\tilde{\Gamma}_0\ar{r}{}\ar{d}{\tilde{\nu}_Z}&\tilde{\Gamma}\ar{r}{\tilde{\pi}}&\tilde{H}/\tilde{H}_0\ar{r}{}\ar{d}{\overline{\mu}_B}&1\\
		1\ar{r}{}&\R\ar{r}{}&N\ar{r}{\tilde{\pi}}&N'\ar{r}{}&1
	\end{tikzcd}\]
	Let $K$ denote $\ker \tilde{\nu}_Z=\ker\tilde{\mu}_Z/\ker\mu$. We note that $K$ injects in $H_0$ and therefore it is finite. Since $\tilde{\nu}_Z(\tilde{\Gamma}_0)$ is torsion-free, the subgroup $K$ is characteristic in $\tilde{\Gamma}_0$. Thus, $K$ is a normal subgroup of $\tilde{\Gamma}$. Let $\Lambda=\tilde{\Gamma}/K$, $\Lambda_0=\tilde{\Gamma}_0/K\cong\Z$ and $\Lambda'=\tilde{H}/\tilde{H}_0$. We also denote by $\overline{\mu}_B: \Lambda_0\longrightarrow \R$ the group morphism induced by $\tilde{\nu}_Z$. We have a commutative diagram 
	\[\begin{tikzcd}
		1\ar{r}{}&{\Lambda}_0\ar{r}{}\ar{d}{\overline{\mu}_Z}&\Lambda\ar{r}{\tilde{\pi}}&\Lambda'\ar{r}{}\ar{d}{\overline{\mu}_B}&1\\
		1\ar{r}{}&\R\ar{r}{}&N\ar{r}{\tilde{\pi}}&N'\ar{r}{}&1
	\end{tikzcd}\]
	Moreover, note that $\Gamma\cap K$ is the identity element, therefore $\Gamma$ is a finite index subgroup of $\Lambda$. Note also that $\Lambda$ is torsion-free, finitely generated and nilpotent, hence $\Lambda$ is a lattice of $N$. Using the inclusion $i:\Gamma\longrightarrow N$, we can define inclusions $i_B:\Gamma'\longrightarrow N'$ and $i_Z:\Gamma_0\cong\Z\longrightarrow \R$, obtaining the following commutative diagram
	\[\begin{tikzcd}
		1\ar{r}{}&{\Gamma}_0\ar{rr}{}\ar[hook]{rd}{}\ar[swap]{rdd}{i_Z}&&\Gamma\ar{rr}{}\ar[hook]{rd}{}\ar[swap]{rdd}{i}&&\Gamma'\ar{r}{}\ar[hook]{rd}{}\ar[swap]{rdd}{i_B}&1&\\
		&1\ar{r}{}&{\Lambda}_0\ar{rr}{}\ar{d}{\overline{\mu}_Z}&&\Lambda\ar{rr}{\tilde{\pi}}&&\Lambda'\ar{r}{}\ar{d}{\overline{\mu}_B}&1\\
		&1\ar{r}{}&\R\ar{rr}{}&&N\ar{rr}{\tilde{\pi}}&&N'\ar{r}{}&1
	\end{tikzcd}\]
	
	The inclusion $\Gamma\leq \Lambda$ induces the identity map between the real Mal'cev completions $\Gamma_\R$ and $\Lambda_\R$ (see the survey \cite{dekimpe2016users} for the definition and main properties of the Mal'cev completion of torsion-free finitely generated nilpotent groups). Consequently, we have a commutative diagram
	\[\begin{tikzcd}
		1\ar{r}{}&({\Gamma}_0)_\R\ar{rr}{}\ar{rd}{Id}\ar[swap]{rdd}{(i_Z)_\R}&&\Gamma_\R\ar{rr}{}\ar{rd}{Id}\ar[swap]{rdd}{i_\R}&&\Gamma'_\R\ar{r}{}\ar{rd}{Id}\ar[swap]{rdd}{(i_B)_\R}&1&\\
		&1\ar{r}{}&({\Lambda}_0)_\R\ar{rr}{}\ar{d}{(\overline{\mu}_Z)_\R}&&\Lambda_\R\ar{rr}{\tilde{\pi}}&&\Lambda'_\R\ar{r}{}\ar{d}{(\overline{\mu}_B)_\R}&1\\
		&1\ar{r}{}&\R\ar{rr}{}&&N\ar{rr}{\tilde{\pi}}&&N'\ar{r}{}&1
	\end{tikzcd}\]
	Therefore, if we set $\overline{\mu}=i_{\R|\Lambda}:\Lambda\longrightarrow N$, we obtain a commutative diagram
	\[\begin{tikzcd}
		1\ar{r}{}&{\Lambda}_0\ar{r}{}\ar{d}{\overline{\mu}_Z}&\Lambda\ar{r}{\tilde{\pi}}\ar{d}{\overline{\mu}}&\Lambda'\ar{r}{}\ar{d}{\overline{\mu}_B}&1\\
		1\ar{r}{}&\R\ar{r}{}&N\ar{r}{\tilde{\pi}}&N'\ar{r}{}&1
	\end{tikzcd}\]
	The desired group morphism $\tilde{\mu}:\tilde{H}\longrightarrow N$ is obtained by composing the $\overline{\mu}$ with the projection $\tilde{H}\longrightarrow\Lambda$. By construction, the map $\tilde{f}:\tilde{M}\longrightarrow N$ is $\tilde{\mu}$-equivariant and $\tilde{\mu}_{|\pi_1(M)}=\mu$, as we wanted to see.
\end{proof}

\textbf{Part 3:} The last step of the proof of \cite[Theorem 4.1]{mundet2021topological} is a consequence of Minkowski's lemma.

\begin{lem}\label{pE local systems trivial action}
	Let $M$ be a closed connected oriented manifold and let $f:M\longrightarrow N/\Gamma$ be a non-zero degree map such that $f_*(\pi_1(M))=\Gamma$. There exists a constant $C$ such that any finite group $G$ acting effectively on $M$ has a finite subgroup $H\leq G$ such that $[G:H]\leq C$ and $H$ acts trivially on $X(\pi_1(M),N)$.
\end{lem}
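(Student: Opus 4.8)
The plan is to produce, out of $\pi_1(M)$ and $N$ alone, a Minkowski group through which the $G$-action on $X(\pi_1(M),N)$ factors; the kernel of that action inside $G$ is then the required finite-index subgroup $H$, and its index is bounded by the Minkowski constant precisely because $G$ is finite.

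First I would make the $G$-action on $X(\pi_1(M),N)$ explicit. Lifting the homeomorphisms of $M$ to $\widetilde M$ yields an extension $1\to\pi_1(M)\to\widetilde G\to G\to 1$, conjugation gives $\widetilde G\to\Aut(\pi_1(M))$, and, exactly as in the proof of \cref{pE local systems}, the induced action on $X(\pi_1(M),N)$ is $\widetilde g\cdot[\nu]=[\nu\circ c_{\widetilde g|\pi_1(M)}]$. For $\gamma\in\pi_1(M)$ one has $\nu\circ c_\gamma=c_{\nu(\gamma)}\circ\nu$, so the subgroup $\pi_1(M)\le\widetilde G$ acts trivially and the action descends to $G\to\Out(\pi_1(M))$ followed by the natural $\Out(\pi_1(M))$-action on $X(\pi_1(M),N)$ (which is well defined by the same identity).

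Next I would pass to a nilpotent quotient. Let $c$ be the nilpotency class of $N$; since $N$ is torsion-free and nilpotent of class $c$, every homomorphism $\pi_1(M)\to N$ vanishes on $\gamma_{c+1}(\pi_1(M))$ and on the torsion subgroup of $\pi_1(M)/\gamma_{c+1}(\pi_1(M))$, hence factors through the quotient $\Lambda$ obtained from $\pi_1(M)$ by killing both; this $\Lambda$ is finitely generated, torsion-free, nilpotent, and characteristic in $\pi_1(M)$. As the quotient map $\pi_1(M)\to\Lambda$ is characteristic, the bijection $\Hom(\pi_1(M),N)=\Hom(\Lambda,N)$ is compatible with $N$-conjugation and with the $\Aut(\pi_1(M))$-action via $\Aut(\pi_1(M))\to\Aut(\Lambda)$; thus $X(\pi_1(M),N)\cong X(\Lambda,N)$ $G$-equivariantly and the action factors further through $G\to\Out(\pi_1(M))\to\Out(\Lambda)$. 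An element of $\Out(\pi_1(M))$ lying in the kernel of the map to $\Out(\Lambda)$ acts on $\Lambda$ by an inner automorphism, hence trivially on $X(\Lambda,N)$ by the conjugation identity of the previous step.

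Finally, $\Lambda$ is a lattice in its Malcev completion, a connected nilpotent Lie group, so $\Out(\Lambda)$ is Minkowski by \cite{Wehrfritz1994} (as already used in \cref{pE nilmanifold covering}; see also the last assertion of \cref{main theorem1 intro}). Let $C$ bound the orders of the finite subgroups of $\Out(\Lambda)$; it depends only on $\Lambda$, hence only on $M$ and $N$. Given a finite $G$ acting effectively on $M$, set $H=\ker(G\to\Out(\pi_1(M))\to\Out(\Lambda))$: then $[G:H]$ equals the order of the image of $G$ in $\Out(\Lambda)$, which is at most $C$, and by the previous paragraph $H$ acts trivially on $X(\pi_1(M),N)$, which is the claim. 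The argument is essentially formal once the two conjugation identities are in hand; the point that needs care, and the real content, is the reduction from the a priori uncontrolled $\Out(\pi_1(M))$ to the Minkowski group $\Out(\Lambda)$, i.e. the naturality of $\Hom(\pi_1(M),N)=\Hom(\Lambda,N)$ under the full automorphism action.
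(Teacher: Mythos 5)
Your proof is correct and follows essentially the same route as the paper: pass to the characteristic nilpotent quotient of $\pi_1(M)$ through which every homomorphism to $N$ factors, observe that the $G$-action on $X(\pi_1(M),N)$ then factors through $\Out$ of that quotient, which is Minkowski by \cite{Wehrfritz1994}, and take $H$ to be the kernel. Your extra step of also killing the torsion of $\pi_1(M)/\gamma_{c+1}(\pi_1(M))$ is harmless but unnecessary, since Wehrfritz's result already applies to the (possibly non-torsion-free) finitely generated nilpotent quotient used in the paper.
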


\begin{proof}
	We denote by $\pi_1(M)^j$ the $j$-th element of the lower central series. Assume that the nilpotency class of $N$ is $c$, then for any $\nu\in \Hom(\pi_1(M),N)$ we have $\pi_1(M)^{c}\leq \ker \nu$. In consequence, the projection $\pi_1(M)\longrightarrow \pi_1(M)/\pi_1(M)^c$ induces a bijection $\Hom(\pi_1(M)/\pi_1(M)^c,N)\longrightarrow \Hom(\pi_1(M),N)$. This bijection descends to a bijection between the sets of isomorphism classes of $N$-local systems $X(\pi_1(M),N)$ and $X(\pi_1(M)/\pi_1(M)^c,N)$.
	
	Since $\pi_1(M)^{c}$ is a characteristic subgroup of $\pi_1(M)$, any automorphism $\phi:\pi_1(M)\longrightarrow \pi_1(M)$ induces and automorphism $\overline{\phi}:\pi_1(M)/\pi_1(M)^{c}\longrightarrow \pi_1(M)/\pi_1(M)^{c}$ given by $\overline{\phi}(\gamma\pi_1(M)^{c})=\phi(\gamma)\pi_1(M)^{c}$. Hence there is a group morphism $\rho:\Out(\pi_1(M))\longrightarrow \Out(\pi_1(M)/\pi_1(M)^{c})$ which sends $[\phi]$ to $[\overline{\phi}]$. 
	
	Notice that the bijection between $X(\pi_1(M),N)$ and $X(\pi_1(M)/\pi_1(M)^c,N)$ is $\rho$-equivariant. If $\psi: G\longrightarrow \Out(\pi_1(M))$ is the map induced by the action of $G$ on $M$, then we have an action of $G$ on $X(\pi_1(M)/\pi_1(M)^c,N)$ given by $[f]g=[f](\rho\circ\psi)(g)$. Since $\pi_1(M)/\pi_1(M)^{c}$ is finitely generated and nilpotent, $ \Out(\pi_1(M)/\pi_1(M)^c)$ is Minkowski by \cite{Wehrfritz1994}. Hence, there exists a constant $C$ such that any finite subgroup of $\Out(\pi_1(M)/\pi_1(M)^{c})$ is at most of order $C$. Thus, $H=\ker(\rho\circ\psi)$ is a subgroup of $G$ such that $[G:H]\leq C$ which acts trivially on $X(\pi_1(M)/\pi_1(M)^c,N)$ and hence it also acts trivially on $X(\pi_1(M),N)$.
\end{proof}

By combining \cref{pE local systems}, \cref{pE local systems trivial action} and \cref{pE nilmanifold covering} we complete the proof of \cref{thm: exporting map hypernilmanifolds}. 

Finally, we prove \cref{main theorem2 intro}. 

\begin{proof}[Proof of \cref{main theorem2 intro}]
	By \cite[Theorem 1.6, Theorem 1.9]{daura2024actions}, we have that:
	\begin{itemize}
		\item[1.] $\Homeo(N/\Gamma)$ is Jordan.
		\item[2.] $\D(N/\Gamma)\leq\rank\Zc\Gamma$.
		\item[3.] $N/\Gamma$ has small stabilizers.
	\end{itemize}
	
	Thus, the first part of \cref{main theorem2 intro} is a direct consequence of \cref{pE jordan and discsym} and \cref{thm: exporting map hypernilmanifolds}. For the second part, the bound of the discrete degree of symmetry is also a consequence of \cref{pE jordan and discsym} and \cref{thm: exporting map hypernilmanifolds}. If we assume $\D(M)=n$, then $\rank\Zc\Gamma\geq n$. This implies that $\Gamma\cong\Z^n$ and $N/\Gamma\cong T^n$. In consequence, the theorem follows from \cref{large group actions hypertoral manifolds}.
	
	For the third part, note that $M$ has the almost fixed point property by \cref{almost fix point euler char}. Since $M$ has the small stabilizers property (by \cref{pE jordan and discsym} and \cref{thm: exporting map hypernilmanifolds}) and the almost fixed point property, $M$ is almost asymmetric, by \cref{small stab+almost fixed point=almost asymmetric}.
	
	The fourth part is a consequence of $M$ having the small stabilizers property and $\Homeo(M)$ being Jordan, see \cite[Lemma 3.6]{daura2024actions}.
\end{proof}

Note that the conclusion of \cite[Corollary 1.6]{mundet2021topological} also holds for closed connected oriented manifolds admitting a non-zero degree map to a nilmanifold.

\begin{cor}
	Let $M$ be a closed connected oriented $n$-dimensional manifold admitting a non-zero degree map to a nilmanifold $f:M\longrightarrow N/\Gamma$. If $\D(M)=n$ and $\pi_1(M)$ is virtually solvable then $M$ is homeomorphic to $T^n$.
\end{cor}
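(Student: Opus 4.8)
The plan is to reduce this to the degenerate case $N/\Gamma\cong T^n$ of \cref{main theorem2 intro}, since the conclusion is in fact already contained in \cref{main theorem2 intro}.2. First I would note that because $f:M\longrightarrow N/\Gamma$ has non-zero degree and $M$ is $n$-dimensional, the target is an $n$-dimensional nilmanifold, so $\dim N=n$. By \cref{main theorem2 intro}.2 we have $n=\D(M)\leq\rank\Zc\Gamma\leq n$, hence $\rank\Zc\Gamma=n$.

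Next I would show that this forces $N$ to be abelian. Since $\Gamma$ is a lattice in the simply connected nilpotent Lie group $N$, its center satisfies $\Zc\Gamma=\Zc N\cap\Gamma$ and is a lattice in $\Zc N$; therefore $\dim\Zc N=\rank\Zc\Gamma=n=\dim N$, so $\Zc N=N$ and $N$ is abelian. Consequently $N\cong\R^n$, $\Gamma\cong\Z^n$, and $N/\Gamma\cong T^n$. In particular $f$ becomes a non-zero degree map $M\longrightarrow T^n$.

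Finally, with $\D(M)=n$ and $\pi_1(M)$ virtually solvable, the conclusion $M\cong T^n$ is precisely \cref{large group actions hypertoral manifolds}.2. There is essentially no obstacle here: the corollary is a repackaging of the equality case already handled inside the proof of \cref{main theorem2 intro}, and the only point worth spelling out is that $\rank\Zc\Gamma=\dim N$ collapses the ambient nilpotent group $N$ to $\R^n$, which follows from the standard fact that the center of a lattice is a lattice in the center of the ambient group.
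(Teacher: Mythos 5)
Your proof is correct and follows essentially the same route as the paper: the equality case of \cref{main theorem2 intro}.2 forces $\rank\Zc\Gamma=n$, hence $\Gamma\cong\Z^n$ and $N/\Gamma\cong T^n$, and the conclusion is then exactly \cref{large group actions hypertoral manifolds}.2. Your explicit justification that $\rank\Zc\Gamma=\dim N$ collapses $N$ to $\R^n$ (via $\Zc\Gamma=\Zc N\cap\Gamma$ being a lattice in $\Zc N$) is just a spelled-out version of the step the paper leaves implicit.
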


Let us show some applications and remarks of \cref{main theorem2 intro}, which we divide in four subsections.

\subsection{The toral rank and stable Carlsson's conjecture}

The first application is \cref{TRC hypernilmanifolds intro}, where we show new examples where the toral rank conjectures and the stable Carlsson's conjecture are true. To do so, we need the following lemma.

\begin{lem}\label{TRC=SCC}
Let $M$ be a closed connected aspherical locally homogeneous space. The toral rank conjecture holds for $M$ if and only if the stable Carlsson's conjecture holds for $M$.
\end{lem}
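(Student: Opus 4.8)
The plan is to prove the two implications separately, and the key point in both directions is that for a closed aspherical locally homogeneous space $M = \Gamma\backslash G/K$ the relevant ranks are controlled by the same algebraic object, namely a maximal torus (or maximal $p$-torus) in a suitable identity component of the isometry group, together with the fact that $M$ is a $K(\pi_1(M),1)$ so its rational cohomology depends only on $\pi_1(M)$.

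First I would establish the easy direction: the toral rank conjecture for $M$ implies the stable Carlsson's conjecture for $M$. For this, it suffices to show that there is a constant $C$ such that $\rank_p(M) \le \rank(M)$ for all primes $p > C$. The point is that a free action of $(\Z/p)^r$ on $M$ with $p$ large must, by the structure theory for finite group actions on locally homogeneous spaces (or by invoking \cref{main theorem2 intro}.4 and the fact that $\Out(\pi_1(M))$ and $\Aut(\pi_1(M))$ are Minkowski, so that for $p$ large such an action is, up to conjugation, by an inner torus action on $M$), be realized inside the standard torus $\Zc N/\Zc\Gamma$ acting on the fibres; hence $r \le \rank(M)$. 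Since $\dim H^*(M,\Z/p) \ge \dim H^*(M,\Q)$ always (universal coefficients, as $H^*(M,\Z)$ is finitely generated and for $p$ large there is no $p$-torsion), we get $\dim H^*(M,\Z/p) \ge \dim H^*(M,\Q) \ge 2^{\rank(M)} \ge 2^{\rank_p(M)}$ for $p > C$.

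For the converse, the stable Carlsson's conjecture for $M$ implies the toral rank conjecture for $M$: here I would use that an almost-free action of $T^r$ on $M$ can be used to produce free actions of $(\Z/p)^r$ on $M$ for all primes $p$. Indeed, restricting the almost-free torus action to the $p$-torsion subgroup $(\Z/p)^r \le T^r$ gives an action with finite (hence, for $p$ larger than the orders of all point stabilizers of the torus action, trivial) stabilizers, i.e. a free action of $(\Z/p)^r$ on $M$ for all sufficiently large $p$. Therefore $\rank_p(M) \ge \rank(M)$ for $p$ large, and the stable Carlsson's conjecture then gives, again using $\dim H^*(M,\Q) = \dim H^*(M,\Z/p)$ for $p$ large (no $p$-torsion in $H^*(M,\Z)$), that $\dim H^*(M,\Q) = \dim H^*(M,\Z/p) \ge 2^{\rank_p(M)} \ge 2^{\rank(M)}$, which is the toral rank conjecture for $M$.

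The main obstacle I anticipate is the first implication: one needs to know that for $p$ large a free $(\Z/p)^r$-action on $M$ cannot have larger rank than an almost-free torus action, i.e. that large-prime elementary abelian free actions are ``toral'' in nature. This is exactly where the local homogeneity and the Minkowski property of $\Out(\pi_1(M))$ enter: for $p$ exceeding the Minkowski bound, the action is trivial on $\Out(\pi_1(M))$, hence is by a subgroup of the centre-quotient torus $\Zc N/\Zc \Gamma$ via the Seifert fibre space picture (as in the proof of \cref{pE nilmanifold covering}), so $r \le \dim(\Zc N/\Zc\Gamma) \le \rank(M)$; making this precise in the generality of ``locally homogeneous space'' — rather than just nilmanifolds — is the step requiring the most care, and one would cite the relevant rigidity results for such spaces.
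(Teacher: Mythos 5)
Your proposal follows essentially the same route as the paper: both reduce the equivalence to the identity $\rank_p(M)=\rank(M)$ for all sufficiently large primes $p$ (large-prime free $(\Z/p)^r$-actions act trivially on $\Out(\pi_1(M))$ and are hence bounded by $\rank\Zc\pi_1(M)=\rank(M)$, while almost-free torus actions restrict to free $(\Z/p)^r$-actions for large $p$), and then compare $\dim H^*(M,\Z/p)$ with $\dim H^*(M,\Q)$ via universal coefficients. The only differences are minor: where you restrict the torus action to its $p$-torsion and use a bound on stabilizer orders, the paper instead invokes the Minkowski property of $\Aut(\pi_1(M))$ (effective $p$-group actions are free for large $p$), and the rigidity input you leave to be cited is exactly the paper's appeal to Lee--Raymond theory for $\rank_p(M)\le\rank\Zc\pi_1(M)$ together with $\rank(M)=\rank\Zc\pi_1(M)$, rather than \cref{main theorem2 intro}, which only covers manifolds dominating nilmanifolds.
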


\begin{proof}
Since $H^*(M,\Z)$ is finitely generated, we can use the universal coefficients theorem to conclude that there exists a constant $C_1$ such that for all prime $p>C_1$ we have $\dim H^*(M,\Q)=\dim H^*(M,\Z/p)$. By \cite[Theorem 1.9]{daura2024actions}, $\Out(\pi_1(M))$ and $\Aut(\pi_1(M))$ are Minkowski. Let $C_2$ and $C_3$ denote the Minkowski constant of $\Out(\pi_1(M))$ and $\Aut(\pi_1(M))$ respectively. 

For all primes $p>C_2$, we have that $\rank_p(M)\leq \rank \Zc\pi_1(M)$, by \cite[Theorem 3.1.16]{lee1987manifolds}. In addition, for all primes $p>C_3$ any effective action of a $p$-group on $M$ is free, by \cite[Corollary 3.1.17]{lee2010seifert}. Consequently, $\rank(M)\leq \rank_p(M)$ for $p>C_3$. Finally, $\rank(M)=\rank\Zc\pi_1(M)$ by \cite[Theorem 1.10]{daura2024actions} and \cite[Corollary 3.1.17]{lee2010seifert}. Thus, if $p>\max\{C_2,C_3\}$, then $\rank(M)=\rank_p(M)=\rank\Zc\pi_1(M)$. Set $C=\max\{C_1,C_2,C_3\}$.

We assume now that the toral rank conjecture holds for $M$, hence $\dim H^*(M,\Q)\geq 2^{\rank(M)}=2^{\rank(\Zc\pi_1(M))}$. For all primes $p>C$ we have $\dim H^*(M,\Z/p)=\dim H^*(M,\Q)\geq 2^{\rank(\Zc\pi_1(M))}=2^{\rank_p(M)}$ and therefore the stable Carlsson's conjecture holds for $p>C$.

Now, we prove the converse statement. Thus, we assume that there exists a constant $D$ such that for all primes $p>D$, we have $\dim H^*(M,\Z/p)\geq 2^{\rank_p(M)}$. Choose a prime $p>\max\{D,C\}$. Then $\rank_p(M)=\rank(M)$ and we have $\dim H^*(M,\Q)=\dim H^*(M,\Z/p)\geq 2^{\rank_p(M)}= 2^{\rank(M)}$. Hence, the toral rank conjecture holds for $M$. 
\end{proof}

\begin{proof}[Proof of \cref{TRC hypernilmanifolds intro}]
	Let $f:M\longrightarrow N/\Gamma$ be a non-zero degree map and assume that the toral rank conjecture holds for $N/\Gamma$. Recall that $\D(N/\Gamma)=\rank(N/\Gamma)=\rank\Zc\Gamma$ (this is a consequence of \cref{main theorem1 intro} and \cite[Corollary 11.7.4, Theorem 11.7.7]{lee2010seifert}). 
	Thus, $\rank(M)\leq \A(M)\leq \D(M)\leq \rank\Zc\Gamma$. Since $f$ has non-zero degree, the induced morphism $f^*:H^*(N/\Gamma,\Q)\longrightarrow H^*(M,\Q)$ is injective and therefore 
	$$\dim H^*(M,\Q)\geq \dim H^*(N/\Gamma,\Q)\geq 2^{\rank(\Zc\Gamma)}\geq 2^{\rank(M)}, $$
	as we wanted to see.
	
	Since the toral rank conjecture holds for $N/\Gamma$, the stable Carlsson's conjecture also holds for $N/\Gamma$ by \cref{TRC=SCC}. Let $C$ denote the constant provided stable Carlsson's conjecture.
	
	Let $D$ denote the exporting map constant of $f$. Note that if $(\Z/p)^r$ acts freely on $M$ with $p\geq \max\{D,\deg(f)\}$, then $(\Z/p)^r$ acts freely on $N/\Gamma$. In particular, $\rank_p(M)\leq \rank_p(N/\Gamma)$ for all $p>\max\{D,\deg(f)\}$. Let $C'=\max\{D,\deg(f),C\}$. Then for all $p>C'$ we have $$\dim H^*(M,\Z/p)\geq \dim H^*(N/\Gamma,\Z/p)\geq 2^{\rank_p(N/\Gamma)}\geq 2^{\rank_p(M)},$$
	and the stable Carlsson's conjecture holds for $M$.
\end{proof}

\subsection{Closed connected manifolds with non-zero degree maps to solvmanifolds}

\Cref{main theorem2 intro} is false if we replace nilmanifold by solvmanifold. In order to give an example of a non-zero degree map between solvmanifolds which does not export group actions we need the following elementary group theoretic lemma.

\begin{lem}\label{center semidriect product}
	Let $G_1$ and $G_2$ be groups and let $\phi:G_2\longrightarrow \Aut(G_1)$ be a group morphism. Then $$\Zc(G_1\rtimes_\phi G_2)=\{(g_1,g_2)\in G_1\rtimes_\phi G_2: g_2\in \Zc G_2, g_1\in \operatorname{Fix}(\phi) \text{ and }\phi(g_2)=c_{g_1}  \},$$ 
	where $\operatorname{Fix}(\phi)=\{g_1\in G_1:\phi(g_2)(g_1)=g_1 \text{ for all } g_2\in G_2\}$.
\end{lem}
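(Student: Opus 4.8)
The plan is to unwind the definition of the semidirect product multiplication and directly compute the centralizing conditions. Recall that in $G_1\rtimes_\phi G_2$ the product is $(g_1,g_2)(h_1,h_2)=(g_1\phi(g_2)(h_1),g_2h_2)$. An element $(g_1,g_2)$ is central precisely when it commutes with every $(h_1,h_2)$, and since the subgroups $G_1\times\{1\}$ and $\{1\}\times G_2$ together generate $G_1\rtimes_\phi G_2$, it suffices to impose commutation with all $(h_1,1)$ and all $(1,h_2)$ separately.

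First I would test commutation with $(1,h_2)$ for all $h_2\in G_2$. Computing $(g_1,g_2)(1,h_2)=(g_1,g_2h_2)$ and $(1,h_2)(g_1,g_2)=(\phi(h_2)(g_1),h_2g_2)$, equality for all $h_2$ forces $g_2h_2=h_2g_2$ for all $h_2$, i.e. $g_2\in\Zc G_2$, and simultaneously $g_1=\phi(h_2)(g_1)$ for all $h_2$, i.e. $g_1\in\operatorname{Fix}(\phi)$. Next I would test commutation with $(h_1,1)$ for all $h_1\in G_1$: computing $(g_1,g_2)(h_1,1)=(g_1\phi(g_2)(h_1),g_2)$ and $(h_1,1)(g_1,g_2)=(h_1g_1,g_2)$, equality for all $h_1$ gives $g_1\phi(g_2)(h_1)=h_1g_1$, that is $\phi(g_2)(h_1)=g_1^{-1}h_1g_1=c_{g_1^{-1}}(h_1)$ — or, adopting the paper's convention $c_{g_1}$ for the relevant inner automorphism, $\phi(g_2)=c_{g_1}$ (one should just be mindful of the left/right convention on $c_{g_1}$ so the statement matches verbatim; this is a cosmetic point).

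Then I would assemble the two directions: an element lying in $\Zc(G_1\rtimes_\phi G_2)$ satisfies all three conditions $g_2\in\Zc G_2$, $g_1\in\operatorname{Fix}(\phi)$, $\phi(g_2)=c_{g_1}$ by the above; conversely, if $(g_1,g_2)$ satisfies these three conditions then reversing the same two computations shows it commutes with every generator of the form $(h_1,1)$ and $(1,h_2)$, hence with every element, so it is central. This gives the claimed set equality.

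This lemma is entirely formal, so there is no real obstacle — the only thing to watch is the bookkeeping of conventions (whether $\phi$ is written as a left action, and whether $c_{g_1}(x)=g_1xg_1^{-1}$ or $g_1^{-1}xg_1$), so that the final displayed condition $\phi(g_2)=c_{g_1}$ is stated with the correct inner automorphism; aside from that the proof is a two-line direct verification in each direction.
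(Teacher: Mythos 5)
Your proof is correct and follows essentially the same route as the paper: a direct computation with the semidirect product multiplication, extracting $g_2\in\Zc G_2$, $g_1\in\operatorname{Fix}(\phi)$, and $\phi(g_2)(h_1)=g_1^{-1}h_1g_1$ (which is the paper's $c_{g_1}$, so your convention caveat is resolved in your favor). The only cosmetic differences are that you test against the generators $(h_1,1)$ and $(1,h_2)$ instead of a general element and that you spell out the easy converse inclusion, which the paper leaves implicit.
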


\begin{proof}
	Let $(g_1,g_2)\in \Zc(G_1\rtimes_\phi G_2)$. For any $(g'_1,g'_2)\in G_1\rtimes_\phi G_2$ we have that $(g_1,g_2)(g'_1g'_2)=(g_1\phi(g_2)(g'_1),g_2g'_2)$ is equal to $(g'_1,g'_2)(g_1g_2)=(g'_1\phi(g'_2)(g_1),g'_2g_2)$. Hence $g_1\phi(g_2)(g'_1)=g'_1\phi(g'_2)(g_1)$ and $g_2g'_2= g'_2g_2$ for all $g_1'\in G_1$ and $g_2'\in G_2$. 
	
	By the second condition, $g_2\in \Zc G_2$. If we set $g'_1=e$, then $g_1=\phi(g'_2)(g_1)$ for all $g'_2\in G_2$ and therefore $g_1\in \operatorname{Fix}(\phi)$. Finally, by using that $g_1=\phi(g'_2)(g_1)$, we obtain that $g_1\phi(g_2)(g'_1)=g'_1g_1$, which implies that $\phi(g_2)(g'_1)=g_1^{-1}g'_1g_1$ and $\phi(g_2)=c_{g_1}$. 
\end{proof}

We consider a non-trivial group morphism $\phi:\Z\longrightarrow \Gl(n,\Z)$ such that $\phi(1)$ has finite order $a$. The group morphism $\phi(1)$ induces a finite order homeomorphism $f:T^n\longrightarrow T^n$ which we use to construct the mapping torus $T^n_f$. We claim that:
\begin{itemize}
	\item[1.] $T_f^n$ is a compact solvmanifold. This is a consequence of the fact that $T^n_f$ is the total space of a fibration $T^n\longrightarrow T^n_f\longrightarrow S^1$. In particular, $\pi_1(T^n_f)=\Z^n\rtimes_\phi \Z$ is polycyclic.
	\item[2.] $T^n_f$ is finitely covered by the torus $T^{n+1}$. This is because the subgroup $\Z^n\rtimes_\phi a\Z \cong \Z^{n+1}$ is a normal subgroup of index $a$. Therefore, we have a regular covering $p:T^{n+1}\longrightarrow T_f^n$. In particular $T^n_f$ is a flat manifold.
\end{itemize} 

\begin{prop}
	We have $\D(T^n_f)\leq n$.
\end{prop}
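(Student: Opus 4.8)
The plan is to exploit the flat structure of $T^n_f$ directly, since Part 3's machinery (exporting maps to nilmanifolds) does not apply and the claim is weaker than a full exporting statement. Recall $T^n_f$ is finitely covered by $T^{n+1}$ via a regular covering $p:T^{n+1}\longrightarrow T^n_f$ with deck group $\Z/a$, and $\pi_1(T^n_f)=\Z^n\rtimes_\phi\Z$. First I would compute $\rank\Zc(\pi_1(T^n_f))$ using \cref{center semidriect product}: with $G_1=\Z^n$ (abelian, so $c_{g_1}$ is trivial) and $G_2=\Z$ (abelian), an element $(v,k)$ is central iff $\phi(k)=\mathrm{id}$ and $v\in\operatorname{Fix}(\phi)$. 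Since $\phi(1)$ has order exactly $a>1$, the condition $\phi(k)=\mathrm{id}$ forces $k\in a\Z$, and $\operatorname{Fix}(\phi)=\ker(\phi(1)-\mathrm{id})\cap\Z^n$ has rank $r:=\dim\ker(\phi(1)-I)$ over $\Q$, which is strictly less than $n$ because $\phi(1)\neq I$. Hence $\rank\Zc(\pi_1(T^n_f))=r+1\leq n$.

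Next I would invoke \cref{main theorem1 intro} (the Daura result on aspherical manifolds): $T^n_f$ is a closed aspherical flat manifold, its fundamental group $\Z^n\rtimes_\phi\Z$ is a lattice in a connected Lie group (indeed in $\R^n\rtimes\R$ or a suitable solvable Lie group), so $\Zc(\pi_1(T^n_f))$ is finitely generated and $\Out(\pi_1(T^n_f))$ is Minkowski by the final clause of \cref{main theorem1 intro}. Therefore part 2 of \cref{main theorem1 intro} gives $\D(T^n_f)\leq\rank\Zc(\pi_1(T^n_f))=r+1\leq n$, which is the desired bound. Actually this already proves the proposition, so the write-up can be kept short; one only needs to note that $T^n_f$ satisfies the hypotheses of \cref{main theorem1 intro}.

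The only mild subtlety — and the place I would be careful — is verifying that $\Z^n\rtimes_\phi\Z$ is a lattice in a connected Lie group so that the Minkowski conclusion of \cref{main theorem1 intro} applies; this follows because $T^n_f$ is a flat manifold (being finitely covered by $T^{n+1}$, with $\pi_1$ virtually $\Z^{n+1}$, it is a Bieberbach group), hence $\pi_1(T^n_f)$ embeds as a lattice in $\R^{n+1}\rtimes O(n+1)$, in particular in a connected Lie group after passing to the identity component; alternatively, $\Z^n\rtimes_\phi\Z$ is visibly a lattice in $\R^n\rtimes_{\bar\phi}\R$ for a one-parameter subgroup $\bar\phi$ extending $\phi$. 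With the hypotheses of \cref{main theorem1 intro} in place, the inequality $\D(T^n_f)\leq\rank\Zc(\pi_1(T^n_f))\leq n$ is immediate, and I would remark that in fact $\rank\Zc(\pi_1(T^n_f))=r+1<n+1$, so $T^n_f$ is a flat manifold with $\D(T^n_f)<\dim T^n_f$, consistent with the fact that $T^n_f\not\cong T^{n}$ unless $\phi$ is trivial.
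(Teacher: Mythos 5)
Your overall route is the same as the paper's: compute $\Zc(\Z^n\rtimes_\phi\Z)=\operatorname{Fix}(\phi)\times a\Z$ from \cref{center semidriect product}, note $\rank\operatorname{Fix}(\phi)\le n-1$ since $\phi(1)\neq\mathrm{Id}$, and then bound $\D(T^n_f)$ by $\rank\Zc\pi_1(T^n_f)$ using the aspherical-manifold result of \cite{daura2024actions} (the paper quotes that reference for flat manifolds, where one even has equality). The one step that does not hold up in the stated generality is your verification of the Minkowski hypothesis via the last clause of \cref{main theorem1 intro}, i.e.\ the claim that $\Z^n\rtimes_\phi\Z$ is a lattice in a \emph{connected} Lie group. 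The phrase ``after passing to the identity component'' is not an argument: the Bieberbach group sits in $\R^{n+1}\rtimes\Ogroup(n+1)$, which is disconnected, and it lies in $\R^{n+1}\rtimes\SO(n+1)$ only when its holonomy is orientation-preserving; in general only a proper finite-index subgroup does, and Minkowski-ness of $\Out$ does not formally pass from a finite-index subgroup to the ambient group. Your alternative, a one-parameter subgroup $\bar\phi$ of $\Gl(n,\R)$ with $\bar\phi(1)=\phi(1)$, exists exactly when $\phi(1)$ admits a real logarithm, which for a finite-order matrix means its $(-1)$-eigenspace is even-dimensional, i.e.\ $\det\phi(1)=1$. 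The proposition allows $\det\phi(1)=-1$: already for $n=1$, $\phi(1)=-1$, the manifold $T^1_f$ is the Klein bottle, and its fundamental group is not a lattice in \emph{any} connected Lie group (a torsion-free cocompact lattice $\Gamma$ of a connected $G$ acts freely, properly, cocompactly and orientation-preservingly on $G/K_{\max}\cong\R^d$, so it would be the fundamental group of a closed \emph{orientable} aspherical manifold; orientability of a closed aspherical manifold is detected by $H^d(\pi_1;\Z)$, so this is impossible for these non-orientable mapping tori, and non-cocompact lattices are excluded because $\Gamma$ is amenable, forcing $G$ amenable, where lattices are cocompact).

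The gap is easy to close and does not change the structure of your argument: either restrict to $\det\phi(1)=1$ (the orientable case, which is the one relevant to the ensuing corollary on exporting maps; there your embedding of $\Z^n\rtimes_\phi\Z$ as a cocompact lattice of $\R^n\rtimes_{\bar\phi}\R$ is correct), or use the fact that $\Out$ of \emph{any} crystallographic group is Minkowski — automorphisms preserve the characteristic translation lattice $\Z^{n+1}$, the kernel of $\Aut(\Gamma)\to\Gl(n+1,\Z)$ is torsion-free, and $\Out(\Gamma)$ is a finite extension of a subgroup of the quotient of $N_{\Gl(n+1,\Z)}(F)$ by the finite holonomy $F$ — which is exactly what the paper's citation of \cite{daura2024actions} for flat manifolds (closed aspherical locally homogeneous spaces) supplies. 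With the hypothesis so justified, your computation $\rank\Zc\pi_1(T^n_f)=\rank\operatorname{Fix}(\phi)+1\le n$ and the bound $\D(T^n_f)\le\rank\Zc\pi_1(T^n_f)$ coincide with the paper's proof.
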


\begin{proof}
	Since $T^n_f$ is a flat manifold, we know that $\D(T^n_f)=\rank\Zc(\Z^n\rtimes_\phi \Z)$ by \cite{daura2024actions}. By \cref{center semidriect product}, $\Zc(\Z^n\rtimes_\phi \Z)=\operatorname{Fix}(\phi)\times a\Z$. Since $\phi(1)\neq Id$, we obtain that $\rank\operatorname{Fix}(\phi)\leq n-1$ and therefore $\D(T^n_f)=\rank\Zc(\Z^n\rtimes_\phi \Z)\leq n$.
\end{proof}

\begin{cor}
	The map $p:T^{n+1}\longrightarrow T^n_f$ does not export group actions.
\end{cor}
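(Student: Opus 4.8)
The plan is to play the inequality $\D(\,\cdot\,)$ respects under non-zero degree exporting maps (\cref{pE jordan and discsym}.2) against the two explicit values of the discrete degree of symmetry available here. First I would record that $p:T^{n+1}\longrightarrow T^n_f$ is an $a$-sheeted covering of closed oriented manifolds of the same dimension, hence a map of degree $a\neq 0$; in particular \cref{pE jordan and discsym} is applicable to it \emph{with its degree-dependent conclusion}, provided it exports group actions.

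Next I would compute the two relevant invariants. On the source, $T^{n+1}$ carries the effective rotation action of $(\Z/k)^{n+1}$ for every $k$, so $n+1\in\mu(T^{n+1})$ and therefore $\D(T^{n+1})\geq n+1$. On the target, the preceding proposition gives $\D(T^n_f)\leq n$ (in fact $\D(T^n_f)=\rank\Zc(\Z^n\rtimes_\phi\Z)=\rank\operatorname{Fix}(\phi)+1\leq n$, using $\phi(1)\neq\mathrm{Id}$).

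Finally, suppose for contradiction that $p$ exports group actions. Since $p$ has non-zero degree, \cref{pE jordan and discsym}.2 yields $\D(T^{n+1})\leq\D(T^n_f)$, whence $n+1\leq n$, which is absurd; hence $p$ does not export group actions. There is essentially no obstacle in this argument; the only point requiring a word of care is that $p$ is a genuine non-zero degree map between closed oriented manifolds (immediate, being a finite covering of closed oriented manifolds), so that the monotonicity $\D(M)\leq\D(M')$ of \cref{pE jordan and discsym} indeed applies. One may also remark that this stands in contrast with \cref{pI covering maps}, which shows $p$ always \emph{imports} group actions, and with the fact (proved later) that finite coverings between nilmanifolds do export group actions.
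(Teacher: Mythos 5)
Your argument is exactly the one the paper intends: the corollary is stated right after the proposition bounding $\D(T^n_f)\leq n$, and the implicit proof is precisely your contradiction via \cref{pE jordan and discsym}.2, since $\D(T^{n+1})=n+1$ would otherwise be forced to be at most $\D(T^n_f)\leq n$. Your proposal is correct and matches the paper's approach.
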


\subsection{Examples where $\D(M)=\rank\Zc\Gamma$}

Now we present examples of closed orientable manifolds $M$ admitting a non-zero degree map to a nilmanifold $f:M\longrightarrow N/\Gamma$ such that $\D(M)=\D(N/\Gamma)$ but $H^*(M,\Z)\ncong H^*(N/\Gamma,\Z)$.  

Let $H$ be the Heisenberg group of dimension 3,

\[H=\{(x,y,z)=\left(\begin{array}{ccc} 1 & x & z\\  0 & 1 & y \\  0 & 0 & 1 \end{array} \right):x,y,z\in \R\}, \] which is a simply connected nilpotent Lie group. Any lattice of $H$ is isomorphic to a lattice of the form
\[\Gamma_k=\{(x,y,z)=\left(\begin{array}{ccc} 1 & x & \tfrac{1}{k}z\\  0 & 1 & y \\  0 & 0 & 1 \end{array} \right):x,y,z\in \Z\}.\]

Consider $\Gamma_1$ and $\Gamma_2$ lattices of $H$. Note that the covering map $p:H/\Gamma_1\longrightarrow H/\Gamma_2$ induced by the inclusion $\Gamma_1\hookrightarrow \Gamma_2$ is a non-zero degree map between and $\D(H/\Gamma_1)=\D(H/\Gamma_2)=1$. However, a straightforward computation shows that $H_1(H/\Gamma,\Z)\cong \Z^2$ and $ H_1(H/\Gamma_2,\Z)\cong \Z^2\oplus\Z/2$, therefore $H^*(H/\Gamma_1,\Z)\ncong H^*(H/\Gamma,\Z)$. 

Note that $H^*(H/\Gamma_1,\Q)\cong H^*(H/\Gamma_2,\Q)$. Our next goal is to prove the next proposition:

\begin{prop}\label{discsym  not enough}
	There exists a closed orientable manifold $M$ admitting a map to a nilmanifold $f:M\longrightarrow N/\Gamma$ satisfying $\deg(f)=1$, $\D(M)=\D(N/\Gamma)=1$ and $H^*(M,\Q)\ncong H^*(N/\Gamma,\Q)$.
\end{prop}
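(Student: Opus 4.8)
The plan is to realize the example as a circle bundle over a surface of genus at least two. Let $\Sigma_2$ be the closed oriented surface of genus $2$ and fix a degree-one map $q\colon\Sigma_2\longrightarrow T^2$ (for instance the map collapsing one handle of the decomposition $\Sigma_2=T^2\#T^2$). Recall that the Heisenberg nilmanifold $H/\Gamma_1$ is the total space of the principal $S^1$-bundle over $T^2$ whose Euler class $e$ generates $H^2(T^2,\Z)\cong\Z$. I would then set $M:=q^*(H/\Gamma_1)$, the pullback principal $S^1$-bundle over $\Sigma_2$, and let $f\colon M\longrightarrow H/\Gamma_1$ be the canonical bundle map covering $q$. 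Then $M$ is a closed connected orientable $3$-manifold (the total space of a principal $S^1$-bundle over an orientable base is orientable), and since $f$ restricts to a diffeomorphism on each fiber we may orient everything compatibly so that $\deg f=\deg q=1$ (if the bundle map forces the opposite sign, precompose $q$ with an orientation-reversing self-homeomorphism of $T^2$ lifted to $H/\Gamma_1$).

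Next I would verify $\D(M)=1$. For the upper bound, $f$ is a non-zero degree map to a nilmanifold, hence an exporting map by \cref{thm: exporting map hypernilmanifolds}, so \cref{pE jordan and discsym} gives $\D(M)\le\D(H/\Gamma_1)$; and $\D(H/\Gamma_1)=\rank\Zc\Gamma_1=1$ since the center of the integer Heisenberg group is infinite cyclic (this was already observed above). For the lower bound, the principal $S^1$-action on $M$ is free, so each $\Z/a\le S^1$ acts freely on $M$, and therefore $\D(M)\ge 1$. Hence $\D(M)=\D(H/\Gamma_1)=1$.

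Finally, I would distinguish the rational cohomology rings by comparing first Betti numbers. Running the Gysin sequence of the oriented circle bundle $S^1\to M\xrightarrow{\pi}\Sigma_2$ with Euler class $u:=q^*e\in H^2(\Sigma_2,\Q)$, and using that $u\neq 0$ because $q$ has degree one, cup product with $u$ is an isomorphism $H^0(\Sigma_2,\Q)\xrightarrow{\sim}H^2(\Sigma_2,\Q)$; the Gysin sequence then yields $H^1(M,\Q)\cong H^1(\Sigma_2,\Q)\cong\Q^4$, so $b_1(M)=4$, whereas $b_1(H/\Gamma_1)=\rank\Gamma_1^{\mathrm{ab}}=2$. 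Thus $H^*(M,\Q)\ncong H^*(H/\Gamma_1,\Q)$, which together with $\deg f=1$ and $\D(M)=\D(H/\Gamma_1)=1$ proves \cref{discsym  not enough}.

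The only genuine decision in the argument is choosing $M$ to be a circle bundle over a surface of genus $\ge 2$ rather than over $T^2$: over $T^2$ one merely reproduces a Heisenberg nilmanifold or $T^3$, for which the rational cohomology is either isomorphic to that of $H/\Gamma_1$ or has $\D>1$. Once such an $M$ is fixed, the degree-symmetry equality is immediate from the exporting-map machinery of \cref{sec: exporting and importing maps}, and the cohomological separation is a one-line Gysin computation, so there is no serious obstacle beyond locating a base space that is not a torus.
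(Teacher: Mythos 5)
Your construction is correct, and it proves the statement by a genuinely simpler route than the paper. The paper builds a five-dimensional example: it removes the preimage of a disk from the Heisenberg $5$-manifold $H_5/\Gamma\to T^4$ and from a filiform nilmanifold $F_5/\Lambda\to F_4/\Lambda'$, and glues along the boundary $S^3\times S^1$ equivariantly, obtaining a principal $S^1$-bundle $E\to T^4\# F_4/\Lambda'$ together with degree-one maps to both nilmanifolds; the same two ingredients you use (a free circle action for $\D\geq 1$, and the exporting-map machinery of \cref{thm: exporting map hypernilmanifolds} and \cref{pE jordan and discsym} for $\D\leq 1$) then give $\D(E)=1$, and $b_1(E)=5$ versus $b_1(H_5/\Gamma)=4$, $b_1(F_5/\Lambda)=2$ separates the cohomologies. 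Your pullback $M=q^*(H/\Gamma_1)\to\Sigma_2$ along a degree-one map $q:\Sigma_2\to T^2$ achieves the same conclusion in the minimal dimension $3$, with the nonvanishing of $u=q^*e$ making the Gysin computation $b_1(M)=4\neq 2=b_1(H/\Gamma_1)$ immediate; the degree claim $\deg f=\deg q$ is justified exactly as you indicate (fiberwise diffeomorphism plus naturality of integration along the fiber, with a possible orientation adjustment). What the paper's heavier construction buys is a single manifold mapping with degree one onto two non-homotopy-equivalent nilmanifolds whose rational cohomologies both differ from that of $E$, which is a slightly stronger illustration of the failure of $\D$ to detect the cohomology ring; for the proposition as stated, your example is entirely sufficient.
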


Firstly, we recall some facts on principal $S^1$-bundles.

\begin{lem}
	Let $p:E\longrightarrow B$ be a principal $S^1$-bundle, then $p^*:H^1(B,\Q)\longrightarrow H^1(E,\Q)$ is injective. It is an isomorphism if and only if the first Chern class $c_1(E)\neq 0$.
\end{lem}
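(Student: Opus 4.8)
The plan is to use the Gysin sequence of the principal $S^1$-bundle $p:E\longrightarrow B$ with rational coefficients. Recall that a principal $S^1$-bundle has an Euler class, which we may identify with the first Chern class $c_1(E)\in H^2(B,\Z)$; taking its image in $H^2(B,\Q)$ gives the class that governs the rational Gysin sequence. The relevant portion of the sequence reads
\[
\begin{tikzcd}
H^0(B,\Q)\ar{r}{\cup c_1} & H^2(B,\Q)\ar{r}{p^*} & H^2(E,\Q)\ar{r} & H^1(B,\Q)\ar{r}{\cup c_1} & H^3(B,\Q),
\end{tikzcd}
\]
but what we actually need is the slightly earlier stretch
\[
\begin{tikzcd}
H^{-1}(B,\Q)\ar{r} & H^1(B,\Q)\ar{r}{p^*} & H^1(E,\Q)\ar{r} & H^0(B,\Q)\ar{r}{\cup c_1} & H^2(B,\Q).
\end{tikzcd}
\]

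First I would record that $H^{-1}(B,\Q)=0$, so exactness immediately yields injectivity of $p^*:H^1(B,\Q)\longrightarrow H^1(E,\Q)$; this proves the first assertion with no hypothesis on $c_1$. Next, for the second assertion I would analyze the cokernel of $p^*$ on $H^1$, which by exactness is isomorphic to the kernel of the map $\cup c_1:H^0(B,\Q)\longrightarrow H^2(B,\Q)$. Since $H^0(B,\Q)\cong\Q$ is generated by the unit $1$, the map $\cup c_1$ sends $1$ to $c_1$ (viewed in $H^2(B,\Q)$), so this kernel is zero precisely when $c_1\neq 0$ in $H^2(B,\Q)$ and is all of $\Q$ when $c_1=0$ rationally. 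Hence $p^*$ is surjective onto $H^1(E,\Q)$ — and therefore an isomorphism, given the injectivity just established — if and only if $c_1(E)\neq 0$.

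The one point that needs a little care, rather than being a genuine obstacle, is the bookkeeping of what "$c_1(E)\neq 0$" means: the statement is phrased with the integral Chern class, but the Gysin argument only sees its rational image, so strictly one is proving the equivalence "$p^*$ is an isomorphism on $H^1(-,\Q)$ iff $c_1(E)$ has nonzero image in $H^2(B,\Q)$," i.e. iff $c_1(E)$ is not torsion. In the intended applications $B$ will be a torus (or a nilmanifold base), whose $H^2$ is torsion-free, so the distinction evaporates; I would simply note this parenthetically. No step here is hard — the whole proof is one invocation of the Gysin sequence plus the observation that cup product with a degree-two class out of $H^0$ is "multiplication by that class."
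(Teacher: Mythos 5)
Your argument is correct and is exactly the paper's intended proof: the paper simply states that this lemma "is a consequence of the Gysin exact sequence," and your write-up supplies precisely that Gysin-sequence computation (injectivity from $H^{-1}(B,\Q)=0$, surjectivity controlled by $\cup\,c_1$ on $H^0(B,\Q)\cong\Q$). Your parenthetical remark that the criterion really concerns the image of $c_1(E)$ in $H^2(B,\Q)$ (i.e. $c_1$ not torsion) is a sensible precision that is harmless in the paper's applications, where the relevant Chern classes are non-torsion.
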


\begin{lem}
	Let $M$ be a closed manifold of dimension $n\geq 4$ and let $D\subseteq M$ be a disk. Then the inclusion $i:M\setminus D\longrightarrow M$ induces an isomorphism $i^*:H^2(M,\Q)\longrightarrow H^2(M\setminus D,\Q)$.
\end{lem}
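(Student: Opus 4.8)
The plan is to compare $H^\ast(M)$ with $H^\ast(M\setminus D)$ using the long exact sequence of the pair $(M,M\setminus D)$, after first computing the relative cohomology groups by excision. I would work with rational coefficients throughout, although the same argument gives the statement integrally.

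First I would pick a coordinate ball $U\subseteq M$ with $D$ contained in its interior; this is possible since any embedded disk in a manifold lies in a chart. Then $M\setminus U$ is a closed subset of the open set $M\setminus D$ (taking $D$ closed; the case of an open disk is identical after enlarging $U$), so excision applies and yields
$$H^k(M,M\setminus D)\;\cong\;H^k(U,U\setminus D)\qquad\text{for all }k.$$
Since $U\cong\R^n$ is contractible and $U\setminus D$ deformation retracts onto a sphere $S^{n-1}$, the long exact sequence of the pair $(U,U\setminus D)$ degenerates to
$$H^k(U,U\setminus D)\;\cong\;\widetilde H^{k-1}(S^{n-1};\Q),$$
which equals $\Q$ for $k=n$ and vanishes otherwise.

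Next I would insert this computation into the long exact sequence of the pair $(M,M\setminus D)$ around degree $2$:
$$H^2(M,M\setminus D)\longrightarrow H^2(M)\xrightarrow{\ i^\ast\ }H^2(M\setminus D)\longrightarrow H^3(M,M\setminus D).$$
Because $n\geq 4$ we have $n\neq 2$ and $n\neq 3$, so both end terms vanish, and hence $i^\ast$ is an isomorphism. The same reasoning in fact shows $i^\ast$ is an isomorphism in every degree $\leq n-2$ and injective in degree $n-1$.

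I do not expect a genuine obstacle here: the argument is essentially formal homological algebra. The only points needing a little care are the excision set-up — choosing $U$ and checking the closure hypothesis — and the assertion that $U\setminus D\simeq S^{n-1}$, which is transparent when $D$ is a standard coordinate disk (the only case needed in the applications) and holds in general by a collar/tubular-neighborhood argument for $\partial D$. As an alternative one may instead run the Mayer–Vietoris sequence of the open cover $M=U\cup(M\setminus D)$, whose overlap $U\setminus D\simeq S^{n-1}$ contributes nothing in degrees $1,2,3$ once $n\geq4$, giving the same conclusion.
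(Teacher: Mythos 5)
Your proof is correct, and it is essentially the argument the paper has in mind: the paper disposes of this lemma in one line as "a consequence of the Mayer--Vietoris sequence", which is exactly the alternative you sketch in your final paragraph, while your main route (excision plus the long exact sequence of the pair $(M,M\setminus D)$, using $n\geq 4$ to kill $H^2$ and $H^3$ of the pair) is the standard equivalent computation. No gaps worth flagging, beyond your own correct caveat that one should take $D$ to be a standard (locally flat) coordinate disk, which is the only case used in the paper.
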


The first lemma is a consequence of the Gysin exact sequence and the second lemma is a consequence of the Mayer-Vietoris sequence.

Given a principal bundle $p:E\longrightarrow B$, then we have a principal $S^1$-bundle $p':E\setminus p^{-1}(D)\longrightarrow B\setminus D$. Then $c_1(E\setminus p^{-1}(D))=i^*c_1(E)$. 

Note that $\partial(E\setminus p^{-1}(D))\cong S^{n-1}\times S^1$ has a right $S^1$ action induced by the action of the principal $S^1$-bundle. 

Given principal $S^1$-bundles $p_i:E_i\longrightarrow B_i$ with $i=1,2$, we can construct a $S^1$-equivariant homeomorphism $f:\partial(E_1\setminus p_1^{-1}(D_1))\longrightarrow \partial(E_2\setminus p_2^{-1}(D_2))$. There is a principal $S^1$-bundle $p: E_1\setminus p_1^{-1}(D_1)\cup_f E_2\setminus p_2^{-1}(D_2)\longrightarrow B_1\#B_2$. 

\begin{lem}
	If $\dim(B_i)\geq 4$ then $$c_1(E_1\setminus p_1^{-1}(D_1)\cup_f E_2\setminus p_2^{-1}(D_2)\longrightarrow B_1\#B_2)=(c_1(E_1),c_1(E_2))\in H^2(B_1,\Q)\oplus H^2(B_2,\Q)$$.
\end{lem}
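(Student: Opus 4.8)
The plan is to compute the first Chern class of the connect-sum bundle by a Mayer–Vietoris argument, splitting $B_1\#B_2$ into the two pieces $B_1\setminus D_1$ and $B_2\setminus D_2$ glued along their common boundary sphere $S^{n-1}$. First I would set $U_i = B_i\setminus D_i$, so $B_1\#B_2 = U_1\cup_{S^{n-1}} U_2$, and recall that the restriction of the connect-sum bundle over $U_i$ is precisely the bundle $p_i':E_i\setminus p_i^{-1}(D_i)\longrightarrow U_i$ obtained by deleting a fibered disk. By the preceding discussion, $c_1$ of this restricted bundle equals $\iota_i^* c_1(E_i)$, where $\iota_i: U_i\hookrightarrow B_i$ is the inclusion; and by the second lemma above (applied with $n = \dim B_i \geq 4$), $\iota_i^*: H^2(B_i,\Q)\longrightarrow H^2(U_i,\Q)$ is an isomorphism. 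So naturality of $c_1$ tells us that the connect-sum bundle, restricted to each piece, records exactly the data $(c_1(E_1),c_1(E_2))$ under these isomorphisms; what remains is to see that these local pieces of cohomology assemble, via Mayer–Vietoris, to give the claimed splitting of $H^2(B_1\#B_2,\Q)$ and that $c_1$ of the glued bundle is the corresponding element.

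Next I would write down the reduced Mayer–Vietoris sequence for $B_1\#B_2 = U_1\cup U_2$ with $U_1\cap U_2\simeq S^{n-1}$:
\[
\widetilde{H}^1(S^{n-1},\Q)\longrightarrow H^2(B_1\#B_2,\Q)\xrightarrow{\ (\iota_1^*,\iota_2^*)\ } H^2(U_1,\Q)\oplus H^2(U_2,\Q)\longrightarrow \widetilde{H}^2(S^{n-1},\Q).
\]
Since $n\geq 4$, both $\widetilde{H}^1(S^{n-1},\Q)$ and $\widetilde{H}^2(S^{n-1},\Q)$ vanish, so the middle map is an isomorphism $H^2(B_1\#B_2,\Q)\cong H^2(U_1,\Q)\oplus H^2(U_2,\Q)\cong H^2(B_1,\Q)\oplus H^2(B_2,\Q)$, where the last step uses the isomorphisms $\iota_i^*$ on $H^2$ established above. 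Under this identification, to compute $c_1$ of the connect-sum bundle it suffices to compute its restriction to $U_1$ and to $U_2$ (because the restriction map is exactly the Mayer–Vietoris injection), and by naturality of the Chern class these restrictions are $c_1(E_i\setminus p_i^{-1}(D_i)) = \iota_i^* c_1(E_i)$. Tracing through the isomorphisms, this says precisely that $c_1$ of the glued bundle equals $(c_1(E_1),c_1(E_2))\in H^2(B_1,\Q)\oplus H^2(B_2,\Q)$, as claimed.

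The main obstacle, and the step requiring the most care, is checking that the restriction of the connect-sum bundle $p: E_1\setminus p_1^{-1}(D_1)\cup_f E_2\setminus p_2^{-1}(D_2)\longrightarrow B_1\#B_2$ over the open set $U_i$ is genuinely isomorphic, as a principal $S^1$-bundle, to $p_i': E_i\setminus p_i^{-1}(D_i)\longrightarrow U_i$ — one must verify that the gluing along the $S^1$-equivariant homeomorphism $f$ of the boundaries does not alter the bundle over the interior of either piece, and that the open cover $\{U_1,U_2\}$ (slightly thickened across the gluing sphere) is compatible with the bundle's local structure. Once this identification is in hand, the naturality of $c_1$ under pullback and the Mayer–Vietoris computation are formal. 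One should also note that the first lemma (injectivity of $p^*$ on $H^1$ and the Chern-class criterion) is what will be invoked afterward, in the application to $\cref{discsym  not enough}$, to detect that the resulting manifold has different rational cohomology from the model nilmanifold; it is not needed for the present statement itself.
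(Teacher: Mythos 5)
Your argument is correct and is exactly the intended one: the paper states this lemma without proof, and the two preceding lemmas (the restriction isomorphism $H^2(B_i,\Q)\cong H^2(B_i\setminus D_i,\Q)$ for $\dim B_i\geq 4$, and the naturality statement $c_1(E_i\setminus p_i^{-1}(D_i))=i^*c_1(E_i)$) are set up precisely so that the Mayer--Vietoris identification $H^2(B_1\#B_2,\Q)\cong H^2(B_1\setminus D_1,\Q)\oplus H^2(B_2\setminus D_2,\Q)$ (valid since $H^1(S^{n-1},\Q)=H^2(S^{n-1},\Q)=0$ when $n\geq 4$) determines $c_1$ of the glued bundle by its restrictions, which is what you carry out, including the correct observation that the glued bundle restricts over each piece to the original deleted bundle. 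No gap remains.
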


Finally note that we have $S^1$-equivariant maps of degree one, $f_i:E\longrightarrow E_i$ for $i=1,2$. We are ready to prove \cref{discsym  not enough}.

\begin{proof}[Proof of \cref{discsym  not enough}]
	Now we consider the Heisenberg manifold of dimension $5$, $H_5/\Gamma$, which is the total space of a principal $S^1$-bundle over a torus $T^4$ (see \cite[Chapter I]{goze2013nilpotent} and \cite{gordon1986spectrum}). We also consider a nilmanifold modeled over the filiform Lie group of dimension 5 (for details on filiform Lie algebras and Lie groups, see \cite[\S2.I]{goze2013nilpotent} and \cite{hamrouni2008discrete,remm2017filiform}), $F_5/\Lambda$, which is the total space of a principal $S^1$-bundle over a filiform nilmanifold of dimension $4$, $F_4/\Lambda'$. By construction, $\D(F_5/\Lambda)=1$ and $H^1(F_4/\Lambda')\cong H^1(F_5/\Lambda)\cong \Q^2$. Note that the discrete degree of symmetry of both nilmanifolds is $1$. Let $E=H_5/\Gamma\setminus p_1^{-1}(D_1)\cup_f F_5/\Lambda\setminus p_2^{-1}(D_2)$ and consider the degree one maps $f_1: E\longrightarrow H_5/\Gamma$ and $f_2:E\longrightarrow F_5/\Lambda$. Observe that $p:E\longrightarrow T^4\# F_4/\Lambda'$ is a principal $S^1$-bundle and therefore $\D(E)\geq 1$. On the other hand, $\D(E)\leq 1$, since the maps $f_i$ export group actions. In conclusion, $\D(E)=1$. However, since $c_1(E)\neq 0$, one can compute that $H^1(E,\Q)\cong H^1(T^4\# F_4/\Lambda',\Q)\cong \Q^5$. On the other hand, we have $H^1(H_5/\Gamma,\Q)\cong H^1(T^4,\Q)\cong \Q^4$ and $H^1(F_5/\Lambda,\Q)\cong H^1(F_4/\Lambda',\Q)\cong \Q^2$, therefore $H^*(E,\Q)\ncong H^*(H_5/\Gamma,\Q)$ and $H^*(E,\Q)\ncong H^*(F_5/\Lambda,\Q)$.
\end{proof}

\subsection{Large finite group actions on admissible manifolds}

One possible generalization of closed connected aspherical manifolds are closed connected admissible manifolds (see \cite[Definition 3.2.7]{lee2010seifert}), which are manifolds where the only periodic self-homeomorphisms of $\tilde{M}$ commuting with the deck transformations group $\pi_1(M)$ are elements of $\Zc\pi_1(M)$. Closed connected oriented manifolds admitting a non-zero degree map to a torus or a nilmanifold are admissible manifolds. One could use the same arguments used to prove \cite[Theorem 1.6]{daura2024actions} together with \cite[Theorem 3.2.2]{lee2010seifert} to obtain:

\begin{prop}\label{finite group actions admissible}
	Let $M$ be a closed connected admissible manifold. Assume that $\Out(\pi_1(M))$ is Minkowski and that $\Zc\pi_1(M)$ is finitely generated. Then:
	\begin{itemize}
		\item[1.] $\Homeo(M)$ is Jordan.
		\item[2.] $\D(M)\leq \rank(\Zc\pi_1(M)/\operatorname{Torsion}(\Zc\pi_1(M)))$.
		\item[3.] If $\chi(M)\neq 0$ and $\Aut(\pi_1(M))$ is Minkowski then $M$ is almost asymmetric.
		\item[4.] If $\Aut(\pi_1(M))$ is Minkowski then $M$ has few stabilizers.
	\end{itemize}
\end{prop}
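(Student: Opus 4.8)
The plan is to follow the proof of \cite[Theorem 1.6, Theorem 1.9]{daura2024actions} for closed aspherical manifolds almost verbatim, the only change being that the rigidity input used there for aspherical $M$ --- the identification of the kernel of $\Homeo(M)\to\Out(\pi_1(M))$ with a Seifert torus action (Conner--Raymond) --- is replaced by its counterpart for admissible manifolds, \cite[Theorem 3.2.2]{lee2010seifert}. Write $\Gamma=\pi_1(M)$ and $k=\rank\bigl(\Zc\Gamma/\operatorname{Torsion}(\Zc\Gamma)\bigr)$, and let $\Psi\colon\Homeo(M)\longrightarrow\Out(\Gamma)$ be the homomorphism induced by the action on fundamental groups; no assumption on $M$ is needed to define it. Given a finite group $G$ acting effectively on $M$, the Minkowski property of $\Out(\Gamma)$ (constant $C_1$) yields $H:=\ker(\Psi|_G)$ with $[G:H]\le C_1$. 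The group $H$ acts effectively on $M$ inducing only inner automorphisms of $\Gamma$; lifting $H$ to the universal cover $\tilde M$ and applying the admissibility hypothesis --- this is exactly the situation treated by \cite[Theorem 3.2.2]{lee2010seifert} --- one finds that the action of $H$ on $M$ is conjugate to the restriction of a Seifert torus action of a torus $T$ of dimension $k$ on $M$. In particular $H$ embeds in $T$, hence is abelian, so $\Homeo(M)$ is Jordan with constant $C_1$, proving part~1.

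For part~2, suppose $(\Z/a)^r$ acts effectively on $M$ for arbitrarily large $a$. The kernel of $\Psi|_{(\Z/a)^r}$ has index at most $C_1$ in $(\Z/a)^r$, so by \cref{subgroup elementary abelian groups} it contains a subgroup isomorphic to $(\Z/a')^r$ with $C_1!\,a'\ge a$; by the previous paragraph this subgroup embeds in the torus $T$ of dimension $k$. Since the $a'$-torsion subgroup of $T$ is isomorphic to $(\Z/a')^k$ and $a'\to\infty$, we must have $r\le k$. Hence $\D(M)\le k$.

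For parts~3 and~4, assume in addition that $\Aut(\Gamma)$ is Minkowski (constant $C_2$); I would first show $M$ has the small stabilizers property. Let $G$ act effectively on $M$ and fix $x\in M$. The stabilizer $G_x$ fixes the basepoint, hence acts genuinely on $\pi_1(M,x)\cong\Gamma$, giving a homomorphism $G_x\to\Aut(\Gamma)$ whose kernel $K$ has index at most $C_2$. Each $\kappa\in K$ fixes $x$ and acts trivially on $\pi_1(M,x)$, so it admits a lift $\tilde\kappa$ to $\tilde M$ fixing a chosen point over $x$; this $\tilde\kappa$ is periodic (some power of $\tilde\kappa$ lies in $\Gamma$ and fixes a point of $\tilde M$, hence equals the identity) and commutes with $\Gamma$, so by admissibility $\tilde\kappa\in\Zc\Gamma$, and being a deck transformation with a fixed point it is the identity. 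Thus $K=\{1\}$ and $|G_x|\le C_2$ for every $x$, so $M$ has the small stabilizers property. Part~3 follows: $\chi(M)\ne0$ gives $M$ the almost fixed point property by \cref{almost fix point euler char}, and \cref{small stab+almost fixed point=almost asymmetric} then shows $M$ is almost asymmetric. Part~4 follows from the small stabilizers property together with the Jordan property of $\Homeo(M)$ established in part~1, exactly as in \cite[Lemma 3.6]{daura2024actions}.

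The main obstacle is the rigidity statement underlying parts~1 and~2: that a finite group acting effectively on $M$ and trivially on $\Out(\Gamma)$ is abelian and, up to conjugation, part of a torus action of dimension $k$. For closed aspherical $M$ this is the classical Conner--Raymond rigidity, but here the universal cover of $M$ need not be contractible, so that machinery is not literally available and one must instead extract this from the structure theory of admissible manifolds in \cite[Theorem 3.2.2]{lee2010seifert}; the fixed-point argument for parts~3 and~4, by contrast, uses only the definition of admissibility recalled above, and the rest of the argument is formally identical to the aspherical case of \cite{daura2024actions}.
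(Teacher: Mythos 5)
Your route is the paper's: items 1, 2 and 4 by rerunning the argument of \cite[Theorem 1.6]{daura2024actions} with the aspherical rigidity input replaced by admissibility, and item 3 by combining the small stabilizers property with \cref{almost fix point euler char} and \cref{small stab+almost fixed point=almost asymmetric}. Your fixed-point argument for small stabilizers (lift an element of $\ker\bigl(G_x\to\Aut(\pi_1(M,x))\bigr)$ to a periodic homeomorphism of $\tilde M$ commuting with the deck group, apply admissibility, conclude it is a deck transformation with a fixed point, hence trivial) is exactly how \cite[Theorem 3.2.2]{lee2010seifert} enters in the paper, and parts 3 and 4 are complete and correct as written.

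The step that needs repair is the rigidity statement you invoke for parts 1 and 2. You assert that a finite group $H$ acting effectively on $M$ and inducing only inner automorphisms of $\Gamma=\pi_1(M)$ is conjugate to the restriction of a torus action of dimension $k=\rank\bigl(\Zc\Gamma/\operatorname{Torsion}(\Zc\Gamma)\bigr)$ on $M$. For a general admissible manifold no such torus action on $M$ exists: the paper's own example $T^n\# M'$, with $M'$ closed simply connected and $\chi(M')\neq 2$, is admissible with $\rank\Zc\pi_1=n$ but is almost asymmetric, hence admits no effective circle action at all, so no theorem can provide the conjugation you describe. What is true, and is all your argument actually uses, is the abstract statement, which is how the intended proof goes: by \cref{aspherical manifolds big diagram} (valid for any closed manifold) the kernel $H$ of $G\to\Out(\Gamma)$ sits in an extension $1\to\Zc\Gamma\to C\to H\to 1$ with $C=C_{\tilde H}(\Gamma)$ acting on $\tilde M$ and commuting with $\Gamma$, and $\Zc\Gamma$ central in $C$; admissibility forces every torsion element of $C$ into $\Zc\Gamma$, so $C/\operatorname{Torsion}(\Zc\Gamma)$ is torsion-free and center-by-finite, hence (Schur) free abelian containing $\Zc\Gamma/\operatorname{Torsion}(\Zc\Gamma)\cong\Z^k$ with finite index, and therefore $H\cong C/\Zc\Gamma$ is a finite abelian group of rank at most $k$, i.e.\ it embeds abstractly in a $k$-torus. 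This yields part 1 (Jordan with the Minkowski constant of $\Out(\Gamma)$) and, combined with \cref{subgroup elementary abelian groups} exactly as you use it, part 2. So replace the geometric conjugation claim by this centralizer argument (or by a citation to the corresponding group-theoretic step of \cite{daura2024actions}); with that substitution your proof matches the paper's.
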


\begin{proof}
	The proofs of items 1, 2 and 4 are the same as in \cite[Theorem 1.6]{daura2024actions}. If $\Aut(\pi_1(M))$ is Minkowski then $M$ has small stabilizers by \cite[Theorem 3.2.2]{lee2010seifert}. Thus, item 3 follows from \cref{almost fix point euler char} and \cref{small stab+almost fixed point=almost asymmetric}.
\end{proof}

An application of \cref{finite group actions admissible} is the following. Suppose that $M$ is a closed connected oriented aspherical manifold satisfying the hypothesis of \cref{main theorem1 intro} and $M'$ is a closed simply-connected manifold of the same dimension as $M$, then $M\# M'$ is a closed admissible manifold such that $\Homeo(M\#M')$ is Jordan, $\D(M\# M')\leq \rank\Zc\pi_1(M)$ and $M\#M'$ has few stabilizers. The discrete degree of symmetry inequality is usually strict. For example, assume that $M'$ is a closed simply-connected even dimensional manifold such that $\chi(M')\neq 2$. Then $\chi(T^n\#M')=\chi(M')-2\neq 0$ and hence $\D(T^n\#M')=0<n=\rank \Zc\pi_1(T^n\#M')$, by \cref{finite group actions admissible}(3).

\Cref{finite group actions admissible} and \cref{main theorem2 intro} have a non-trivial overlap, but neither is more general than the other. On one hand, not all admissible manifolds admit a non-zero degree maps to a nilmanifold. On the other hand, there exist closed connected orientable manifolds admitting anon-zero degree map to a torus such that the outer automorphism group of their fundamental group is not Minkowski. In \cite[Theorem 1.1.1]{bloomberg1975manifolds} it is proved that if $\Gamma$ and $\Lambda$ are isomorphic neither to a non-trivial free product nor to a infinite cyclic group then $\Out(\Gamma*\Lambda)\cong\Aut(\Gamma)\times\Aut(\Lambda)$ if $\Gamma\ncong\Lambda$ and $\Out(\Gamma*\Lambda)\cong(\Aut(\Gamma)\times\Aut(\Lambda))\times\Z/2$ if $\Gamma\cong\Lambda$. In particular, if $M$ is a closed connected 4-manifold with fundamental group the Baumslag-Solitar group $B(m,ml)=\langle a,b| ba^mb^{-1}=a^{ml}\rangle$ with $m,l\geq 2$. Note that $B(m,ml)$ is not a proper free product of groups (see \cite[II.5.13]{lyndon1977combinatorial} or)  Then $\Out(\pi_1(M\#T^4))\cong\Aut(B(m,ml))\times\Gl(4,\Z)$ is not Minkowski (see \cite[Lemma 3.8]{collins1983automorphisms} or \cite{levitt2007automorphism}), but $M\#T^4$ is hypertoral and hence $\Homeo(M\#T^4)$ is Jordan by \cref{large group actions hypertoral manifolds}.

\section{Free iterated actions}\label{sec: free iterated actions}

Recall that if $G$ is a finite group acting freely on a manifold $M$ then $M/G$ is also a manifold and $p:M\longrightarrow M/G$ is a regular covering. Moreover, we have a short exact sequence $1\longrightarrow \pi_1(M)\longrightarrow \pi_1(M/G)\longrightarrow G\longrightarrow 1$.   Therefore, if we have an iterated free action of $\mathcal{G}=\{G_i\}_{i=1,..,n}$ on a manifold $M$, then every $M_i$ is a manifold, $p_i:M_{i-1}\longrightarrow M_i$ is a regular cover and the map $p:M\longrightarrow M_n$ is a covering on $M$. A free iterated action of $\mathcal{G}=\{G_i\}_{i=1,\dots,n}$ on $M$ induces a series of groups 
$$ \pi_1(M)=\pi_1(M_0)\trianglelefteq\pi_1(M_1)\trianglelefteq\cdots \trianglelefteq \pi_1(M_n) $$
where $\pi_1(M_i)/\pi_1(M_{i-1})\cong G_i$. 

\begin{lem}\label{iterated action equivalent fundamental group}
	Let $\mathcal{G}\acts M$ and $\mathcal{G}'\acts M$ be a free iterated actions of finite groups inducing coverings $p:M\longrightarrow M/\mathcal{G}$ and $p':M\longrightarrow M/\mathcal{G}'$ respectively. If $\mathcal{G}\acts M\sim\mathcal{G}'\acts M$ then there exists an isomorphism $\phi:\pi_1(M/\mathcal{G})\longrightarrow \pi_1(M/\mathcal{G}') $ satisfying $\phi(p_*(\pi_1(M)))=p'_*(\pi_1(M))$. 
\end{lem}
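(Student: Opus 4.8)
The plan is to take for $\phi$ the isomorphism $f_*$ on fundamental groups induced by the homeomorphism $f : M/\mathcal{G} \longrightarrow M/\mathcal{G}'$ that is part of the data of the equivalence $\mathcal{G}\acts M\sim\mathcal{G}'\acts M$ (see \cref{iterated action equivalent}), and then to use the second condition of that definition—the existence of a homeomorphism $\overline{f} : M \longrightarrow M$ with $p' \circ \overline{f} = f \circ p$—to verify the required compatibility of $\phi$ with the distinguished subgroups. Concretely, I would fix a basepoint $x_0 \in M$, set $x_1 = \overline{f}(x_0)$, $y_0 = p(x_0)$ and $y_1 = p'(x_1) = f(y_0)$, so that $f_* : \pi_1(M/\mathcal{G}, y_0) \longrightarrow \pi_1(M/\mathcal{G}', y_1)$ is an isomorphism (being induced by a homeomorphism), and recall that, since all the actions involved are free, $p$ and $p'$ are regular coverings, so $p_*\pi_1(M, x_0)$ and $p'_*\pi_1(M, x_1)$ are exactly the normal subgroups classifying them.

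The key step is to apply the fundamental group functor to the commuting square $p' \circ \overline{f} = f \circ p$ with these basepoints, obtaining $p'_* \circ \overline{f}_* = f_* \circ p_*$ as homomorphisms $\pi_1(M, x_0) \longrightarrow \pi_1(M/\mathcal{G}', y_1)$. Since $\overline{f}$ is a homeomorphism, $\overline{f}_*$ is an isomorphism of $\pi_1(M, x_0)$ onto $\pi_1(M, x_1)$, in particular surjective, whence
\[
\phi\bigl(p_*\pi_1(M)\bigr) = f_*\bigl(p_*\pi_1(M, x_0)\bigr) = p'_*\bigl(\overline{f}_*\pi_1(M, x_0)\bigr) = p'_*\pi_1(M, x_1) = p'_*\pi_1(M),
\]
which is the assertion of the lemma.

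The only delicate point—hence the nearest thing to an obstacle—is the bookkeeping of basepoints: the subgroups $p_*\pi_1(M)$ and $p'_*\pi_1(M)$ in the statement are only defined after a choice of basepoint (or of a point in the relevant fiber), but because $p$ and $p'$ are regular coverings these images are normal subgroups and therefore independent of the point chosen within a fiber; moreover the choice $x_1 = \overline{f}(x_0)$ is precisely the one making the square commute at the chosen basepoints. With that choice the argument above is complete, and no computation beyond the functoriality displayed is needed.
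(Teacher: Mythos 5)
Your functoriality computation is correct, and it is essentially the mechanism of the paper's own proof: both arguments start from the homeomorphism $f$ of \cref{iterated action equivalent} and use the compatibility $p'\circ\overline{f}=f\circ p$ to compare the two subgroups. However, the claim you lean on to settle the basepoint issue --- that $p$ and $p'$ are regular coverings because the actions are free --- is false, and it is false in exactly the way that matters in this part of the paper. Freeness of each $G_i$-action only makes the individual orbit maps $p_i$ regular; the composite $p=p_n\circ\cdots\circ p_1$ need not be. Indeed, by \cref{free itertaed action simplifiable}, $p_*(\pi_1(M))$ is normal in $\pi_1(M/\mathcal{G})$ precisely when the iterated action is simplifiable, and \cref{iterated action T2 not simplfiable} exhibits a free iterated action on $T^3$ for which this fails. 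So you cannot conclude that $p'_*\pi_1(M,x)$ is independent of the choice of $x$ in a fiber: different choices give subgroups that are in general only conjugate, not equal.

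What your argument actually establishes is the equality $f_*\bigl(p_*\pi_1(M,x_0)\bigr)=p'_*\pi_1\bigl(M,\overline{f}(x_0)\bigr)$, i.e. $f_*$ carries the subgroup classifying $p$ onto one representative of the conjugacy class of subgroups classifying $p'$. To obtain the equality with $p'_*(\pi_1(M))$ for a previously fixed choice of basepoint, you must correct $f_*$ by an inner automorphism: since $\overline{f}$ is an isomorphism between the coverings $f\circ p$ and $p'$ of $M/\mathcal{G}'$, their classifying subgroups are conjugate, say $f_*(p_*(\pi_1(M)))=c_\gamma(p'_*(\pi_1(M)))$ for some $\gamma\in\pi_1(M/\mathcal{G}')$, and then $\phi=c_{\gamma^{-1}}\circ f_*$ works --- which is precisely the paper's proof. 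The repair is one line, but the regularity assertion as written is a genuine error (it would in fact contradict the later discussion of non-simplifiable actions), so it cannot stand as the justification for the final step.
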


\begin{proof}
	Let $f:M/\mathcal{G}\longrightarrow M/\mathcal{G}'$ be the homeomorphism provided by \cref{iterated action equivalent}. By construction, the coverings $f\circ p:M\longrightarrow M/\mathcal{G}'$ and $p': M\longrightarrow M/\mathcal{G}'$ are isomorphic. Therefore, there exists $\gamma\in \pi_1(M/\mathcal{G}')$ such that $f_*(p_*(\pi_1(M))))=c_\gamma(p'_*(\pi_1(M)))$. We can write $\phi(p_*(\pi_1(M)))=p'_*(\pi_1(M))$, where $\phi=c_{\gamma^{-1}}\circ f_*$.
\end{proof}

\Cref{iterated action equivalent fundamental group} shows that to study free iterated actions up to equivalence, we can focus on the inclusion $p_*:\pi_1(M)\longrightarrow \pi_1(M/\mathcal{G})$ induced by the covering. For example: 

\begin{cor}\label{free iterated action simplifiable regular covering}
	Let $\mathcal{G}\acts M$ and $\mathcal{G}'\acts M$ be equivalent free iterated actions of finite groups inducing coverings $p:M\longrightarrow M/\mathcal{G}$ and $p':M\longrightarrow M/\mathcal{G}'$ respectively. Then $p_*(\pi_1(M))\trianglelefteq\pi_1(M/\mathcal{G})$ if and only if $p'_*(\pi_1(M))\trianglelefteq\pi_1(M/\mathcal{G}')$.
\end{cor}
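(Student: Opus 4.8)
The plan is to reduce everything to the previous lemma, \Cref{iterated action equivalent fundamental group}, and then observe that normality of a subgroup is preserved under isomorphisms of the ambient group. First I would invoke \Cref{iterated action equivalent fundamental group}: since $\mathcal{G}\acts M$ and $\mathcal{G}'\acts M$ are equivalent, there is an isomorphism $\phi:\pi_1(M/\mathcal{G})\longrightarrow \pi_1(M/\mathcal{G}')$ satisfying $\phi(p_*(\pi_1(M)))=p'_*(\pi_1(M))$. This single fact carries all the content; what remains is a triviality about groups.

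Next I would argue the two implications symmetrically. Suppose $p_*(\pi_1(M))\trianglelefteq\pi_1(M/\mathcal{G})$. Applying the isomorphism $\phi$, and using that an isomorphism of groups sends normal subgroups to normal subgroups, we get that $\phi(p_*(\pi_1(M)))$ is normal in $\phi(\pi_1(M/\mathcal{G}))=\pi_1(M/\mathcal{G}')$; but $\phi(p_*(\pi_1(M)))=p'_*(\pi_1(M))$, so $p'_*(\pi_1(M))\trianglelefteq\pi_1(M/\mathcal{G}')$. For the converse, note that the relation $\mathcal{G}\acts M\sim\mathcal{G}'\acts M$ is symmetric by \Cref{iterated action equivalent}, so applying $\phi^{-1}$ (or re-invoking the lemma with the roles of $\mathcal{G}$ and $\mathcal{G}'$ interchanged) gives the reverse implication in exactly the same way.

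There is no real obstacle here; the corollary is essentially a restatement of \Cref{iterated action equivalent fundamental group} together with the observation that ``being a normal subgroup'' is an invariant of the pair (ambient group, subgroup) under isomorphism. The only point worth a sentence of care is making sure the isomorphism $\phi$ is an isomorphism \emph{onto} all of $\pi_1(M/\mathcal{G}')$ — which it is, by construction in the lemma — so that normality in the image is the same as normality in $\pi_1(M/\mathcal{G}')$ rather than merely in some subgroup. I would state the proof in two or three lines accordingly.
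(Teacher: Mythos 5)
Your argument is correct and is exactly how the paper intends the corollary to be read: it is an immediate consequence of \Cref{iterated action equivalent fundamental group} (which supplies the isomorphism $\phi$ with $\phi(p_*(\pi_1(M)))=p'_*(\pi_1(M))$), combined with the fact that group isomorphisms preserve normality, with the converse obtained via $\phi^{-1}$. The paper gives no separate proof, so your two-line reduction matches its approach.
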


In particular:

\begin{lem}\label{free itertaed action simplifiable}
	A free iterated action of a collection of finite groups $\mathcal{G}=\{G_i\}_{i=1,..,n}$ on $M$ is simplifiable if and only if $\pi_1(M)\trianglelefteq\pi_1(M/\mathcal{G})$.
\end{lem}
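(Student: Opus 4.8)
The plan is to deduce this directly from \cref{free iterated action simplifiable regular covering} together with the standard correspondence between regular (Galois) coverings of a connected space and free actions of the associated deck‑transformation group. Throughout I identify $\pi_1(M)$ with its image $p_*(\pi_1(M))$ inside $\pi_1(M/\mathcal{G})$ under the injection induced by the covering $p:M\longrightarrow M/\mathcal{G}$, so that the two phrasings ``$\pi_1(M)\trianglelefteq\pi_1(M/\mathcal{G})$'' and ``$p_*(\pi_1(M))\trianglelefteq\pi_1(M/\mathcal{G})$'' mean the same thing.

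For the forward implication, suppose $\mathcal{G}\acts M$ is simplifiable, so it is equivalent to $\{G\}\acts M$ for some finite group $G$ acting freely on $M$. The orbit map of this single‑group action is a covering $p':M\longrightarrow M/G$, and since $G$ acts freely it is a regular covering with deck group $G$; hence $p'_*(\pi_1(M))\trianglelefteq\pi_1(M/G)$ by elementary covering space theory. Viewing $\{G\}$ as a length‑one free iterated action, I apply \cref{free iterated action simplifiable regular covering} with $\mathcal{G}'=\{G\}$ to conclude $p_*(\pi_1(M))\trianglelefteq\pi_1(M/\mathcal{G})$.

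For the converse, assume $\pi_1(M)\trianglelefteq\pi_1(M/\mathcal{G})$. Since $M$ is connected, the covering $p:M\longrightarrow M/\mathcal{G}$ is then regular, and its deck‑transformation group $G:=\pi_1(M/\mathcal{G})/p_*(\pi_1(M))$ acts freely on $M$ with $p$ as its orbit map. Thus $\{G\}\acts M$ — with the single orbit map $p_1:=p$ and $M_1:=M/\mathcal{G}$ — is a free iterated action of one group, and it is equivalent to $\mathcal{G}\acts M$ in the sense of \cref{iterated action equivalent}: take $f=\operatorname{id}_{M/\mathcal{G}}$ and $\overline{f}=\operatorname{id}_M$, so that the two coverings appearing in that definition literally coincide. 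Hence $\mathcal{G}\acts M$ is simplifiable.

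I do not expect a genuine obstacle: the real content has already been isolated in \cref{free iterated action simplifiable regular covering} and in the dictionary ``regular covering $\leftrightarrow$ free action of the deck group''. The only points needing a moment's care are the implicit connectedness of $M$ (so that ``$p$ regular'' and ``$p_*\pi_1(M)$ normal'' are genuinely equivalent) and the verification that the tautological identification used in the converse satisfies both clauses of \cref{iterated action equivalent} — which it does, since there $p'=p$ on the nose.
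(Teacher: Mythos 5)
Your proof is correct and takes essentially the route the paper intends: it combines \cref{free iterated action simplifiable regular covering} with the standard dictionary between regular coverings and free deck-group actions, which is exactly how the paper treats this lemma (it records it as an immediate consequence, without further argument). Your attention to the connectedness of $M$ and to checking both clauses of \cref{iterated action equivalent} with $f=\operatorname{id}$, $\overline{f}=\operatorname{id}$ fills in the details the paper leaves implicit.
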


\begin{cor}\label{free itertaed action simplifiable when simply connected}
	A free iterated action on a simply connected manifold $M$ is simplifiable.
\end{cor}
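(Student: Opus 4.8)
The plan is to deduce this directly from Lemma \ref{free itertaed action simplifiable}. That lemma characterizes simplifiability of a free iterated action $\mathcal{G}=\{G_i\}_{i=1,\dots,n}\acts M$ by the single condition $\pi_1(M)\trianglelefteq\pi_1(M/\mathcal{G})$, where $\pi_1(M)$ is viewed inside $\pi_1(M/\mathcal{G})$ via the covering $p\colon M\longrightarrow M/\mathcal{G}$. When $M$ is simply connected, $\pi_1(M)$ is the trivial group, and the trivial subgroup is normal in any group; hence the hypothesis of Lemma \ref{free itertaed action simplifiable} is automatically satisfied and the action is simplifiable. So the only thing to check is that the statement really is just this instance, which it is.

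It is worth recording the explicit picture behind this, which also gives a self-contained argument not routing through Lemma \ref{free itertaed action simplifiable}. Since all the actions are free, each $M_i$ is a closed manifold and each $p_i\colon M_{i-1}\longrightarrow M_i$ is a regular cover, so $p=p_n\circ\cdots\circ p_1\colon M\longrightarrow M_n$ is a covering. The induced tower $\pi_1(M)=\pi_1(M_0)\trianglelefteq\pi_1(M_1)\trianglelefteq\cdots\trianglelefteq\pi_1(M_n)$ has successive quotients $\pi_1(M_i)/\pi_1(M_{i-1})\cong G_i$, and with $\pi_1(M)=1$ this exhibits $G:=\pi_1(M_n)$ as a finite group (an iterated extension of the $G_i$) with $M$ its universal cover and $p$ the orbit map of the free deck action of $G$ on $M$, whose quotient is $M_n$. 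Thus $\{G\}\acts M$ is a free iterated action with $M/\{G\}=M_n=M/\mathcal{G}$, and taking $f=\operatorname{id}_{M_n}$ and $\overline f=\operatorname{id}_M$ in \cref{iterated action equivalent} shows $\mathcal{G}\acts M\sim\{G\}\acts M$, i.e. the action is simplifiable.

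There is no real obstacle here: the corollary is immediate once Lemma \ref{free itertaed action simplifiable} is in place, the only input being the elementary observation that the trivial subgroup is normal. I would present it as a one-line proof citing that lemma, perhaps appending the remark above so the reader sees concretely that the simplifying group is $G=\pi_1(M/\mathcal{G})$.
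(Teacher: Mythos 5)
Your proposal is correct and takes essentially the same route as the paper: the corollary is presented there as an immediate consequence of \cref{free itertaed action simplifiable}, exactly because the trivial subgroup $\pi_1(M)=1$ is normal in $\pi_1(M/\mathcal{G})$. Your added explicit description of the simplifying group as the deck group $G=\pi_1(M/\mathcal{G})$ acting on the universal cover $M$ is a correct, harmless elaboration of the same argument.
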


However, $M$ being simply connected is not a necessary condition to have simplifiability of all free iterated actions on $M$.

\begin{lem}\label{free itertaed action simplifiable S1 and T2}
	Any free iterated action on $S^1$ or $T^2$ is simplifiable.
\end{lem}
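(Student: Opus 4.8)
The plan is to use Lemma~\ref{free itertaed action simplifiable} and reduce the claim to a statement about the fundamental groups of $S^1$ and $T^2$. By that lemma, a free iterated action $\mathcal{G}=\{G_i\}_{i=1,\dots,n}\acts M$ is simplifiable if and only if $\pi_1(M)\trianglelefteq \pi_1(M/\mathcal{G})$. Now if $M=S^1$ or $M=T^2$, then $\pi_1(M)$ is isomorphic to $\Z$ or $\Z^2$, and $\pi_1(M/\mathcal{G})$ is a group $Q$ containing $\pi_1(M)$ as a subgroup of finite index; moreover $M/\mathcal{G}$ is a closed surface (or $1$-manifold) which admits a free action of a finite group, so $Q$ is itself the fundamental group of a closed aspherical $1$- or $2$-manifold. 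The key point is that the subgroup $\pi_1(M)$ arising this way is always normal in $Q$.

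\textbf{Key steps.} First I would handle the one-dimensional case: the only closed connected $1$-manifold is $S^1$, any finite group acting freely on $S^1$ is cyclic acting by rotations, each quotient $M_i$ is again $S^1$, and $\pi_1(S^1)=\Z$ sits inside $\pi_1(M_n)=\Z$ as a finite-index (hence automatically normal, since $\Z$ is abelian) subgroup. So the $S^1$ case is immediate. For $T^2$: since $p_\ast\colon\pi_1(T^2)\hookrightarrow \pi_1(M/\mathcal{G})=Q$ realizes $\Z^2$ as a finite-index subgroup of $Q$, the group $Q$ is virtually $\Z^2$, hence (being torsion-free, as the fundamental group of a closed surface covered by $T^2$) $Q$ is a closed-surface group with nonnegative Euler characteristic and infinite, which forces $Q\cong\Z^2$ or $Q\cong\pi_1(K)$ the Klein bottle group. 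In the first case $\pi_1(T^2)=\Z^2$ is finite index in $\Z^2$ and the quotient is finite abelian, so it is normal. In the Klein bottle case, $Q=\langle a,b\mid bab^{-1}=a^{-1}\rangle$ has a unique $\Z^2$ subgroup of each index of a given shape, and one checks directly that any finite-index subgroup of $Q$ isomorphic to $\Z^2$ (which is what $p_\ast(\pi_1(T^2))$ must be, since $T^2\to M/\mathcal{G}$ is a covering and $T^2$ has abelian fundamental group) is normal in $Q$: indeed $[Q,Q]=\langle a^2\rangle$ is infinite cyclic and any such $\Z^2$ contains the centralizer data forcing normality --- more cleanly, any subgroup of $Q$ containing $[Q,Q]$ is normal, and a finite-index abelian subgroup of $Q$ must contain $a^{2k}$ for suitable $k$, so after passing to the actual subgroup one verifies it is normalized by both generators.

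\textbf{Main obstacle.} The delicate point is the Klein bottle group case: I must argue that the particular finite-index subgroup $p_\ast(\pi_1(T^2))$ --- which a priori could be any finite-index subgroup isomorphic to $\Z^2$ --- is normal in $Q$. The cleanest route is to observe that in $Q=\pi_1(K)$, the subgroup $\langle a\rangle$ is normal (it is the kernel of the map $Q\to\Z$ sending $a\mapsto 0$, $b\mapsto 1$) and every finite-index subgroup $H\cong\Z^2$ of $Q$ surjects onto a finite-index subgroup of $Q/\langle a\rangle\cong\Z$ with kernel a finite-index subgroup of $\langle a\rangle$; writing $H=\langle a^m, a^j b^{k}\rangle$ with $k\ge 1$ and using $b^{k}a^m b^{-k}=a^{(-1)^k m}$, the condition that $H$ be abelian forces $k$ even, whence conjugation by $a$ and by $b$ both preserve $H$, so $H\trianglelefteq Q$. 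Carrying out this normal-form bookkeeping carefully, and confirming that no other virtually-$\Z^2$ torsion-free possibility for $Q$ arises, is the one genuinely computational part; everything else follows formally from Lemma~\ref{free itertaed action simplifiable} and the classification of closed surfaces with infinite fundamental group and nonnegative Euler characteristic.
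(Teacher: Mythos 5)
You follow the same route as the paper: reduce via \cref{free itertaed action simplifiable} to showing that $p_*\pi_1(M)$ is normal in $\pi_1(M/\mathcal{G})$, observe that the quotient is $S^1$, respectively $T^2$ or the Klein bottle $K$, and settle the abelian cases at once. The gap is in your Klein-bottle computation, and it is a genuine one. With $Q=\langle a,b\mid bab^{-1}=a^{-1}\rangle$ and $H=\langle a^m,a^jb^k\rangle$, $k$ even, conjugation by $a$ does preserve $H$, but conjugation by $b$ sends $a^jb^k$ to $a^{-j}b^k=a^{-2j}\cdot a^jb^k$, which lies in $H$ only when $m$ divides $2j$ --- and that is not automatic. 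Concretely, $H=\langle a^3,ab^2\rangle=\{a^{3x+y}b^{2y}:x,y\in\Z\}$ is a subgroup of $Q$ of index $6$ isomorphic to $\Z^2$, and $b(ab^2)b^{-1}=a^{-1}b^2\notin H$, so $H$ is not normal in $Q$. Your ``cleaner'' variant has the same flaw: containing some power $a^{2k}$ is weaker than containing $[Q,Q]=\langle a^2\rangle$, and this $H$ contains $a^3$ and $a^6$ but not $a^2$. So the group-theoretic fact your argument needs --- every finite-index subgroup of $\pi_1(K)$ isomorphic to $\Z^2$ is normal --- is false.

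Moreover the failure is not a repairable bookkeeping slip, because the offending subgroup is realized by a free iterated action: $H\trianglelefteq\langle a,b^2\rangle\trianglelefteq Q$ is a subnormal chain with successive quotients $\Z/3$ and $\Z/2$, and realizing $Q$ as the usual group of isometries of $\R^2$ (so that $\langle a,b^2\rangle$ consists of translations) yields a free iterated action $\{\Z/3,\Z/2\}\acts T^2$, namely a free $\Z/3$-action on $T^2=\R^2/H$ with torus quotient $\R^2/\langle a,b^2\rangle$, followed by the free $\Z/2$-action with quotient $K$, whose composite covering $p\colon T^2\to K$ has $p_*\pi_1(T^2)=H$ non-normal in $\pi_1(K)$. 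By \cref{free itertaed action simplifiable} this iterated action is not simplifiable, so the Klein-bottle case cannot be completed along these lines at all; the argument is sound only when $M/\mathcal{G}$ has abelian fundamental group (the $S^1$ case and the case $T^2/\mathcal{G}\cong T^2$). Note that the unproved claim you rely on is the same one invoked in the paper's own one-line justification, so this issue touches the statement itself, not just your write-up; a correct version needs an extra hypothesis, e.g.\ that $T^2/\mathcal{G}$ is orientable, or equivalently that $p_*\pi_1(T^2)$ is invariant under the involution $a\mapsto a^{-1}$, $b^2\mapsto b^2$ of $\langle a,b^2\rangle$.
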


\begin{proof}
	Let $\mathcal{G}\acts S^1$ be a free iterated group action. Then $S^1/\mathcal{G}$ is a closed 1-dimensional manifold and hence $S^1/\mathcal{G}\cong S^1$. This implies that $\pi_1(M)\cong \Z\trianglelefteq \pi_1(S^1/\mathcal{G})\cong \Z$ and hence the free iterated group action is simplifiable. All the groups of $\mathcal{G}$ are cyclic and the simplification is given by group action of a cyclic group of order $|\mathcal{G}|$.
	
	The proof for the second case is similar. Assume that we have a free iterated group action $\mathcal{G}\acts T^2$. Then $T^2/\mathcal{G}$ is homeomorphic to $T^2$ or the Klein bottle $K$. Then the result follows from the fact that any subgroup of $\pi_1(T^2)\cong\Z^2$ or $\pi_1(K)\cong\Z\rtimes\Z$ isomorphic to $\Z^2$ is normal, hence the action is simplifiable.
\end{proof}

\begin{rem}\label{iterated action T2 not simplfiable}
	\Cref{free itertaed action simplifiable S1 and T2} cannot be extended to $T^n$ for $n\geq 3$. Consider the Bieberbach group with presentation
	$$\Gamma=\langle t_1,t_2,t_3,\alpha|[t_i,t_j]=e\text{ $\forall i,j$},\alpha^3=t_1,\alpha t_2\alpha^{-1}=t_3,\alpha t_3\alpha^{-1}=t_2^{-1}t_3^{-1}\rangle.$$
	
	The group generated by $t_1$, $t_2$ and $t_3$ is normal and isomorphic to $\Z^3$, which we denote by $Z$. Note that $\Gamma/Z\cong \Z/3$. Let $Z'$ be the subgroup generated by $t_1$, $t_2$ and $t_3^2$, which is also isomorphic to $\Z^3$. We have a normal series $Z'\trianglelefteq Z \trianglelefteq \Gamma$ with $\Gamma/ Z\cong \Z/3$ and $Z/Z'\cong \Z/2$. On the other hand, $Z'$ is not normal in $\Gamma$, since $\alpha t_2\alpha^{-1}=t_3\notin Z'$. Now we can define a free iterated action $\{\Z/2,\Z/3\}\acts T^3$ such that $\pi_1(T^3)\cong Z'$, $\pi_1(T^3/\Z/2)\cong Z$ and $\pi_1((T^3/\Z/2)/\Z/3)\cong \Gamma$. This free iterated action is not simplifiable.
\end{rem}

We have seen that free iterated actions produce a covering map that is not necessarily regular. Conversely, given a finite covering map $q:M\longrightarrow M'$ we can ask whether there exists an iterated action $\mathcal{G}$ on $M$ such that $p:M\longrightarrow M/\mathcal{G} $ is isomorphic to $q:M\longrightarrow M'$. The next example shows that this does not happen in general.

\begin{ex}

			Let $M$ be a closed flat manifold with holonomy group the alternate group $A_5$. It exist because every finite group is the holonomy group of a flat manifold (see \cite[Chapter III.5]{charlap2012bieberbach}). Let $n=\dim M$. We take the short exact sequence
			\[\begin{tikzcd}
				1\ar{r}{} &\Z^n\ar{r}{}&\pi_1(M)\ar{r}{\rho}&A_5\ar{r}{}&1
			\end{tikzcd}\]
			Take a non-trivial subgroup $G\leq A_5$ and consider the finite covering of closed flat manifold $q:M'\longrightarrow M$, where  $\pi_1(M')=\rho^{-1}(G)$. If $q$ were induced by a free iterated action, then there would exist a group $\Gamma$ such that $\pi_1(M')\leq \Gamma\trianglelefteq \pi_1(M)$. This would imply that $G\leq \Gamma/\Z^n\trianglelefteq A_5$, which is not possible since $A_5$ is simple.
		\end{ex}
		
		Let $k$ be a natural number and let $\Cov_k(M)$ be the set of all coverings of $M$ of $k$-sheets up to equivalence of coverings. Let $p:\tilde{M}\longrightarrow M$ be a $k$-covering and pick $x\in M$. We can enumerate the points of the fiber $p^{-1}(x)=\{x_1,\dots,x_k\}$. Given $\alpha\in\pi_1(M,x)$ there is a unique lift $a_i:I\longrightarrow \tilde{M}$ such that $a_i(0)=x_i$. Thus, we can define an element in the group of permutations of $k$ letters $\sigma_\alpha$  such that $x_i$ goes to $a_i(1)$. If we remove the choice of the base point $x$, then $\Cov_k(M)\cong \Hom(\pi_1(M),S_k)/\sim$, where $S_k$ is the permutation group of $k$ elements acting by conjugations on $\Hom(\pi_1(M),S_k)$.
		
		Assume that a finite group $G$ acts effectively on $M$, then we have an action of $G$ on $\Cov_k(M)$ given by the pull-back of each element of $G$. Explicitly, $g[\tilde{M}\longrightarrow M]=[g^*\tilde{M}\longrightarrow M]$ for all $g\in G$ and $[\tilde{M}\longrightarrow M]\in\Cov(M)_k $. The induced action of $G$ on  $\Hom(\pi,S_k)/S_k$ is given by $g[f:\pi\longrightarrow S_k]=[f\circ g_*:\pi\longrightarrow S_k]$, where $g_*:\pi_1(M)\longrightarrow \pi_1(M)$ is the group morphism induced by $g$ on the fundamental group.
		
		We also recall the lifting condition of continuous maps. Assume that we have $f:M'\longrightarrow M$ a continuous map such that $f(y)=x$ for some $y\in M'$. Then, there exists $\tilde{f}:M'\longrightarrow \tilde{M}$ such that $f=p\circ\tilde{f}$ if and only if $f_*(\pi_1(M',y))\leq p_*(\pi_1(\tilde{M},\tilde{x}))$. In particular, a homeomorphism $f:M\longrightarrow M$ can be lifted to a homeomorphism  $\tilde{f}:\tilde{M}\longrightarrow\tilde{M}$ if and only if $f_*(p_*(\pi_1(\tilde{M},\tilde{y})))\leq p_*(\pi_1(\tilde{M},\tilde{x}))$.

		\begin{lem}\label{free itertaed actions simplifiable coverings}
			Assume that we have a free iterated action of $\mathcal{G}=\{G_1,G_2\} $ on a manifold $M$. Then, the following statements are equivalent:
			\begin{itemize}
				\item[(1)] $\mathcal{G}$ is simplifiable
				\item[(2)] We can lift ${g_2}:M/G_1\longrightarrow M/G_1$ to a homeomorphism $\tilde{g}_2:M\longrightarrow M$ for all $g_2\in G_2$.
				\item[(3)] The action of $G_2$ on $\Cov_{|G_1|}(M/G_1)$ fixes $[p_1:M\longrightarrow M/G_1]$.
			\end{itemize}
		\end{lem}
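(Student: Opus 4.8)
The plan is to reduce all three conditions to a single statement about subgroups of $\pi_1(M/\mathcal G)$ and then to quote \cref{free itertaed action simplifiable}. Write $M_1=M/G_1$ and $M_2=M/\mathcal G=M_1/G_2$, and let $p_1\colon M\to M_1$ and $p_2\colon M_1\to M_2$ be the orbit maps, so that $p=p_2\circ p_1$. Since $G_1$ acts freely, $p_1$ is a regular covering, hence $P:=p_{1*}(\pi_1(M))$ is a \emph{normal} subgroup of $\pi_1(M_1)$; likewise $p_{2*}(\pi_1(M_1))\trianglelefteq\pi_1(M_2)$, and $p_{2*}$ is injective, so I identify $\pi_1(M_1)$ with its image in $\pi_1(M_2)$, under which $P$ becomes $p_*(\pi_1(M))$. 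For $g_2\in G_2$ let $g_{2*}\in\Aut(\pi_1(M_1))$ denote the automorphism induced by the homeomorphism $g_2\colon M_1\to M_1$; it is only well defined up to an inner automorphism of $\pi_1(M_1)$, but this ambiguity will be harmless because $P$ is normal. I claim that each of (1), (2), (3) is equivalent to
$$(\star)\qquad g_{2*}(P)=P\quad\text{for every }g_2\in G_2.$$

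The equivalence (2)$\Leftrightarrow(\star)$ is immediate from the lifting criterion recalled above: $g_2\colon M_1\to M_1$ lifts along $p_1$ to a homeomorphism of $M$ iff $g_{2*}(P)\subseteq P$ after matching base points, and the normality of $P$ makes this independent of the base points; since $g_{2*}$ is an automorphism, the inclusion forces equality. For (3)$\Leftrightarrow(\star)$, recall that under $\Cov_{|G_1|}(M_1)\cong\Hom(\pi_1(M_1),S_{|G_1|})/{\sim}$ the covering $p_1$ corresponds to the (conjugacy class of the) subgroup $P$, and that for a homeomorphism $h$ of $M_1$ the pulled-back covering $h^*p_1$ corresponds to $h_*^{-1}(P)$. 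Hence $g_2$ fixes $[p_1]$ if and only if $g_{2*}^{-1}(P)$ is conjugate to $P$ in $\pi_1(M_1)$; since $P$ is normal this says $g_{2*}^{-1}(P)=P$, i.e.\ $(\star)$.

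It remains to prove (1)$\Leftrightarrow(\star)$. By \cref{free itertaed action simplifiable}, (1) holds iff $p_*(\pi_1(M))=P\trianglelefteq\pi_1(M_2)$ under the identification above. The deck action of $G_2$ on the regular covering $p_2\colon M_1\to M_2$ realizes, for each $g_2\in G_2$ and each lift $\gamma\in\pi_1(M_2)$ of $g_2$, the automorphism $g_{2*}$ of $\pi_1(M_1)$ as conjugation $c_\gamma$, up to an inner automorphism of $\pi_1(M_1)$; since $P\trianglelefteq\pi_1(M_1)$, the equality $g_{2*}(P)=P$ is thus equivalent to $c_\gamma(P)=P$. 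Choosing a transversal of $\pi_1(M_1)$ in $\pi_1(M_2)$ consisting of such lifts $\gamma_{g_2}$ and writing a general $\omega\in\pi_1(M_2)$ as $\omega=\delta\gamma_{g_2}$ with $\delta\in\pi_1(M_1)$ (where $c_\delta(P)=P$ automatically), one sees that $(\star)$ is equivalent to $c_\omega(P)=P$ for all $\omega\in\pi_1(M_2)$, i.e.\ to $P\trianglelefteq\pi_1(M_2)$. This closes the cycle of equivalences. The one genuinely delicate point, which I would write out carefully, is the matching of the abstract automorphism $g_{2*}$ of $\pi_1(M_1)$ with conjugation by a lift in $\pi_1(M_2)$, together with the systematic use of the normality of $P$ to dispose of the inner-automorphism and base-point ambiguities; everything else is a direct application of covering-space theory.
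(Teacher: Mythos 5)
Your proof is correct, but it is organized differently from the paper's. The paper proves the cycle $(1)\Rightarrow(2)\Rightarrow(3)\Rightarrow(1)$ directly at the level of actions and coverings: for $(1)\Rightarrow(2)$ it takes preimages of $g_2$ in the simplifying group $G$; for $(2)\Rightarrow(3)$ it observes that a lift identifies $p_1$ with $g_2^*p_1$ as $G_1$-coverings; and for $(3)\Rightarrow(1)$ it lifts the $G_2$-action to $M$, producing an extension $1\to G_1\to G\to G_2\to 1$ acting freely on $M$ with orbit map $p_2\circ p_1$ (the same mechanism as in \cref{pI covering maps}). You instead reduce all three conditions to the single algebraic condition $g_{2*}(P)=P$ for $P=p_{1*}(\pi_1(M))\trianglelefteq\pi_1(M_1)$, and then close the loop via \cref{free itertaed action simplifiable} together with the standard identification of the outer action of the deck group of $p_2$ with conjugation by lifts in $\pi_1(M_2)$; this is legitimate, since \cref{free itertaed action simplifiable} is established before the present lemma and not via it. What each route buys: yours is uniform and makes the role of normality of $P$ completely transparent, at the price of the base-point/inner-automorphism bookkeeping you acknowledge; the paper's route is more geometric and actually constructs the simplifying group $G$ as the group of lifts, which is the object reused later (e.g.\ in the proof that $\D_2(M')\geq\D_2(M)$ for regular coverings). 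One small patch: your assertion that ``since $g_{2*}$ is an automorphism, the inclusion $g_{2*}(P)\subseteq P$ forces equality'' is not a valid reason in general for infinite groups; here it is true because $P$ has finite index $|G_1|$ in $\pi_1(M_1)$ and automorphisms preserve index (alternatively, lift $g_2^{-1}$ as well and compose), so state one of these explicitly.
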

		
		\begin{proof}
			We will prove the chain of implications $(1)\implies (2)\implies (3)\implies (1)$. If the action of $\mathcal{G}$ is simplifiable then there exists a group $G$ fitting in the exact sequence $1\longrightarrow G_1\longrightarrow G\longrightarrow G_2\longrightarrow 1$ which acts freely on $M$ and $p:M\longrightarrow M/G=p_2\circ p_1$. Given any $g_2\in G_2$ we choose an element $\tilde{g}_2\in G$ which inside the preimage of $G\longrightarrow G_2$. The induced homeomorphism $\tilde{g}_2:M\longrightarrow M$ is a lift of ${g_2}:M/G_1\longrightarrow M/G_1$.
			
			We now prove the second implication. Note that if we can lift $g_2:M\longrightarrow M$, then $p_1:M\longrightarrow M/G_1$ and the pullback $g_2^*M\longrightarrow M/G_1$ are isomorphic as regular $G_1$-coverings. If $[p_1]\in \Cov_{|G_1|}(M/G_1)$ denotes the class of the covering $p_1:M\longrightarrow M/G_1$ then $g_2[p_1]=[p_1]$ for all $g_2\in G_2$.
			
			Finally, if $G_2$ fixes $[p_1]$ then there exists a group $G$ of the form $1\longrightarrow G_1\longrightarrow G\longrightarrow G_2\longrightarrow 1$ acting freely on $M$. This action extends the action of $G_1$ on $M$ and it also covers the action of $G_2$ on $M/G_1$. Therefore $p:M\longrightarrow M/G=p_2\circ p_1$ and the free iterated action is simplifiable.
		\end{proof}

\section{The iterated discrete degree of symmetry}\label{sec: discsym2}

Recall that if $G$ is a finite group, then $\rank G$ is the minimum number of elements which are needed to generate $G$. We want to extend this notion to iterated group actions. 

\begin{defn}
	Given a free iterated action $\mathcal{G}\acts X$, the rank of the iterated action is $$\rank(\mathcal{G}\acts X)=\min\{\sum_{i=1}^{n}\rank G'_i:\mathcal{G}'=\{G_1',\dots,G_n'\}\acts X\in [\mathcal{G}\acts X]\}.$$
	The iterated rank of the space $X$ is $$\rank(X)=\max\{\rank(\mathcal{G}\acts X): \text{ free iterated action }\mathcal{G}\acts X\}.$$
\end{defn}

\begin{lem}\label{rank iterated aciton simplifiable}
	Assume that we have a free iterated action of $\mathcal{G}=\{G_1,\dots,G_n\}$ on $X$. If $\mathcal{G}\acts X$ is simplifiable with a group $G$, then $\rank(\mathcal{G}\acts X)=\rank G$.
\end{lem}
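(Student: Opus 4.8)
The plan is to move everything to the level of fundamental groups and then invoke an elementary rank estimate for subnormal series. Recall that, by \cref{free itertaed action simplifiable}, the hypothesis that $\mathcal{G}\acts X$ is simplifiable with $G$ says precisely that $p_*(\pi_1(X))\trianglelefteq\pi_1(X/\mathcal{G})$, and by \cref{iterated action equivalent fundamental group} (comparing $\mathcal{G}\acts X$ with $\{G\}\acts X$) one has $\pi_1(X/\mathcal{G})/p_*(\pi_1(X))\cong G$; in particular $\rank G$ depends only on the class $[\mathcal{G}\acts X]$.

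The inequality $\rank(\mathcal{G}\acts X)\le\rank G$ is immediate: the single free action $\{G\}\acts X$ lies in $[\mathcal{G}\acts X]$ and is therefore an admissible competitor in the minimum defining $\rank(\mathcal{G}\acts X)$, contributing exactly $\rank G$.

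For the reverse inequality I would take an arbitrary representative $\mathcal{G}'=\{G_1',\dots,G_m'\}\acts X$ of $[\mathcal{G}\acts X]$ and show $\sum_{i=1}^{m}\rank G_i'\ge\rank G$. By \cref{free iterated action simplifiable regular covering} the action $\mathcal{G}'\acts X$ is again simplifiable, so $p'_*(\pi_1(X))\trianglelefteq\pi_1(X/\mathcal{G}')$, and again by \cref{iterated action equivalent fundamental group} we get $\pi_1(X/\mathcal{G}')/p'_*(\pi_1(X))\cong G$. The free iterated action $\mathcal{G}'$ provides the subnormal series $\pi_1(X)=\pi_1(X_0')\trianglelefteq\pi_1(X_1')\trianglelefteq\cdots\trianglelefteq\pi_1(X_m')=\pi_1(X/\mathcal{G}')$ with successive quotients $G_1',\dots,G_m'$, where each $\pi_1(X_i')$ is identified with its image in $\pi_1(X/\mathcal{G}')$. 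Since $p'_*(\pi_1(X))=\pi_1(X_0')$ is normal in $\pi_1(X/\mathcal{G}')$, it is normal in each term of this series, so dividing the whole series by it produces a subnormal series $1\trianglelefteq\cdots\trianglelefteq G$ of length $m$ whose successive quotients are still $G_1',\dots,G_m'$.

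The last ingredient is purely group-theoretic: if a finite group $E$ has a subnormal series with factors $F_1,\dots,F_m$ then $\rank E\le\sum_{i=1}^{m}\rank F_i$. This follows by induction from the standard fact that $\rank E\le\rank K+\rank(E/K)$ whenever $K\trianglelefteq E$ (lift a minimal generating set of $E/K$ and append one of $K$). Applying this to the series above gives $\rank G\le\sum_{i=1}^{m}\rank G_i'$, and taking the minimum over all representatives $\mathcal{G}'$ yields $\rank G\le\rank(\mathcal{G}\acts X)$; together with the first inequality this proves the equality. I do not anticipate a real obstacle here: the only point requiring attention is ensuring that $\pi_1(X)$ is normal all the way up in $\pi_1(X/\mathcal{G}')$ — not merely subnormal — which is exactly where simplifiability of $\mathcal{G}'$, guaranteed by \cref{free iterated action simplifiable regular covering}, is used so that the subnormal series genuinely descends to a subnormal series of $G$.
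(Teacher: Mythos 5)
Your proof is correct and follows essentially the same route as the paper's: simplifiability yields a subnormal series of $G$ with successive quotients the $G_i$, and subadditivity of rank along such a series gives $\rank G\leq\sum_i\rank G_i$, while the single action $\{G\}\acts X$ gives the opposite inequality. You are in fact slightly more careful than the paper at one point, since you run the subnormal-series argument for an arbitrary representative of the equivalence class (using that equivalent actions are simultaneously simplifiable with the same quotient group $G$), whereas the paper applies it only to the given representative and tacitly identifies $\sum_i\rank G_i$ with the minimum defining $\rank(\mathcal{G}\acts X)$.
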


\begin{proof}
	By definition, $\rank(\mathcal{G}\acts X)\leq\rank G$. Since $\mathcal{G}\acts X$ is simplifiable there exists a subnormal series $G^0=\{e\}\trianglelefteq G^1\trianglelefteq\cdots\trianglelefteq G^n=G$ such that $G^i/G^{i-1}\cong G_{i}$. In particular, $\rank G^{i}\leq \rank G^{i-1}+\rank G_i$ for all $1\leq i\leq n$. This implies that $\rank G=\rank G^n\leq\sum_{i=1}^{n}\rank G_i+\rank G^0=\rank(\mathcal{G}\acts X)$.
\end{proof}

Note that $\rank(\mathcal{G}\acts X)$ depends on the free iterated group action, see \cref{example iterated action torus} below.

We also note that $l(X)\leq \rank (X)$ and therefore we cannot bound the iterated rank of a closed manifold. To generalize the discrete degree of symmetry we will need a notion of rank which only uses abelian groups.

\begin{defn}\label{iterated abelian rank def}
	Let $\mathcal{G}=\{G_1,\dots,G_n\}$ act freely on $X$ and assume that $G_i$ is solvable for all $i$. The abelian rank of the iterated action is $$\rank_{ab}(\mathcal{G}\acts X)=\min\{\sum_{i=1}^{m}\rank A'_i:\{A_1',\dots,A_m'\}\acts X\in [\mathcal{G}\acts X], \text{ $A_i'$ abelian for all $i$}\}.$$
\end{defn} 

Note that $\rank(\mathcal{G}\acts X)\leq \rank_{ab}(\mathcal{G}\acts X)$. We would like to define an invariant of free iterated actions with similar properties to the discrete degree of symmetry. An essential property of the discrete degree of symmetry is that if $M$ is a closed manifold, then $\D(M)<\infty$. The proof of this fact is a direct consequence of \cref{MannSu thm}. We can generalize this theorem to the context of free iterated actions. Recall that given a closed manifold $M$ of dimension $n$, we define $b(M)=\sum_{i=1}^{n} \rank H_i(M,\Z)$, where $\rank$ is understood to be the minimum number of generators needed to generate $H_i(M,\Z)$ (note that $b(M)$ is not the Betti number of $H_*(M,\Z)$, since we also count the torsion part of $H_*(M,\Z)$). If $p$ is a prime number, then $b_p(M)=\sum_{i=1}^{n} \dim H_i(M,\Z/p)$. The next results generalizes \cref{MannSu thm} to the context of free iterated actions.

\begin{thm}\label{iterated MannSu thm}
	Let $M$ be a closed connected $n$-dimensional manifold. There exists a sequence of numbers $\{f_i\}_{i\in\mathbb{N}}$ depending only on $n$ and $b(M)$ such that for any prime $p$, any sequence of positive integers $\{k_i\}$ and any free iterated action $\{(\Z/p^{k_i})^{a_i}\}_{i=1,\dots,r}\acts M$, the numbers $a_i$ satisfy $a_i\leq f_i$ for all $i$.
\end{thm}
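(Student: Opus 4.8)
The plan is to proceed by induction on $r$, the length of the free iterated action, extracting at each step a bound on $a_i$ from the Mann--Su theorem (\cref{MannSu thm}) applied to an appropriate quotient manifold, and then controlling how the relevant homological complexity $b_p(M_i)$ grows as we pass from $M_{i-1}$ to $M_i = M_{i-1}/(\Z/p^{k_i})^{a_i}$. The base case $r=1$ is exactly \cref{MannSu thm}: since $(\Z/p)^{a_1} \leq (\Z/p^{k_1})^{a_1}$ acts effectively (indeed freely) on $M$, we get $a_1 \leq C_p$ where $C_p$ depends only on $n$ and $b_p(M)$; and by the universal coefficient theorem $b_p(M) \leq $ some function of $b(M)$ (for instance $b_p(M) \le 2\,b(M)$, since each torsion summand contributes at most one extra dimension mod $p$ in two adjacent degrees, and free summands contribute exactly one), so in fact $a_1 \leq f_1$ for an $f_1$ depending only on $n$ and $b(M)$.

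For the inductive step, suppose we have a free iterated action $\{(\Z/p^{k_i})^{a_i}\}_{i=1,\dots,r} \acts M$. The first projection $p_1 : M \longrightarrow M_1$ is the orbit map of the free action of $(\Z/p^{k_1})^{a_1}$, so $M_1$ is a closed connected $n$-manifold, and the remaining data $\{(\Z/p^{k_i})^{a_i}\}_{i=2,\dots,r} \acts M_1$ is a free iterated action of length $r-1$. By the induction hypothesis, $a_i \leq f_i'$ for $i = 2,\dots,r$, where the $f_i'$ depend only on $n$ and $b(M_1)$. So the whole problem reduces to two things: (i) bounding $a_1$ in terms of $n$ and $b(M)$ — which is the base case above, giving $a_1 \leq f_1$; and (ii) bounding $b(M_1)$, or at least $b_p(M_1)$, in terms of $n$ and $b(M)$ uniformly in $p$ and $k_1$, so that the $f_i'$ can be replaced by constants $f_i$ depending only on $n$ and $b(M)$.

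Step (ii) is the heart of the matter. Here I would use that $p_1 : M \longrightarrow M_1$ is a finite regular covering with deck group $(\Z/p^{k_1})^{a_1}$, hence a $\pi$-covering with $\pi$ a $p$-group, and invoke a transfer/spectral-sequence argument: the Cartan--Leray spectral sequence $H^*(\pi; H^*(M;\Z/p)) \Rightarrow H^*(M_1;\Z/p)$ — or more simply the fact that for a free action of a finite $p$-group $\pi$ on a finite-dimensional $\Z/p$-Poincaré-duality space one has the Borel-construction comparison — shows that $\sum_i \dim H^i(M_1;\Z/p)$ is controlled. The cleanest route is probably: since $\pi$ acts freely, $M_1 \simeq M \times_\pi E\pi$, and the fibration $M \to M_1 \to B\pi$ gives a Serre spectral sequence whose $E_2$-page is $H^*(B\pi; \mathcal{H}^*(M;\Z/p))$; but $B\pi$ has unbounded cohomology, so one must instead exploit finite-dimensionality of $M_1$ (it is an $n$-manifold) to truncate. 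A more robust classical tool is Mann--Su's own estimate, which already packages exactly this: in \cite{mann1963actions} the bound on the rank is proved via showing $b_p(M/\pi) \le b_p(M)$ for free $p$-group actions, or rather that $b_p$ of the quotient is bounded in terms of $b_p$ of the total space and $n$. I expect the main obstacle to be precisely establishing a clean inequality of the form $b_p(M_1) \leq F(n, b_p(M))$ that is uniform in $k_1$ (the exponent of the cyclic factors) — one must make sure the bound does not degrade as $k_1 \to \infty$, which is plausible because $M_1$ is itself a fixed-dimensional closed manifold and $p_1$ only has $p^{k_1 a_1}$ sheets, but the homology of a large-degree covering's quotient is subtle. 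Once (ii) is in hand, chaining the inductive bounds gives the sequence $\{f_i\}$ depending only on $n$ and $b(M)$, completing the proof.
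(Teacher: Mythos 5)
Your overall plan coincides with the paper's: induct along the iterated action, apply \cref{MannSu thm} at each stage, and close the recursion by bounding $b_p$ of the successive quotients uniformly in $p$ and in the exponents $k_i$, finishing with $b_p(M)\le 2b(M)$. However, the step you yourself single out as the heart of the matter --- a clean inequality $b_p(M_1)\le F(n,b_p(M))$ uniform in $k_1$ --- is never actually established, and this is a genuine gap. Moreover, the tool you propose to lean on is not available: the inequality $b_p(M/\pi)\le b_p(M)$ for free $p$-group actions, which you attribute to Mann--Su, is false. Already a free $\Z/p$-action on $S^3$ gives a lens space quotient with nontrivial $p$-torsion in $H_1$, so that $b_p$ of the quotient strictly exceeds $b_p(S^3)$. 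So neither of the two routes you sketch is carried to completion, and one of them cannot work as stated.

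What actually closes the gap (and is what the paper does, via \cref{cohomology bound Borel fibration}) is the convolution bound
$\dim H_k(M/G,\Z/p)\le\sum_{i+j=k}\dim H_i(BG,\Z/p)\,\dim H_j(M,\Z/p)$,
coming from the Borel fibration $M\to M\times_G EG\to BG$ together with $M\times_G EG\simeq M/G$ for free actions, applied to one cyclic factor at a time. Your worry that the bound might degrade as $k_1\to\infty$, and your objection that $B\pi$ has unbounded cohomology, are resolved by two facts you did not combine: first, $\dim H_i(B\Z/p^{k},\Z/p)=1$ in every degree $i$, independently of $k$, so quotienting by a single factor $\Z/p^{k_1}$ gives $\dim H_k(M/(\Z/p^{k_1}),\Z/p)\le b_p(M)$ for every $k$ with no dependence on the exponent; second, the quotient is again a closed $n$-manifold, so only degrees up to $n$ contribute, yielding $b_p(M/(\Z/p^{k_1}))\le n\,b_p(M)$ and, after iterating over the $a_1$ cyclic factors, $b_p(M/(\Z/p^{k_1})^{a_1})\le n^{a_1}b_p(M)\le n^{f_1}b_p(M)$. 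With this in hand the recursion $f_i=f(n,n^{f_1+\dots+f_{i-1}}b_p(M))$ from \cref{MannSu thm} closes, and replacing $b_p(M)$ by $2b(M)$ removes the dependence on $p$, exactly as you indicate; without it, your induction on $r$ does not get off the ground past the first stage.
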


To prove \cref{iterated MannSu thm}, we need the following lemma.

\begin{lem}\label{cohomology bound Borel fibration}
	Let $M$ be a closed connected manifold and $p$ a prime number. Assume that we have an effective action of a finite $p$-group $G$ on $M$. Then $$\dim H_k(M/G,\Z/p)\leq\sum_{i+j=k}\dim H_i(BG,\Z/p)\dim H_j(M,\Z/p).$$
\end{lem}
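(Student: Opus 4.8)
The plan is to use the Borel construction and the fact that since $G$ is a $p$-group acting on $M$, its action on $M$ is a particular kind of equivariant setting where one can compare the quotient cohomology with the equivariant cohomology. I would first observe that since $G$ acts effectively on the closed manifold $M$, the quotient $M/G$ carries an action-free structure only on an open dense set, but what we really want is to relate $H_*(M/G,\Z/p)$ to $H_*(M,\Z/p)$ via the Borel fibration $M \hookrightarrow M_{hG} := EG\times_G M \longrightarrow BG$. The key input is the classical fact (due to the localization philosophy of Borel, and appearing in Quillen, Bredon, Hsiang) that for $G$ a finite $p$-group the projection $M_{hG}\to M/G$ induces an isomorphism (or at least an injection, which is all we need) on cohomology with $\Z/p$ coefficients. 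Actually the sharp statement I would invoke is that $H^*(M/G,\Z/p)$ injects into $H^*_G(M,\Z/p) = H^*(M_{hG},\Z/p)$; this holds because the fibers of $M_{hG}\to M/G$ over a point with stabilizer $G_x$ are $BG_x$, which are $\Z/p$-acyclic in the sense that $H^*(BG_x,\Z/p)$ is connected in degree $0$... no, that is false for $p$-groups. Let me restate: the correct fact is that $M/G$ and $M_{hG}$ have the same $\Z/p$-cohomology is only true when the action is free. So instead I would argue directly with the Cartan–Leray spectral sequence.

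First I would set up the Cartan–Leray spectral sequence of the (not necessarily free) action, or rather the spectral sequence of the fibration $M \to M_{hG} \to BG$. This has $E_2^{s,t} = H_s(BG, \mathcal{H}_t(M,\Z/p))$ converging to $H_{s+t}(M_{hG},\Z/p)$; since $G$ is a $p$-group acting on a space over $\Z/p$, and using that the action of $\pi_1(BG)=G$ on $H_*(M,\Z/p)$ can be made trivial after passing to associated graded (because a $p$-group acting on a finite-dimensional $\Z/p$-vector space is unipotent, so has a fixed vector and more strongly a filtration by submodules with trivial quotients), I get
\[
\dim H_k(M_{hG},\Z/p) \leq \sum_{s+t=k} \dim H_s(BG,\Z/p)\,\dim H_t(M,\Z/p).
\]
The unipotence point is what lets me replace the possibly nontrivial coefficient system by trivial coefficients at the cost of only an inequality (the spectral sequence with the true coefficients has total dimension bounded by that with trivialized coefficients, and differentials only decrease dimension). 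Then I would invoke the comparison between $H_*(M/G,\Z/p)$ and $H_*(M_{hG},\Z/p)$: there is a natural map $M_{hG}\to M/G$, and the claim $\dim H_k(M/G,\Z/p)\le \dim H_k(M_{hG},\Z/p)$ follows because this map is surjective on $\Z/p$-homology — indeed for each $x$ the fiber $BG_x$ is connected, so the map is a "homology surjection" by another Leray spectral sequence argument (the fiberwise $H_0$ is $\Z/p$ everywhere, giving an edge homomorphism $H_k(M_{hG})\twoheadrightarrow H_k(M/G)$ in homology, dually $H^k(M/G)\hookrightarrow H^k(M_{hG})$).

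Combining the two displayed bounds gives exactly
\[
\dim H_k(M/G,\Z/p)\le \dim H_k(M_{hG},\Z/p)\le \sum_{i+j=k}\dim H_i(BG,\Z/p)\,\dim H_j(M,\Z/p),
\]
which is the assertion. The main obstacle I anticipate is making the two comparisons rigorous with the correct variance: (i) justifying that $H^*(M/G,\Z/p)\hookrightarrow H^*_G(M,\Z/p)$ — this is standard but needs the Leray spectral sequence of $M_{hG}\to M/G$ with fibers $BG_x$ together with the fact that $H^0(BG_x,\Z/p)=\Z/p$ and the edge map is injective; and (ii) the unipotence argument trivializing the local coefficient system $\mathcal{H}_*(M,\Z/p)$ over $BG$, which requires knowing that a finite $p$-group acting on a finite-dimensional $\Z/p$-module admits a composition series with trivial subquotients (true since the group algebra $\Z/p[G]$ is local with residue field $\Z/p$). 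Everything else — convergence of the spectral sequences, the Künneth-type estimate $\dim H_s(BG,\Z/p\text{-module }V)\le \dim H_s(BG,\Z/p)\cdot\dim V$ for a module with trivial subquotients — is routine bookkeeping with $\Z/p$-vector space dimensions and I would not spell it out in detail.
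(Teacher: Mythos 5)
Your first half is fine: the Serre spectral sequence of $M\to EG\times_G M\to BG$, together with the fact that every finite-dimensional $\Z/p[G]$-module has a composition series with trivial quotients (because $G$ is a $p$-group, $\Z/p[G]$ is local), does give $\dim H_k(EG\times_G M,\Z/p)\le\sum_{i+j=k}\dim H_i(BG,\Z/p)\dim H_j(M,\Z/p)$. The gap is in the comparison step between $M/G$ and the Borel construction. Connectedness of the fibres $BG_x$ of $\phi\colon EG\times_G M\to M/G$ does not make $\phi_*$ surjective on homology (equivalently, $\phi^*$ injective on cohomology): in the Leray spectral sequence of $\phi$ the bottom row $E_2^{k,0}=H^k(M/G,\Z/p)$ receives the differentials $d_r\colon E_r^{k-r,r-1}\to E_r^{k,0}$, so the edge map $E_2^{k,0}\twoheadrightarrow E_\infty^{k,0}\hookrightarrow H^k(EG\times_G M,\Z/p)$ is a quotient followed by an inclusion, and "connected fibres" only controls $H_0$ (the Hopf map $S^3\to S^2$ has connected fibres and is zero on $H_2$). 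Worse, the statement you rely on is actually false: let $G=\Z/2$ act on $M=S^2$ by rotation by $\pi$ about an axis, so $M/G\cong S^2$. A standard computation (Gysin sequence for the representation sphere, or the Serre spectral sequence, which degenerates because a fixed point gives a section) yields $\dim H^2_G(S^2,\Z/2)=2$; plugging this into the Leray spectral sequence of $\phi$, whose $E_2$-page has $H^*(S^2,\Z/2)$ in the bottom row and, for $t\ge 1$, skyscraper contributions of dimension $2$ concentrated over the two fixed points, forces $E_\infty^{2,0}=0$. Hence $\phi^*\colon H^2(M/G,\Z/2)\to H^2_G(M,\Z/2)$ is the zero map even though its source is non-zero. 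So the injectivity/surjectivity you invoke fails for non-free actions, and even the weaker inequality $\dim H_k(M/G,\Z/p)\le\dim H_k(EG\times_G M,\Z/p)$ is left unproved by your argument (it holds in this example by accident, but you would need a genuinely different justification for it in general).

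For context, the paper does not prove this lemma; it cites \cite[Lemma 1.46]{saez2019finite}. The standard treatments (Smith theory, or the spectral sequences relating $H^*(M/G,\Z/p)$ to equivariant sheaf cohomology, whose extra terms involve the stabilizer cohomologies $H^*(G_x,\Z/p)$) bound the cohomology of the honest quotient directly, rather than trying to embed $H^*(M/G,\Z/p)$ into $H^*_G(M,\Z/p)$. It is precisely the passage from the homotopy quotient to the actual quotient — the point at which your own first paragraph already hesitated — that your proposal does not establish.
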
 

We refer to \cite[Lemma 1.46]{saez2019finite} for a detailed proof of the lemma.

\begin{proof}[Proof of \cref{iterated MannSu thm}]
	We construct the sequence $f_i$ recursively. For the first step, if $(\Z/p^{k_1})^{a_1}$ acts freely on $M$ then so it does the group $(\Z/p)^{a_1}$. Thus, we can take $f_1=f(n,b_p(M))$, where $f$ is the function defined in \cref{MannSu thm}. 
	
	By \cref{cohomology bound Borel fibration}, if $\Z/p^{k_1}$ acts freely on $M$, then 
	$$\dim H_k(M/(\Z/p^{k_1}),\Z/p)\leq \sum_{i+j=k} \dim H_i(B\Z/p^{k_1},\Z/p)\dim H_j(M,\Z/p)$$ for any $k$. Moreover, $H_i(B\Z/p^{k_1},\Z/p)\cong\Z/p$ for all $i\geq 0$ (see \cite[Chapter III, \S1]{brown2012cohomology}). Thus,
	$\dim H_k(M/(\Z/p^{k_1}),\Z/p)\leq b_p(M)$ for all $k$ and therefore $b_p(M/(\Z/p^{k_1}))\leq nb_p(M)$. Using this inequality recursively, we obtain that $b_p(M/(\Z/p^{k_1})^{a_1})\leq n^{a_1}b_p(M)\leq n^{f_1}b_p(M)$. 
	
	We can use \cref{MannSu thm} on $M/(\Z/p^{k_1})^{a_1}$ to deduce that $$a_2\leq f(n,b_p(M/(\Z/p^{k_1})^{a_1}))\leq f(n,n^{f_1}b_p(M)).$$ Set $f_2=f(n,n^{f_1}b_p(M))$. Repeating the same argument as above, we obtain $$f_i=f(n,n^{f_1+\dots+f_{i-1}}b_p(M))$$ for all $1\leq i\leq r$. 
	
	By the universal coefficients theorem, $b_p(M)\leq 2b(M)$. Thus, by replacing  $b_p(M)$ with $2b(M)$ we obtain a bound not depending on the prime $p$.
\end{proof}

We are ready to define an iterated discrete degree of symmetry for free iterated actions of length $2$. In $\mathbb{N}^2$ we have a partial order relation called the lexicographic order defined as follows: If $(n,m),(n',m')\in \mathbb{N}^2$ then $(n,m)\geq (n',m')$ if and only if $n>n'$ or $n=n'$ and $m\geq m'$. 

\begin{defn}
	We define $\mu_2(M)$ as the set of all pairs $(f,b)\in \mathbb{N}^2$ which satisfy:
	\begin{itemize}
		\item[1.] There exist an increasing sequence of prime numbers $\{p_i\}$, a sequence of natural numbers $\{a_i\}$ and a collection of free iterated actions $\{(\Z/p_i^{a_i})^f,(\Z/p_i)^b\}\acts M$ for each $i\in\mathbb{N}$.
		\item[2.] $\rank_{ab}(\{(\Z/p_i^{a_i})^f,(\Z/p_i)^b\}\acts M)=f+b$ for each $i\in\mathbb{N}$.
	\end{itemize}
	
	We define the iterated discrete degree of symmetry of $M$ as
	$$\D_2(M)=\max\{(0,0)\cup\mu_2(M)\}.$$
\end{defn}

To understand better the definition of the iterated discrete degree of symmetry, let us compute it for tori.

\begin{lem}\label{iterated discsym Tn}
	We have $\D_2(T^n)=(n,0)$.
\end{lem}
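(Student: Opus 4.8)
The plan is to prove both inequalities $\D_2(T^n)\geq (n,0)$ and $\D_2(T^n)\leq (n,0)$, the first by exhibiting explicit free iterated actions and the second by a dimension/rank argument.

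First I would establish $\D_2(T^n)\geq (n,0)$. For each prime $p$ and each $a\in\mathbb{N}$, the group $(\Z/p^a)^n$ acts freely on $T^n$ by rotations, and the quotient is again $T^n$. Hence the one-step free iterated action $\{(\Z/p^a)^n\}\acts T^n$ is defined with $M_1\cong T^n$; trivially $\rank_{ab}$ of this action is $n$ (it is simplifiable by the group $(\Z/p^a)^n$ itself, so by \cref{rank iterated aciton simplifiable} and the fact that $\rank_{ab}$ computed with abelian groups agrees with $\rank$ here, the value is $n$). Taking $b=0$ and any increasing sequence of primes $p_i$ with exponents $a_i\to\infty$, condition 1 and condition 2 in the definition of $\mu_2$ are satisfied, so $(n,0)\in\mu_2(T^n)$ and therefore $\D_2(T^n)\geq (n,0)$.

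Next I would prove $\D_2(T^n)\leq (n,0)$, which amounts to showing that if $(f,b)\in\mu_2(T^n)$ then $f\leq n$, and moreover $f=n$ forces $b=0$. Suppose $\{(\Z/p_i^{a_i})^f,(\Z/p_i)^b\}\acts T^n$ is a free iterated action with $\rank_{ab}=f+b$. Since the first action of $(\Z/p_i^{a_i})^f$ on $T^n$ is free, the quotient $M_1$ is a closed manifold of dimension $n$, and by \cref{iterated MannSu thm} (applied to $M=T^n$) the integer $f$ is bounded by a constant $f_1$ depending only on $n$ and $b(T^n)$; more importantly, a free action of $(\Z/p^a)^f$ on $T^n$ exists only if $f\leq n$ — this is classical (e.g. because such an action lifts to a properly discontinuous action on $\R^n$ making the quotient an aspherical $n$-manifold with fundamental group an extension of $\Z^n$ by $(\Z/p^a)^f$, whose cohomological dimension forces $f\leq n$; alternatively one invokes that $(\Z/p)^f$ acting freely on $T^n$ needs $f\leq n$ by Smith theory / the bound $\rank_p(T^n)\leq n$). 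This gives $f\leq n$.

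Finally, for the boundary case $f=n$: if $(\Z/p_i^{a_i})^n$ acts freely on $T^n$ then the quotient $M_1$ is a closed aspherical $n$-manifold whose fundamental group fits in $1\to\Z^n\to\pi_1(M_1)\to(\Z/p_i^{a_i})^n\to 1$; for $p_i$ large this extension is a Bieberbach group with holonomy of rank $n$, hence $\pi_1(M_1)$ has trivial center-rank deficiency and in particular $M_1$ admits no further free action of a nontrivial $p_i$-group $(\Z/p_i)^b$ with $b\geq 1$ that keeps $\rank_{ab}=n+b$ — more directly, the second stage would produce a free action of $(\Z/p_i)^b$ on the $n$-manifold $M_1$, and combined with the first stage a free iterated action with total rank $n+b$, but then $\D(T^n)\geq n+b$ by pushing down to an effective action, contradicting $\D(T^n)=n$ (which follows from \cref{main theorem2 intro}.2 or the classical invariance-of-domain bound, since $T^n$ is its own nilmanifold with $\rank\Zc\Gamma = n$). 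Thus $b=0$ when $f=n$, and therefore $\D_2(T^n)=(n,0)$.

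The main obstacle I anticipate is the boundary analysis ensuring $f=n\Rightarrow b=0$: one must carefully relate the existence of a \emph{free iterated} action of the two-group collection to an ordinary effective action whose discrete degree of symmetry is controlled, and check that the $\rank_{ab}=f+b$ normalization is genuinely achieved, so that no "collapsing'' of ranks at the kernel level lets one cheat. Handling this cleanly likely requires invoking \cref{iterated action equivalent fundamental group} to reduce to the chain $\pi_1(T^n)\trianglelefteq \pi_1(M_1)\trianglelefteq \pi_1(M_2)$ and tracking how the successive quotients interact with the torsion-free nilpotent (here, free abelian) structure, together with the bound $\D(T^n)=n$.
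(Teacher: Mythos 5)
Your lower bound and the bound $f\leq n$ are fine, but the crucial boundary step ($f=n\Rightarrow b=0$) rests on a claim that is not valid: from a free iterated action $\{(\Z/p_i^{a_i})^n,(\Z/p_i)^b\}\acts T^n$ you cannot ``push down to an effective action'' and conclude $\D(T^n)\geq n+b$. The second group acts on the quotient $M_1$, not on $T^n$, and even when the iterated action can be lifted (simplified) to an action of a single group $G_i$ on $T^n$, that group need not have rank $n+b$: the whole point of the $\rank_{ab}$ normalization is that simplification can collapse ranks, as in \cref{example iterated action torus}, where $\{\Z/p,\Z/p\}\acts T^2$ simplifies to $\Z/p^2$ of rank $1$. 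In the present situation the simplified group is in fact always abelian of rank at most $n$, so no action of a rank-$(n+b)$ abelian group on $T^n$ is produced and no contradiction with $\D(T^n)=n$ arises this way; the contradiction must instead be extracted from the hypothesis $\rank_{ab}=n+b$. (Also, your description of $\pi_1(M_1)$ as ``a Bieberbach group with holonomy of rank $n$'' is off: for $p_i$ larger than the Minkowski constant of $\Gl(n,\Z)$ the holonomy is trivial and $\pi_1(M_1)\cong\Z^n$, so $M_1$ is again a torus.)

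The paper's argument runs as follows, and it is the ingredient your sketch is missing. For $p_i$ large both stages act trivially on $H^1$, hence by rotations, so both quotients $M_1$ and $M_2=M_1/(\Z/p_i)^{d_2}$ are tori; since $\pi_1(M_2)\cong\Z^n$ is abelian, $\pi_1(T^n)$ is normal in it, so by \cref{free itertaed action simplifiable} the iterated action is simplifiable by $G_i\cong\pi_1(M_2)/\pi_1(T^n)$, a quotient of $\Z^n$, hence abelian with $\rank G_i\leq n$. Since $\{G_i\}\acts T^n$ is an equivalent one-step decomposition by an abelian group, $\rank_{ab}(\{(\Z/p_i^{a_i})^n,(\Z/p_i)^{d_2}\}\acts T^n)\leq\rank G_i\leq n$, which contradicts the required equality $\rank_{ab}=n+d_2$ unless $d_2=0$ (the bound $\D(T^n)=n$ is only needed to get $d_1\leq n$). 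Your closing remark about tracking the chain $\pi_1(T^n)\trianglelefteq\pi_1(M_1)\trianglelefteq\pi_1(M_2)$ via \cref{iterated action equivalent fundamental group} correctly identifies this missing piece, but the proof as written does not carry it out, and the step it offers instead would fail.
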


\begin{proof}
	Let $C$ be the Minkowski constant of $\Gl(n,\Z)$. Assume that $\D_2(T^n)=(d_1,d_2)$. Since $T^n$ admits actions of $(\Z/p)^n$ for any prime $p$ and $\D(T^n)=n$, we have $d_1=n$. By hypothesis there exists an increasing sequence of prime numbers $\{p_i\}_{i\in\mathbb{N}}$ with $p_i>C$ and free iterated group actions of $\{(\Z/p_i^{a_i})^n,(\Z/p_i)^{d_2}\}\acts T^n$ such that $\rank_{ab}(\{(\Z/p_i^{a_i})^n,(\Z/p_i)^{d_2}\}\acts T^n)=n+d_2$.
	
	The action of $(\Z/p_i^{a_i})^n$ on $T^n$ is by rotations for all $i$, therefore $T^n/(\Z/p_i^{a_i})^n\cong T^n$.  Since $p_i>C$ the group $(\Z/p_i)^{d_2}$ also acts by rotations on $T^n$, hence $T^n/(\{(\Z/p_i^{a_i})^n,(\Z/p_i)^{d_2}\})$ is homeomorphic to $T^n$. In consequence, the free iterated action  $\{(\Z/p_i^{a_i})^n,(\Z/p_i)^{d_2}\}\acts T^n$ is simplifiable for all $i$. The simplification gives a group $G_i$ and a short exact sequence
	$$1\longrightarrow  (\Z/p_i^{a_i})^n\longrightarrow G_i\longrightarrow (\Z/p_i)^{d_2}\longrightarrow 1.$$
	
	Moreover, we also have that a short exact sequence 
	$$1\longrightarrow \Z^n\longrightarrow \Z^n\longrightarrow G_i\longrightarrow 1 $$
	induced by the regular covering $p:T^n\longrightarrow T^n/\{(\Z/p_i^{a_i})^n,(\Z/p_i)^{d_2}\}$. This implies that $G_i$ is abelian and $\rank G_i\leq n$. On the other hand, $$\rank G_i=\rank_{ab}(\{(\Z/p_i^{a_i})^n,(\Z/p_i)^{d_2}\}\acts T^n)=n+d_2\leq n,$$ which implies that $d_2=0$.
\end{proof}

\begin{rem}\label{example iterated action torus}
	As an example, we consider two different free iterated actions of $\{\Z/p,\Z/p\}\acts T^2$, as shown in the Figure 4.1.
	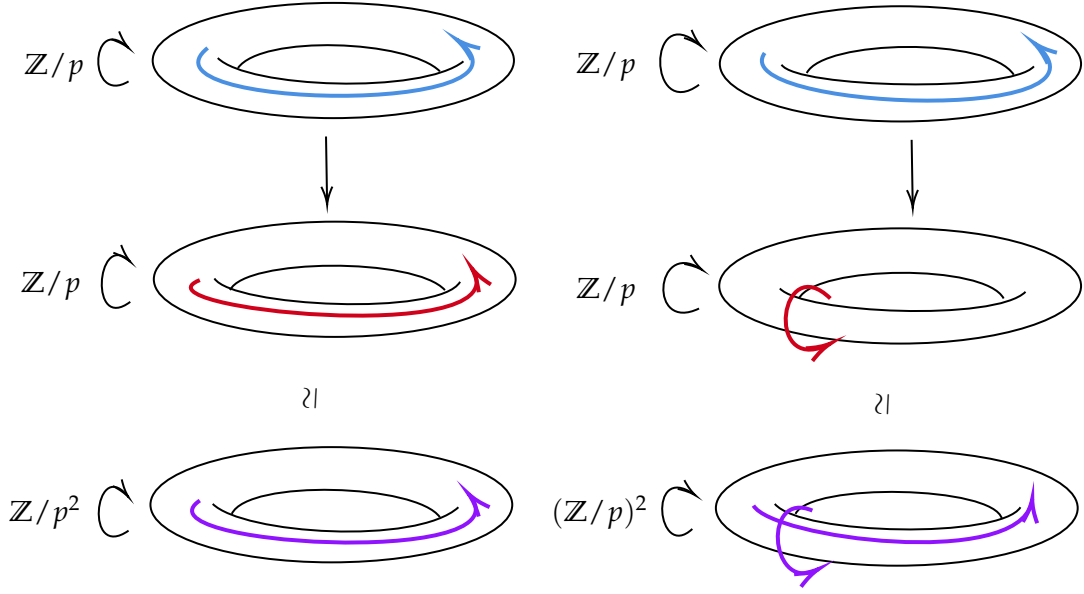
\begin{figure}[h]
		\centering

		\tikzset{every picture/.style={line width=0.75pt}} %set default line width to 0.75pt        
		
		\begin{tikzpicture}[x=0.63pt,y=0.63pt,yscale=-1,xscale=1]
			%uncomment if require: \path (0,463); %set diagram left start at 0, and has height of 463
			
			%Shape: Ellipse [id:dp6162195574222344] 
			\draw   (92,85.5) .. controls (92,66.45) and (140.05,51) .. (199.33,51) .. controls (258.61,51) and (306.67,66.45) .. (306.67,85.5) .. controls (306.67,104.55) and (258.61,120) .. (199.33,120) .. controls (140.05,120) and (92,104.55) .. (92,85.5) -- cycle ;
			%Shape: Ellipse [id:dp4316437916264195] 
			\draw   (92.67,216.5) .. controls (92.67,197.45) and (140.8,182) .. (200.17,182) .. controls (259.54,182) and (307.67,197.45) .. (307.67,216.5) .. controls (307.67,235.55) and (259.54,251) .. (200.17,251) .. controls (140.8,251) and (92.67,235.55) .. (92.67,216.5) -- cycle ;
			%Shape: Ellipse [id:dp19462312303269047] 
			\draw   (90.67,351.5) .. controls (90.67,332.45) and (138.8,317) .. (198.17,317) .. controls (257.54,317) and (305.67,332.45) .. (305.67,351.5) .. controls (305.67,370.55) and (257.54,386) .. (198.17,386) .. controls (138.8,386) and (90.67,370.55) .. (90.67,351.5) -- cycle ;
			%Curve Lines [id:da041867015079397385] 
			\draw    (77.67,98) .. controls (53.17,112.7) and (54.6,58.25) .. (76.31,76.77) ;
			\draw [shift={(77.67,78)}, rotate = 223.73] [color={rgb, 255:red, 0; green, 0; blue, 0 }  ][line width=0.75]    (10.93,-3.29) .. controls (6.95,-1.4) and (3.31,-0.3) .. (0,0) .. controls (3.31,0.3) and (6.95,1.4) .. (10.93,3.29)   ;
			%Curve Lines [id:da9552748838251759] 
			\draw    (78.67,230) .. controls (54.29,248.53) and (58.44,189.1) .. (77.19,204.65) ;
			\draw [shift={(78.67,206)}, rotate = 225] [color={rgb, 255:red, 0; green, 0; blue, 0 }  ][line width=0.75]    (10.93,-3.29) .. controls (6.95,-1.4) and (3.31,-0.3) .. (0,0) .. controls (3.31,0.3) and (6.95,1.4) .. (10.93,3.29)   ;
			%Curve Lines [id:da7613876761335261] 
			\draw    (77.67,367) .. controls (51.21,385.62) and (57.4,323.57) .. (76.48,345.55) ;
			\draw [shift={(77.67,347)}, rotate = 232.43] [color={rgb, 255:red, 0; green, 0; blue, 0 }  ][line width=0.75]    (10.93,-3.29) .. controls (6.95,-1.4) and (3.31,-0.3) .. (0,0) .. controls (3.31,0.3) and (6.95,1.4) .. (10.93,3.29)   ;
			%Straight Lines [id:da4410673731246213] 
			\draw    (195,131) -- (195.63,170) ;
			\draw [shift={(195.67,172)}, rotate = 269.07] [color={rgb, 255:red, 0; green, 0; blue, 0 }  ][line width=0.75]    (10.93,-3.29) .. controls (6.95,-1.4) and (3.31,-0.3) .. (0,0) .. controls (3.31,0.3) and (6.95,1.4) .. (10.93,3.29)   ;
			%Shape: Ellipse [id:dp38063397584389924] 
			\draw   (429,87.5) .. controls (429,68.45) and (477.05,53) .. (536.33,53) .. controls (595.61,53) and (643.67,68.45) .. (643.67,87.5) .. controls (643.67,106.55) and (595.61,122) .. (536.33,122) .. controls (477.05,122) and (429,106.55) .. (429,87.5) -- cycle ;
			%Shape: Ellipse [id:dp570161485942696] 
			\draw   (428.67,220.5) .. controls (428.67,201.45) and (476.8,186) .. (536.17,186) .. controls (595.54,186) and (643.67,201.45) .. (643.67,220.5) .. controls (643.67,239.55) and (595.54,255) .. (536.17,255) .. controls (476.8,255) and (428.67,239.55) .. (428.67,220.5) -- cycle ;
			%Shape: Ellipse [id:dp8205934641991803] 
			\draw   (426.67,353.5) .. controls (426.67,334.45) and (474.8,319) .. (534.17,319) .. controls (593.54,319) and (641.67,334.45) .. (641.67,353.5) .. controls (641.67,372.55) and (593.54,388) .. (534.17,388) .. controls (474.8,388) and (426.67,372.55) .. (426.67,353.5) -- cycle ;
			%Straight Lines [id:da14306019543281256] 
			\draw    (543,133) -- (543.63,172) ;
			\draw [shift={(543.67,174)}, rotate = 269.07] [color={rgb, 255:red, 0; green, 0; blue, 0 }  ][line width=0.75]    (10.93,-3.29) .. controls (6.95,-1.4) and (3.31,-0.3) .. (0,0) .. controls (3.31,0.3) and (6.95,1.4) .. (10.93,3.29)   ;
			%Curve Lines [id:da1748903253171208] 
			\draw    (416.67,100) .. controls (386.13,122.66) and (385.67,49.26) .. (416.57,75.72) ;
			\draw [shift={(418,77)}, rotate = 222.86] [color={rgb, 255:red, 0; green, 0; blue, 0 }  ][line width=0.75]    (10.93,-3.29) .. controls (6.95,-1.4) and (3.31,-0.3) .. (0,0) .. controls (3.31,0.3) and (6.95,1.4) .. (10.93,3.29)   ;
			%Curve Lines [id:da820110170609166] 
			\draw    (416.67,234) .. controls (386.29,248.7) and (390.48,193.29) .. (417.01,209.89) ;
			\draw [shift={(418.67,211)}, rotate = 215.54] [color={rgb, 255:red, 0; green, 0; blue, 0 }  ][line width=0.75]    (10.93,-3.29) .. controls (6.95,-1.4) and (3.31,-0.3) .. (0,0) .. controls (3.31,0.3) and (6.95,1.4) .. (10.93,3.29)   ;
			%Curve Lines [id:da5334149985962666] 
			\draw    (412.67,366) .. controls (391.65,385.7) and (385.84,325.84) .. (413.38,345.07) ;
			\draw [shift={(414.67,346)}, rotate = 217.18] [color={rgb, 255:red, 0; green, 0; blue, 0 }  ][line width=0.75]    (10.93,-3.29) .. controls (6.95,-1.4) and (3.31,-0.3) .. (0,0) .. controls (3.31,0.3) and (6.95,1.4) .. (10.93,3.29)   ;
			%Curve Lines [id:da21062953006046725] 
			\draw    (130.67,84) .. controls (148.67,102) and (251.67,106) .. (276.67,86) ;
			%Curve Lines [id:da6892390729683207] 
			\draw    (465.67,88) .. controls (474.67,102) and (601.67,105) .. (615.67,89) ;
			%Curve Lines [id:da4889155926983155] 
			\draw    (128.67,216) .. controls (139.67,233) and (262.67,239) .. (274.67,218) ;
			%Curve Lines [id:da9829558720733369] 
			\draw    (464.67,222) .. controls (478.67,237) and (592.67,242) .. (610.67,224) ;
			%Curve Lines [id:da8186489386514495] 
			\draw    (127.67,351) .. controls (145.67,369) and (248.67,376) .. (273.67,353) ;
			%Curve Lines [id:da49140878725336834] 
			\draw    (461.67,351) .. controls (477.67,370) and (595.67,372) .. (607.67,353) ;
			%Curve Lines [id:da3180648529962762] 
			\draw    (142.67,90) .. controls (156.67,70) and (243.67,73) .. (262.67,92) ;
			%Curve Lines [id:da983374711119201] 
			\draw    (480.67,93) .. controls (490.67,73) and (588.67,71) .. (603.67,94) ;
			%Curve Lines [id:da6676247399165844] 
			\draw    (138,223) .. controls (146.67,204) and (247.67,203) .. (265.67,223) ;
			%Curve Lines [id:da25274439554255923] 
			\draw    (476,228) .. controls (486.67,208) and (586.67,207) .. (597.67,228) ;
			%Curve Lines [id:da8016538482727726] 
			\draw    (139,357) .. controls (148.67,336) and (248.67,339) .. (262.67,359) ;
			%Curve Lines [id:da06061676852388653] 
			\draw    (474,357) .. controls (482.67,338) and (591.67,340) .. (596.67,360) ;
			%Curve Lines [id:da3904310157649218] 
			\draw [color={rgb, 255:red, 74; green, 144; blue, 226 }  ,draw opacity=1 ][line width=1.5]    (124.67,78) .. controls (79.13,114.63) and (328.6,118.92) .. (274.4,74.37) ;
			\draw [shift={(272.67,73)}, rotate = 37.02] [color={rgb, 255:red, 74; green, 144; blue, 226 }  ,draw opacity=1 ][line width=1.5]    (14.21,-4.28) .. controls (9.04,-1.82) and (4.3,-0.39) .. (0,0) .. controls (4.3,0.39) and (9.04,1.82) .. (14.21,4.28)   ;
			%Curve Lines [id:da6645053743045317] 
			\draw [color={rgb, 255:red, 208; green, 2; blue, 27 }  ,draw opacity=1 ][line width=1.5]    (120,217) .. controls (77.32,233.75) and (305.65,257.28) .. (282.88,212.11) ;
			\draw [shift={(281.67,210)}, rotate = 57.14] [color={rgb, 255:red, 208; green, 2; blue, 27 }  ,draw opacity=1 ][line width=1.5]    (14.21,-4.28) .. controls (9.04,-1.82) and (4.3,-0.39) .. (0,0) .. controls (4.3,0.39) and (9.04,1.82) .. (14.21,4.28)   ;
			%Curve Lines [id:da7085200477986635] 
			\draw [color={rgb, 255:red, 144; green, 19; blue, 254 }  ,draw opacity=1 ][line width=1.5]    (120,349) .. controls (80.27,375.6) and (311.56,389.58) .. (281.24,346.99) ;
			\draw [shift={(279.67,345)}, rotate = 49.09] [color={rgb, 255:red, 144; green, 19; blue, 254 }  ,draw opacity=1 ][line width=1.5]    (14.21,-4.28) .. controls (9.04,-1.82) and (4.3,-0.39) .. (0,0) .. controls (4.3,0.39) and (9.04,1.82) .. (14.21,4.28)   ;
			%Curve Lines [id:da27610577476345033] 
			\draw [color={rgb, 255:red, 74; green, 144; blue, 226 }  ,draw opacity=1 ][line width=1.5]    (455.67,81) .. controls (431.91,114.66) and (670.81,124.8) .. (617.38,76.48) ;
			\draw [shift={(615.67,75)}, rotate = 39.81] [color={rgb, 255:red, 74; green, 144; blue, 226 }  ,draw opacity=1 ][line width=1.5]    (14.21,-4.28) .. controls (9.04,-1.82) and (4.3,-0.39) .. (0,0) .. controls (4.3,0.39) and (9.04,1.82) .. (14.21,4.28)   ;
			%Curve Lines [id:da08049556076286768] 
			\draw [color={rgb, 255:red, 208; green, 2; blue, 27 }  ,draw opacity=1 ][line width=1.5]    (494.67,228) .. controls (464.94,198.6) and (455.06,272.92) .. (492.33,256.14) ;
			\draw [shift={(494.67,255)}, rotate = 152.3] [color={rgb, 255:red, 208; green, 2; blue, 27 }  ,draw opacity=1 ][line width=1.5]    (14.21,-4.28) .. controls (9.04,-1.82) and (4.3,-0.39) .. (0,0) .. controls (4.3,0.39) and (9.04,1.82) .. (14.21,4.28)   ;
			%Curve Lines [id:da553862946093701] 
			\draw [color={rgb, 255:red, 144; green, 19; blue, 254 }  ,draw opacity=1 ][line width=1.5]    (449,352) .. controls (460.43,371.6) and (604.72,390.24) .. (614.25,350.51) ;
			\draw [shift={(614.67,348)}, rotate = 95.31] [color={rgb, 255:red, 144; green, 19; blue, 254 }  ,draw opacity=1 ][line width=1.5]    (14.21,-4.28) .. controls (9.04,-1.82) and (4.3,-0.39) .. (0,0) .. controls (4.3,0.39) and (9.04,1.82) .. (14.21,4.28)   ;
			%Curve Lines [id:da32319941245781103] 
			\draw [color={rgb, 255:red, 144; green, 19; blue, 254 }  ,draw opacity=1 ][line width=1.5]    (484,355) .. controls (454.58,342.39) and (458.4,402.23) .. (485.12,391.23) ;
			\draw [shift={(487.67,390)}, rotate = 151.11] [color={rgb, 255:red, 144; green, 19; blue, 254 }  ,draw opacity=1 ][line width=1.5]    (14.21,-4.28) .. controls (9.04,-1.82) and (4.3,-0.39) .. (0,0) .. controls (4.3,0.39) and (9.04,1.82) .. (14.21,4.28)   ;
			
			% Text Node
			\draw (14,79.4) node [anchor=north west][inner sep=0.75pt]    {$\mathbb{Z} /p$};
			% Text Node
			\draw (12,209.4) node [anchor=north west][inner sep=0.75pt]    {$\mathbb{Z} /p$};
			% Text Node
			\draw (5,344.4) node [anchor=north west][inner sep=0.75pt]    {$\mathbb{Z} /p^{2}$};
			% Text Node
			\draw (179.28,296.22) node [anchor=north west][inner sep=0.75pt]  [rotate=-273.09,xslant=-0.02]  {$\simeq $};
			% Text Node
			\draw (342,78.4) node [anchor=north west][inner sep=0.75pt]    {$\mathbb{Z} /p$};
			% Text Node
			\draw (342,213.4) node [anchor=north west][inner sep=0.75pt]    {$\mathbb{Z} /p$};
			% Text Node
			\draw (327,344.4) node [anchor=north west][inner sep=0.75pt]    {$(\mathbb{Z} /p)^{2}$};
			% Text Node
			\draw (519.4,300.15) node [anchor=north west][inner sep=0.75pt]  [rotate=-272.52]  {$\simeq $};
		\end{tikzpicture}
		\caption{Free iterated actions on $T^2$}
	\end{figure}
	
	Let us describe explicitly these two free iterated actions. The first step of both free iterated actions is by rotations on the horizontal plane. More explicitly, given $g\in\Z/p$ and $[x,y]\in T^2=\R^2/\Z^2$, we have $g[x,y]=[x+\tfrac{g}{p},y]$. The quotient is $T^2/(\Z/p)\cong T^2$. The second step of the first iterated action is by rotations on the horizontal plane. The free iterated action is simplifiable by an action of $\Z/p^2$ on $T^2$ such that $g[x,y]=[x+\tfrac{g}{p^2},y]$ for $g\in\Z/p^2$ and $[x,y]\in T^2$. Note that $\rank_{ab}(\{\Z/p,\Z/p\}\acts T^2)=1<2$. Hence, this free iterated action is not considered in the definition of the iterated discrete degree of symmetry.
	
	The second step of the second iterated actions is by rotations such that $g[x,y]=[x,y+\tfrac{g}{p}]$ for $g\in\Z/p$ and $[x,y]\in T^2$. The free iterated action is simplifiable by an action of $(\Z/p)^2$ on $T^2$ such that $(g,h)[x,y]=[x+\tfrac{g}{p},y+\tfrac{h}{p}]$ for $(g,h)\in(\Z/p)^2$ and $[x,y]\in T^2$. In this case $\rank_{ab}(\{\Z/p,\Z/p\}\acts T^2)=2$, but it does not satisfy the maximality condition on the definition of the iterated discrete degree of symmetry, since we have a free iterated action $\{(\Z/p)^2,\{e\}\}\acts T^2$. 
\end{rem}

\begin{cor}
	Let $M$ be a closed manifold. There exists $(f,b)\in\mathbb{N}^2$ such that $\D_2(M)\leq (f,b)$.
\end{cor}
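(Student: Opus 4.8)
The plan is to read this off directly from \cref{iterated MannSu thm}. Fix $n=\dim M$ and let $\{f_i\}_{i\in\mathbb{N}}$ be the sequence of natural numbers provided by \cref{iterated MannSu thm}, depending only on $n$ and $b(M)$. I claim that the pair $(f_1,f_2)$ is an upper bound, in the lexicographic order, for every element of $\mu_2(M)$. Granting this, $\{(0,0)\}\cup\mu_2(M)$ is contained in the finite set $\{0,\dots,f_1\}\times\{0,\dots,f_2\}$; since the lexicographic order on $\mathbb{N}^2$ is a total order, every finite subset has a maximum, so $\D_2(M)=\max(\{(0,0)\}\cup\mu_2(M))$ is well defined and $\D_2(M)\leq(f_1,f_2)$.

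To prove the claim, take $(f,b)\in\mu_2(M)$. By the first condition in the definition of $\mu_2(M)$ there exist a prime $p$, a natural number $a$, and a free iterated action $\{(\Z/p^{a})^f,(\Z/p)^b\}\acts M$. This is a free iterated action of length $2$ by abelian $p$-groups, so \cref{iterated MannSu thm} applies with $k_1=a$, $a_1=f$, $k_2=1$, $a_2=b$ in the notation there, and yields $f\leq f_1$ and $b\leq f_2$. Consequently, if $(f,b)\in\mu_2(M)$ then either $f<f_1$, so $(f,b)<(f_1,f_2)$ lexicographically, or $f=f_1$ and $b\leq f_2$, so $(f,b)\leq(f_1,f_2)$; since also $(0,0)\leq(f_1,f_2)$, the claim follows.

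There is no genuine obstacle here, as the substance is entirely contained in \cref{iterated MannSu thm}. The only points needing (minor) care are the bookkeeping of indices when invoking that theorem and the observation that the lexicographic order on $\mathbb{N}^2$ is total, which is what guarantees that the maximum defining $\D_2(M)$ actually exists.
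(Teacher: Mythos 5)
Your proof is correct and follows exactly the paper's route: the paper's proof is the one-line observation that one may take $f=f_1$ and $b=f_2$ from \cref{iterated MannSu thm}. Your write-up merely makes explicit the lexicographic bookkeeping and the well-definedness of the maximum, which the paper leaves implicit.
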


\begin{proof}
	We can choose $f=f_1$ and $b=f_2$ from \cref{iterated MannSu thm}.
\end{proof}

Note that if $\D_2(M)=(d_1,d_2)$ then $d_1\leq\D(M)$. We want to investigate the value $d_2$. 

\begin{rem}
	If $M$ is a manifold such that $\D(M)=0$ then $\D_2(M)=(0,0)$.
\end{rem}

The next lemma shows a case where $d_2=0$, so we do not have large iterated group actions.

\begin{lem}\label{iterated discsym Jordan}
	Let $M$ be a closed manifold. Assume that $l(M)=1$ and $\Homeo(M)$ is Jordan. Then $\D_2(M)=(d,0)$.
\end{lem}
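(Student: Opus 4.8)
The plan is to argue by contradiction. Suppose $\D_2(M)=(d_1,d_2)$ with $d_2\ge 1$; then $(d_1,d_2)\in\mu_2(M)$, so there are an increasing sequence of primes $\{p_i\}$, natural numbers $\{a_i\}$, and free iterated actions $\mathcal{G}_i=\{(\Z/p_i^{a_i})^{d_1},(\Z/p_i)^{d_2}\}\acts M$ with $\rank_{ab}(\mathcal{G}_i\acts M)=d_1+d_2$ for every $i$. The goal is to produce, for all large $i$, a \emph{free} action of $(\Z/p_i)^{d_1+d_2}$ on $M$; this gives the free iterated action $\{(\Z/p_i)^{d_1+d_2},\{e\}\}\acts M$, and I will check it witnesses the pair $(d_1+d_2,0)\in\mu_2(M)$, which is strictly larger than $(d_1,d_2)$ in the lexicographic order — a contradiction with the maximality of $\D_2(M)$.

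The first step uses $l(M)=1$: every free iterated action of finite groups on $M$ is simplifiable (equivalently $\pi_1(M)\trianglelefteq\pi_1(M/\mathcal{G}_i)$, by Lemma~\ref{free itertaed action simplifiable}). So for each $i$ there is a finite group $G_i$ acting freely on $M$ with $\{G_i\}\acts M\in[\mathcal{G}_i\acts M]$; counting sheets, $|G_i|=p_i^{a_id_1+d_2}$, so $G_i$ is a $p_i$-group. The second step uses that $\Homeo(M)$ is Jordan with constant $C$: $G_i$ has an abelian subgroup $H_i$ of index at most $C$, but $[G_i:H_i]$ is a power of $p_i$, so for $p_i>C$ we get $G_i=H_i$, i.e.\ $G_i$ is abelian. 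Since $\{G_i\}\acts M$ is an equivalent iterated action with $G_i$ abelian, the definition of $\rank_{ab}$ gives $\rank G_i\ge\rank_{ab}(\mathcal{G}_i\acts M)=d_1+d_2$. An abelian $p_i$-group of rank $r\ge d_1+d_2$ contains $(\Z/p_i)^{d_1+d_2}$ (its $p_i$-torsion subgroup is elementary abelian of rank $r$), so restricting the free $G_i$-action gives the desired free action of $(\Z/p_i)^{d_1+d_2}$ on $M$.

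For the last step I would note that the trivially iterated action $\{(\Z/p_i)^{d_1+d_2},\{e\}\}\acts M$ is simplifiable by the abelian group $(\Z/p_i)^{d_1+d_2}$; by Lemma~\ref{iterated action equivalent fundamental group} every iterated action equivalent to it has the same simplification group up to isomorphism, and the subnormal-series argument of Lemma~\ref{rank iterated aciton simplifiable} then gives $\rank_{ab}(\{(\Z/p_i)^{d_1+d_2},\{e\}\}\acts M)=\rank(\Z/p_i)^{d_1+d_2}=d_1+d_2$. Thus, along the subsequence of primes with $p_i>C$, both conditions defining $\mu_2(M)$ hold for the pair $(d_1+d_2,0)$, so $(d_1+d_2,0)\in\mu_2(M)$; since $d_2\ge1$ this pair exceeds $(d_1,d_2)$ lexicographically, contradicting $\D_2(M)=(d_1,d_2)$. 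Therefore $d_2=0$.

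I expect the genuinely delicate point to be the very first reduction: extracting from $l(M)=1$ that the specific length-two iterated actions $\mathcal{G}_i\acts M$ are simplifiable. Everything afterwards is a routine combination of the order of $G_i$ being a power of $p_i$ (so Jordan's constant is eventually beaten), the structure theorem for finite abelian $p$-groups, and the behaviour of $\rank_{ab}$ under equivalence that is already recorded for $\rank$ in Lemma~\ref{rank iterated aciton simplifiable}.
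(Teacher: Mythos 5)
Your proof is correct and follows essentially the same route as the paper's: use $l(M)=1$ to simplify each iterated action to a single free abelian $p_i$-group $G_i$ (Jordan constant beaten once $p_i>C$) of rank at least $d_1+d_2$, extract a free $(\Z/p_i)^{d_1+d_2}$-action, and contradict the lexicographic maximality of $\D_2(M)$. Your explicit check that $\rank_{ab}(\{(\Z/p_i)^{d_1+d_2},\{e\}\}\acts M)=d_1+d_2$ is a detail the paper leaves implicit rather than a genuinely different argument.
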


\begin{proof}
	Assume that $C$ is the Jordan constant of $\Homeo(M)$ and that $\D_2(M)=(d_1,d_2)$. We have an increasing sequence of prime numbers $\{p_i\}_{i\in\mathbb{N}}$ and free iterated group actions of $\{(\Z/p_i^{a_i})^{d_1},(\Z/p_i)^{d_2}\}$ on $M$ such that $\rank_{ab}(\{(\Z/p_i^{a_i})^{d_1},(\Z/p_i)^{d_2}\}\acts M)=d_1+d_2$.
	We may assume without loss of generality that $p_i>C$. Since all actions are simplifiable, for each $i$ there exists a $p_i$-group $G_i$ acting freely on $M$ which fits in the short exact sequence
	$$1\longrightarrow  (\Z/p_i^{a_i})^{d_1}\longrightarrow G_i\longrightarrow (\Z/p_i)^{d_2}\longrightarrow 1.$$
	
	Any proper subgroup $H$ of $G_i$ has index $[G:H]>p_i>C$. Since $\Homeo(M)$ is Jordan of constant $C$, we can conclude that $G_i$ is abelian. Consequently, we have $\rank G_i=\rank_{ab}(\{(\Z/p_i^{a_i})^{d_1},(\Z/p_i)^{d_2}\}\acts M)=d_1+d_2$. Therefore, $G_i\cong (\Z/p_i^{a_i})^{d_1}\oplus (\Z/p_i)^{d_2}$. We can take a subgroup $(\Z/p_i)^{d_1+d_2}\leq G_i$, which acts freely on $M$ for all $i$. This implies that $(d_1+d_2,0)\leq (d_1,d_2)$ and since $d_1,d_2\geq 0$, $d_2=0$.
\end{proof}

\begin{prop}
	Let $p:M'\longrightarrow M$ be a regular finite covering. Then $\D_2(M')\geq \D_2(M)$.
\end{prop}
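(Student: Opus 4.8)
The plan is to take a family of iterated actions witnessing $\D_2(M)=(d_1,d_2)$ and lift it, level by level, to $M'$. Write $D$ for the deck group of $p\colon M'\to M$ and $m=|D|$, and fix witnessing data for $\D_2(M)$: an increasing sequence of primes $\{p_i\}$, naturals $\{a_i\}$, and free iterated actions $\mathcal G_i=\{(\Z/p_i^{a_i})^{d_1},(\Z/p_i)^{d_2}\}\acts M$ with tower $M\to M_1^i\to M_2^i$ and $\rank_{ab}(\mathcal G_i\acts M)=d_1+d_2$. Because $\pi_1(M_1^i)$ and $\pi_1(M_2^i)$ are generated by at most $\rank\pi_1(M)+d_1+d_2$ elements (generators of $\pi_1(M)$ together with lifts of generators of the two finite groups), the number of index-$m$ subgroups of $\pi_1(M)$ and of $\pi_1(M_1^i)$ is bounded by a constant $N$ independent of $i$. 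First I would discard the finitely many $i$ with $p_i\le N$ or $\gcd(p_i,\,m\,|\Aut(D)|)>1$.

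Next I would lift the first-level action. An element of $(\Z/p_i^{a_i})^{d_1}=\pi_1(M_1^i)/\pi_1(M)$ lifts to a self-homeomorphism of $M'$ exactly when the associated automorphism of $\pi_1(M)$ preserves the normal subgroup $\pi_1(M')\trianglelefteq\pi_1(M)$; the set of such elements is the subgroup $N_{\pi_1(M_1^i)}(\pi_1(M'))/\pi_1(M)$, whose index equals the number of $\pi_1(M_1^i)$-conjugates of $\pi_1(M')$, all of which are normal subgroups of $\pi_1(M)$ of index $m$, hence at most $N$ of them. As this index is a power of $p_i$ and $p_i>N$, it equals $1$: the whole group $(\Z/p_i^{a_i})^{d_1}$ lifts. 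The group of all lifts acts freely on $M'$ (an element fixing a point would map to an element of $(\Z/p_i^{a_i})^{d_1}$ fixing a point of $M$, hence lie in $D$, hence be trivial) and fits in $1\to D\to\hat G_1^i\to(\Z/p_i^{a_i})^{d_1}\to 1$; by Schur--Zassenhaus this splits, and since $p_i\nmid|\Aut(D)|$ the induced action of the $p_i$-group on $D$ is trivial, so $\hat G_1^i\cong D\times(\Z/p_i^{a_i})^{d_1}$. The direct factor $(\Z/p_i^{a_i})^{d_1}$ then acts freely on $M'$; its quotient $\check M_1^i$ inherits a free $D$-action with $\check M_1^i/D=M_1^i$, so $\check M_1^i\to M_1^i$ is a regular $D$-cover. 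Running the same argument for $(\Z/p_i)^{d_2}\acts M_1^i$ (using the bound $N$ for $\pi_1(M_1^i)$ and coprimality) that group lifts to $\check M_1^i$, splits off as a direct factor, and acts freely there with quotient $\check M_2^i$. The outcome is a free iterated action $\mathcal G'_i=\{(\Z/p_i^{a_i})^{d_1},(\Z/p_i)^{d_2}\}\acts M'$ of exactly the shape required in the definition of $\mu_2$, sitting over $\mathcal G_i\acts M$ in a ladder of regular $D$-covers whose squares are pull-backs; this is \cref{pI covering maps} made explicit and rigidified using coprimality.

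It then remains to check $\rank_{ab}(\mathcal G'_i\acts M')=d_1+d_2$. The inequality $\le$ is immediate from the abelian iterated action $\mathcal G'_i$ itself. For $\ge$ I would pass to the group-theoretic description supplied by \cref{iterated action equivalent fundamental group}: with $\pi=\pi_1(M)$, $\pi'=\pi_1(M')$, $\Pi_2=\pi_1(M_2^i)$, $\check\Pi_2=\pi_1(\check M_2^i)$, the ladder gives $\check\Pi_2\trianglelefteq\Pi_2$ with quotient $D$, together with $\pi\cap\check\Pi_2=\pi'$ and $\pi\check\Pi_2=\Pi_2$; moreover $\rank_{ab}(\mathcal G'_i\acts M')$ is the minimum over subnormal chains $\pi'=L_0\trianglelefteq\cdots\trianglelefteq L_l=\check\Pi_2$ with abelian quotients of $\sum_j\rank(L_j/L_{j-1})$, while $\rank_{ab}(\mathcal G_i\acts M)$ is the analogous minimum for chains from $\pi$ to $\Pi_2$. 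Given such a chain for $\mathcal G'_i$, I would build a chain from $\pi$ to $\Pi_2$ with abelian quotients and no larger total rank by amalgamating it with the compatible $D$-normal subgroups produced above (for which, by coprimality, all the relevant extensions by $D$ are direct products); minimality of $\rank_{ab}(\mathcal G_i\acts M)=d_1+d_2$ then forces the total rank of the original chain to be $\ge d_1+d_2$. Hence $(d_1,d_2)\in\mu_2(M')$ and $\D_2(M')\ge\D_2(M)$.

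The main obstacle is precisely this last step: normality is not transitive, so one cannot literally "adjoin $\pi$" to a subnormal chain living inside $\check\Pi_2$. The argument must genuinely use that, for $p_i$ large, each $\pi_1(\check M_j^i)$ is not merely contained in but \emph{normal} in $\pi_1(M_j^i)$ with quotient precisely $D$ (which is why the ``index is a power of $p_i$ that is $<p_i$'' trick has to be applied at every level, and why the hypothesis $\gcd(p_i,\,m\,|\Aut(D)|)=1$ is needed), and that intersecting or amalgamating subnormal chains against these $D$-normal subgroups is compatible with the abelianity of the successive quotients. Everything else — the lifting, the freeness, and the splittings — is routine once the constant $N$ is in place.
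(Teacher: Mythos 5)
Your lifting construction is correct and is, in substance, the proof the paper itself gives: the paper observes that for $p_i$ large the $p_i$-groups fix the class of the covering, applies the simplifiability criterion (\cref{free itertaed actions simplifiable coverings}) to obtain an extension $1\to D\to \hat G\to (\Z/p_i^{a_i})^{d_1}\to 1$ acting freely on $M'$, and splits it as a direct product via an ad hoc lemma valid for $p_i>|D|!$; your normalizer-index count together with Schur--Zassenhaus and $p_i\nmid|\Aut(D)|$ accomplishes exactly the same thing, and your orbit-size argument with the constant $N$ is in fact a more careful justification of the ``the covering class is fixed'' step than the paper's bare appeal to $p_i>|G|!$. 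Up to that point you obtain, as the paper does, free iterated actions $\{(\Z/p_i^{a_i})^{d_1},(\Z/p_i)^{d_2}\}\acts M'$ covering the given ones on $M$ through a ladder of regular $D$-covers; the paper ends its proof right there.

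The gap is the one you flag yourself: verifying $\rank_{ab}(\{(\Z/p_i^{a_i})^{d_1},(\Z/p_i)^{d_2}\}\acts M')=d_1+d_2$, i.e.\ condition 2 in the definition of $\mu_2$. Your plan --- take a hypothetical cheaper subnormal chain $\pi'=L_0\trianglelefteq\cdots\trianglelefteq L_l=\check\Pi_2$ with abelian quotients and ``amalgamate'' it with the $D$-data to produce a cheaper chain from $\pi$ to $\Pi_2$, contradicting $\rank_{ab}(\mathcal G_i\acts M)=d_1+d_2$ --- is not an argument as written: the subgroups $L_j$ of an arbitrary equivalent decomposition need not be normalized by $\pi$ (equivalently, the deck group $D$ need not act on the intermediate covers of that decomposition), and coprimality of $|D|$ with $p_i$ does not by itself repair this; producing from the $L_j$ a subnormal chain between $\pi$ and $\Pi_2$ with abelian quotients and no larger total rank is precisely the missing content, not a routine check. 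To be fair, the paper's own proof concludes $\D_2(M')\geq(d_1,d_2)$ directly from the existence of the lifted actions and never addresses this normalization condition, so you have isolated a genuine subtlety rather than taken a wrong turn; but as submitted your argument proves only what the paper's proof proves, and the rank step you correctly identify as necessary remains open.
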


\begin{proof}
	Firstly, we need to prove the following group theoretic fact. Let $G$ be a finite group and $p$ a prime such that $p>|G|!$. If $G'$ is a group which fits in the short exact sequence $1\longrightarrow G\longrightarrow G'\longrightarrow (\Z/p^r)^b\longrightarrow 1$ then $G'\cong G\times (\Z/p^r)^b$.
	
	The abstract kernel $\psi:(\Z/p^r)^b\longrightarrow \Out(G)$ is trivial since $|\Out(G)|\leq |G|!<p$. This leads to the following diagram
	\[
	\begin{tikzcd}
		& 1\ar{d}{}& 1\ar{d}{}& & \\
		1\ar{r}{} & \Zc G \ar{r}{}\ar{d}{} & H\ar{r}{}\ar{d}{}& (\Z/p^r)^b\ar{r}{}\ar{d}{Id} &1\\
		1\ar{r}{} & G \ar{r}{}\ar{d}{} & G'\ar{r}{}\ar{d}{}& (\Z/p^r)^b\ar{r}{} &1\\
		& \Inn G\ar{d}{}\ar{r}{Id} & \Inn G\ar{d}{} & &\\
		& 1 &1 & & \\
	\end{tikzcd}
	\]
	where the above short exact sequence is central. Since $(\Z/p^r)^b$ is abelian, there is a cocycle $\beta: (\Z/p^r)^b\times (\Z/p^r)^b\longrightarrow \Zc G$ which is a group morphism. More explicitly, $\beta(x,y)=[\tilde{x},\tilde{y}]$, where $\tilde{x},\tilde{y}\in H$ are preimages of $x$ and $y$ respectively. We use that $|\Zc G|<p$ to conclude that $\beta$ is trivial. This implies that the central short exact sequence is trivial and $H\cong \Zc G\times(\Z/p^r)^b$.
	
	We can construct a group morphism $(\Z/p^r)^b\longrightarrow H\longrightarrow G$ which makes the short exact sequence split. Therefore $G'\cong G\rtimes_\phi (\Z/p^r)^b$. The group morphism $\phi:(\Z/p^r)^b\longrightarrow \Aut(G)$ is trivial since $|\Aut(G)|\leq |G|!<p$. In conclusion, $G'\cong G\times (\Z/p^r)^b$.
	
	Assume now that we have a regular $G$-covering  $p:M'\longrightarrow M$ and $\D_2(M)=(d_1,d_2)$. There is an increasing sequence of prime numbers $\{p_i\}_{i\in\mathbb{N}}$ such that $p_i>|G|!$ for all $i$ and free iterated group actions of $\{(\Z/p_i^{a_i})^{d_1},(\Z/p_i)^{d_2}\}\acts M$. They induce free iterated actions of $\{G,(\Z/p_i^{a_i})^{d_1},(\Z/p_i)^{d_2}\}\acts M'$.
	
	Since $p_i>|G|!$, the actions of $(\Z/p_i^{a_i})^{d_1}$ on $\Cov_{|G|}(M)$ are trivial. By \cref{free itertaed actions simplifiable coverings} the free iterated action $\{G,(\Z/p_i^{a_i})^{d_1},(\Z/p_i)^{d_2}\}\acts M'$ is equivalent to a free iterated action of $\{G_i',(\Z/p_i)^{d_2}\}\acts M'$ for all $i$. For each $i$ there exists a short exact sequence $$1\longrightarrow G\longrightarrow G'_i\longrightarrow (\Z/p_i^{a_i})^{d_1}\longrightarrow 1.$$ Since $p_i>|G|!$, $G_i'\cong G\times (\Z/p_i^{a_i})^{d_1}$. Thus, the free iterated action $\{G_i',(\Z/p_i)^{d_2}\}\acts M'$ is equivalent to $\{(\Z/p_i^{a_i})^{d_1},G,(\Z/p_i)^{d_2}\}\acts M'$ for all $i$. Repeating the same argument we can show that $\{(\Z/p_i^{a_i})^{d_1},G,(\Z/p_i)^{d_2}\}\acts M'$ is equivalent to $\{(\Z/p_i^{a_i})^{d_1},(\Z/p_i)^{d_2},G\}\acts M'$ for all $i$.
	
	We have free iterated actions $\{(\Z/p_i^{a_i})^{d_1},(\Z/p_i)^{d_2}\}\acts M'$ for all $i$, thus $\D_2(M)=(d_1,d_2)\leq \D_2(M')$. 
\end{proof}   

We start now the study of the iterated discrete degree of symmetry on nilmanifolds. We need 3 preliminary lemmas.

\begin{lem}\cite[Lemma 3.1.14]{lee2010seifert}\label{aspherical manifolds big diagram}
	Let $G$ be a finite group acting effectively on a closed manifold $M$. Then, there is a commutative diagram with exact rows and columns
	\[\begin{tikzcd}
		& 1\ar{d}{} & 1 \ar{d}{}& 1 \ar{d}{} & \\
		1\ar{r}{}& \Zc\pi_1(M)\ar{r}{}\ar{d}{} & C_{\tilde{G}}(\pi_1(M)) \ar{r}{}\ar{d}{}& \ker\psi\ar{r}{}\ar{d}{} & 1\\
		1\ar{r}{}& \pi_1(M)\ar{r}{}\ar{d}{} & \tilde{G} \ar{r}{}\ar{d}{}& G\ar{r}{}\ar{d}{\psi} & 1\\
		1\ar{r}{}& \Inn(\pi_1(M))\ar{r}{}\ar{d}{} & \Aut(\pi_1(M)) \ar{r}{}& \Out(\pi_1(M))\ar{r}{} & 1\\
		&1 &&&
	\end{tikzcd}\]
	where $C_{\tilde{G}}(\pi_1(M))$ is the centralizer of $\pi_1(M)$ in $\tilde{G}$.
\end{lem}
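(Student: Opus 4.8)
The statement is classical — it is \cite[Lemma 3.1.14]{lee2010seifert} — and the plan is to build the middle row from covering-space theory, produce the conjugation homomorphisms onto the bottom row, and then extract the top row as a kernel sequence by a snake-lemma argument. \textbf{Step 1 (the middle row).} Let $q\colon\tilde M\longrightarrow M$ be the universal cover and let $\tilde G$ be the group of all homeomorphisms $\tilde g\colon\tilde M\longrightarrow\tilde M$ that cover some element of $G$, i.e.\ $q\circ\tilde g=g\circ q$ for some $g\in G$. Every $g\in G$ lifts (the lifting criterion applies since $g_*$ fixes the trivial subgroup $q_*\pi_1(\tilde M)$), composites of lifts cover composites, inverses cover inverses, and the fibre over $\mathrm{id}_M$ is the deck group; hence $\tilde g\mapsto g$ is a surjection $\tilde G\twoheadrightarrow G$ with kernel $\mathrm{Deck}(\tilde M/M)\cong\pi_1(M)$, giving the exact middle row $1\to\pi_1(M)\to\tilde G\to G\to 1$.

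\textbf{Step 2 (the map to the bottom row).} Since every element of $\tilde G$ conjugates deck transformations to deck transformations, $\pi_1(M)$ is normal in $\tilde G$, so conjugation defines $b\colon\tilde G\longrightarrow\Aut(\pi_1(M))$ with $\ker b=C_{\tilde G}(\pi_1(M))$ by definition. Restricting $b$ to $\pi_1(M)\le\tilde G$ is the canonical surjection $a\colon\pi_1(M)\twoheadrightarrow\Inn(\pi_1(M))$, whose kernel is $\Zc\pi_1(M)$, and $b$ descends on quotients to $c=\psi\colon G\longrightarrow\Out(\pi_1(M))$. This is precisely a morphism of short exact sequences from the middle row to the standard exact bottom row $1\to\Inn(\pi_1(M))\to\Aut(\pi_1(M))\to\Out(\pi_1(M))\to 1$, with vertical maps $a,b,c$.

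\textbf{Step 3 (the top row and the columns).} Applying the snake lemma to this morphism of extensions and using that $a$ is surjective (so the connecting map $\ker c\to\operatorname{coker}a$ vanishes) yields the exact sequence $1\to\ker a\to\ker b\to\ker c\to 1$, i.e.\ $1\to\Zc\pi_1(M)\to C_{\tilde G}(\pi_1(M))\to\ker\psi\to 1$, which is the top row. The only nonformal point, surjectivity of $C_{\tilde G}(\pi_1(M))\twoheadrightarrow\ker\psi$, is checked by hand: if $\psi(g)=1$, pick a lift $\tilde g$, write $b(\tilde g)=a(\gamma)$ for some $\gamma\in\pi_1(M)$, and observe that $\gamma^{-1}\tilde g$ lies in $C_{\tilde G}(\pi_1(M))$ and still covers $g$. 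The first column is the usual central extension $1\to\Zc\pi_1(M)\to\pi_1(M)\to\Inn(\pi_1(M))\to 1$, while the second and third columns are exact by definition of $C_{\tilde G}(\pi_1(M))$ and $\ker\psi$ as the kernels of $b$ and $c$; assembling these pieces produces the displayed diagram.

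\textbf{Main obstacle.} The one subtlety is that all of this takes place with nonabelian groups, so the homological snake lemma cannot be invoked verbatim; what rescues the argument is exactly that $a$ is surjective, which kills the connecting map and lets the kernel sequence close up on the right. Everything else is elementary covering-space theory together with a routine diagram chase.
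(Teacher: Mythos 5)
Your proof is correct and follows essentially the same route as the source: the paper gives no argument of its own, citing \cite[Lemma 3.1.14]{lee2010seifert}, and the standard proof there is exactly your construction — lift the $G$-action to the universal cover to obtain $1\to\pi_1(M)\to\tilde{G}\to G\to 1$, map this extension by conjugation to $1\to\Inn(\pi_1(M))\to\Aut(\pi_1(M))\to\Out(\pi_1(M))\to 1$, and take kernels, with surjectivity of $C_{\tilde{G}}(\pi_1(M))\to\ker\psi$ verified by correcting a lift by a deck transformation as you do.
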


\begin{lem}\label{inner actions aspherical manifolds}\cite[Proposition 3.1.21]{lee2010seifert}
	Let $M$ be a closed aspherical manifold and let $\Zc\pi_1(M)$ be finitely generated. Assume that we have a free action of an abelian group $A$ on $M$ such that $\psi:A\longrightarrow \Out(\pi_1(M))$ is trivial. Then $\Zc\pi_1(M/A)=C_{\pi_1(M/A)}(\pi_1(M))$ and it is an extension of $\Zc\pi_1(M)$ by $A$. In particular, $\rank(\Zc\pi_1(M/A))=\rank(\Zc\pi_1(M))$.
\end{lem}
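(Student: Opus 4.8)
The plan is to descend to fundamental groups and run through the exact diagram of \cref{aspherical manifolds big diagram}. Write $\pi=\pi_1(M)$. Since $A$ acts freely on the closed aspherical manifold $M$, the covering $M\to M/A$ shows $M/A$ is again closed and aspherical, so $\tilde A:=\pi_1(M/A)$ is torsion-free and fits into an extension $1\to\pi\to\tilde A\to A\to1$; this $\tilde A$ is the group playing the role of $\tilde G$ in \cref{aspherical manifolds big diagram} for the free $A$-action. The hypothesis that $\psi\colon A\to\Out(\pi)$ is trivial means $\ker\psi=A$, so the top row of that diagram reads $1\to\Zc\pi\to C_{\tilde A}(\pi)\to A\to1$. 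In particular $C_{\tilde A}(\pi)$ is finitely generated and $[C_{\tilde A}(\pi):\Zc\pi]=|A|<\infty$.

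First I would prove that $C_{\tilde A}(\pi)$ is abelian. Because $\psi$ is trivial, conjugation by any $g\in\tilde A$ acts on $\pi$ as an inner automorphism $c_{n_g}$ with $n_g\in\pi$; applying this to a central element $z\in\Zc\pi$ gives $gzg^{-1}=n_gzn_g^{-1}=z$, so $\Zc\pi$ is in fact central in all of $\tilde A$, hence $\Zc\pi\subseteq\Zc\big(C_{\tilde A}(\pi)\big)$. Therefore $\big[C_{\tilde A}(\pi):\Zc(C_{\tilde A}(\pi))\big]\le|A|<\infty$, and by Schur's theorem the commutator subgroup of $C_{\tilde A}(\pi)$ is finite; since $C_{\tilde A}(\pi)\le\tilde A$ is torsion-free, that commutator subgroup is trivial, so $C_{\tilde A}(\pi)$ is abelian (and, being finitely generated torsion-free abelian with the finite-index subgroup $\Zc\pi$, it is free abelian of rank $\rank\Zc\pi$).

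Next I would show $\Zc\tilde A=C_{\tilde A}(\pi)$. The inclusion $\Zc\tilde A\subseteq C_{\tilde A}(\pi)$ is trivial. Conversely, the same inner-automorphism observation gives $n_g^{-1}g\in C_{\tilde A}(\pi)$ for every $g\in\tilde A$, so $\tilde A=\pi\cdot C_{\tilde A}(\pi)$. If $c\in C_{\tilde A}(\pi)$ and $g=nc'$ with $n\in\pi$, $c'\in C_{\tilde A}(\pi)$, then $cg=cnc'=ncc'=nc'c=gc$, using that $c$ commutes with $\pi$ by definition of $C_{\tilde A}(\pi)$ and with $c'$ because $C_{\tilde A}(\pi)$ is abelian; hence $c\in\Zc\tilde A$. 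Combining, $\Zc\pi_1(M/A)=C_{\tilde A}(\pi)$ sits in the exact sequence $1\to\Zc\pi\to\Zc\pi_1(M/A)\to A\to1$ with $A$ finite, which is the asserted extension; and since $\Zc\pi_1(M/A)$ is a finite-index torsion-free overgroup of $\Zc\pi\cong\Z^{\rank\Zc\pi}$, it is itself free abelian of the same rank, giving $\rank\Zc\pi_1(M/A)=\rank\Zc\pi$. The only delicate point is the abelianness of $C_{\tilde A}(\pi)$; the rest is diagram chasing, and indeed this is the content of \cite[Proposition 3.1.21]{lee2010seifert}.
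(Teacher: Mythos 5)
Your argument is correct, but note that the paper does not actually prove this lemma: it is imported verbatim from \cite[Proposition 3.1.21]{lee2010seifert}, so there is no in-paper proof to compare against. What you give is a clean, self-contained derivation that stays entirely within the paper's toolkit: you use the exact top row of \cref{aspherical manifolds big diagram} with $\ker\psi=A$ (and in fact re-derive its surjectivity via the elements $n_g^{-1}g$), observe that triviality of $\psi$ forces $\Zc\pi_1(M)$ to be central in $\pi_1(M/A)$, and then get abelianness of $C_{\pi_1(M/A)}(\pi_1(M))$ from Schur's theorem combined with torsion-freeness of $\pi_1(M/A)$ (which holds because $M/A$ is again a closed aspherical manifold); the identification $\Zc\pi_1(M/A)=C_{\pi_1(M/A)}(\pi_1(M))$ then follows from $\pi_1(M/A)=\pi_1(M)\cdot C_{\pi_1(M/A)}(\pi_1(M))$, and the rank statement from the finite-index inclusion $\Zc\pi_1(M)\leq \Zc\pi_1(M/A)$ of finitely generated torsion-free abelian groups. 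All steps check out. One small point worth making explicit: you use $|A|<\infty$ (both for the Schur step and, implicitly, for $M/A$ to be a closed manifold with $M\to M/A$ a regular covering); this is indeed the setting in which the paper applies the lemma (free actions of finite abelian $p$-groups), and with $M$ compact it is forced anyway, but the statement as written says only ``abelian group,'' so you should flag that hypothesis.
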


\begin{lem}\label{inner actions simplifiable}
	Let $\{A,A'\}\acts M$ be a free iterated action of abelian groups on a closed connected aspherical manifold such that $\psi:A\longrightarrow \Out(\pi_1(M))$ and $\psi':A'\longrightarrow\Out(\pi_1(M/A))$ are trivial. Then $\{A,A'\}\acts M$ is simplifiable by an abelian group.
\end{lem}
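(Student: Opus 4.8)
The plan is to prove the statement by exhibiting $\pi_1(M)$ as a normal subgroup of $\pi_1((M/A)/A')$ with abelian quotient: by \cref{free itertaed action simplifiable} the first condition already means $\{A,A'\}\acts M$ is simplifiable, and since any simplifying group $G$ sits in a short exact sequence $1\to\pi_1(M)\to\pi_1((M/A)/A')\to G\to 1$ (so $G\cong \pi_1((M/A)/A')/\pi_1(M)$, using \cref{iterated action equivalent fundamental group} and \cref{iterated action equivalent}), the second condition forces the simplification to be by an abelian group. Throughout write $\Gamma=\pi_1(M)$, $\Gamma_1=\pi_1(M/A)$, $\Gamma_2=\pi_1((M/A)/A')$, and $Z_i=\Zc\Gamma_i$; since the orbit maps $M\to M/A$ and $M/A\to (M/A)/A'$ are regular finite covers, $\Gamma\trianglelefteq\Gamma_1\trianglelefteq\Gamma_2$ with $\Gamma_1/\Gamma\cong A$ and $\Gamma_2/\Gamma_1\cong A'$.

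First I would apply \cref{inner actions aspherical manifolds} to the free action of the abelian group $A$ on the closed connected aspherical manifold $M$ (with $\Zc\Gamma$ finitely generated and $\psi$ trivial): this gives $Z_1=C_{\Gamma_1}(\Gamma)$ together with a short exact sequence $1\to Z_0\to Z_1\to A\to 1$, where $Z_0=\Zc\Gamma=Z_1\cap\Gamma$; comparing orders, the image of $Z_1$ in $\Gamma_1/\Gamma\cong A$ is all of it, so $\Gamma_1=Z_1\Gamma$. Now $M/A$ is again a closed connected aspherical manifold and $Z_1=\Zc\Gamma_1$ is finitely generated (an extension of the finitely generated group $Z_0$ by the finite group $A$), so \cref{inner actions aspherical manifolds} applies once more, to the free action of $A'$ on $M/A$ with $\psi'$ trivial, yielding $Z_2=C_{\Gamma_2}(\Gamma_1)$ and, by the same argument, $\Gamma_2=Z_2\Gamma_1$.

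The key step is to combine these two presentations. Because $Z_1=\Zc\Gamma_1$ lies in $\Gamma_1\subseteq\Gamma_2$ and commutes with all of $\Gamma_1$, it lies in $C_{\Gamma_2}(\Gamma_1)=Z_2$; hence
\[
\Gamma_2=Z_2\Gamma_1=Z_2(Z_1\Gamma)=Z_2\Gamma .
\]
Since $Z_2=\Zc\Gamma_2$ is central in $\Gamma_2$, for any $g=z\gamma$ with $z\in Z_2$, $\gamma\in\Gamma$ we get $g\Gamma g^{-1}=z(\gamma\Gamma\gamma^{-1})z^{-1}=z\Gamma z^{-1}=\Gamma$, so $\Gamma\trianglelefteq\Gamma_2$. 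Moreover $\Gamma_2/\Gamma=Z_2\Gamma/\Gamma\cong Z_2/(Z_2\cap\Gamma)$ is a quotient of the abelian group $Z_2$, hence abelian. By \cref{free itertaed action simplifiable} the iterated action $\{A,A'\}\acts M$ is then simplifiable, and its simplifying group $\pi_1((M/A)/A')/\pi_1(M)=\Gamma_2/\Gamma$ is abelian.

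I expect the only genuinely delicate point to be the passage from the clause ``$\Zc\pi_1(M/A)$ is an extension of $\Zc\pi_1(M)$ by $A$'' in \cref{inner actions aspherical manifolds} to the surjectivity $\Zc\pi_1(M/A)\twoheadrightarrow \pi_1(M/A)/\pi_1(M)$ that I use at both stages; this holds because $\Zc\pi_1(M/A)\cap\pi_1(M)=\Zc\pi_1(M)$ forces the image of $\Zc\pi_1(M/A)$ in that quotient to have order $|A|$, hence to be everything (equivalently, one can read it off the surjection $C_{\tilde G}(\pi_1(M))\twoheadrightarrow\ker\psi$ in \cref{aspherical manifolds big diagram}). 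The remaining verifications — that $M/A$ and $(M/A)/A'$ are closed connected aspherical and that $\Zc\pi_1(M/A)$ is finitely generated, so that \cref{inner actions aspherical manifolds} legitimately applies at the second stage — are routine.
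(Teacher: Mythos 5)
Your proof is correct and takes essentially the same route as the paper's: both rest on applying \cref{inner actions aspherical manifolds} (via \cref{aspherical manifolds big diagram}) at each of the two stages, and your decomposition $\pi_1((M/A)/A')=\Zc\pi_1((M/A)/A')\cdot\pi_1(M)$ is exactly the content of the paper's diagram chase (every $g'$ agrees with some $\gamma'\in\pi_1(M)$ up to a central element), which yields normality of $\pi_1(M)$ and the abelian quotient $\Zc\pi_1((M/A)/A')/\Zc\pi_1(M)$. The one caveat, the finite generation of $\Zc\pi_1(M)$ needed to invoke \cref{inner actions aspherical manifolds}, is implicit in the paper's own argument as well, so it is not a gap specific to your write-up.
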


\begin{proof}
	We have a commutative diagram 
	\[
	\begin{tikzcd}
		1\ar{d}{} & 1\ar{d}{} & 1\ar{d}{} \\
		\Zc \pi_1(M) \ar{r}{}\ar{d}{} &\Zc \pi_1(M/A)\ar{r}{}\ar{d}{}& \Zc \pi_1((M/A)/A')\ar{d}{}\\
		\pi_1(M) \ar{r}{}\ar{d}{p} & \pi_1(M/A)\ar{r}{}\ar{d}{q}& \pi_1((M/A)/A')\ar{d}{q'} \\
		\Inn \pi_1(M)\ar{r}{Id}\ar{d}{} & \Inn \pi_1(M)\ar{r}{Id}\ar{d}{} &\Inn \pi_1(M)\ar{d}{}\\
		1& 1 & 1
	\end{tikzcd}
	\]
	Given $\gamma\in\pi_1(M)$ and $g'\in \pi_1((M/A)/A')$, we want to see that $g'\gamma g'^{-1}\in\pi_1(M)$. Let $\gamma'\in \pi_1(M)$ such that $p(\gamma')=q'(g')$, then $g'\gamma g'^{-1}=(g'\gamma'^{-1})\gamma'' (g'\gamma'^{-1})^{-1}$ with $\gamma''\in\pi_1(M)$. Since $q'(g'\gamma'^{-1})$ is trivial then $g'\gamma'^{-1}\in\Zc \pi_1((M/A)/A')$ and therefore $g'\gamma g'^{-1}=\gamma''\in\pi_1(M)$, as we wanted to see. 
	
	Note that $\pi_1((M/A)/A')/\pi_1(M)\cong \Zc \pi_1((M/A)/A')/\Zc\pi_1(M)$, which is abelian of rank at most $\rank\Zc\pi_1(M)$.
\end{proof}

The next result shows that the definition of the iterated degree of symmetry is suitable to study 2-step nilmanifolds.

\begin{thm}\label{iterated discsym nilmanifolds}
	Let $N/\Gamma$ be a nilmanifold of dimension $n$, $f=\rank\Zc\Gamma$ and $\D_2(N/\Gamma)=(d_1,d_2)$. Then $d_1=f$ and $d_2\leq n-f$. If $d_2= n-f$ then $N/\Gamma$ is a 2-step nilmanifold. Conversely, if $N/\Gamma$ is a 2-step nilmanifold then $\D_2(N/\Gamma)=(f,n-f)$.
\end{thm}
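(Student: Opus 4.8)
The plan is to combine the structure theory of nilmanifolds (as iterated principal $S^1$-bundles) with the three preliminary lemmas on inner actions. First I would establish $d_1 = f$: the inequality $d_1 \le \D(N/\Gamma) = \rank\Zc\Gamma = f$ follows from the remark that $d_1 \le \D(M)$ together with \cref{main theorem1 intro} and \cite[Corollary 11.7.4, Theorem 11.7.7]{lee2010seifert}. For the reverse inequality $d_1 \ge f$, observe that the standard torus action of $\Zc N/\Zc\Gamma$ on $N/\Gamma$ restricts, for any prime $p$ and any $a$, to a free action of $(\Z/p^a)^f$ whose quotient is again a nilmanifold; taking the trivial second group gives a free iterated action $\{(\Z/p^a)^f,\{e\}\}\acts N/\Gamma$ with abelian rank exactly $f$ (by \cref{inner actions aspherical manifolds}, applied with $A=(\Z/p^a)^f$, the quotient has center of the same rank, so no simplification can lower the rank below $f$). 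Hence $(f,0)\in\mu_2(N/\Gamma)$ and $d_1=f$.

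Next I would prove $d_2\le n-f$. Suppose $(f,d_2)\in\mu_2(N/\Gamma)$, realized by free iterated actions $\{(\Z/p_i^{a_i})^f,(\Z/p_i)^{d_2}\}\acts N/\Gamma$ with $p_i\to\infty$ and abelian rank $f+d_2$. For $p_i$ large (larger than the Minkowski constant of $\Out(\Gamma)$), the first action $(\Z/p_i^{a_i})^f$ has trivial image in $\Out(\pi_1(N/\Gamma))$ by \cite[Theorem 3.1.16]{lee2010seifert}, so its quotient $Q_i := (N/\Gamma)/(\Z/p_i^{a_i})^f$ is again a nilmanifold with $\rank\Zc\pi_1(Q_i)=f$ by \cref{inner actions aspherical manifolds}. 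The second action $(\Z/p_i)^{d_2}$ on $Q_i$ is free; since all free iterated actions on $Q_i$ with trivial outer action simplify by \cref{inner actions simplifiable}, and in any case $(\Z/p_i)^{d_2}$ acting freely on an $n$-dimensional closed aspherical manifold forces $d_2\le n$, I want the sharper bound. The key point: the orbit map $p_2:Q_i\to Q_i/(\Z/p_i)^{d_2}$ is a regular covering with deck group $(\Z/p_i)^{d_2}$, so $\dim Q_i/(\Z/p_i)^{d_2}=n-f$ and the composite covering $N/\Gamma\to Q_i/(\Z/p_i)^{d_2}$ exhibits $N/\Gamma$ as a nilmanifold; passing to fundamental groups, $\pi_1(N/\Gamma)\subseteq\pi_1(Q_i/(\Z/p_i)^{d_2})$ is finite index, and the latter is a torsion-free nilpotent group of Hirsch length $n$. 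Writing the chain $\pi_1(N/\Gamma)\subseteq\Lambda_i\subseteq\Gamma_i$ (with $\Gamma_i/\Lambda_i\cong(\Z/p_i)^{d_2}$, $\Lambda_i/\pi_1(N/\Gamma)\cong(\Z/p_i^{a_i})^f$), I would analyze how many independent $\Z/p_i$-quotients $\Gamma_i$ can have over $\Lambda_i$ while keeping the abelian-rank condition; the rational cohomology $\dim H^*(N/\Gamma,\Q) \ge \dim H^*(Q_i/(\Z/p_i)^{d_2},\Q)\cdot 2^{?}$ considerations, or more directly, counting the rank of $\Gamma_i/[\Gamma_i,\Gamma_i]$ versus $[\Gamma_i:\pi_1(N/\Gamma)]$ forces $f+d_2\le n$, i.e. $d_2\le n-f$.

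For the equality case, if $d_2=n-f$ then in the above the quotient $Q_i/(\Z/p_i)^{n-f}$ is $0$-dimensional minus... no: $\dim Q_i=n-f$ and $(\Z/p_i)^{n-f}$ acts freely on it, forcing $Q_i\cong T^{n-f}$ for $p_i$ large (a closed aspherical $(n-f)$-manifold admitting free $(\Z/p)^{n-f}$-actions for arbitrarily large $p$ is a torus, by the toral-rank/Smith-theory bound). Then $\Gamma$ sits in a central-type extension $1\to\Z^f\to\Gamma\to\Z^{n-f}\to 1$ coming from the iterated $S^1$-bundle structure $N/\Gamma\to T^{n-f}$ with fiber $\Zc N/\Zc\Gamma\cong T^f$; this extension being central (the first stage quotients by the center) means $[\Gamma,\Gamma]\subseteq\Z^f=\Zc\Gamma$, so $\Gamma$ is $2$-step nilpotent. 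Conversely, if $N/\Gamma$ is $2$-step, it is a principal $T^a$-bundle over $T^b$ with $a=f$, $b=n-f$; realizing the fiber torus action by $(\Z/p^{k})^a$ (free, quotient a torus bundle over $T^b$, i.e. again $2$-step) and then the base torus action by $(\Z/p)^b$ lifted appropriately, I get free iterated actions $\{(\Z/p^k)^a,(\Z/p)^b\}\acts N/\Gamma$; the abelian-rank condition $=a+b=n$ holds because any simplification would produce an abelian group of rank $n$ acting freely on an $n$-manifold, which by \cref{inner actions simplifiable}/\cref{iterated discsym Tn}-type arguments is consistent only with rank exactly $n$. Hence $\D_2(N/\Gamma)=(f,n-f)$. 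The main obstacle I anticipate is the sharp bound $d_2\le n-f$ in the general (non-$2$-step) case: naively $(\Z/p)^{d_2}$ acting freely on an $(n-f)$-manifold only gives $d_2\le n-f$ directly, so this step may actually be clean — the real subtlety is ensuring the abelian-rank-$=f+d_2$ hypothesis is compatible, i.e. that the simplified action cannot be "cheaper," which is where \cref{inner actions simplifiable} and the rank computation for centers of nilpotent extensions do the work.
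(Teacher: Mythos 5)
You set up the first stage correctly ($d_1=f$; passing to primes beyond the Minkowski constant of $\Out(\Gamma)$ so that the first action is inner and $Q_i=(N/\Gamma)/(\Z/p_i^{a_i})^f$ is again a nilmanifold with $\rank\Zc\pi_1(Q_i)=f$), and your converse construction is the right idea. But the heart of the theorem --- the bound $d_2\le n-f$ and the characterization of equality --- is not actually proved in your sketch, and what is written there is wrong in two places. First, $Q_i$ and $Q_i/(\Z/p_i)^{d_2}$ are quotients by \emph{finite} groups, so both are $n$-dimensional; your claim ``$\dim Q_i/(\Z/p_i)^{d_2}=n-f$'' and, in the equality case, ``$\dim Q_i=n-f$, so a free $(\Z/p_i)^{n-f}$-action forces $Q_i\cong T^{n-f}$'' concern the wrong space (the relevant $(n-f)$-dimensional object is the base $N'/\Gamma'$ of the $T^f$-fibration, with $\Gamma'=\Gamma/\Zc\Gamma$). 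Second, $\pi_1\bigl(Q_i/(\Z/p_i)^{d_2}\bigr)$ need not be nilpotent: the abelian-rank hypothesis forces, via \cref{inner actions simplifiable}, the map $\psi_i'\colon(\Z/p_i)^{d_2}\to\Out(\pi_1(Q_i))$ to be \emph{injective} for $i$ large (otherwise a $\Z/p_i$ in the kernel lets you merge the two stages into one abelian group of rank $f$, contradicting $\rank_{ab}=f+d_2$), so the total quotient is in general only an infra-nilmanifold; your ``torsion-free nilpotent group of Hirsch length $n$'' claim fails, and with it the proposed counting of elementary abelian quotients. Indeed, comparing $\rank(\Gamma_i/[\Gamma_i,\Gamma_i])$ with the index cannot give $f+d_2\le n$ in general: for higher-step $N$ the abelianization rank $n-\dim[N,N]$ can be much smaller than $n-f$, while for $N=\R^n$ it equals $n$ even though $n-f=0$ (compare \cref{iterated discsym Tn}); the bound genuinely uses the rank hypothesis, not Smith-type counting alone.

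What is missing is the following chain, which is how the paper proceeds after the injectivity of $\psi_i'$: the group $\Gamma_i':=\Gamma_{i_2}/\Zc\Gamma_i$ embeds in $\Aut(\Gamma_i)\le\Aut(\Gamma_\Q)\le\Gl(m,\Q)$, so its torsion is bounded independently of $i$; since $\Gamma'$ is torsion-free, $\Gamma_i'$ is torsion-free for $p_i$ large. Then, because $\Out(\Gamma')$ is Minkowski, $C_{\Gamma_i'}(\Gamma')$ is a torsion-free central extension of $\Zc\Gamma'$ by $(\Z/p_i)^{d_2}$ (via \cref{aspherical manifolds big diagram}), hence abelian, giving $d_2\le\rank\Zc\Gamma'\le\dim(N'/\Gamma')=n-f$; equality forces $\Gamma'\cong\Z^{n-f}$, i.e.\ $N/\Gamma$ is $2$-step --- note your forward equality argument instead presupposes the extension $1\to\Z^f\to\Gamma\to\Z^{n-f}\to1$, which is exactly what has to be established. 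Finally, in the converse your ``$(\Z/p^k)^a$, then $(\Z/p)^b$ lifted appropriately'' hides the one computation that makes the construction work: rotating the base lattice by $1/p$ rescales the structure $2$-cocycle by $1/p^2$, so the fibre lattice must be refined by $p^2$ (the paper takes $(\Z/p^2)^f$) in order that $\Gamma_p'\cong(\tfrac1{p^2}\Z)^f\times_{c_p}(\tfrac1p\Z)^b$ contain $\Gamma_p$ as a normal subgroup with quotient $(\Z/p)^b$; with $(\Z/p)^f$ on the fibre the second stage need not exist. One then reads off $\rank_{ab}=n$ from $\Gamma_p'/\Gamma\cong(\Z/p^2)^f\oplus(\Z/p)^b$, rather than from the consistency argument you gesture at.
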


\begin{proof}
	Let $\{p_i\}_{i\in \mathbb{N}}$ be the increasing sequence of primes and $\{(\Z/p_i^{a_i})^{d_1},(\Z/p_i)^{d_2}\}\acts N/\Gamma$ be the free iterated actions provided by the condition $\D_2(N/\Gamma)=(d_1,d_2)$. Let $C$ be the Minkowski constant of $\Out(\Gamma)$. Assume that all $p_i>C$ and hence the group morphism $\psi_i:(\Z/p_i^{a_i})^{d_1}\longrightarrow \Out(\Gamma)$ is trivial for all $i$. By \cref{aspherical manifolds big diagram} and \cref{inner actions aspherical manifolds}, each action induces a commutative diagram of fundamental groups 
	\[
	\begin{tikzcd}
		& 1\ar{d}{}& 1\ar{d}{}& & \\
		1\ar{r}{} & \Zc\Gamma\cong \Z^f \ar{r}{}\ar{d}{} & \Zc\Gamma_i\cong\Z^f\ar{r}{}\ar{d}{}& (\Z/p_i^{a_i})^{f}\ar{r}{}\ar{d}{Id} &1\\
		1\ar{r}{} & \Gamma \ar{r}{}\ar{d}{} & \Gamma_i\ar{r}{}\ar{d}{}& (\Z/p_i^{a_i})^{f}\ar{r}{} &1\\
		& \Gamma'\ar{d}{}\ar{r}{Id} & \Gamma'\ar{d}{} & &\\
		& 1 &1 & & \\
	\end{tikzcd}
	\]	
	where $\Gamma_i$ is the fundamental group of the nilmanifold $(N/\Gamma)/(\Z/p_i^{a_i})^{f}$ and the identity map of the third row is induced by the inclusion $\Inn(\Gamma)\longrightarrow\Aut(\Gamma)$. Moreover, notice that $\rank\Zc\Gamma_i=f$ for all $i$.
	
	We claim that the group morphism $\psi_i':(\Z/p_i)^{d_2}\longrightarrow \Out(\Gamma_i)$ is injective for big enough $i$. If not, there exists $\Z/p_i\leq \ker\psi_i'$ such that the action $\Z/p_i\acts N/\Gamma_i$ is inner. The first step of the iterated action is also inner. Thus, by \cref{inner actions simplifiable}, the free iterated action $\{(\Z/p_i^{a_i})^{f},\Z/p_i\}\acts N/\Gamma$ is equivalent to a free action of an abelian $p_i$-group $G_i$ of $\rank G_i=f$. The free iterated action $\{(\Z/p_i^{a_i})^{d_1},(\Z/p_i)^{d_2}\}\acts N/\Gamma$ is equivalent to $\{G_i,(\Z/p_i)^{d_2-1}\}\acts N/\Gamma$, which contradicts the fact that $\rank_{ab}(\{(\Z/p_i^{a_i})^{d_1},(\Z/p_i)^{d_2}\}\acts N/\Gamma)=f+d_2$.
	
	In consequence, we have a commutative diagram
	\[
	\begin{tikzcd}
		& 1\ar{d}{}& 1\ar{d}{}& & \\
		& \Z^f \ar{r}{Id}\ar{d}{} & \Z^f\ar{d}{}&  &\\
		1\ar{r}{} & \Gamma_i \ar{r}{}\ar{d}{} & \Gamma_{i_2}\ar{r}{}\ar{d}{}& (\Z/p_i)^{d_2}\ar{r}{}\ar{d}{\phi} &1\\
		1\ar{r}{} & \Gamma'\ar{d}{}\ar{r}{} & \Gamma_i'\ar{d}{}\ar{r}{} &(\Z/p_i)^{d_2}\ar{r}{} &1\\
		& 1 &1 & & \\
	\end{tikzcd}
	\]	
	where $\Gamma_{i_2}$ is the fundamental group of the quotient $(N/\Gamma)/\{(\Z/p_i^{a_i})^{d_1},(\Z/p_i)^{d_2}\}$. We claim that $\Gamma_i'$ is torsion-free for $i$ large enough. Firstly, $\Gamma_i'\leq\Aut(\Gamma_i)$. For each $i$ we have an injective morphism $\Aut(\Gamma_i)\longrightarrow \Aut(\Gamma_\Q)$, since all lattices $\Gamma_i\leq N$ are commensurable to $\Gamma$. Moreover, $\Aut(\Gamma_\Q)=\Aut(\mathcal{L}(\Gamma_\Q))\leq\Gl(m,\Q)$ for some $m$, where $\mathcal{L}(\Gamma_\Q)$ denotes the rational Lie algebra of $\Gamma_\Q$. Since any finite subgroup of $\Gl(m,\Q)$ is conjugated to a finite subgroup of $\Gl(m,\Z)$, we can conclude that each $\Gamma_i'$ is Minkowski with a constant $C'$ which does not depend on $i$. Finally, since $\Gamma'$ is torsion-free the torsion of $\Gamma_i'$ injects in $(\Z/p_i)^{d_2}$. Thus, $\Gamma_i'$ is torsion-free if $p_i>C$. 
	
	Since $\Out(\Gamma')$ is Minkowski with a constant not depending on $i$, we have a commutative diagram for $i$ large enough.
	\[
	\begin{tikzcd}
		& 1\ar{d}{}& 1\ar{d}{}& & \\
		1\ar{r}{} & \Zc \Gamma' \ar{r}{}\ar{d}{} & C_{\Gamma_{i}'}(\Gamma')\ar{r}{}\ar{d}{}& (\Z/p_i)^{d_2}\ar{r}{}\ar{d}{Id} &1\\
		1\ar{r}{} & \Gamma' \ar{r}{}\ar{d}{} & \Gamma_{i}'\ar{r}{}\ar{d}{}& (\Z/p_i)^{d_2}\ar{r}{} &1\\
		& \Inn\Gamma'\ar{d}{}\ar{r}{Id} & \Inn\Gamma'\ar{d}{} & &\\
		& 1 &1 & & \\
	\end{tikzcd}
	\]	
	
	Since $\Gamma_i'$ is torsion-free, the group $C_{\Gamma_i'}(\Gamma')$ is torsion-free. The first row of the diagram is central, hence $C_{\Gamma_{i}'}(\Gamma')$ is abelian and $d_2\leq \rank\Zc\Gamma'$. Furthermore, $\Gamma'$ is the fundamental group of the nilmanifold $N'/\Gamma'$, obtained as the orbit space of the free action of $\Zc N/\Zc\Gamma\cong T^f$ on $N/\Gamma$. Therefore, $\rank\Zc\Gamma'\leq\dim(N'/\Gamma')=n-f$. In conclusion, $d_2\leq n-f$. If $d_2=n-f$ then $N'/\Gamma'\cong T^{n-f}$ and $N/\Gamma$ is a $2$-step nilmanifold with $\rank\Zc\Gamma=f$, as we wanted to see.
	
	Let us now prove the converse implication. Suppose that $N/\Gamma$ is a 2-step nilmanifold with $f=\rank\Zc\Gamma$ and let $b=n-f$. Then $\Gamma\cong \Z^f\times_c\Z^b$, where $c:\Z^b\times\Z^b \longrightarrow \Z^f$ is a 2-cocycle representing a cohomology class $[c]\in H^2(\Z^b,\Z^f)=H^2(T^b,\Z^f)$ which determines the principal $T^f$-bundle $\pi:N/\Gamma\longrightarrow T^b$. The cocycle can be chosen to be of the form $$c((x_1,\dots,x_b),(y_1,\dots,y_b))=(\sum_{1\leq i<j\leq b} c_{i,j}^1x_iy_j,\dots,\sum_{1\leq i<j\leq b} c_{i,j}^fx_iy_j)$$ where $c_{i,j}^k\in\Z$ for all $1\leq i,j\leq f$ and $1\leq k\leq b$.
	
	For any prime $p$ we can define a free iterated action $\{(\Z/p^{2})^{f},(\Z/p)^{b}\}\acts N/\Gamma$. The first action is by right multiplication on the fiber $T^f$. The orbit space is a nilmanifold with fundamental group $\Gamma_p\cong (\tfrac{1}{p^2}\Z)^f\times_c\Z^b$, where $c:\Z^b\times\Z^b \longrightarrow \Z^f\subseteq (\tfrac{1}{p^2}\Z)^f$. The second step of the iterated action is by rotations on the torus of the basis $T^b$. More explicitly, we define the lattice $\Gamma'_p\cong (\tfrac{1}{p^2}\Z)^f\times_{c_p}(\tfrac{1}{p}\Z)^b$, where $c_p:(\tfrac{1}{p}\Z)^b\times(\tfrac{1}{p}\Z)^b \longrightarrow \Z^f\subseteq (\tfrac{1}{p^2}\Z)^f$ is of the form $$c((\tfrac{1}{p}x_1,\dots,\tfrac{1}{p}x_b),(\tfrac{1}{p}y_1,\dots,\tfrac{1}{p}y_b))=(\sum_{1\leq i<j\leq b} \tfrac{c_{i,j}^1}{p^2}x_iy_j,\dots,\sum_{1\leq i<j\leq b} \tfrac{c_{i,j}^f}{p^2}x_iy_j).$$ 
	We have that $\Gamma_p\trianglelefteq \Gamma_p'$ and  $\Gamma_p'/\Gamma_p\cong(\Z/p)^b$, which defines a free action of $(\Z/p)^b$ on $N/\Gamma_p$. 
	
	Moreover, $\rank_{ab}(\{(\Z/p^{2})^{f},(\Z/p)^{b}\}\acts N/\Gamma)=n$, therefore $(f,b)\leq \D_2(N/\Gamma)$. But the first part of the theorem implies that $\D_2(N/\Gamma)\leq(f,b)$. In consequence $\D_2(N/\Gamma)=(f,b)$.
\end{proof}

\section[Free iterated group actions and non-zero degree map to a nilmanifold]{Free iterated group actions on manifolds admitting a non-zero degree map to a nilmanifold}\label{sec: iterated actions hypernilmanifolds}

The aim of this section is to study the relation between the iterated discrete degree of symmetry and rigidity on manifolds which admit a non-zero degree map to a nilmanifold. In particular, we prove \cref{main theorem11 intro}. We start with a result on closed connected oriented manifolds admitting a non-zero degree map to a torus.

\begin{prop}\label{discsym2 hypertoral}
	Let $M$ be a closed oriented connected manifold of dimension $n$ and $f:M\longrightarrow T^n$ a non-zero degree map. Then $\D_2(M)\leq (n,0)$.
\end{prop}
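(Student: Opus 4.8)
The plan is to reduce to the nilmanifold case $\D_2(T^n) = (n,0)$ (proved in \cref{iterated discsym Tn}) by transporting free iterated actions on $M$ across $f$ to free iterated actions on $T^n$. The essential input is that $f$ exports group actions: by \cref{pE torus} any non-zero degree map $f:M\longrightarrow T^n$ is an exporting map, and moreover the constructed action of any finite group acting trivially on $H^1(M,\Z)$ is a genuine (possibly non-effective) action on $T^n$, with the effective quotient acting freely and by rotations. First I would suppose $\D_2(M) = (d_1,d_2)$, so there is an increasing sequence of primes $\{p_i\}$, exponents $\{a_i\}$, and free iterated actions $\{(\Z/p_i^{a_i})^{d_1},(\Z/p_i)^{d_2}\}\acts M$ with $\rank_{ab}$ equal to $d_1+d_2$. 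Since $d_1 \le \D(M) \le \D(T^n) = n$ by \cref{pE jordan and discsym} and \cref{thm: exporting map hypernilmanifolds}, the bound $d_1 \le n$ is immediate, and the real content is showing $d_2 = 0$ when $d_1 = n$; but in fact I would argue $d_2 = 0$ regardless, mimicking the structure of \cref{iterated discsym Tn}.

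The key steps, in order: (1) For $i$ large enough, pass to the simplification. By \cref{pE jordan and discsym}, or directly because the full group $G_i$ sitting in $1\to (\Z/p_i^{a_i})^{d_1}\to G_i\to (\Z/p_i)^{d_2}\to 1$ has all proper subgroups of index $>p_i$, once $p_i$ exceeds the Minkowski constant of $\Gl(n,\Z)$ every such $G_i$ acting on $M$ acts trivially on $H^1(M,\Z)$; in particular the free iterated action on $M$ is simplifiable exactly as in \cref{iterated discsym Tn}, giving a $p_i$-group $G_i$ acting freely on $M$. (2) Apply \cref{pE torus} to $G_i$: there is an action of $G_i$ on $T^n$ and a $G_i$-equivariant map $f_{G_i}:M\longrightarrow T^n$ homotopic to $f$. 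By \cref{exporting map non-zero degree} the kernel $K_i$ of this action has $|K_i|\le \deg(f)$, so for $p_i > \deg f$ the action of $G_i$ on $T^n$ is effective, hence (being a $p$-group with $p_i$ large, trivial on $H^1$) free and by rotations. (3) Now $G_i$ acts freely on $T^n$, so $\D_2(T^n) = (n,0)$ from \cref{iterated discsym Tn} forces the abelian structure: the iterated action $\{(\Z/p_i^{a_i})^{d_1},(\Z/p_i)^{d_2}\}\acts T^n$ is simplifiable by $G_i$, and as in the proof of \cref{iterated discsym Tn} the covering $T^n\to T^n/G_i$ gives a surjection $\Z^n\to G_i$, so $G_i$ is abelian of rank $\le n$. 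But then $\rank G_i = \rank_{ab} = d_1 + d_2$; this already shows $d_1 + d_2 \le n$, and taking a subgroup $(\Z/p_i)^{d_1+d_2}\le G_i$ acting freely on $M$ shows $(d_1+d_2,0)\le (d_1,d_2)$ in the lexicographic order, whence $d_2 = 0$.

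The main obstacle is step (2)–(3): I must be careful that the equivariant map $f_{G_i}$ and the resulting free rotation action on $T^n$ are compatible with the \emph{iterated} structure, i.e.\ that the simplification $G_i$ on $M$ transports to a simplification-compatible $G_i$-action on $T^n$ realizing the same short exact sequence, so that the rank computation via the covering $\Z^n \to G_i$ genuinely applies; the cleanest route is to note that \cref{pE torus} is applied once to the single finite group $G_i$ (the simplification), and the filtration of $G_i$ by $(\Z/p_i^{a_i})^{d_1}$ is then automatically a filtration of its image in $\Isom(T^n)$, since the action on $T^n$ is free and by rotations so every subgroup's quotient is again a torus. A secondary point is justifying that the $\rank_{ab}$ condition survives: this is where one uses that $G_i$ abelian forces $G_i \cong (\Z/p_i^{a_i})^{d_1}\oplus(\Z/p_i)^{d_2}$, exactly as in \cref{iterated discsym Tn}. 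I would remark that this proposition is in fact the hypertoral special case of \cref{iterated discsym nilmanifolds intro} once one replaces $N/\Gamma$ by $T^n$ (so $a = n$, $b = 0$), and it serves as the base case for the general argument.
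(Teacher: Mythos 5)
There is a genuine gap, and it sits exactly where your step (1) claims the free iterated action on $M$ is simplifiable. Triviality of the action on $H^1(M,\Z)$ (Minkowski's lemma) has no bearing on simplifiability: by \cref{free itertaed action simplifiable}, simplifiability of $\{(\Z/p_i^{a_i})^{d_1},(\Z/p_i)^{d_2}\}\acts M$ means that the action of $(\Z/p_i)^{d_2}$ on $M_i=M/(\Z/p_i^{a_i})^{d_1}$ lifts to $M$, i.e.\ that $\pi_1(M)$ is normal in the fundamental group of the total quotient, and nothing forces this for a general hypertoral $M$, however large $p_i$ is (compare \cref{iterated action T2 not simplfiable}, where simplifiability already fails on $T^3$). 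Your justification is moreover circular: you argue that ``the full group $G_i$'' acting on $M$ acts trivially on $H^1(M,\Z)$, but the existence of such a $G_i$ acting on $M$ \emph{is} the simplification whose existence you are trying to establish. In \cref{iterated discsym Tn} simplifiability comes for free only because both stages act by rotations on tori, so all the fundamental groups in play are $\Z^n$ and every finite-index subgroup is normal; that special feature is exactly what is missing for $M$, and obtaining the group $G_i$ acting freely on $M$ is the main content of the proposition, not an input to it.

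The paper's proof goes the other way around: it exports the two stages separately and only then transports the simplification back. One applies \cref{pE torus} to $(\Z/p_i^{a_i})^{d_1}$ acting on $M$ to get an injective $\eta_i$ and an equivariant $f_i$, passes to the quotients $M_i\to T^n/(\Z/p_i^{a_i})^{d_1}\cong T^n$, checks (using triviality of the actions on $H^1$) that \cref{pE torus} applies again to $(\Z/p_i)^{d_2}$ acting on $M_i$, and then — this is the decisive step — identifies $M$ with the pullback $(h_i')^*T^n$ of the principal $(\Z/p_i^{a_i})^{d_1}$-bundle $T^n\to T^n/(\Z/p_i^{a_i})^{d_1}$ along the second equivariant map $h_i'$. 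This makes the whole iterated action on $M$ equivariant over the iterated rotation action on $T^n$; the latter is simplifiable by an abelian $p_i$-group $G_i$, and since $M\to M_i/(\Z/p_i)^{d_2}$ is then the pullback of the regular $G_i$-covering $T^n\to T^n/G_i$, the iterated action on $M$ is simplified by a free $G_i$-action as well. Only at that point does one get $d_1+d_2=\rank_{ab}=\rank G_i\leq n$ and conclude $\D_2(M)\leq (n,0)$. So your steps (2)–(3) are fine \emph{conditional} on the existence of $G_i$, but to make the proof work you must replace step (1) by this two-stage export-and-pullback argument (or an equivalent verification of the lifting criterion of \cref{free itertaed actions simplifiable coverings}); also note the Minkowski constant should be taken for $\Gl(r,\Z)$ with $r=\rank H^1(M,\Z)$, not $\Gl(n,\Z)$.
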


\begin{proof}
	Suppose that $\D_2(M)=(d_1,d_2)$. Let $\{p_i\}_{i\in\mathbb{N}}$ be the increasing sequence of primes and let $\{(\Z/p_i^{a_i})^{d_1},(\Z/p_i)^{d_2}\}\acts M$ be the collection of free iterated actions given by the definition of the iterated discrete degree of symmetry. Note that we can assume without loss of generality that $p_i>\max\{\deg(f),C\}$ for all $i$, where $C$ is the Minkowski constant of $\Gl(r,\Z)$ with $r=\rank H^1(M,\Z)$. Consequently, the induced action of $(\Z/p_i^{a_i})^{d_1}$ on $H^1(M,\Z)$ is trivial for all $i$.
	
	By \cref{pE torus}, for each $i$ there exists a group morphism $\eta_i:(\Z/p_i^{a_i})^{d_1}\longrightarrow T^n$, which is injective since $|\ker\eta_i|\leq d<p_i$. Moreover, for each $i$ there exists a $\eta_i$-equivariant map $f_i:M\longrightarrow T^n$ homotopic to $f$. We consider the commutative diagram
	\[
	\begin{tikzcd}
		M \ar{r}{f_i}\ar{d}{} & T^n\ar{d}{}\\
		{M}_i\cong M/(\Z/p_i^{a_i})^{d_1} \ar{r}{f'_i} & T^n/(\Z/p_i^{a_i})^{d_1}\cong T^n
	\end{tikzcd}
	\]
	
	Note that $\deg(f'_i)=\deg(f)$ for all $i$. The first cohomology group does not have torsion, hence $\rank H^1({M}_i,\Z)=\rank H^1({M}_i,\Q)=\rank H^1(M,\Q)^{(\Z/p_i^{a_i})^{d_1}}=\rank H^1(M,\Q)=r$, where the last equality holds because the action of $(\Z/p_i^{a_i})^{d_1}$ on $H^1(M,\Q)$ is trivial. Since we have assumed that $p_i> C$ for all $i$ then the action of $(\Z/p_i)^{d_2}$ on $H^1({M}_i,\Z)$ is trivial for all $i$.
	
	Consequently, by \cref{pE torus}, for each $i$ there exists an injective group morphism $\eta'_i:(\Z/p_i)^{d_2}\longrightarrow T^n$ and a $\eta'_i$-equivariant map $h_i':{M}_i\longrightarrow T^n$ homotopic to $f'_i$. Since $T^n\longrightarrow T^n/(\Z/p_i^{a_i})^{d_1}$ is a principal $(\Z/p_i^{a_i})^{d_1}$-bundle, for each $i$ there exists a $(\Z/p_i^{a_i})^{d_1}$-equivariant homeomorphism between the total space of the pull-back $T^n\longrightarrow T^n/(\Z/p_i^{a_i})^{d_1}$ by $h'_i$ and $M$, which we denote by $\phi_i:(h'_i)^*T^n\longrightarrow M$. It induces a commutative diagram
	\[
	\begin{tikzcd}
		(h'_i)^*T^n\ar{rd}{h_i}\ar{d}{\phi_i} &\\	
		M \ar{r}{f_i} & T^n\\
	\end{tikzcd}
	\]
	Note that each $h_i$ is a $\{(\Z/p_i^{a_i})^{d_1},(\Z/p_i)^{d_2}\}$-equivariant map of degree $\deg(f)$. We have a commutative diagram 
	\[
	\begin{tikzcd}
		M \ar{r}{h_i}\ar{d}{} & T^n\ar{d}{}\\
		M_i \ar{r}{h_i'}\ar{d}{} & T^n/(\Z/p_i^{a_i})^{d_1}\cong T^n\ar{d}{}\\
		M_i/(\Z/p_i)^{d_2} \ar{r}{} & T^n/(\Z/p_i)^{d_2}\cong T^n
	\end{tikzcd}
	\]
	The induced iterated action $\{(\Z/p_i^{a_i})^{d_1},(\Z/p_i)^{d_2}\}\acts T^n$ is simplifiable by an abelian $p_i$-group $G_i$ for each $i$. 
	
	The map $h_i$ maps bijectively iterated orbits of $\{(\Z/p_i^{a_i})^{d_1},(\Z/p_i)^{d_2}\}\acts T^n$ to orbits of the action of $G_i$ on $T^n$. In consequence, we can simplify $\{(\Z/p_i^{a_i})^{d_1},(\Z/p_i)^{d_2}\}\acts M$ with a free action of $G_i$ on $M$. This implies that $d_1+d_2=\rank_{ab}(\{(\Z/p_i^{a_i})^{d_1},(\Z/p_i)^{d_2}\})=\rank(G_i)\leq n$. If $d_1<n$ then $\D_2(M)<(n,0)=\D_2(T^n)$. If $d_1=n$ then $d_2=0$ and therefore $\D_2(M)=(n,0)=\D_2(T^n)$.
	
\end{proof}

\begin{thm}\label{free iterated actions rigidity hypernilmanifolds}
	Let $M$ be a closed oriented connected $n$-dimensional manifold $M$ and $f:M\longrightarrow N/\Gamma$ a non-zero degree map where $N/\Gamma$ is a $2$-step nilmanifold. Then:
	\begin{itemize}
		\item[1.] $\D_2(M)\leq \D_2(N/\Gamma)$.
		\item[2.] If the map $f_*:\pi_1(M)\longrightarrow \Gamma$ is surjective and $\D_2(M)= \D_2(N/\Gamma)$ then $H^*(M,\Z)\cong H^*(N/\Gamma,\Z)$.
	\end{itemize}
\end{thm}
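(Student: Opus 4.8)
The plan is to follow the two-step structure used for tori in \cref{discsym2 hypertoral}, replacing the torus with the $2$-step nilmanifold $N/\Gamma$ and invoking \cref{thm: exporting map hypernilmanifolds} in place of \cref{pE torus}. For part 1, suppose $\D_2(M)=(d_1,d_2)$, with the sequence of primes $\{p_i\}$ and free iterated actions $\{(\Z/p_i^{a_i})^{d_1},(\Z/p_i)^{d_2}\}\acts M$ from the definition. By passing to a subsequence we may assume each $p_i$ exceeds $\deg f$, the exporting-map constant, and the Minkowski constants of $\Out(\Gamma)$, $\Out(\Gamma/[\Gamma,\Gamma])$ and the relevant $\Gl(m,\Z)$'s. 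The first action, being a $p_i$-group action of large $p_i$, is free and (after using \cref{thm: exporting map hypernilmanifolds} and the construction in \cref{pE local systems}) yields an injective $\eta_i\colon(\Z/p_i^{a_i})^{d_1}\hookrightarrow N/\Gamma$ acting by the central torus action, together with an $\eta_i$-equivariant map $f_i\simeq f$. Then $M_i:=M/(\Z/p_i^{a_i})^{d_1}$ admits a non-zero degree map $f_i'$ to the nilmanifold $(N/\Gamma)/(\Z/p_i^{a_i})^{d_1}\cong N/\Gamma_i$, and I pull the second $(\Z/p_i)^{d_2}$-action through the same machinery: since everything is a principal bundle, the total spaces of the pullbacks reassemble to $M$ equivariantly, producing a $\{(\Z/p_i^{a_i})^{d_1},(\Z/p_i)^{d_2}\}$-equivariant map $h_i\colon M\longrightarrow N/\Gamma$ of degree $\deg f$ covering the induced iterated action $\{(\Z/p_i^{a_i})^{d_1},(\Z/p_i)^{d_2}\}\acts N/\Gamma$. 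Because $h_i$ carries iterated orbits bijectively to iterated orbits, any simplification of the downstairs iterated action (abelian, by the combination of \cref{inner actions simplifiable} and the argument in \cref{iterated discsym nilmanifolds} once $d_2$ equals the injective-part count) transports to a simplification upstairs by an abelian group $G_i$; hence $\rank_{ab}(\{(\Z/p_i^{a_i})^{d_1},(\Z/p_i)^{d_2}\}\acts M)=d_1+d_2$ forces $(d_1,d_2)$ to be realized as a free iterated action on $N/\Gamma$ with the same abelian rank, so $\D_2(M)\le\D_2(N/\Gamma)$.

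For part 2, assume $f_*$ surjective and $\D_2(M)=\D_2(N/\Gamma)=(f,b)$ with $f=\rank\Zc\Gamma$, $b=n-f$, as computed in \cref{iterated discsym nilmanifolds}. The idea is that equality forces the cohomology of $M$ to agree with that of $N/\Gamma$ by comparing Leray--Serre/Gysin data of the two principal-bundle structures obtained above. From the equivariant maps $h_i$ we get, after quotienting by the first torus factor, non-zero degree maps $M_i\longrightarrow N/\Gamma_i$ with $(\Z/p_i)^b$-actions on both sides that are free and, because $d_2=b$ is forced to be the ``injective part,'' give $(\Z/p_i)^b$ acting on $M_i$ with quotient of the same rational cohomology dimension as $(N/\Gamma_i)/(\Z/p_i)^b\cong T^b$. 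Running \cref{MannSu thm}/\cref{cohomology bound Borel fibration} in reverse along the tower (as in the proof of \cref{iterated MannSu thm}) pins $b_p(M_i/(\Z/p_i)^b)$ from below by the Euler-characteristic-type count, and combined with the injectivity of $f^*$ on $H^*(-,\Q)$ this squeezes $\dim H^*(M,\Q)=\dim H^*(N/\Gamma,\Q)$. Since $f^*$ is then a degree-preserving injection between graded rings of equal total dimension, it is a ring isomorphism over $\Q$; a separate torsion argument (using that $\pi_1(M_n)$ is a lattice-like group after simplification, together with the universal coefficient theorem applied at the primes $p_i$, which are unbounded) upgrades this to $H^*(M,\Z)\cong H^*(N/\Gamma,\Z)$.

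The main obstacle I expect is part 2: unlike the torus case, the nilmanifold $N/\Gamma$ has a genuinely nontrivial cohomology ring (it is a nontrivial principal $T^f$-bundle over $T^b$), so showing that $\dim H^*(M,\Q)$ cannot be strictly larger than $\dim H^*(N/\Gamma,\Q)$ requires more than the na\"ive injectivity of $f^*$ — one must exploit the full force of the equality $\D_2(M)=\D_2(N/\Gamma)$ to rule out ``extra'' rational cohomology of $M$ (cf. \cref{discsym not enough}, where $\D(M)=\D(N/\Gamma)$ but the rational cohomology genuinely differs). The delicate point is arranging the two-step equivariant comparison so that the simplification upstairs has $\rank$ exactly $f+b$ and not more; this is what prevents $M$ from having a strictly larger iterated discrete degree of symmetry while also controlling its cohomology, and it is where the hypothesis that $N/\Gamma$ is $2$-step (so that $N'/\Gamma'\cong T^b$) is essential. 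Getting the integral, rather than merely rational, statement will also need care about torsion in $H_1$, handled by noting that the covering groups $G_i$ are abelian $p_i$-groups with $p_i\to\infty$.
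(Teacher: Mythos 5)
Your outline of part 1 is broadly in the right direction (exporting the first action, passing to $M_i=M/(\Z/p_i^{a_i})^{d_1}$, exporting the second action, and using the rank condition to force injectivity), but it glosses over the two points that carry the actual content there: one must show that the $(\Z/p_i)^{d_2}$-action on the set of $N$-local systems $X(\pi_1(M_i),N)$ is trivial for all large $p_i$, with a bound uniform in $i$ (this is \cref{eta trivial action}, proved by an explicit computation with the $2$-cocycle of the central extension $\R^a\to N\to\R^b$ — it is exactly where the $2$-step hypothesis enters, and it cannot be obtained by simply quoting Minkowski since the groups $\pi_1(M_i)$ vary with $i$), and then the injectivity of $\eta_i'$ via the abelian-rank contradiction (\cref{eta injective}). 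Saying ``I pull the second action through the same machinery'' skips precisely this step.

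The serious gap is in part 2: your proposed route cannot work as stated, and it misses the paper's central mechanism. The inequality in \cref{cohomology bound Borel fibration} bounds the cohomology of a quotient by that of the total space, so ``running \cref{MannSu thm} in reverse along the tower'' gives no upper bound on $\dim H^*(M,\Q)$; the existence of large iterated actions does not by itself squeeze $\dim H^*(M,\Q)$ down to $\dim H^*(N/\Gamma,\Q)$, and your claim that $M_i/(\Z/p_i)^b$ has the rational cohomology of $T^b$ is essentially the conclusion you are trying to prove. The paper instead works with $\tilde M$, the pullback of $N\to N/\Gamma$ by $f$: it shows $H^*(\tilde M,\Z)$ is finitely generated over the group ring $\Z\Gamma$ (\cref{cohomology fg group ring}), uses the equivariant maps $h_i$ to extend the $\Gamma$-action on $H^*(\tilde M,\Z)$ to the larger lattices $\Gamma_i,\Gamma_i'$, thereby producing $p_i^{a_i}$-th and $p_i$-th roots of the multiplication-by-generator automorphisms, and then invokes a genuinely new non-commutative Noetherian ring theorem for iterated skew-Laurent rings (\cref{non commutative finitely generation modules} and \cref{non commutative finitely generation modules skew-Laurent}, generalizing Mundet's commutative argument) to conclude $H^*(\tilde M,\Z)$ is finitely generated over $\Z$; a Serre spectral sequence dimension argument then forces $\tilde M$ to be $\Z$-acyclic, and the fibration $\tilde M\times_\Gamma N\to N/\Gamma$ gives the integral isomorphism. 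None of this appears in your plan. Finally, your proposed upgrade from $\Q$ to $\Z$ via ``universal coefficients at the primes $p_i$'' is insufficient: $H^*(N/\Gamma,\Z)$ for a $2$-step nilmanifold typically has torsion at small primes (cf.\ the Heisenberg example with $H_1\cong\Z^2\oplus\Z/2$), and the unbounded primes $p_i$ say nothing about torsion at those primes; the acyclicity of $\tilde M$ is what delivers the integral statement in the paper.
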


Before proving the theorem, let us show what happens if we remove the hypothesis of $f_*:\pi_1(M)\longrightarrow \Gamma$ being surjective from \cref{free iterated actions rigidity hypernilmanifolds}.2.

\begin{cor}\label{iterated action rational cohomology}
	Let $M$ be a closed oriented connected $n$-dimensional manifold $M$ and $f:M\longrightarrow N/\Gamma$ a non-zero degree map where $N/\Gamma$ is a $2$-step nilmanifold. If $\D_2(M)= \D_2(N/\Gamma)$ then $H^*(M,\Q)\cong H^*(N/\Gamma,\Q)$.
\end{cor}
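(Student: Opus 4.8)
The plan is to reduce the statement to \cref{free iterated actions rigidity hypernilmanifolds}.2 by inserting a finite covering between $M$ and $N/\Gamma$, exactly as in Part~1 of the proof of \cref{thm: exporting map hypernilmanifolds}. First I would put $\Gamma'=f_*(\pi_1(M))$. Since $f$ has non-zero degree, $\Gamma'$ has finite index in $\Gamma$ and is therefore again a lattice of $N$, and $f$ factors as $f=p\circ f'$ where $f'\colon M\to N/\Gamma'$ satisfies $f'_*(\pi_1(M))=\Gamma'$ and $p\colon N/\Gamma'\to N/\Gamma$ is the finite covering induced by $\Gamma'\leq\Gamma$. Note that $\deg f'=\deg f/[\Gamma:\Gamma']\neq 0$, and that $N/\Gamma'$ is again a $2$-step nilmanifold (or a torus), since it is modelled on the same group $N$.

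Second, I would check that $\D_2(N/\Gamma')=\D_2(N/\Gamma)$. Both $\Zc\Gamma=\Zc N\cap\Gamma$ and $\Zc\Gamma'=\Zc N\cap\Gamma'$ are lattices of $\Zc N$, so $\rank\Zc\Gamma'=\dim\Zc N=\rank\Zc\Gamma$; writing $a$ for this common value, \cref{iterated discsym nilmanifolds} applied to the two $n$-dimensional $2$-step nilmanifolds gives $\D_2(N/\Gamma)=(a,n-a)=\D_2(N/\Gamma')$. Then I would apply \cref{free iterated actions rigidity hypernilmanifolds} to the map $f'$: its first part gives $\D_2(M)\leq\D_2(N/\Gamma')=\D_2(N/\Gamma)$, so the hypothesis $\D_2(M)=\D_2(N/\Gamma)$ forces $\D_2(M)=\D_2(N/\Gamma')$; since moreover $f'_*$ is surjective, its second part yields $H^*(M,\Z)\cong H^*(N/\Gamma',\Z)$, hence $H^*(M,\Q)\cong H^*(N/\Gamma',\Q)$.

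Finally I would transport this back along $p$. As $p\colon N/\Gamma'\to N/\Gamma$ is a finite covering of closed oriented manifolds, the transfer homomorphism shows $p^*\colon H^*(N/\Gamma,\Q)\to H^*(N/\Gamma',\Q)$ is injective; and by Nomizu's theorem $\dim_\Q H^*(N/\Lambda,\Q)=\dim_\R H^*(\mathfrak n,\R)$ (Chevalley--Eilenberg cohomology of $\mathfrak n=\operatorname{Lie}(N)$) for every lattice $\Lambda$ of $N$, so source and target of $p^*$ have the same finite total dimension. Therefore $p^*$ is an isomorphism of graded rings, and composing it with the isomorphism of the previous step proves $H^*(M,\Q)\cong H^*(N/\Gamma,\Q)$.

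The argument is essentially formal: the substantive content lives entirely in \cref{free iterated actions rigidity hypernilmanifolds}, and the only extra ingredients are \cref{iterated discsym nilmanifolds}, the standard fact that the centre of a lattice in $N$ is a lattice in $\Zc N$, and Nomizu's theorem. Accordingly I expect no real obstacle; the one point to be slightly careful about is that replacing $\Gamma$ by $\Gamma'$ leaves $\D_2$ unchanged, which is precisely what the equality $\rank\Zc\Gamma=\rank\Zc\Gamma'$ secures.
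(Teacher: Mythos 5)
Your proposal is correct and follows essentially the same route as the paper: factor $f$ through the finite covering $N/f_*(\pi_1(M))\to N/\Gamma$, apply \cref{free iterated actions rigidity hypernilmanifolds} to the lifted map (which is $\pi_1$-surjective and of non-zero degree), and identify the rational cohomology of the two nilmanifolds via Nomizu's theorem. The only differences are cosmetic: you spell out the verification $\D_2(N/\Gamma')=\D_2(N/\Gamma)$ (left implicit in the paper) and phrase the last step as transfer plus a dimension count rather than the paper's direct chain of Nomizu isomorphisms.
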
  

\begin{proof}
	The condition $d=\deg(f)\neq0$ implies that $[\Gamma:f_*\pi_1(M)]<\infty$. Therefore $f_*\pi_1(M)$ is a lattice of $N$ and we have a finite covering $N/f_*\pi_1(M)\longrightarrow N/\Gamma$. We can lift the map $f$ to a map $f'$, obtaining a commutative diagram 
	\[
	\begin{tikzcd}
		& N/f_*\pi_1(M)\ar{d}{}\\
		M\ar{r}{f}\ar{ru}{f'}& N/\Gamma
	\end{tikzcd}
	\]
	
	Note that $\deg(f')\neq 0$ and $f'_*:\pi_1(M)\longrightarrow f_*\pi_1(M)$ is surjective. Thus, we have a chain of isomorphisms $$H^*(M,\Q)\cong H^*(N/f_*\pi_1(M),\Q)\cong H^*(\mathcal{L}(N),\Q)\cong H^*(N/\Gamma,\Q).$$ 
\end{proof}

The proof of \cref{free iterated actions rigidity hypernilmanifolds} is a generalization of the arguments used to prove \cref{large group actions hypertoral manifolds}.2. in \cite{mundet2021topological}. We will divide the proof in six parts. The first two parts are analogous to \cref{discsym2 hypertoral} and will prove the first part of \cref{free iterated actions rigidity hypernilmanifolds}. We denote by $\tilde{M}$ the total space of the pull-back of $\rho:N\longrightarrow N/\Gamma$ by $f$. In the third and fourth part of the proof, we assume that $\D_2(M)=\D_2(N/\Gamma)$ and we discuss the structure of $H^*(\tilde{M},\Z)$ as $\Z\Gamma$-module. In part 5, we use non-commutative ring theory to prove that $\tilde{M}$ is an acyclic manifold. From this fact we deduce that $H^*(M,\Z)\cong H^*(N/\Gamma,\Z)$ in part 6.

Before starting the proof we fix some notation. We set $a=\dim \Zc N$ and $b=\dim N/\Zc N$. We have a projection map $\pi:N\longrightarrow \R^b$ with $\ker\pi=\Zc N$. thus, we have a short exact sequence \[\begin{tikzcd}
	1\ar{r}{}& \R^a\ar{r}{}& N\ar{r}{\pi}& \R^b\ar{r}{}& 1
\end{tikzcd}\]
Since $\Zc N$ is lattice hereditary, we have $\Zc\Gamma=\Zc N\cap \Gamma\cong \Z^a$ and $\pi(\Gamma)\cong \Z^b$. We have a normalized 2-cocycle $c:\Z^b\times \Z^b\longrightarrow \Z^a$ such that $\Gamma\cong \Z^a\times_c \Z^b$ and $N\cong \R^a\times_c \R^b$. We can write the 2-cocycle $c$ $$c((x_1,\dots,x_b),(y_1,\dots,y_b))=(\sum_{1\leq j,k\leq b}c_{jk}^1x_jy_k,\dots,\sum_{1\leq j,k\leq b}c_{jk}^ax_jy_k)$$
where $c_{jk}^l\in\Q$ and $c_{jk}^l=-c_{kj}^l$ for all $j,k,l$ and $c(\pi(\Gamma)\times\pi(\Gamma))\subseteq \Zc\Gamma$. 

We denote $e_i=((0,\dots,1,\dots,0),0)\in \Z^a\times_c \Z^b$, where the $1$ is in the $i$-th position. Similarly, we denote $e_i'=(0,(0,\dots,1,\dots,0))\in \Z^a\times_c \Z^b$, where the $1$ is in the $i$-th position. Note that the set $\{e_1,\dots,e_a,e_1',\dots,e'_b\}$ generates $\Gamma$.

Denote by $\tilde{M}$ the pull-back by $f$ of the universal cover $N\longrightarrow N/\Gamma$. %They fit in the commutative diagram
%\[
%\begin{tikzcd}
%	\tilde{M}\ar{d}{}\ar{r}{\tilde{f}} & N \ar{d}{}\\
%	\overline{M}\ar{d}{}\ar{r}{\overline{f}} & N/\Zc\Gamma \ar{d}{}\\
%	M\ar{r}{f} & N/\Gamma
%\end{tikzcd}
%\]

\textbf{Part 1. First step of the iterated actions:} Assume $\D_2(M)=(d_1,d_2)$ and $\D_2(N/\Gamma)=(a,b)$. 

Let $\{p_i\}$ be the sequence of increasing primes and let $\{a_i\}$ be a sequence of natural numbers such that we have a free action $\{(\Z/p_i^{a_i})^{d_1},(\Z/p_i)^{d_2}\}\acts M$ for each $i$. We can assume without loss of generality that $p_i>\max\{\deg(f),C\}$, where $C$ is the exporting map constant of $f$ (see \cref{pE defn} and \cref{thm: exporting map hypernilmanifolds}). In consequence, there exists an inner free action $(\Z/p_i^{a_i})^{d_1}$ on $N/\Gamma$ and a $(\Z/p_i^{a_i})^{d_1}$-equivariant map $f_i:M\longrightarrow N/\Gamma$ which is homotopic to $f$ for each $i$. In particular, $d_1\leq a$. If the inequality is strict then the first part of the theorem is proven. Thus, we will assume that $d_1=a$.

\textbf{Part 2. Second step of the iterated action:} Before we continue with the proof we introduce some notation. We denote by $M_i$ the orbit space $M/(\Z/p_i^{a_i})^{d_1}$ and by $\pi_i:M\longrightarrow M_i$ the orbit map. The quotient $(N/\Gamma)/(\Z/p_i^{a_i})^{d_1}$ is a nilmanifold with the same simply connected nilpotent Lie group $N$ and lattice $\Gamma_i$, which fits into the short exact sequence
\[
\begin{tikzcd}
	1 \ar{r}{}& \Gamma \ar{r}{}& \Gamma_i \ar{r}{}& (\Z/p_i^{a_i})^{d_1}  \ar{r}{}&1 .
\end{tikzcd}
\]

The map $f_i$ induces a map $f_i':M_i\longrightarrow N/\Gamma_i$ for each $i$. We have a commutative diagram
\[
\begin{tikzcd}
	M\ar{dd}{\pi_i}\ar{rr}{f_i} && N/\Gamma\ar{dd}{}\ar{rd}{q}&\\
	&&& T^b\ar{dd}{id}\\
	M_i\ar{rr}{f'_i} && N/\Gamma_i\ar{rd}{q_i}&\\
	&&& T^b
\end{tikzcd}
\]

Note that $q_i\circ f_i'\circ\pi_i$ is homotopic to $q\circ f$ and $\deg(f_i')=\deg(f)$. 

By Minkowski's lemma, there exists a constant $C'$ such that $(\Z/p_i^{a_i})^{d_1}$ acts trivially on $H^*(M,\Q)$ for all $p_i\geq C'$. The map $\pi_i$ induces an isomorphism in cohomology $H^*(M_i,\Q)\cong H^*(M,\Q)^{(\Z/p_i^{a_i})^{d_1}}=H^*(M,\Q)$. Since the first cohomology group has no torsion, we have $H^1(M_i,\Z)\cong H^1(M,\Z)$. The action of $(\Z/p_i)^{d_2}$ on $M_i$ induces an action of $(\Z/p_i)^{d_2}$ on $H^1(M_i,\Z)$. Since $H^1(M_i,\Z)$ does not depend on $i$ up to isomorphism, we can use Minkowski's lemma again to conclude that there exists a constant $C''>C'$ such that the action of $(\Z/p_i)^{d_2}$ on $H^1(M_i,\Z)$ is trivial for $p_i\geq C''$. Thus, we can assume without loss of generality that the action of $(\Z/p_i)^{d_2}$ on $H^1(M_i,\Z)$ is trivial.

We consider the map $q_i\circ f_i':M_i\longrightarrow T^b$, for each $i$. Since $(\Z/p_i)^{d_2}$ acts trivially on $H^1(M_i,\Z)$, there exist a group morphism $\eta_i':(\Z/p_i)^{d_2}\longrightarrow T^b$ and a $\eta_i'$-equivariant map $F_i:M_i\longrightarrow T^b$ homotopically equivalent to $q_i\circ f_i'$. If $\eta_i'$ is injective then $\D_2(M)=(a,d_2)\leq (a,b)=\D_2(N/\Gamma)$ and the first part of the theorem would be proved.

Our next goal is to prove that $\eta_i'$ is injective. We divide the proof in two lemmas, since the first lemma will be also useful in other steps of the proof. Recall that $X(\pi_1(M_i),N)$ is the set of isomorphism classes of $N$-local systems, $X(\pi_1(M_i),N)=\Hom(\pi_1(M),N)/\sim$.

\begin{lem}\label{eta trivial action}
	There exists a constant $D$ such that the action of $(\Z/p_i)^{d_2}$ on $X(\pi_1(M_i),N)$ is trivial for $p_i\geq D$.
\end{lem}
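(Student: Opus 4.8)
There is a constant $D$ such that the action of $(\Z/p_i)^{d_2}$ on $X(\pi_1(M_i),N)$ is trivial whenever $p_i \ge D$.

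The plan is to mimic the argument of \cref{pE local systems trivial action}, which already produces a bounded-index subgroup acting trivially on $X(\pi_1(M),N)$, but to exploit the fact that here the acting group is an elementary abelian $p$-group: for such groups, ``bounded index'' forces ``trivial'' once the prime exceeds the bound. Concretely, first I would record that the action of $(\Z/p_i)^{d_2}$ on $M_i$ induces a homomorphism $\psi_i:(\Z/p_i)^{d_2}\longrightarrow \Out(\pi_1(M_i))$, and that the action on $X(\pi_1(M_i),N)$ factors through $\psi_i$ composed with the natural map $\rho_i:\Out(\pi_1(M_i))\longrightarrow \Out(\pi_1(M_i)/\pi_1(M_i)^c)$, where $c$ is the nilpotency class of $N$ (the same reduction modulo the $c$-th term of the lower central series used in the proof of \cref{pE local systems trivial action} applies verbatim, since any homomorphism $\pi_1(M_i)\to N$ kills $\pi_1(M_i)^c$).

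The one subtlety is that $\pi_1(M_i)$ varies with $i$, so I cannot invoke a single Minkowski constant for $\Out(\pi_1(M_i)/\pi_1(M_i)^c)$ directly. To handle this I would observe, exactly as in the proof of \cref{iterated discsym nilmanifolds}, that all the groups $\pi_1(M_i)/\pi_1(M_i)^c$ embed into a fixed group: indeed $\pi_1(M_i)$ surjects onto $\Gamma_i$ with kernel independent of the $i$-dependent data, and since $\Gamma_i$ is commensurable to $\Gamma$, the nilpotent quotients $\pi_1(M_i)/\pi_1(M_i)^c$ have automorphism groups that inject into a fixed $\Gl(m,\Q)$ for some $m$ depending only on $\pi_1(M)$ and $N$. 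Any finite subgroup of $\Gl(m,\Q)$ is conjugate into $\Gl(m,\Z)$, hence is Minkowski with a constant $D_0$ depending only on $m$. Therefore the image of $\psi_i$ under $\rho_i$ is a finite $p_i$-group inside a group whose finite subgroups all have order at most $D_0$; if $p_i > D_0$ this image is trivial, so $(\Z/p_i)^{d_2}$ acts trivially on $X(\pi_1(M_i),N)$.

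Setting $D = D_0 + 1$ (or $D=\max\{D_0,\deg f,C\}+1$ to be safe for later use) completes the argument. The main obstacle — and it is a mild one — is the uniformity over $i$: making precise that the automorphism groups $\Aut(\pi_1(M_i)/\pi_1(M_i)^c)$ all sit inside one fixed linear group with a uniform Minkowski bound. Once that uniformity is in hand, the passage from ``bounded order'' to ``trivial'' is immediate because the acting group is a $p_i$-group with $p_i$ unbounded, and the rest is the already-established reduction through the lower central series.
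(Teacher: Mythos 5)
Your reduction of the action on $X(\pi_1(M_i),N)$ to an action through $\Out(\pi_1(M_i)/\pi_1(M_i)^c)$ is fine (it is the same reduction as in \cref{pE local systems trivial action}), but the step you yourself flag as the ``mild'' obstacle is exactly where the argument breaks, and it cannot be repaired along these lines. The groups $\pi_1(M_i)$ are not lattices in $N$; their class-$c$ quotients $Q_i=\pi_1(M_i)/\pi_1(M_i)^c$ genuinely vary with $i$, may acquire $p_i$-torsion (quotients by free $p_i$-actions typically create $p_i$-torsion in homology), and their automorphism groups do not embed in a fixed $\Gl(m,\Q)$: already $\Aut(\Z/p_i)$ has order $p_i-1$. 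Passing to $Q_i$ modulo torsion does not help, because what you need is a uniform Minkowski bound for the \emph{outer} automorphism groups, and this is false even for torsion-free finitely generated nilpotent groups of bounded Hirsch length that are all commensurable to one another: for the Heisenberg lattices $\Gamma_k$ (generators $X,Y,Z$ with $Z$ central and $[X,Y]=Z^k$), the central automorphism $X\mapsto XZ$, $Y\mapsto Y$, $Z\mapsto Z$ has order exactly $k$ in $\Out(\Gamma_k)$, so these outer automorphism groups contain $p$-elements for arbitrarily large $p$. The analogy with \cref{iterated discsym nilmanifolds} does not transfer: there the uniform bound is for finite subgroups of $\Aut(\Gamma_i)\hookrightarrow\Aut(\Gamma_\Q)$, i.e.\ of automorphism groups of lattices in the fixed group $N$, not for finite subgroups of outer automorphism groups of the varying fundamental groups $\pi_1(M_i)$.

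Worse, the statement your route would establish --- triviality of the image of $(\Z/p_i)^{d_2}$ in $\Out(\pi_1(M_i)/\pi_1(M_i)^c)$ --- is false in the motivating examples, so no uniformity fix can save it. Take $M=N/\Gamma$ a $2$-step nilmanifold with the iterated actions constructed in the proof of \cref{iterated discsym nilmanifolds}: the second-step group $(\Z/p_i)^{b}$ acts on $M_i=N/\Gamma_i$ through $\Gamma_i'/\Gamma_i$, and the induced outer action on $\Gamma_i$ is by conjugation by elements of $\Gamma_i'\setminus\Gamma_i$; these are central twists which are in general non-trivial of order $p_i$ in $\Out(\Gamma_i)$, and yet they act trivially on $X(\pi_1(M_i),N)$ because the equivalence defining $X$ is conjugation by elements of the real group $N$, where one may divide by $p_i$. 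This is precisely the content of the paper's own argument, which your proposal discards: the paper measures the failure of $\phi_i$ to fix the class $[\mu_i]$ by a homomorphism $\zeta_i:\pi_1(M_i)\longrightarrow \Zc N$, uses $\phi_i^{p_i}=\operatorname{id}$ together with the bilinear $2$-cocycle of the $2$-step group to show that $\zeta_i(\cdot)^{p_i}$ is realized by conjugation by some $n=(x,y)\in N$, and then conjugates by $m=(x,y/p_i)$ instead; the only Minkowski inputs there concern $H^1(M_i,\Z)$ and $\Hom(\pi_1(M_i),\R^a)$, whose ranks are independent of $i$. In short, the proof has to work at the level of $N$-conjugacy classes of local systems, not at the level of $\Out$ of nilpotent quotients.
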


\begin{proof}
	We will prove that the restriction of the action of $(\Z/p_i)^{d_2}$ on $X(\pi_1(M_i),N)$ to any cyclic subgroup of $\Z/p_i$ is trivial. Thus, we will study actions of $\Z/p_i$ on $X(\pi_1(M_i),N)$. Recall that we also assume that $p_i>C''$ and therefore $\Z/p_i$ acts trivially on $H^1(M_i,\Z)$.  
	
	Let us fix some more notation. Let $\phi_i\in\Homeo(M_i)$ be the homeomorphism induced by the action of $\Z/p_i$ on $M_i$ corresponding to $\overline{1}\in\Z/p_i$, $\pi:N\longrightarrow \R^b$ is the projection such that $\ker \pi=\Zc N\cong \R^a$ and $\iota_i:\Gamma_i\longrightarrow N$ is an injective morphism. Set $\mu_i=\iota_i\circ f'_{i*}$ and consider the representative $[\mu_i]\in X(\pi_1(M_i),N)$. We consider the morphism $q_i\circ \mu_i\in X(\pi_1(M_i),\R^b)=\Hom(\pi_1(M_i),\R^b)$ (note that we do not have the conjugation equivalence relation since $\R^b$ is abelian). Since $\Z/p_i$ acts trivially on $H^1(M_i,\Z)$, we have $q_i\circ \mu_i \circ \phi_{i*}=q_i\circ \mu_i$. Therefore, we can define a map $\zeta_i:\pi_1(M_i) \longrightarrow \R^f$ such that 
	$$\zeta_i(\alpha)=\mu_i(\phi_{i*}(\alpha))\mu_i(\alpha)^{-1} $$ for $\alpha\in\pi_1(M_i)$. It is well defined since the image is inside $\ker \pi=\Zc N\cong \R^a$ and it is also a group morphism. Indeed, we have
	
	\begin{align*}
		\zeta_i(\alpha\beta)&=\mu_i(\phi_{i*}(\alpha))\mu_i(\phi_{i*}(\beta))\mu_i(\beta)^{-1}\mu_i(\alpha)^{-1}\\&=\mu_i(\phi_{i*}(\alpha))\mu_i(\alpha)^{-1}\mu_i(\phi_{i*}(\beta))\mu_i(\beta)^{-1}=\zeta_i(\alpha)\zeta_i(\beta)
	\end{align*}
	
	for $\alpha,\beta\in \pi_1(M_i)$, where we use that $\mu_i(\phi_{i*}(\beta))\mu_i(\beta)^{-1}\in\Zc N$. Consequently, $\zeta_i\in\Hom(\pi_1(M_i),\R^a)$. Note that $\Z/p_i$ acts on $\Hom(\pi_1(M_i),\R^a)$ by precomposition, therefore it fixes a lattice of $\R^a$. Thus, by Minkowski lemma, there exists a constant $D>C''$ such that $\Z/p_i$ acts trivially on $\Hom(\pi_1(M_i),\R^a)$. From now on we assume that $p_i\geq D$. This implies that $\zeta_i\circ\phi_{i*}=\zeta_i$.
	
	On the other hand $[\mu_i\circ \phi^p_{i*}]=[\mu_i]$ and therefore there exists $n\in N$ such that $n\mu_i (\phi^p_{i*}(\alpha))=\mu_i(\alpha)n$ for all $\alpha\in \pi_1(M_i)$. A computation shows that
	$$[\mu_i(\alpha),n]=\sum_{j=0}^{p_i-1}\zeta_i(\phi^{j}_{i*}(\alpha))=\zeta_i(\alpha)^{p_i}.$$
	
	Recall that $N=\R^a\times_c\R^b$ where $c:\R^b\times \R^b\longrightarrow \R^a$ is a normalized $2$-cocycle such that $c(\pi(\Gamma)\times\pi(\Gamma))\subseteq\Zc\Gamma$. Since $\pi(\Gamma)=\pi(\Gamma_i)$ for all $i$, we have $c(\pi(\Gamma_i)\times\pi(\Gamma_i))\subseteq\Zc\Gamma\subseteq\Zc\Gamma_i$ for all $i$. Using $N\cong \R^a\times_c \R^b$, we can write $n=(x,y)$ and $\mu_i(\alpha)=(\mu_i'(\alpha),\pi(\mu_i(\alpha)))$.
	
	A straightforward computation shows that the conjugation by $n$ takes the form 
	$$c_n(\mu_i(\alpha))=(\mu_i'(\alpha)+c(y,\pi(\mu_i(\alpha)))-c(\pi(\mu_i(\alpha)),y),\pi(\mu_i(\alpha))).$$ 
	Hence
	$$\zeta_i(\alpha)^{p_i}=c(y,\pi(\mu_i(\alpha)))-c(\pi(\mu_i(\alpha)),y).$$
	We define $m=(x,\tfrac{y}{p})\in \R^a\times_c\R^b$, which satisfies
	$$\zeta_i(\alpha)=c(\tfrac{y}{p},\pi(\mu_i(\alpha)))-c(\pi(\mu_i(\alpha)),\tfrac{y}{p}).$$ Summarising the previous computations, we showed that $\mu_i\circ\phi_{i*}=c_{m}\circ\mu_i$ which implies that $[\mu_i\circ\phi_{i*}]=[\mu_i]\in X(\pi_1(M_i),N)$, as we wanted to prove.
\end{proof}

\begin{lem}\label{eta injective}
	The morphism $\eta_i'$ is injective for all $p_i\geq D$.
\end{lem}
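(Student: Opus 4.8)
The plan is to prove the contrapositive: if $\eta_i'$ fails to be injective for some $i$ with $p_i$ above the finitely many constants named at the end, then the equality $\rank_{ab}\big(\{(\Z/p_i^{a_i})^{d_1},(\Z/p_i)^{d_2}\}\acts M\big)=d_1+d_2$ built into the definition of $\D_2(M)$ is violated. So suppose $\ker\eta_i'\ne 1$; as $(\Z/p_i)^{d_2}$ is elementary abelian it contains $K\cong\Z/p_i$ with $K\le\ker\eta_i'$, and the $\eta_i'$-equivariant map $F_i\colon M_i\to T^b$ is $K$-invariant. By \cref{eta trivial action} the group $(\Z/p_i)^{d_2}$ acts trivially on $X(\pi_1(M_i),N)$, so by the corollary following \cref{pE local systems} there are an action $\Psi_i$ of $(\Z/p_i)^{d_2}$ on $N/\Gamma_i$ and a $(\Z/p_i)^{d_2}$-equivariant map $g_i\colon M_i\to N/\Gamma_i$ homotopic to $f_i'$; since $\deg(f_i')=\deg(f)\ne 0$ and $p_i>\deg(f)$, \cref{exporting map non-zero degree} shows $\Psi_i$ is effective (and, for $p_i$ large, free). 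By the construction in \cref{pE local systems} the elements of $(\Z/p_i)^{d_2}$ act on $N/\Gamma_i$ through homeomorphisms induced by (right) translation in $N$, hence normalizing $\Gamma_i$ and commuting with the vertical torus of the principal $\Zc N/\Zc\Gamma_i\cong T^a$-bundle $q_i\colon N/\Gamma_i\to T^b$; moreover the $q_i$-component of $\Psi_i$ is a rotation action of $(\Z/p_i)^{d_2}$ on $T^b$, and by the canonicity of the construction of \cref{pE S1} (applied coordinatewise to the class $[q_i\circ f_i']=[q_i\circ g_i]$) it equals $\eta_i'$. Since $K\subseteq\ker\eta_i'$, the $K$-action on $N/\Gamma_i$ covers the identity on $T^b$, i.e.\ $K$ acts by gauge transformations of $q_i$.

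The crux is: a finite group acting by gauge transformations of a principal $T^a$-bundle over $T^b$ acts through the structure torus. Indeed the gauge group is $\Map(T^b,T^a)\cong T^a\times\Map_*(T^b,T^a)$, and $\Map_*(T^b,T^a)$ is torsion-free (its component group is $H^1(T^b;\Z^a)\cong\Z^{ab}$ and its identity component is the contractible vector space $\Map_*(T^b,\R^a)$), so every finite subgroup lies in the constants $T^a$. Hence $K$ acts on $N/\Gamma_i$ by central translations, through $\Zc N/\Zc\Gamma_i$. In particular $K$ acts trivially on $\pi_1(N/\Gamma_i)=\Gamma_i$, so it fixes every covering of $N/\Gamma_i$, in particular the class $[N/\Gamma\to N/\Gamma_i]\in\Cov_{p_i^{a_i d_1}}(N/\Gamma_i)$.

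Now transport this to $M$. As $f_i\colon M\to N/\Gamma$ is $(\Z/p_i^{a_i})^{d_1}$-equivariant, the covering $M\to M_i$ is the pullback by $f_i'$ (equivalently, by the homotopic map $g_i$) of $N/\Gamma\to N/\Gamma_i$; since $g_i$ is $K$-equivariant and $K$ fixes $[N/\Gamma\to N/\Gamma_i]$, it follows that $K$ fixes $[M\to M_i]\in\Cov_{p_i^{a_i d_1}}(M_i)$. By \cref{free itertaed actions simplifiable coverings} the free iterated action $\{(\Z/p_i^{a_i})^{d_1},K\}\acts M$ is simplifiable: there is a $p_i$-group $\widehat G_i$ acting freely on $M$ with $1\to(\Z/p_i^{a_i})^{d_1}\to\widehat G_i\to K\to 1$. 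Since $f$ is an exporting map (\cref{thm: exporting map hypernilmanifolds}), \cref{pE local systems trivial action} together with the corollary following \cref{pE local systems} and \cref{exporting map non-zero degree} produce (for $p_i$ larger than the constant of \cref{pE local systems trivial action} and than $\deg(f)$) an effective, free action of $\widehat G_i$ on $N/\Gamma$; because $\Out(\Gamma)$ is Minkowski with a constant depending only on $\Gamma$ \cite{Wehrfritz1994}, this action is inner, hence by \cite[Theorem 3.1.16]{lee2010seifert} conjugate to a subgroup of $\Zc N/\Zc\Gamma\cong T^a$. Thus $\widehat G_i$ is abelian of rank at most $d_1$.

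Finally, split $(\Z/p_i)^{d_2}\cong K\oplus(\Z/p_i)^{d_2-1}$, refine the second step of the iterated action accordingly, and replace the first two steps by the simplification $\widehat G_i$: then $\{(\Z/p_i^{a_i})^{d_1},(\Z/p_i)^{d_2}\}\acts M$ is equivalent to the iterated action of abelian groups $\{\widehat G_i,(\Z/p_i)^{d_2-1}\}\acts M$. Consequently $\rank_{ab}\big(\{(\Z/p_i^{a_i})^{d_1},(\Z/p_i)^{d_2}\}\acts M\big)\le\rank\widehat G_i+(d_2-1)\le d_1+d_2-1<d_1+d_2$, contradicting the defining property of $\D_2(M)=(d_1,d_2)$. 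Hence $\eta_i'$ is injective for all $p_i\ge D$, once $D$ is taken larger than $\deg(f)$, the exporting constant of $f$, the Minkowski constant of $\Gl(b,\Z)$, the constant of \cref{pE local systems trivial action}, the Minkowski constant of $\Out(\Gamma)$, and the constant of \cite{lee2010seifert} guaranteeing that effective $p_i$-actions on nilmanifolds are free. I expect the gauge-theoretic identification of the $K$-action on $N/\Gamma_i$ with a subgroup of the vertical torus to be the only delicate point; the remaining steps are routine manipulations with exporting maps and with the simplifiability criterion \cref{free itertaed actions simplifiable coverings}.
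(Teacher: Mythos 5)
Your argument is correct, and it reaches the paper's contradiction by the same global strategy: a nontrivial $K\le\ker\eta_i'$ would allow one to replace $\{(\Z/p_i^{a_i})^{d_1},\Z/p_i\}\acts M$ by a single abelian group of rank at most $a$, violating the condition $\rank_{ab}=d_1+d_2$. The middle of your route, however, is genuinely different from the paper's. The paper replaces $f_i'$ by a homotopic $f_i''$ with $q_i\circ f_i''=F_i$, produces the $\Z/p_i$-action on $N/\Gamma_i$ from \cref{eta trivial action} and \cref{pE local systems trivial action}, reads off from $\eta_i'(\Z/p_i)=0$ that $\pi(\Gamma_i)=\pi(\Gamma_i')$, so that the action is inner, and then simplifies by an abelian group of rank at most $a$ using \cref{inner actions simplifiable}, transporting the simplification back to $M$ only implicitly through the equivariant maps. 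You instead show that $K$ acts on $N/\Gamma_i$ by constant central translations via the gauge group of the principal $T^a$-bundle $q_i$ (finite subgroups of $\Map(T^b,T^a)$ are constants), deduce that $K$ fixes $[N/\Gamma\to N/\Gamma_i]$, transfer this to $[M\to M_i]$ through the identification of $M\to M_i$ with the pullback of $N/\Gamma\to N/\Gamma_i$ along the equivariant map, simplify directly on $M$ with \cref{free itertaed actions simplifiable coverings}, and bound $\rank\widehat G_i\le a$ by exporting $\widehat G_i$ once more and invoking \cite[Theorem 3.1.16]{lee2010seifert}. Your version makes explicit the transfer of simplifiability from the nilmanifold side to $M$ (which the paper leaves tacit) and avoids \cref{inner actions simplifiable}; the price is the extra export step and one point you should argue rather than assert: that the action furnished by (the corollary to) \cref{pE local systems} induces on the base exactly the rotation homomorphism $\eta_i'$. ``Canonicity of \cref{pE S1}'' is not a proof, but the fact is true and easy: $F_i$ and $q_i\circ g_i$ are homotopic and equivariant for rotation actions given by homomorphisms $\eta_i',\bar\eta_i\colon(\Z/p_i)^{d_2}\to T^b$; lifting the null-homotopic difference $F_i-q_i\circ g_i$ to $\R^b$ shows that for each $g$ the discrepancy $\eta_i'(g)-\bar\eta_i(g)$ admits a constant lift $c(g)$ with $p_i\,c(g)=0$, whence $\bar\eta_i=\eta_i'$. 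With that inserted, your proof is complete.
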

\begin{proof}
	Assume that $\eta'_i$ is not injective and take $\Z/p_i\leq \ker\eta'_i$. Then, the free iterated action of $\{(\Z/p_i^{a_i})^{a},\Z/p_i,(\Z/p_i)^{d_2-1}\}\acts M$ is equivalent to $\{(\Z/p_i^{a_i})^a,(\Z/p_i)^{d_2}\}\acts M$. We will prove that $\{(\Z/p^{a_i}_i)^{a},\Z/p_i\}\acts M$ is simplifiable by an abelian group of rank $a$. This will imply that $\rank_{ab}(\{(\Z/p_i^{a_i})^a,(\Z/p_i)^{d_2}\}\acts M)<a+d_2$, which is a contradiction with the assumption that $\D_2(M)=(a,d_2)$.
	
	Firstly, note that $F_i(gx)=\eta'_i(g)F_i(x)=F_i(x)$ for all $g\in\Z/p$ and $x\in M_i$. Using the homotopy lifting property, we can replace $f_i'$ by a homotopic map $f''_i:M_i\longrightarrow N/\Gamma_i$ such that $f''_i=q_i\circ F_i$. Note that the orbits of the action of $\Z/p_i$ are inside the fibers of $q_i:N/\Gamma_i\longrightarrow T^b$.
	
	By \cref{eta trivial action} and \cref{pE local systems trivial action}, there exists a $\Z/p_i$ action on $N/\Gamma_i$ and a $\Z/p_i$-equivariant map $h'_i:M_i\longrightarrow N/\Gamma_i$ homotopic to $f_i''$, and hence homotopic to $f_i'$. The orbit space $(N/\Gamma_i)/(\Z/p_i)$ is a nilmanifold $N/\Gamma_i'$. The equality $\eta_i'(\Z/p_i)=0$ implies that $\pi(\Gamma_i)=\pi(\Gamma_i')$ which leads to the commutative diagram \[\begin{tikzcd}
		& 1\ar{d}{}& 1 \ar{d}{}& & \\
		1\ar{r}{}& \Zc\Gamma_i\ar{d}{}\ar{r}{}& \Zc\Gamma_i \ar{d}{}\ar{r}{}& \Z/p_i \ar{d}{id}\ar{r}{}& 1\\
		1\ar{r}{}& \Gamma_i\ar{d}{}\ar{r}{}& \Gamma'_i \ar{d}{}\ar{r}{p}&\Z/p_i \ar{r}{}& 1\\
		& \pi(\Gamma_i)\ar{d}{}\ar{r}{id}&  \pi(\Gamma'_i) \ar{d}{}& & \\	
		& 1& 1 &  & \\
	\end{tikzcd}\]
	
	The action of $\Z/p_i$ on $N/\Gamma_i$ is inner, hence $\{(\Z/p^{a_i}_i)^{a},\Z/p_i\}\acts N/\Gamma$ is simplifiable by an abelian group $A_i$ (see \cref{inner actions simplifiable}). Thus, $\rank_{ab}(A_i)\leq a=\rank\Zc\Gamma$. In conclusion $\{(\Z/p_i^{a_i})^{a},(\Z/p_i)^{d_2}\}\acts M$ is equivalent to $\{A_i,(\Z/p_i)^{d_2-1}\}\acts M$ and therefore $\rank_{ab}(\{(\Z/p_i^{a_i})^{a},(\Z/p_i)^{d_2}\}\acts M)\leq a+d_2-1$, as we wanted to see.
\end{proof}

Since $\eta'_i:(\Z/p_i)^{d_2}\longrightarrow T^b$ is injective, $d_2\leq b$. Thus, the proof of item 1 of \cref{free iterated actions rigidity hypernilmanifolds} is completed.

If $\eta_i':(\Z/p_i)^{d_2}\longrightarrow T^b$ is injective then the action of $(\Z/p_i)^{d_2}$ on $\Hom(\pi_1(M_i),\R^b)$ is trivial. By \cref{eta trivial action}, the action of $(\Z/p_i)^{d_2}$ on $\Hom(\pi_1(M_i),N)$ is trivial. Thus, we can construct a $(\Z/p_i)^{d_2}$ action on $N/\Gamma_i$ and a $(\Z/p_i)^{d_2}$-equivariant map $h'_i:M_i\longrightarrow N/\Gamma_i$ such that $q_i\circ h'_i:M_i\longrightarrow T^b$ is $\eta_i'$-equivariant.

\textbf{Part 3. $H^*(\tilde{M},\Z)$ as a $\Z\Gamma$-module:} The action of $\Gamma$ on $\tilde{M}$ induces an action on $H^*(\tilde{M},\Z)$, which we denote by $\Phi:\Gamma\longrightarrow \Aut_\Z(H^*(\tilde{M},\Z))$. Thus, $H^*(\tilde{M},\Z)$ has a structure of $\Z\Gamma$-module. To prove the following lemma we use the same argument as in \cite[Lemma 8.2]{mundet2021topological}

\begin{lem}\label{cohomology fg group ring}
	$H^*(\tilde{M},\Z)$ is finitely generated as a $\Z\Gamma$-module.
\end{lem}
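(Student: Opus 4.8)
The plan is to exhibit a compact topological space on which $\tilde M$ fibers over, with fiber having finitely generated cohomology, and then feed this into a Leray–Serre spectral sequence argument that transfers finite generation to an honest $\Z\Gamma$-module statement. Recall that $\tilde M$ is the pull-back of the universal cover $\rho\colon N\longrightarrow N/\Gamma$ along $f\colon M\longrightarrow N/\Gamma$; thus $\tilde M\longrightarrow M$ is a covering with deck group $f_*(\pi_1(M))$ acting through its (finite-index) image in $\Gamma$, and the projection $\tilde M\longrightarrow N$ is a map whose homotopy fiber is that of $f$. First I would observe that, since $f$ has nonzero degree and we have reduced to the case $f_*$ surjective, $\tilde M$ is the total space of a map $\tilde f\colon\tilde M\longrightarrow N$ lifting $f$, equivariant for the $\Gamma$-actions (deck action upstairs, translation action downstairs), and that $N$ is contractible. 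Hence $H^*(\tilde M,\Z)$ computes, up to the relevant spectral sequence, the cohomology of the homotopy fiber of $f$, which is a space with the homotopy type of a finite CW complex because $M$ and $N/\Gamma$ are closed manifolds (a finite nilpotent Postnikov tower argument, or directly: $f$ is a map of finite complexes and the homotopy fiber of a map between finite complexes where the base is aspherical with polycyclic $\pi_1$ has finitely generated homology in each degree and vanishes above the dimension).

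Concretely, I would run the argument exactly as in \cite[Lemma 8.2]{mundet2021topological}, which is cited in the statement: consider the fibration up to homotopy
\[
\tilde M\longrightarrow M\xrightarrow{\ f\ } N/\Gamma,
\]
or rather its $\Gamma$-equivariant refinement. The key point is that $M$ is a closed manifold, so $H^*(M,\Z)$ is finitely generated as an abelian group, and $N/\Gamma = K(\Gamma,1)$, so that $H^*(M,\Z)$ is the abutment of the Cartan–Leray (equivalently Lyndon–Hochschild–Serre) spectral sequence
\[
E_2^{p,q}=H^p(\Gamma, H^q(\tilde M,\Z))\ \Longrightarrow\ H^{p+q}(M,\Z).
\]
Because $\Gamma$ is a lattice in a simply connected nilpotent Lie group of dimension $n$, it is a duality group of dimension $n$; in particular $\Gamma$ has a finite free resolution over $\Z\Gamma$ (it is of type $FP$), and $\Z\Gamma$ is Noetherian since $\Gamma$ is polycyclic-by-finite. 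The Noetherian property of $\Z\Gamma$ is the engine: it forces the homology of any chain complex of finitely generated $\Z\Gamma$-modules to again be finitely generated.

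The argument then proceeds by downward induction. Since $M$ is a finite-dimensional closed manifold, $H^q(\tilde M,\Z)=0$ for $q>\dim\tilde M=n$ (note $\tilde M$ is an $n$-manifold, a finite-sheeted cover of $M$... wait — here $\tilde M$ is not compact, it is the pull-back of $N$, so rather $\tilde M$ has the homotopy type of a complex of dimension $\le n$; in any case $H^q$ vanishes for $q$ large). Take the largest $q_0$ with $H^{q_0}(\tilde M,\Z)\ne 0$. Then $E_2^{p,q_0}=H^p(\Gamma,H^{q_0}(\tilde M,\Z))$ survives to $E_\infty$ (no differentials in or out are possible in the top row for degree reasons along the relevant diagonal), and contributes to $H^{p+q_0}(M,\Z)$, which is finitely generated over $\Z$ and in particular over $\Z\Gamma$. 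Using that $\Z\Gamma$ is Noetherian and that $\Gamma$ has finite cohomological dimension, a standard argument shows $H^{q_0}(\tilde M,\Z)$ must be finitely generated as a $\Z\Gamma$-module: if it were not, one could extract an infinite strictly ascending chain of $\Z\Gamma$-submodules, contradicting the finiteness coming from the abutment together with the finite-length, finite-dimensionality of the $E_2$-page horizontally. Having settled the top nonzero row, one truncates: replace $\tilde M$ by the covering space corresponding to a quotient and repeat, or more cleanly, run the spectral sequence for the relative/Postnikov-truncated situation so that the next row down becomes the top row. By descending induction on $q$ every $H^q(\tilde M,\Z)$ is finitely generated over $\Z\Gamma$, and since there are finitely many of them, $H^*(\tilde M,\Z)=\bigoplus_q H^q(\tilde M,\Z)$ is finitely generated over $\Z\Gamma$.

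\textbf{Main obstacle.} The delicate point is the inductive extraction of finite generation of $H^q(\tilde M,\Z)$ from the abutment: one must be careful that, in the Cartan–Leray spectral sequence, the differentials leaving and entering the relevant entries are maps of $\Z\Gamma$-modules (this is automatic since the whole spectral sequence is one of $\Z\Gamma$-modules once we remember the residual $\Gamma$-action coming from the deck group of $\tilde M\to M$ versus the translation action — here one uses that these two actions agree up to the module structure, which is where the equivariance of $\tilde f$ is used), and that "finitely generated abutment $\Rightarrow$ finitely generated $E_\infty$-entries $\Rightarrow$ finitely generated $E_2$-entries" really does propagate, using Noetherianity of $\Z\Gamma$ at each step to conclude that subquotients and kernels of maps between finitely generated modules are finitely generated. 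I expect this bookkeeping — rather than any single hard theorem — to be the bulk of the work, and it is exactly what is carried out in \cite[Lemma 8.2]{mundet2021topological}, so I would cite that proof and indicate only the (minor) changes needed because here $N/\Gamma$ is a nilmanifold rather than a torus: namely replace "$\Z^n$" and "$T^n$" by "$\Gamma$" and "$N/\Gamma$", and invoke that $\Z\Gamma$ is Noetherian and $\Gamma$ is a PD$_n$ group — both standard facts about lattices in nilpotent Lie groups.
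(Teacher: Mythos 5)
Your plan rests on the Cartan--Leray spectral sequence $E_2^{p,q}=H^p(\Gamma,H^q(\tilde M,\Z))\Rightarrow H^{p+q}(M,\Z)$ and on the inference ``abutment finitely generated $\Rightarrow$ $E_\infty$-entries finitely generated $\Rightarrow$ $E_2$-entries finitely generated $\Rightarrow$ $H^q(\tilde M,\Z)$ finitely generated over $\Z\Gamma$''. This inference is the gap, and it fails at every step. First, the $E_2$-terms $H^p(\Gamma,H^q(\tilde M,\Z))$ are only abelian groups; the $\Gamma$-module structure on the coefficients has been used up, so the differentials are not maps of $\Z\Gamma$-modules and there is no $\Z\Gamma$-module statement left to propagate. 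Second, finite generation of all the groups $H^p(\Gamma,A)$ does \emph{not} imply that $A$ is a finitely generated $\Z\Gamma$-module: already for $\Gamma=\Z$ and $A=\Q(t)$ with $t$ acting by multiplication, the invariants and coinvariants both vanish (so all group cohomology is finitely generated), yet $A$ is not finitely generated over $\Z[t^{\pm1}]$. Your ``infinite ascending chain of submodules'' argument contradicts nothing, because the abutment only sees group cohomology, not the module itself. Third, the degree-reason claim is wrong in the direction that matters: entries $E_r^{p,q_0}$ in the top nonzero row receive no differentials but do support outgoing ones, so $E_\infty^{p,q_0}$ is only a subgroup of $E_2^{p,q_0}$, and finite generation of a subgroup says nothing about the whole group. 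Finally, your parenthetical claim that the (homotopy) fiber has degreewise finitely generated cohomology over $\Z$ is false in general -- for a degree-one map from a genus-two surface to $T^2$ the corresponding cover has infinitely generated $H^1$ -- which is precisely why the lemma is a statement over $\Z\Gamma$ and not over $\Z$.

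Note also that the proof of \cite[Lemma 8.2]{mundet2021topological}, which you say you would follow, is not the spectral-sequence argument you describe; it is the argument the paper actually runs. One embeds $M$ in $\R^m$ as a Euclidean neighbourhood retract, covers a neighbourhood $U$ of $M$ by finitely many balls $B_1,\dots,B_s$, and observes that over each contractible ball the pulled-back principal $\Gamma$-bundle is trivial, so the preimage of any $S\subseteq B_i$ with finitely generated $H^*(S,\Z)$ has cohomology $H^*(S,\Z)\otimes_\Z\Z\Gamma$, finitely generated over $\Z\Gamma$. A Mayer--Vietoris induction over the partial unions $B_1\cup\dots\cup B_j$, using that $\Z\Gamma$ is Noetherian (here $\Gamma$ is polycyclic), shows the preimage of the whole neighbourhood has finitely generated cohomology as a $\Z\Gamma$-module; the retraction $r\colon B\to M$ then exhibits $H^*(\tilde M,\Z)$ as a $\Z\Gamma$-submodule of this module, and Noetherianity concludes. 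The Noetherian property of $\Z\Gamma$ is indeed the engine, as you say, but it must be applied to a geometric finite cover of $M$ by trivializing pieces, not extracted backwards from the Cartan--Leray abutment.
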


\begin{proof}
	Recall that compact manifolds are Euclidean Neighbourhood Retracts (see, for example, \cite[Corollary A.9]{hatcher2002algebraic}) and therefore we can identify $M$ with a closed subset of $\R^m$ for some $m$ large enough such that there exist an open neighbourhood $U\subseteq \R^m$ of $M$ and a retraction $r:U\longrightarrow M$. Given $x\in M$ let $B_x$ denote an open ball centred at $x$ and contained in $U$. By the compactness of $M$ there exists a collection of point $x_1,\dots, x_s$ in $M$ such that $M\subseteq\bigcup_{i=1}^s B_{x_i}=B$. Let $F=f\circ r:B\longrightarrow N/\Gamma$, $B_i=B_{x_i}$ and $F_i=f\circ r:B_i\longrightarrow N/\Gamma$.
	
	Since $B_{i}$ is contractible, the principal $\Gamma$-bundle $F_i^*\pi:F_i^*N\longrightarrow B_i$ obtained by pulling back the universal covering $\pi:N\longrightarrow N/\Gamma$ by $F_i$ is trivial. This implies that for every subset $S\subseteq B_i$, we have $H^*((F_i^*\pi)^{-1}(S),\Z)\cong H^*(S,\Z)\otimes_\Z\Z\Gamma$. So if $H^*(S,\Z)$ is finitely generated then $H^*((F_i^*\pi)^{-1}(S),\Z)$ is finitely generated as $\Z\Gamma$-module.
	
	Let $B_{\leq j}=B_1\cup\dots\cup B_j$ and $F_{\leq j}=f\circ r_{|B_{\leq j}}:B_{\leq j}\longrightarrow N/\Gamma$. To ease notation, we set $X_i=(F_i^*\pi)^{-1}(B_i)$ and $X_{\leq i}=(F_{\leq i}^*\pi)^{-1}(B_{\leq i})$. We will prove using the Mayer-Vietoris long exact sequence that $H^*(X_{\leq j},\Z)$ is finitely generated as $\Z\Gamma$-module. 
	
	If $j=1$ then $H^*(X_{\leq 1},\Z)\cong H^*(X_{1},\Z)\cong\Z\Gamma$ and hence it is finitely generated. Assume now that $H^*(X_{\leq j-1},\Z)$ is a finitely generated $\Z\Gamma$-module. By using that $B_{\leq j}=B_{\leq j-1}\cup B_j$, we have a long exact sequence of $\Z\Gamma$-modules:
	$$
	\dots \longrightarrow H^{k}(X_{\leq j},\Z)\longrightarrow H^{k}(X_{\leq j-1},\Z)\oplus H^{k}(X_{j},\Z)\longrightarrow H^{k}(X_{\leq j}\cap X_{j},\Z)\longrightarrow \dots 
	$$
	
	Since $H^*(B_{\leq j-1}\cap B_j,\Z)$ is finitely generated and it is a subset of $B_j$, the cohomology group $H^{k}(X_{\leq j}\cap X_{j},\Z)$ is a finitely generated $\Z\Gamma$-module. By induction hypothesis, we have that $H^{k}(X_{\leq j-1},\Z)\oplus H^{k}(X_{j},\Z)$ is finitely generated. We can conclude that $H^k(X_{\leq j},\Z)$ is finitely generated. Finally, $H^k(X_{\leq j},\Z)=0$ for $k>m$ implies that $H^*(X_{\leq j},\Z)$ is a finitely generated $\Z\Gamma$-module.
	
	Since we have an inclusion $i:M\longrightarrow B$ and a retraction $r:B\longrightarrow M$, the $\Z\Gamma$-module $H^*(\tilde{M},\Z)$ is a $\Z\Gamma$-submodule of $H^*((F^*\pi)^{-1}(B),\Z)$. Since $H^*((F^*\pi)^{-1}(B),\Z)$ is finitely generated and $\Z\Gamma$ is Noetherian (see \cite[Theorem 1.16]{goodearl2004introduction}), $H^*(\tilde{M},\Z)$ is finitely generated as $\Z\Gamma$-module.
\end{proof}

\textbf{Part 4. Setting for the proof of the second part of the theorem:} We suppose now that $(d_1,d_2)=(a,b)$. %By \cref{eta injective}, we can assume without loss of generality that $\eta_i'$ is injective. Then  $\eta_i'((\Z/p_i)^b)\cong\langle[\tfrac{1}{p_i},0,\dots,0],\dots,[0,0,\dots,\tfrac{1}{p_i}]\rangle$. 
By \cref{thm: exporting map hypernilmanifolds}, there exists a $\{(\Z/p_i^{a_i})^a,(\Z/p_i)^b\}$-equivariant map $h_i:M\longrightarrow N/\Gamma$ homotopic to $f$ for all $i$. Hence, for each $i$ there exist non-zero degree maps $h_i':M_i\longrightarrow N/\Gamma_i$ and $h_i'':M_i/(\Z/p_i)^b=M_i'\longrightarrow N/\Gamma_i'$ such that the following diagram commutes
\[
\begin{tikzcd}
	M\ar{d}{}\ar{r}{h_i} & N/\Gamma \ar{d}{}\\
	M_i\ar{d}{}\ar{r}{h_i'} & N/\Gamma_i \ar{d}{}\\
	M_i'\ar{r}{h_i''} & N/\Gamma_i'
\end{tikzcd}
\]
The vertical arrows are the orbit maps of the iterated group actions on $M$ and $N/\Gamma$. 

In this part of the proof we show that there exists group morphism $\Phi'_i:\Gamma'_i\longrightarrow \Aut_\Z(H^*(\tilde{M},\Z))$ such that $\Phi'_{i|\Gamma}=\Phi$ for each $i$. We will construct $\Phi'_i$ in two steps. First, we construct a group morphism $\Phi_i:\Gamma_i\longrightarrow \Aut_\Z(H^*(\tilde{M},\Z))$ such that $\Phi_{i|\Gamma}=\Phi$. Thereafter, we construct a group morphism $\Phi'_i:\Gamma'_i\longrightarrow \Aut_\Z(H^*(\tilde{M},\Z))$ such that $\Phi'_{i|\Gamma_i}=\Phi_i$. 

\begin{lem}\label{Gamma-modules 1}
	The action of $\Gamma_i$ on $H^*(\tilde{M},\Z)$ induces a morphism $\Phi_i:\Gamma_i\longrightarrow \Aut_\Z(H^*(\tilde{M},\Z))$ such that $\Phi_{i|\Gamma}=\Phi$ for all $i$.
\end{lem}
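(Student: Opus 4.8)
The plan is to produce, for each $i$, an action of $\Gamma_i$ on $\tilde M$ by homeomorphisms that extends the deck‑transformation action of $\Gamma$ on $\tilde M=f^{*}N$; passing to cohomology then gives the desired $\Phi_i$ with $\Phi_{i|\Gamma}=\Phi$, and since the construction is identical for every $i$ the uniformity over $i$ is automatic. The point is that the first step of the iterated action, the free action of $(\Z/p_i^{a_i})^{a}\cong\Gamma_i/\Gamma$ on $M$, lifts compatibly to $\tilde M$ and fits together with the deck $\Gamma$-action into a $\Gamma_i$-action.

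Concretely I would argue as follows. Since $h_i$ is homotopic to $f$, a homotopy induces a $\Gamma$-equivariant homeomorphism $h_i^{*}N\cong f^{*}N=\tilde M$, so it suffices to put the $\Gamma_i$-action on $h_i^{*}N=\{(m,\nu)\in M\times N:h_i(m)=\rho(\nu)\}$. By the construction in \cref{pE local systems}, the first-step action of $(\Z/p_i^{a_i})^{a}$ on $N/\Gamma$ is inner, i.e.\ it is the restriction to $\Gamma_i/\Gamma$ of the action of the lattice $\Gamma_i$ on $N/\Gamma$ by translations, and $h_i$ is equivariant for it. Writing $\bar\gamma_i\in\Gamma_i/\Gamma=(\Z/p_i^{a_i})^{a}$ for the class of $\gamma_i\in\Gamma_i$, one sets $\gamma_i\cdot(m,\nu)=(\bar\gamma_i\cdot m,\ \gamma_i\cdot\nu)$; the equivariance of $h_i$ and of $\rho\colon N\to N/\Gamma$ shows this pair again lies in $h_i^{*}N$, and it is routine that this defines a $\Gamma_i$-action. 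Equivalently, in covering-space language: the covering $\tilde M\to M$ is the one classified by $\ker f_{*}\le\pi_1(M)$, and using the map $h_i'\colon M_i\to N/\Gamma_i$ one checks by a short diagram chase that $\ker(h_i')_{*}=\ker f_{*}$ (the restriction of $(h_i')_{*}$ to $\pi_1(M)$ is $f_{*}$ followed by $\Gamma\hookrightarrow\Gamma_i$, and $(h_i')_{*}$ induces the canonical isomorphism $\pi_1(M_i)/\pi_1(M)\cong\Gamma_i/\Gamma$); hence $\ker f_{*}\trianglelefteq\pi_1(M_i)$ and $\tilde M\to M\xrightarrow{\pi_i}M_i$ is a regular covering with deck group $\pi_1(M_i)/\ker f_{*}\cong\Gamma_i$.

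It then remains to check $\Phi_{i|\Gamma}=\Phi$, which is immediate from the construction: for $\gamma_i\in\Gamma$ one has $\bar\gamma_i=1$, so $\gamma_i\cdot(m,\nu)=(m,\gamma_i\cdot\nu)$, which under the $\Gamma$-equivariant identification $h_i^{*}N\cong\tilde M$ is exactly the original deck action defining $\Phi$; hence the induced automorphism of $H^{*}(\tilde M,\Z)$ is $\Phi(\gamma_i)$. I expect the only delicate point — as opposed to a genuinely new idea — to be the bookkeeping that makes the identification $h_i^{*}N\cong f^{*}N$ compatible with the $\Gamma_i$-structure, so that the restriction of $\Phi_i$ to $\Gamma$ is literally $\Phi$ and not merely a conjugate of it, together with pinning down from \cref{pE local systems} that the first-step action on $N/\Gamma$ is indeed the restriction of the translation action of $\Gamma_i$; everything else is routine.
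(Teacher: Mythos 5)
Your proof is correct and takes essentially the same approach as the paper: there too one puts a $\Gamma_i$-action on the pull-back $\tilde{M}_i=h_i^*N$ extending the deck action of $\Gamma$ (the paper obtains it by identifying $\tilde{M}_i\to M_i$ with the pull-back of the principal $\Gamma_i$-covering $N\to N/\Gamma_i$ along $h_i'$, rather than via your explicit formula exploiting the equivariance of $h_i$ and the translation action of $\Gamma_i$), and then transports it to $H^*(\tilde{M},\Z)$ by conjugating with the $\Z\Gamma$-module isomorphism induced by the $\Gamma$-equivariant homeomorphism $\tilde{M}\cong\tilde{M}_i$ coming from the homotopy $h_i\simeq f$, which is exactly how the identity $\Phi_{i|\Gamma}=\Phi$ (and not merely a conjugate of it) is secured. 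The bookkeeping points you flag are precisely the ones the paper settles in the same way.
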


\begin{proof} 
	We denote by $\tilde{M}_i$ the total space of the pull-back of $N\longrightarrow N/\Gamma$ by $h_i:M\longrightarrow N/\Gamma$. Similarly, we denote by $\tilde{M}_i'$ the pull-back of $N\longrightarrow N/\Gamma_i$ by $h_i':M_i\longrightarrow N/\Gamma_i$. 
	
	We also have a free action of $\Gamma_i'$ on $\tilde{M}_i'$. We consider the commutative diagram
	\[\begin{tikzcd}
		\tilde{M}_i\ar{dd}{}\ar{rr}{}\ar{rd}{\zeta_i'} & & N\ar{rd}{Id_N}\ar{dd}{}&\\
		& \tilde{M}_i'\ar{dd}{}\ar{rr}{} & & N\ar{dd}{}\\
		M\ar{rr}{h_i}\ar{rd}{} & & N/\Gamma\ar{rd}{}&\\
		& M'_i\ar{rr}{h_i'} & & N/\Gamma_i
	\end{tikzcd}\]
	
	The maps $\tilde{M}_i'\longrightarrow M_i$ and $\tilde{M}_i\longrightarrow M_i$ are isomorphic coverings. Thus, $\tilde{M}_i$ admits a free action of $\Gamma_i$ and the map $\zeta_i':\tilde{M}_i\longrightarrow \tilde{M}'_i$ is a $\Gamma_i$ equivariant homeomorphism. By construction, the restriction to $\Gamma$ of the action of $\Gamma_i$ on $\tilde{M}_i$ is the action induced by the pull-back of $N\longrightarrow N/\Gamma$ by $h_i:M\longrightarrow N/\Gamma$. Thus, the action of $\Gamma_i$ on $\tilde{M}_i$ induces a group morphism $\Psi'_i:\Gamma_i\longrightarrow \Aut_\Z(H^*(\tilde{M_i},\Z))$ such that $\Psi'_{i|\Gamma}=\Psi_i$.

	Since, $h_i$ is homotopic to $f$, there exists a $\Gamma$-equivariant homeomorphism $\zeta_i:\tilde{M}\longrightarrow \tilde{M}_i$ which induces an isomorphism of $\Z\Gamma$-modules $\zeta_i^*:H^*(\tilde{M}_i,\Z)\longrightarrow H^*(\tilde{M},\Z)$. We define $\Phi_i:\Gamma_i\longrightarrow \Aut_\Z(H^*(\tilde{M},\Z))$ as $\Phi_i(\gamma)=\zeta_i^*\circ\Psi_i(\gamma)\circ(\zeta_i^*)^{-1}$. Since $\zeta_i^*$ is an isomorphism of $\Z\Gamma$, we have $\Phi_{i|\Gamma}=\Phi$.
\end{proof}

The proof of the next lemma is analogous to the proof of \cref{Gamma-modules 1}.

\begin{lem}\label{Gamma-module 2}
	The action of $\Gamma'_i$ on $H^*(\tilde{M},\Z)$ induces a morphism $\Phi'_i:\Gamma'_i\longrightarrow \Aut_\Z(H^*(\tilde{M},\Z))$ such that $\Phi'_{i|\Gamma}=\Phi$.
\end{lem}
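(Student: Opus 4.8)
The plan is to reproduce the argument of \cref{Gamma-modules 1} one step higher in the tower $\Gamma\trianglelefteq\Gamma_i\trianglelefteq\Gamma_i'$, using the commutative diagram of Part 4 relating the non-zero degree maps $h_i:M\to N/\Gamma$, $h_i':M_i\to N/\Gamma_i$ and $h_i'':M_i'\to N/\Gamma_i'$ to the orbit maps of the two iterated actions. First I would introduce, besides the space $\tilde M_i$ of \cref{Gamma-modules 1} (the pull-back of the universal cover $N\to N/\Gamma$ along $h_i$), the pull-back $\tilde M_i''$ of $N\to N/\Gamma_i'$ along $h_i''$, which is a connected regular covering of $M_i'$ with deck transformation group $\Gamma_i'$. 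Both $\tilde M_i''\to M_i'$ and the composite covering $\tilde M_i\to M\to M_i\to M_i'$ are connected coverings of $M_i'$ whose deck group is isomorphic to $\Gamma_i'$.

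The key step is to check that these two coverings of $M_i'$ are isomorphic, exactly as in \cref{Gamma-modules 1}. Since $h_i$ is $\{(\Z/p_i^{a_i})^a,(\Z/p_i)^b\}$-equivariant and homotopic to $f$, the induced maps on fundamental groups fit into a commutative ladder in which $h_{i*}:\pi_1(M)\to\Gamma$ is surjective (Part 2) and the maps on the successive quotients by $\pi_1(M)$ and $\Gamma$ are the identities; hence $h_{i*}'':\pi_1(M_i')\to\Gamma_i'$ is surjective and $\ker h_{i*}''=\ker h_{i*}\leq\pi_1(M)\leq\pi_1(M_i')$. As $\tilde M_i\to M_i'$ and $\tilde M_i''\to M_i'$ both correspond to this normal subgroup, they are isomorphic, say via a homeomorphism $\zeta_i':\tilde M_i\to\tilde M_i''$ intertwining the two covering structures. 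Transporting the deck action of $\Gamma_i'$ on $\tilde M_i''$ through $\zeta_i'$ endows $\tilde M_i$ with a free $\Gamma_i'$-action whose restriction to $\Gamma$ is the action coming from pulling back $N\to N/\Gamma$ by $h_i$, and whose restriction to $\Gamma_i$ recovers the action built in \cref{Gamma-modules 1}; this yields a morphism $\Psi_i'':\Gamma_i'\to\Aut_\Z(H^*(\tilde M_i,\Z))$ with $\Psi_{i|\Gamma}''=\Psi_i$.

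Finally I would transport everything to $\tilde M$. Since $h_i$ is homotopic to $f$, \cref{Gamma-modules 1} already supplies a $\Gamma$-equivariant homeomorphism $\zeta_i:\tilde M\to\tilde M_i$ inducing a $\Z\Gamma$-module isomorphism $\zeta_i^*:H^*(\tilde M_i,\Z)\to H^*(\tilde M,\Z)$. Setting $\Phi_i'(\gamma)=\zeta_i^*\circ\Psi_i''(\gamma)\circ(\zeta_i^*)^{-1}$ gives a group morphism $\Phi_i':\Gamma_i'\to\Aut_\Z(H^*(\tilde M,\Z))$, and because $\zeta_i^*$ is an isomorphism of $\Z\Gamma$-modules we obtain $\Phi_{i|\Gamma}'=\Phi$ (and likewise $\Phi_{i|\Gamma_i}'=\Phi_i$), as desired.

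The only genuinely delicate point, where the word ``analogous'' must be used with care, is the identification of the two coverings of $M_i'$ together with the verification that the $\Gamma_i'$-action produced on $\tilde M_i$ truly extends the $\Gamma_i$-action of \cref{Gamma-modules 1} rather than merely the $\Gamma$-action, so that in Part 5 the $\Z\Gamma_i'$-module structure on $H^*(\tilde M,\Z)$ is compatible with all the previously constructed structures. Everything else is a formal repetition of the pull-back and homotopy-invariance bookkeeping already carried out for \cref{Gamma-modules 1}.
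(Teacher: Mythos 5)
Your proposal is correct and is essentially the paper's argument: introduce $\tilde{M}_i''$ as the pull-back of $N\longrightarrow N/\Gamma_i'$ along $h_i''$, identify it with the covering of $M_i'$ coming from $\tilde{M}_i$, transport the $\Gamma_i'$-deck action, and conjugate by $\zeta_i^*$ to define $\Phi_i'$ with $\Phi'_{i|\Gamma}=\Phi$. The only cosmetic difference is that the paper factors the identification through $\tilde{M}_i'$ (using $\zeta_i'$ from \cref{Gamma-modules 1} together with a new $\zeta_i''$) and records the compatibility as $\Phi'_{i|\Gamma_i}=\Phi_i$, whereas you compare the composite covering $\tilde{M}_i\longrightarrow M_i'$ with $\tilde{M}_i''\longrightarrow M_i'$ directly via the equality of kernels $\ker h''_{i*}=\ker h_{i*}$, which is a perfectly valid justification of the same step.
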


\begin{proof}
	We continue using the same notation introduced in \cref{Gamma-modules 1}. We denote by $\tilde{M}_i''$ the pull-back of $N\longrightarrow N/\Gamma'_i$ by $h_i'':M_i\longrightarrow N/\Gamma'_i$.
	
	We also have a free action of $\Gamma'_i$ on $\tilde{M}_i'$. We consider the commutative diagram
	\[\begin{tikzcd}
		\tilde{M}'_i\ar{dd}{}\ar{rr}{}\ar{rd}{\zeta_i''} & & N\ar{rd}{Id_N}\ar{dd}{}&\\
		& \tilde{M}''_i\ar{dd}{}\ar{rr}{} & & N\ar{dd}{}\\
		M_i\ar{rr}{h'_i}\ar{rd}{} & & N/\Gamma_i\ar{rd}{}&\\
		& M'_i\ar{rr}{h_i''} & & N/\Gamma'_i
	\end{tikzcd}\]
	
	The maps $\tilde{M}_i'\longrightarrow M_i'$ and $\tilde{M}''_i\longrightarrow M_i'$ are isomorphic coverings. Thus, $\tilde{M}_i'$ admits a free action of $\Gamma_i'$ and the map $\zeta_i'':\tilde{M}_i\longrightarrow \tilde{M}'_i$ is a $\Gamma_i'$-equivariant homeomorphism. By construction, the restriction to $\Gamma_i$ of the action of $\Gamma_i'$ on $\tilde{M}_i$ is the action induced by the pull-back of $N\longrightarrow N/\Gamma$ by $h_i:M\longrightarrow N/\Gamma$. Thus, the action of $\Gamma_i'$ on $\tilde{M}_i'$ induces a group morphism $\Psi''_i:\Gamma_i'\longrightarrow \Aut_\Z(H^*(\tilde{M}_i',\Z))$ satisfying $({\zeta_i'}^*\circ \Psi''_i \circ ({\zeta_i'}^*)^{-1})_{|\Gamma_i}=\Psi_i'$, where ${\zeta_i'}^*:H^*(\tilde{M}'_i,\Z)\longrightarrow H^*(\tilde{M}_i,\Z)$ is the isomorphism of $\Z\Gamma_i$-modules induced by $\zeta_i'$ defined in \cref{Gamma-modules 1}.
	
	Finally, we can define $\Phi_i''$ as $\Phi_i''(\gamma)=({\zeta_i^*\circ\zeta_i'}^*)\circ \Psi''_i(\gamma) \circ ({\zeta_i^*\circ\zeta_i'}^*)^{-1}$. Since ${\zeta_i'}^*$ is an isomorphism of  $\Z\Gamma_i$-modules, we have $\Phi''_{i|\Gamma_i}=\Phi_i$.
\end{proof}

\textbf{Part 5. $H^*(\tilde{M},\Z)$ is a finitely generated $\Z$-module:} 
Our objective in part 5 of the proof will be to prove that $H^*(\tilde{M},\Z)$ is finitely generated as a $\Z$-module. 

Recall that $\Gamma_i/\Gamma\cong \Zc\Gamma_i/\Zc\Gamma\cong (\Z/p_i^{a_i})^a$ and $\Gamma'_i/\Gamma_i\cong q(\Gamma_i)/q(\Gamma)\cong (\Z/p_i)^b$. Note that $\Gamma_i\cong (\tfrac{1}{p_i^{a_i}}\Z)^f\times_c\Z^b$ and $\Gamma_i'\cong (\tfrac{1}{p_i^{a_i}}\Z)^f\times_c(\tfrac{1}{p_i}\Z)^b$. The lattice $\Gamma_i'$ is generated $\{\tfrac{1}{p_i^{a_i}}e_1,\dots,\tfrac{1}{p_i^{a_i}}e_a,e_1',\dots,e'_b\}$ and $\Gamma_i''$ is generated by $\{\tfrac{1}{p_i^{a_i}}e_1,\dots,\tfrac{1}{p_i^{a_i}}e_a,\tfrac{1}{p_i}e_1',\dots,\tfrac{1}{p_i}e'_b\}$.

By \cref{Gamma-modules 1} and \cref{Gamma-module 2}, for each $1\leq j\leq a$ there exists a collection $\{w_{j,i}\}_{i\in\mathbb{N}}$ of automorphisms of $H^*(M,\Z)$ satisfying that $(w_{j,i})^{p_i^{a_i}}=\Phi(e_j)$. Similarly, for each $1\leq j\leq b$ there exists a collection $\{w'_{j,i}\}_{i\in\mathbb{N}}$ of automorphisms of $H^*(M,\Z)$ satisfying that $(w_{j,i}')^{p_i}=\Phi(e'_j)$.

Recall that $\Z\Gamma$ is an iterated skew-Laurent ring generated by $\{e_1^{\pm1},\dots,e_a^{\pm1},{e_1'}^{\pm1},\dots,{e_b'}^{\pm1}\}$ (see \cite[pg. xvii]{goodearl2004introduction}). Explicitly, $\Z\Gamma\cong \Z[e_1^{\pm1},\dots,e_a^{\pm1},{e_1'}^{\pm1},\dots,{e_b'}^{\pm1};\alpha_1,\cdots,\alpha_{a+b}]$ where the automorphism are defined using the cocycle $c$. 

The main theorem of this part is the generalization of \cite[Theorem 6.1]{mundet2021topological}. Be aware that the ring $R$ involved in the theorem below is in general not commutative. We refer to \cite{bell1988notes} and \cite{goodearl2004introduction} for an introduction to the theory of non-commutative Noetherian rings and, in particular, to the theory of localization on those rings.

\begin{thm}\label{non commutative finitely generation modules}
	Let $R$ be a prime Noetherian ring such that any prime ideal $\mathfrak{p}$ is right localizable. Given an automorphism $\alpha:R\longrightarrow R$, we consider the skew-polynomial ring $R[z;\alpha]$. Suppose that $X$ is a finitely generated $R[z;\alpha]$-module and that there exists a sequence of positive integers $r_j\longrightarrow\infty$ and a collection of automorphisms $w_j:X\longrightarrow X$ such that $w_j^{r_j}$ coincides with the multiplication by $z$ on the right. Then $X$ is finitely generated as $R$-module. 
\end{thm}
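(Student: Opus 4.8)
The plan is to imitate the strategy of \cite[Theorem 6.1]{mundet2021topological}, but to run it in the noncommutative Noetherian setting, using Goldie's theorem and localization at prime ideals as substitutes for the commutative localization arguments. First I would reduce to a module-theoretic dichotomy: either $X$ is a torsion $R[z;\alpha]$-module (every element is killed by a nonzero element of $R[z;\alpha]$, in the appropriate Ore sense, since $R[z;\alpha]$ is a prime Noetherian ring and hence has a simple Artinian classical quotient ring by Goldie's theorem) or it is not. If $X$ is not torsion, it contains a copy of $R[z;\alpha]$, or at least a submodule on which multiplication by $z$ acts with "infinite order" in a suitable sense; I would then derive a contradiction from the existence of the $w_j$, because $w_j^{r_j}$ being multiplication by $z$ with $r_j\to\infty$ forces the group generated by the relevant automorphisms of $X$ (or of a localization of $X$) to contain elements of arbitrarily large finite order dividing no fixed integer, which cannot happen when $X$ is, say, faithful over a domain-like quotient. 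So the core case is when $X$ is a torsion $R[z;\alpha]$-module; then $X$ is supported on a finite set of prime ideals $\mathfrak p_1,\dots,\mathfrak p_k$ of $R[z;\alpha]$ (using that $R[z;\alpha]$ is Noetherian so $X$ has a finite filtration with factors of the form $R[z;\alpha]/\mathfrak p_j$), and it suffices to treat each such prime factor.

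Next I would analyze a prime $\mathfrak p$ of $R[z;\alpha]$ appearing in the support of $X$. The key structural input is the relationship between primes of $R[z;\alpha]$ and $\alpha$-primes of $R$: either $\mathfrak p \cap R$ is an $\alpha$-prime ideal $\mathfrak q$ of $R$ with $\mathfrak p = \mathfrak q[z;\alpha]$, or $\mathfrak p$ contracts to an $\alpha$-stable prime and $\mathfrak p$ corresponds to a prime of a skew-Laurent-type localization (this is the standard ideal theory of skew polynomial rings, see \cite{goodearl2004introduction}). In the first case $R[z;\alpha]/\mathfrak p \cong (R/\mathfrak q)[\bar z;\bar\alpha]$, a skew polynomial ring over a prime Noetherian ring, and $X$ has a free-ish action of $\bar z$ — here I would show that such a module cannot simultaneously admit the automorphisms $w_j$ with $w_j^{r_j}=$ multiplication by $\bar z$: pass to the Goldie quotient ring $Q$ of $(R/\mathfrak q)[\bar z;\bar\alpha]$, extend scalars, and observe $\bar z$ becomes a unit which is not a root of unity times a unit of the coefficient field in any controlled way, so it cannot have $r_j$-th roots in $\operatorname{Aut}(X\otimes Q)$ for all large $r_j$ (this is where the right-localizability hypothesis on primes is used, to make the localization and the extension-of-scalars behave well). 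In the second, "Laurent" case, the contraction $\mathfrak q=\mathfrak p\cap R$ is an honest prime of $R$ and, after localizing $R$ at $\mathfrak q$ (legitimate by hypothesis), $X_{\mathfrak q}$ becomes a finitely generated module over a localization of $R$ on which $z$ acts invertibly; the Noetherian induction on $R$ (replacing $R$ by the prime Noetherian ring $R/\mathfrak q$, which has smaller Krull dimension or at least sits lower in a suitable well-founded order) lets me assume the theorem already known for smaller coefficient rings, and then $X_{\mathfrak q}$ finitely generated over $R/\mathfrak q$ together with a standard gluing/patching over the finitely many associated primes yields $X$ finitely generated over $R$.

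The main obstacle, and the step I expect to require the most care, is making the "$z$ has no compatible system of $r_j$-th roots" argument rigorous in the noncommutative setting: over a field one simply says a nonzero scalar other than a root of unity has only finitely many roots of bounded order, but here $z$ lives in a skew polynomial ring and its image in $\operatorname{Aut}_{\mathbb Z}(X)$ need not be central, so I have to extract a genuine numerical invariant — most likely by applying a reduced-norm or determinant functor after passing to the simple Artinian quotient ring $Q$ of the relevant prime factor, or by tracking the action on a well-chosen associated graded or on $K_0$/$K_1$, to convert "$w_j^{r_j}=z$" into an equation in an abelian group where $r_j\to\infty$ is visibly impossible unless $z$ acts with finite order, which in turn forces the torsion-module factor to be finitely generated over $R$. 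Setting up this invariant compatibly with the localizations at the primes $\mathfrak p_j$ and with the Noetherian induction is the technical heart of the proof; once it is in place, the rest is bookkeeping of filtrations and Mayer–Vietoris-style gluing over $\operatorname{Spec}$.
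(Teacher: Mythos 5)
Your proposal has a genuine gap at its core: the step that would actually prove the theorem is never carried out. You reduce (via a torsion/non-torsion dichotomy and a case analysis on primes of $R[z;\alpha]$) to the claim that right multiplication by $z$ cannot admit $r_j$-th roots in the automorphism group of the relevant module for arbitrarily large $r_j$, and you explicitly defer the justification to a hoped-for invariant (reduced norm, $K_0/K_1$, associated graded). But that claim is the wrong target and is not true in the needed generality: nothing prevents multiplication by $z$ from having roots of every order — in the geometric application such roots $w_j$ \emph{do} exist by hypothesis — and the theorem's conclusion is not that this situation is impossible but that it forces finite generation over $R$. Relatedly, the remark that the $w_j$ would give ``elements of arbitrarily large finite order'' is unfounded ($w_j^{r_j}=z$ has infinite order in general, so no finite-order elements arise), and the torsion case does not behave as you assume: a Goldie-torsion $R[z;\alpha]$-module need not be finitely generated over $R$ (for an $\alpha$-prime $\mathfrak q$ of $R$, the module $(R/\mathfrak q)[z;\bar\alpha]$ is killed by regular elements of $\mathfrak q[z;\alpha]$ yet is infinitely generated over $R$), so the support/filtration reduction does not isolate the difficulty. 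Finally, the ``gluing over $\operatorname{Spec}$'' step has no noncommutative meaning as stated, and the right-localizability hypothesis is not what makes Goldie quotients available (those exist for any prime Noetherian ring); it is needed elsewhere.

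For comparison, the paper's proof is an elementary growth-counting argument that avoids all of this machinery. One sets $X_0$ equal to the $R$-span of a finite generating set and $X_i=X_{i-1}+X_{i-1}z$; Noetherianity of $R$ forces the surjections $X_{i-1}/X_{i-2}\to X_i/X_{i-1}$ induced by $z$ to become isomorphisms from some $i_0$ on. If the stable quotient $Y=X_{i_0}/X_{i_0-1}$ were nonzero, one localizes at a minimal associated prime $\mathfrak p$ of $Y$ (this is precisely where right localizability is used), so that $Y_{\mathfrak p}$ has finite length $\lambda$ over $R_{\mathfrak p}$. Taking an automorphism $w$ with $w^{r}=z$ and $r>\lambda$, one runs the same construction with $X'_i=X'_{i-1}+w(X'_{i-1})$ and uses the sandwich $X_i\subseteq X'_{ri}\subseteq X_{i+e}$ (with $e$ a fixed degree bound coming from expressing the $w^j(x_k)$ in the generators) to trap the length of the stable quotient for $w$ strictly between $0$ and $1$ after localizing, contradicting integrality of length. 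No classification of primes of $R[z;\alpha]$, no Goldie quotient ring, and no norm or $K$-theoretic invariant is required; the ``numerical invariant'' is simply length growth per step. If you want to salvage your outline, the missing heart is exactly such a quantitative comparison between the $z$-filtration and the $w$-filtration; the qualitative root-extraction obstruction you propose will not close the argument.
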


\begin{proof}
	Let $S=\{x_1,\dots,x_s\}$ be a generating set of $X$ as a $R[z;\alpha]$-module and let $X_0\subseteq X$ be the $R$-module generated by $S$. Consider the increasing sequence of finitely generated $R$-modules $X_0\subseteq X_1\subseteq X_2\subseteq\cdots$ defined by the condition $X_i=X_{i-1}+X_{i-1}z$. Multiplication by $z$ induces surjective morphisms of $R$-modules $\mu_i:X_{i-1}/X_{i-2}\longrightarrow X_{i}/X_{i-1}$ and thus we can define surjective maps $\nu_i=\mu_i\circ\cdots\circ \mu_1:X_0\longrightarrow X_{i}/X_{i-1}$. We have an increasing sequence $K_0\subseteq K_1\subseteq K_2\subseteq \cdots$ of submodules of $X_0$ where $K_i=\ker\nu_i$. Since $R$ is Noetherian and $X_0$ is finitely generated there exists a $i_0$ such that $K_i=K_{i_0}$ for all $i\geq i_0$. In particular, $\mu_i$ is an isomorphism for all $i_0$.
	
	Let $Y=X_{i_0}/X_{i_0-1}$. If $Y=0$ then $X=X_{i_0-1}$ and we are done. Thus we will assume that $Y\neq 0$ and reach a contradiction. Since $Y$ is a finitely generated $R$-module and $R$ is Noetherian, there exists an increasing series of submodules $0=Y_0\subseteq Y_1\subseteq \cdots \subseteq Y_r=Y$ such that $Y_i/Y_{i-1}$ is a prime module (see \cite[Proposition 3.13, Proposition 3.14]{goodearl2004introduction}). Let $\mathfrak{p}$ denote a minimal prime ideal of the collection of the associated prime ideals $\{\mathfrak{p}_1,\dots,\mathfrak{p}_r\}$. We can now consider the right localization $R_\mathfrak{p}$ and we can also consider the localization $Y_\mathfrak{p}$. Since localizing is an exact functor (see \cite[Lemma 1.3]{bell1988notes}), $(Y_{i\mathfrak{p}}/Y_{i-1\mathfrak{p}})=(Y_{i}/Y_{i-1})_{\mathfrak{p}}$. Moreover, $R_\mathfrak{p}/\mathfrak{p}R_\mathfrak{p}$ is simple Artinian (see \cite[Theorem 1.10]{bell1988notes} and \cite[Corollary 2.2]{bell1988notes}), hence the length $\lenght(Y_\mathfrak{p})=\lambda $ of the composition series is finite. % and since $(R/\mathfrak{p}_i)_\mathfrak{p}$ is either $0$ or simple Artinian, the length of the composition series is precisely $\lenght(N_\mathfrak{p})=|\{\mathfrak{p}_i:\mathfrak{p}=\mathfrak{p}_i\}|$.
	
	We will now prove that there cannot exists any $R[z;\alpha]$-module automorphism $w$ such that $w^r=z$ with $r>\lambda$ by reaching a contradiction. Thus, assume that there exists an automorphism $w:X\longrightarrow X$ such that $w^r=z$ and $r>\lambda$. Consider the following $R$-modules defined recursively as $X'_0=X_0$ and $X_i'=X'_{i-1}+w(X'_{i-1})$ for $i\geq 1$. Using the same arguments as above, there exists a $i_0'$ such that $\mu_i':X'_{i-1}/X'_{i-2}\longrightarrow X'_{i}/X'_{i-1}$ is an isomorphism for all $i\geq i_0'$. Let $Y'=X'_{i_0'}/X'_{i_0'-1}$.
	
	It is clear that $X_i\subseteq X'_{ri}$ for all $i$. On the other hand, since $X$ is finitely generated as a $R[z;\alpha]$-module we can write $$w^j(x_k)=x_1P_{jk1}+\cdots+x_sP_{jks}$$ for $j=1,\dots, r-1$ and $k=1,\dots, s$, where $P_{jkl}$ are polynomials in $z$ with coefficients in $R$. Let $e=\max\deg P_{jkl}$. Then $w^j(x_k)\in X_e$ for $j=1,\dots r-1$ and $k=1,\dots, s$. This implies that $X_i'\subseteq X_{[i/r]+e}$ for any $i$. Thus, we have inclusions $X_{i}\subseteq X'_{ir}\subseteq X_{i+e}$. 
	
	The next step is to localize at $\mathfrak{p}$ in order to use the length of the composition series. Firstly, we consider the inclusions $X_{i+L}\subseteq X'_{(i+L)r}\subseteq X_{i+L+e}$ where $L$ is  large number which we will determine below. If we fix $i$ such that $i\geq i_0$ and $i\geq i_0'r$ these inclusions imply that $$ 0\leq \lenght(X_{i+L+e,\mathfrak{p}}/X'_{(i+L)r,\mathfrak{p}})\leq \lenght(X_{i+L+e,\mathfrak{p}}/X_{(i+L)r,\mathfrak{p}})=e\lambda.$$
	
	Moreover, the chain of inclusion $X_{i,\mathfrak{p}}\subseteq X'_{ir,\mathfrak{p}}\subseteq X'_{(i+1)r,\mathfrak{p}}\subseteq \cdots\subseteq X'_{(i+L)r,\mathfrak{p}}\subseteq X_{i+L+e,\mathfrak{p}}$ imply that $$(i+L)\lambda=\lenght(X'_{(i+L)r,\mathfrak{p}}/X'_{ir,\mathfrak{p}})+\lenght(X_{i+L+e,\mathfrak{p}}/X'_{(i+L)r,\mathfrak{p}}) $$ and 
	$$\lenght(X'_{(i+L)r,\mathfrak{p}}/X'_{ir,\mathfrak{p}})=rL\lenght(Y'_\mathfrak{p}). $$
	
	In conclusion, $$\lambda/r\leq \lenght(Y'_\mathfrak{p})\leq \tfrac{L+e}{Lr}\lambda.$$
	
	Note that the lower bound is inside the interval $(0,1)$ and if $L$ is big enough then the upper bound also is inside the interval $(0,1)$, contradicting the fact that $\lenght(Y'_\mathfrak{p})$ is an integer.
\end{proof}

We want now to extend \Cref{non commutative finitely generation modules} to skew-Laurent rings. Thus, we consider the skew-Laurent ring $R[z^{\pm 1};\alpha]$, where $\alpha\in \Aut(R)$. We now construct an iterated skew-polynomial ring as follows (see \cite[Exercise 1R]{goodearl2004introduction}). First, we consider the skew-polynomial ring $R[t_+;\alpha_+]$ where $\alpha_+=\alpha$. We now define a map $\alpha_-:R[t_+;\alpha_+]\longrightarrow R[t_+;\alpha_+]$ satisfying that $\alpha_-(\sum t^ir_i)=\sum t^i\alpha^{-1}(r_i)$. Using that $rt=t\alpha^{-1}(r)$ and that $\alpha$ is an automorphism, it is straightforward to prove that $\alpha_-$ is an automorphism of $R[t_+;\alpha_+]$. Thus, we can consider the iterated skew-polynomial ring $(R[t_+;\alpha_+])[t_-;\alpha_-]$. Consider the map $\mu: (R[t_+;\alpha_+])[t_-;\alpha_-]\longrightarrow R[z^{\pm1};\alpha]$ defined as $\mu(\sum t_-^i(\sum t_+^jr_{ij}))=\sum z^{j-i}r_{ij}$. As before, a routine check shows that $\mu$ is a surjective ring morphism. Thus, if $X$ is a finitely generated $R[z_{\pm1};\alpha]$-module then $X$ is finitely generated as a $(R[t_+;\alpha_+])[t_-;\alpha_-]$-module. Assume that there exists a sequence of positive integers $r_i\longrightarrow\infty$ and a collection of $R[z^{\pm1};\alpha]$-automorphisms $w_i:X\longrightarrow X$ such that $w_i^{r_i}$ coincides with the multiplication by $z$ on the right. We define automorphisms $w_{i+}:X\longrightarrow X$ and $w_{i-}:X\longrightarrow X$ satisfying that $w_{i+}=w_i$ and $w_{i-}=w_i^{-1}$ for all $i$. By construction, $w_{i+}^{r_i}$ coincides with the right multiplication by $t_+$ and $w_{i+}^{r_i}$ coincides with the right multiplication by $t_-$. Thus, we have:

\begin{cor}\label{non commutative finitely generation modules skew-Laurent}
	Let $R$ be a prime Noetherian ring such that any prime ideal $\mathfrak{p}$ is right localizable. Suppose that $M$ is a finitely generated $R[z^{{\pm1}};\alpha]$-module and that there exists a sequence of positive integers $r_j\longrightarrow\infty$ and a collection of automorphisms $w_j:M\longrightarrow M$ such that $w_j^{r_j}$ coincides with the right multiplication by $z$. Finally, assume that $R[t_+;\alpha_+]$ defined as above is also a prime Noetherian ring such that any prime ideal $\mathfrak{p}$ is right localizable. Then $M$ is finitely generated as $R$-module.
\end{cor}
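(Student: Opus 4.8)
The plan is to deduce the statement from \cref{non commutative finitely generation modules} by a two-step bootstrap along the iterated skew-polynomial presentation constructed just above the statement: first strip off the variable $t_-$, then strip off $t_+$. Write $S=R[t_+;\alpha_+]$, so that the iterated ring is $S[t_-;\alpha_-]$, and recall that $\mu\colon S[t_-;\alpha_-]\longrightarrow R[z^{\pm 1};\alpha]$ is a surjective ring morphism; hence restriction of scalars along $\mu$ turns the finitely generated $R[z^{\pm1};\alpha]$-module $M$ into a finitely generated $S[t_-;\alpha_-]$-module.

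First I would apply \cref{non commutative finitely generation modules} with $R$ replaced by $S$, the skew variable $z$ replaced by $t_-$, the automorphism $\alpha$ replaced by $\alpha_-$, the module $X$ replaced by $M$, and the automorphisms $w_j$ replaced by $w_{j-}:=w_j^{-1}$. The hypotheses are in place: $S=R[t_+;\alpha_+]$ is assumed to be prime Noetherian with every prime ideal right localizable; $\alpha_-$ is a ring automorphism of $S$ (checked in the paragraph preceding the statement); $M$ is finitely generated over $S[t_-;\alpha_-]$ by the previous paragraph; and $w_{j-}^{r_j}=(w_j^{r_j})^{-1}$ is right multiplication by $z^{-1}=\mu(t_-)$, i.e.\ by $t_-$, with $r_j\to\infty$. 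The conclusion is that $M$ is finitely generated as an $S=R[t_+;\alpha_+]$-module.

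Then I would apply \cref{non commutative finitely generation modules} a second time, now with the ring $R$ itself, the skew variable $t_+$, the automorphism $\alpha_+=\alpha$, the module $M$ (finitely generated over $R[t_+;\alpha_+]$ by the previous step), and the automorphisms $w_{j+}:=w_j$, which satisfy $w_{j+}^{r_j}=w_j^{r_j}=$ right multiplication by $z=\mu(t_+)$, i.e.\ by $t_+$, again with $r_j\to\infty$; here the hypothesis that every prime ideal of $R$ is right localizable is exactly the remaining assumption of the corollary. This yields that $M$ is finitely generated as an $R$-module, as claimed.

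There is no genuinely new difficulty here beyond \cref{non commutative finitely generation modules}: the argument is a formal iteration, and the two localizability hypotheses in the statement are precisely what makes the two successive invocations legitimate. The only points needing a line of verification are bookkeeping ones — that $w_{j\pm}$ are module automorphisms of $M$ having the asserted $r_j$-th powers (immediate from $w_j^{r_j}$ being right multiplication by $z^{\pm1}$ and from $w_j$ commuting with its own powers), and that $S[t_-;\alpha_-]$ is indeed the iterated skew-polynomial ring through which $\mu$ factors the module structure on $M$, which was established above.
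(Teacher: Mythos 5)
Your proposal is correct and is essentially identical to the paper's own argument: the paper also restricts scalars along the surjection $\mu$ to the iterated skew-polynomial ring $(R[t_+;\alpha_+])[t_-;\alpha_-]$ and applies \cref{non commutative finitely generation modules} twice, first with the automorphisms $w_{j-}=w_j^{-1}$ to pass from $(R[t_+;\alpha_+])[t_-;\alpha_-]$ down to $R[t_+;\alpha_+]$, and then with $w_{j+}=w_j$ to pass down to $R$.
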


Recall that $\tilde{M}$ denotes the total space of the pull-back of the covering $N\longrightarrow N/\Gamma$ by $f:M\longrightarrow N/\Gamma$. Since the group ring $\Z\Gamma$ satisfies the conditions of \cref{non commutative finitely generation modules skew-Laurent} (see \cite[(A) Connell's Theorem]{lam1991first} and \cite[pg. 17, Appendix B]{bell1988notes}) and they are iterated skew-Laurent rings, we can use downward induction to prove:

\begin{cor}\label{cohomology fg group}
	$H^*(\tilde{M},\Z)$ is a finitely generated $\Z$-module.
\end{cor}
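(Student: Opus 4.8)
The plan is to deduce the statement from \cref{cohomology fg group ring} and \cref{non commutative finitely generation modules skew-Laurent} by downward induction along the iterated skew-Laurent presentation of $\Z\Gamma$. Recall that in Part 4 we produced, for each generator $e_j$ ($1\le j\le a$) a sequence $\{w_{j,i}\}_{i\in\mathbb{N}}$ of automorphisms of $H^*(\tilde M,\Z)$ with $(w_{j,i})^{p_i^{a_i}}=\Phi(e_j)$, and for each generator $e'_j$ ($1\le j\le b$) a sequence $\{w'_{j,i}\}_{i\in\mathbb{N}}$ with $(w'_{j,i})^{p_i}=\Phi(e'_j)$; in both cases the exponents tend to $\infty$ as $i\to\infty$, because $p_i\to\infty$. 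By \cref{cohomology fg group ring}, $H^*(\tilde M,\Z)$ is finitely generated over $\Z\Gamma\cong\Z[e_1^{\pm1},\dots,e_a^{\pm1},{e_1'}^{\pm1},\dots,{e_b'}^{\pm1};\alpha_1,\dots,\alpha_{a+b}]$, viewed as an iterated skew-Laurent ring built one variable at a time.

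First I would set up the induction. Write $R_0=\Z$ and, for $1\le k\le a+b$, let $R_k=R_{k-1}[z_k^{\pm1};\beta_k]$ be the $k$-th stage of the iterated skew-Laurent construction, so that $R_{a+b}=\Z\Gamma$ and $z_k$ is $e_k$ for $k\le a$ and $e'_{k-a}$ for $k>a$; each $\beta_k$ is the automorphism of $R_{k-1}$ induced by the relevant $\alpha_k$. The inductive claim is: $H^*(\tilde M,\Z)$ is finitely generated as an $R_k$-module, for $k=a+b,a+b-1,\dots,1,0$. The base case $k=a+b$ is \cref{cohomology fg group ring}. For the inductive step from $k$ to $k-1$, I would apply \cref{non commutative finitely generation modules skew-Laurent} with $R=R_{k-1}$, $z=z_k$, $\alpha=\beta_k$: the module is finitely generated over $R_{k-1}[z_k^{\pm1};\beta_k]=R_k$ by the inductive hypothesis, and the required sequence of automorphisms $w_j$ with $w_j^{r_j}$ equal to right multiplication by $z_k$ (and $r_j\to\infty$) is exactly $\{w_{k,i}\}$ (resp. $\{w'_{k-a,i}\}$). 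The conclusion is that $H^*(\tilde M,\Z)$ is finitely generated over $R_{k-1}$. After $a+b$ steps we reach $R_0=\Z$, which is the statement.

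The verification needed to apply \cref{non commutative finitely generation modules skew-Laurent} at each stage is that $R_{k-1}$ is a prime Noetherian ring in which every prime ideal is right localizable, and that the auxiliary skew-polynomial ring $R_{k-1}[t_+;(\beta_k)_+]$ has the same properties. This is where I would cite \cite[(A) Connell's Theorem]{lam1991first} (primeness of group rings of torsion-free abelian-by-... — more precisely of the relevant intermediate groups, which are themselves finitely generated torsion-free nilpotent, hence have prime Noetherian group rings) and \cite[pg. 17, Appendix B]{bell1988notes} for the localizability of prime ideals in Noetherian group rings of polycyclic-by-finite groups; each intermediate ring $R_k$ is itself the group ring $\Z\Gamma_k$ of a finitely generated torsion-free nilpotent group $\Gamma_k$ (the subgroup of $\Gamma$ generated by $z_1,\dots,z_k$, or rather the quotient structure matching the skew-Laurent presentation), and likewise $R_{k-1}[t_+;(\beta_k)_+]$ is an iterated skew-polynomial ring over such a group ring, which is again prime Noetherian with localizable primes. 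The main obstacle, and the one point requiring care rather than routine invocation, is making sure the intermediate rings $R_k$ really are of the form covered by these theorems and that the automorphisms $\beta_k$ (and the $(\beta_k)_+$) are set up consistently with the cocycle $c$, so that the hypotheses of \cref{non commutative finitely generation modules skew-Laurent} are genuinely met at every stage; once that bookkeeping is in place the downward induction closes immediately.
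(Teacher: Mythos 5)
Your proposal is correct and follows essentially the same route as the paper: finite generation over $\Z\Gamma$ from \cref{cohomology fg group ring}, then downward induction along the iterated skew-Laurent presentation, eliminating one generator at a time via \cref{non commutative finitely generation modules skew-Laurent} using the root automorphisms $w_{j,i}$, $w'_{j,i}$ from \cref{Gamma-modules 1} and \cref{Gamma-module 2}, with the prime Noetherian/localizability hypotheses supplied by Connell's theorem and Bell's notes. Your extra care in checking that the intermediate rings are group rings of finitely generated torsion-free nilpotent subgroups is a detail the paper leaves implicit, but it is the same argument.
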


\begin{proof}
	By \cref{cohomology fg group ring}, $H^*(\tilde{M},\Z)$ is a finitely generated as $\Z\Gamma$-module. Recall also that $\Z\Gamma\cong \Z[e_1^{\pm1},\dots,e_a^{\pm1},{e_1'}^{\pm1},\dots,{e_b'}^{\pm1};\alpha_1,\cdots,\alpha_{a+b}]$. By \cref{Gamma-modules 1} and \cref{Gamma-module 2}, there exists a collection of $\{w'_{b,i}\}_{i\in\mathbb{N}}$ of automorphisms satisfying that $(w_{b,i}')^{p_i}$ coincides with the multiplication of $e'_b$. Thus, by \cref{non commutative finitely generation modules skew-Laurent}, $H^*(\tilde{M},\Z)$ is a finitely generated as $\Z\Gamma\cong \Z[e_1^{\pm1},\dots,e_a^{\pm1},{e_1'}^{\pm1},\dots,{e'}^{\pm1}_{b-1};\alpha_1,\cdots,\alpha_{a+b-1}]$. We can repeat this process with each generator to conclude that $H^*(\tilde{M},\Z)$ is a finitely generated as $\Z$-module.
\end{proof}

\textbf{Part 6. Concluding the proof} The last step of the proof \cref{free iterated actions rigidity hypernilmanifolds} is to prove that $H^*(\tilde{M},\Z)$ is acyclic (that is, $H^0(\tilde{M},\Z)\cong\Z$ and $H^i(\tilde{M},\Z)=0$ for all $i>0$). We will prove this statement by contradiction.

Assume that $\tilde{M}$ is not $\Z$-acyclic. Note that $f_*(\pi_1(M))=\Gamma$ implies $H^0(\tilde{M},\Z)\cong \Z$. Hence if $\tilde{M}$ is not $\Z$-acyclic there exists $j>0$ such that $H^j(\tilde{M},\Z)\neq 0$.

By the universal coefficients theorem, there exists a prime $l$ such that $\tilde{M}$ is not $\Z/l$-acyclic. Let $k=\max\{j:H^j(\tilde{M},\Z/l)\neq 0\}>0$. Since $H^*(\tilde{M},\Z)$ is a finitely generated abelian group, then $\Aut(H^*(\tilde{M},\Z/l))$ is finite. Let $\Phi_{(l)}:\Gamma\longrightarrow \Aut(H^*(\tilde{M},\Z/l))$ be the group morphism induced by the action of $\Gamma$ on $\tilde{M}$. The kernel of the morphism $\Phi_{(l)}:\Gamma\longrightarrow \Aut(H^*(\tilde{M},\Z/l))$, which we denote by $\Lambda$, has finite index in $\Gamma$. Hence $\Lambda$ is a lattice of $N$. Consider the diagram
\[\begin{tikzcd}
	\tilde{M}\times_\Lambda N\ar{d}{\pi}\ar{r}{\Theta} & \tilde{M}/\Lambda\\
	N/\Lambda &
\end{tikzcd}\]
where $\Theta$ and $\pi$ are the natural projections. Since $N$ is contractible the map $\Theta$ is a homotopy equivalence and $H^j(\tilde{M}\times_\Lambda N,\Z/l)\cong H^j(\tilde{M}/\Lambda,\Z/l)=0$ for $j>n$.

To compute the cohomology of the other fibration we need to use the Serre spectral sequence. The monodromy action of $\Lambda$ on $H^j(\tilde{M},\Z/l)$ is trivial, thus
$$E_2^{r,s}=H^r(N/\Lambda,H^s(\tilde{M},\Z/l))\cong H^r(N/\Lambda,\Z/l)\otimes H^s(\tilde{M},\Z/l)\implies H^{r+s}(\tilde{M}\times_\Lambda N,\Z/l).$$

We have $E_2^{k,n}\neq 0$, but for dimensional reasons $E_2^{k,n}$ does not belong to the image of any differential. Furthermore, $E_2^{k,n}$ is not inside the kernel of any differential. Consequently, $H^{k+n}(\tilde{M}\times_\Lambda N,\Z/l)\neq 0$ which is a contradiction because $\tilde{M}\times_\Lambda N\cong \tilde{M}/\Lambda$ and $\tilde{M}/\Lambda$ has dimension $n$.

Finally, we take the fibration \[\begin{tikzcd}
	\tilde{M}\times_\Gamma N\ar{d}{\pi}\\
	N/\Gamma
\end{tikzcd}\]
with fiber $\tilde{M}$. Since $\tilde{M}$ is acyclic the Serre spectral sequence collapses on the second page. This implies that $H^*(M,\Z)\cong H^*(\tilde{M}\times_\Gamma N,\Z)\cong H^*(N/\Gamma,\Z)$ as we wanted to see.

\begin{cor}
	With the same assumptions as in \cref{free iterated actions rigidity hypernilmanifolds}.2, suppose also that $\pi_1(M)$ is virtually solvable. Then $M$ is homeomorphic to $N/\Gamma$.
\end{cor}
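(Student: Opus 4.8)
The plan is to upgrade the cohomological conclusion of \Cref{free iterated actions rigidity hypernilmanifolds}.2 to a homeomorphism, following the scheme used for the torus in \cite[Corollary 1.6]{mundet2021topological}.

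First I would extract from the proof of \Cref{free iterated actions rigidity hypernilmanifolds}.2 the two facts established there: that $\tilde M$, the pull-back of $N\longrightarrow N/\Gamma$ along $f$, is $\Z$-acyclic, and that $H^*(M,\Z)\cong H^*(N/\Gamma,\Z)$. Since $f_*\colon\pi_1(M)\longrightarrow\Gamma$ is surjective, $\tilde M\longrightarrow M$ is precisely the regular covering associated with $K:=\ker f_*$, so $\pi_1(\tilde M)=K$, and the long exact homotopy sequence of the (homotopy) fibration $\tilde M\to M\xrightarrow{\,f\,}N/\Gamma=B\Gamma$ gives $\pi_i(\tilde M)\cong\pi_i(M)$ for $i\ge 2$. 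In particular $H_1(\tilde M,\Z)=0$ forces $K$ to be perfect.

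The heart of the argument, and the step I expect to be the main obstacle, is to prove that $K=1$, i.e. that $f_*$ is an isomorphism. Here one uses that $\pi_1(M)$ is virtually solvable, hence so is $K$; a virtually solvable perfect group is severely constrained, and, combined with the full acyclicity of $\tilde M$ (not just $H_1(\tilde M,\Z)=0$) and the fact that $M$ is a closed $n$-manifold while $\operatorname{cd}\Gamma=n$, this should be pushed exactly as in \cite{mundet2021topological} to show that $\pi_1(M)$ is virtually polycyclic, hence virtually torsion-free. Passing to a finite cover $M'\to M$ with torsion-free polycyclic fundamental group (to which all the hypotheses descend, $\tilde M$ still being acyclic over $M'$), one then checks that a torsion-free polycyclic group of Hirsch length $n$ which surjects onto a lattice of $N$ with perfect kernel must equal that lattice; this forces $K=1$. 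Consequently $\tilde M$ is simply connected and acyclic, hence contractible, so $M$ is a closed aspherical manifold with $\pi_1(M)\cong\Gamma$.

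Finally, $M$ and $N/\Gamma$ are then closed aspherical manifolds with the same fundamental group $\Gamma$, so $f$ is a homotopy equivalence. Since $\Gamma$ is virtually polycyclic it satisfies the Farrell--Jones conjecture, so the Borel conjecture holds for $N/\Gamma$ (equivalently, infra-nilmanifolds are topologically rigid, by Farrell--Hsiang); therefore $M$ is homeomorphic to $N/\Gamma$. The concluding rigidity input is standard, whereas the delicate point is the group-theoretic argument forcing asphericity of $M$.
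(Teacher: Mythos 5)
Your skeleton (acyclicity of $\tilde M$ from the proof of \cref{free iterated actions rigidity hypernilmanifolds}.2, killing $K=\ker f_*$, deducing asphericity, then invoking topological rigidity of nilmanifolds) is the paper's scheme, and your first and last paragraphs are fine. The genuine gap is the middle step, which is exactly the one you flag as the obstacle and then do not carry out. The assertion that $\pi_1(M)$ is virtually polycyclic is not justified: a finitely generated virtually solvable group need not be virtually polycyclic (e.g.\ $\operatorname{BS}(1,2)$ or the lamplighter group), and none of the data in hand — acyclicity of $\tilde M$, $H^*(M,\Z)\cong H^*(N/\Gamma,\Z)$, $\operatorname{cd}\Gamma=n$ — produces polycyclicity without doing the very work you defer to ``exactly as in [MiR24a]''. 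The subsequent steps inherit this problem: ``Hirsch length $n$'' for $\pi_1(M')$ presupposes asphericity, which is what is being proved; the kernel over the cover is $K\cap\pi_1(M')$, and perfectness of $K$ (which you correctly get from $H_1(\tilde M,\Z)=0$) does not pass to finite-index subgroups, so ``perfect kernel'' is unjustified; and the parenthetical ``$\tilde M$ still being acyclic over $M'$'' is not right as stated, since the covering of $M'$ corresponding to $\ker f'_*$ is $\tilde M'$, a finite cover of $\tilde M$ (equal to $\tilde M$ only when $K\subseteq\pi_1(M')$), whose acyclicity does not follow from that of $\tilde M$. Note also that ``perfect and virtually solvable'' by itself constrains nothing towards triviality: any finite perfect group, e.g.\ $A_5$, is both.

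The repair is much simpler than your Hirsch-length detour and is what the paper does: take a finite cover $M'\to M$ with $\pi_1(M')$ solvable, set $\Gamma'=f_*(\pi_1(M'))$ and $f'\colon M'\to N/\Gamma'$, and check $\D_2(M')=\D_2(N/\Gamma')$ (the inequality $\geq$ from the covering, $\leq$ from part 1 of \cref{free iterated actions rigidity hypernilmanifolds}); then the proof of the theorem applies to $f'$ and shows that $\tilde M'$ is $\Z$-acyclic. Now $\pi_1(\tilde M')=\ker f'_*$ is a \emph{solvable} group with trivial abelianization, hence trivial — no polycyclicity or Hirsch-length count is needed. Thus $\tilde M'$ is simply connected and acyclic, hence contractible, $M'$ is aspherical and homotopy equivalent to $N/\Gamma'$, and Borel rigidity for nilmanifolds gives $M'\cong N/\Gamma'$; then $M$ is aspherical, $\pi_1(M)$ is torsion-free, $K$ is finite (as $K\cap\pi_1(M')=1$) hence trivial, so $\pi_1(M)\cong\Gamma$ and the same rigidity statement, as in your concluding paragraph, yields $M\cong N/\Gamma$.
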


\begin{proof}
	Firstly, recall that $f:M\longrightarrow N/\Gamma$ induces a surjective map $f_*:\pi_1(M)\longrightarrow \Gamma$. Since $\pi_1(M)$ is virtually solvable there exists a finite covering $q:M'\longrightarrow M$ such that $\pi_1(M')$ is solvable. We have a commutative diagram\[
	\begin{tikzcd}
		M' \ar{d}{r}\ar{r}{f'} & N/\Gamma'\ar{d}{}\\
		M	\ar{r}{f} & N/\Gamma
	\end{tikzcd}\]
	
	where $f'_*:\pi_1(M')\longrightarrow \Gamma'$ is surjective. Notice that $\D_2(N/\Gamma')=\D_2(N/\Gamma)$ and that $\D_2(M')\geq \D_2(M)$. Consequently, we have $\D_2(N/\Gamma')=\D_2(M')$. The next step is to prove that $M'$ is a nilmanifold.  
	
	The acyclic manifold $\tilde{M}'$ has solvable fundamental group. This implies, since $H_1(\tilde{M}',\Z)$ is trivial, that $\tilde{M}'$ is simply connected. Consequently, $\tilde{M}'$ is contractible by Hurewicz theorem (see \cite[Corollary 4.33]{hatcher2002algebraic}) and $M'$ is a closed connected aspherical manifold. The map $\pi:\tilde{M}' \times_{\Gamma'}N\longrightarrow N/\Gamma'$ is a homotopy equivalence. Since $\Theta$ is also a homotopy equivalence we can construct a homotopy equivalence $M'\longrightarrow N/\Gamma'$. Since the Borel conjecture is true for nilmanifolds, we can conclude that $M'$ is homeomorphic to $N/\Gamma$.
	
	Finally, $M$ is also a closed connected aspherical manifold, since the finite covering $r:M'\longrightarrow M$ is obtained via pullback by $f$ of the covering of nilmanifolds $N/\Gamma'\longrightarrow N/\Gamma$, we have $\pi_1(M)\cong \Gamma$. Thus, $M$ is homeomorphic to $N/\Gamma$, as we wanted to see.
\end{proof}

\begin{rem}
	Using the same argument as in \cref{iterated action rational cohomology}, we can remove the assumption that $f_*:\pi_1(M)\longrightarrow \Gamma$ is surjective. In this case, if $\pi_1(M)$ is virtually solvable then $M$ is homeomorphic to a nilmanifold and $\pi_1(M)$ is commensurable to $\Gamma$.
\end{rem}

%%%%%%%%%%%%%%%%%%%%%%%%%%%%%%%%%%%%%%%%%%%%%%%%%%%%%%%%%%%%%%%%%%%%%%%%%%%%%%%%%%%%%%%%%%%%%%%%%%%%%%%%%%%%%%%%%%%%%%%%%%%%%%%%%%%%%%%%%%%%%%%%%%%
\selectlanguage{english}
%\bibliographystyle{alpha}
%\bibliography{refs.bib}

\end{document}